\newtheorem{theorem}{Theorem}[section]
\newtheorem{lemma}[theorem]{Lemma}
\newtheorem{proposition}[theorem]{Proposition}
\newtheorem{corollary}[theorem]{Corollary}
\newtheorem{definition}[theorem]{Definition}
\newtheorem{example}[theorem]{Example}
\theoremstyle{remark}
\newtheorem{remark}[theorem]{Remark}
\numberwithin{equation}{section}
\newcommand{\RR}{\mathbb{R}}
\newcommand{\ZZ}{\mathbb{Z}}
\newcommand{\CC}{\mathbb{C}}
\newcommand{\ii}{\mathbf{i}}
\newcommand{\ee}{\mathrm{e}}
\renewcommand{\AA}{\mathbb{A}}
\newcommand{\PP}{\mathbb{P}}
\newcommand{\FF}{\mathbb{F}}
\newcommand{\abs}[1]{\left\lvert #1 \right\rvert}
\newcommand{\defset}[2]{ \left\{\ #1 \ \left\rvert\ #2 \ \right\}\right. }
\newcommand{\rmd}{\mathrm{d}}
\newcommand{\pt}{\partial}
\newcommand{\PH}{Poincar\'{e}-Hopf}
\newcommand{\teich}{Teichm\"{u}ller}
\newcommand{\Kahler}{K\"{a}hler}
\newcommand{\qdf}{quadratic differential}
\newcommand{\nbhd}{neighborhood}
\newcommand{\homeo}{homeomorphism}
\newcommand{\ortprehomeo}{orientation-preserving homeomorphism}
\newcommand{\SYM}{\textcolor{black}}
\newcommand{\DEF}{\textbf}
\begin{document}
\setcounter{page}{1}


\centerline{}

\title[Moduli space of HCMU surfaces]{The  moduli space of HCMU surfaces}

\author[S.C. Lu, B. Xu]{Sicheng Lu$^1$ and Bin Xu$^2$} 

\address{$^{1}$ The Institute of Geometry and Physics, University of Science and Technology of China, Hefei, Anhui, People's Republic of China.
}
\email{\textcolor[rgb]{0.00,0.00,0.84}{sichenglu@ustc.edu.cn}}

\address{$^{2}$ School of Mathematical Sciences and CAS Wu Wen-Tsun Key Laboratory of Mathematics, University of Science and Technology of China, Hefei, Anhui, People's Republic of China.
}
\email{\textcolor[rgb]{0.00,0.00,0.84}{bxu@ustc.edu.cn}}


\subjclass[2020]{Primary 32G15 58E11; Secondary 57M15 30F30}%

\keywords{extremal K\"{a}hler metric, HCMU surface, moduli space, graph on surface, geometric deformation, abelian differential} 

\date{ \today
}

\begin{abstract}
HCMU surfaces are compact Riemann surfaces equipped with an extremal K\"{a}hler metric and a finite number of singularities. Research on these surfaces was initiated by E. Calabi and X.-X. Chen over thirty years ago.
We provide a detailed description of the geometric structure of HCMU surfaces, building on the classical football decomposition introduced by Chen-Chen-Wu \cite{CCW05}. 
From this perspective, most HCMU surfaces can be uniquely described by a set of data that includes both discrete topological information and continuous geometric parameters.
This data representation is effective for studying the moduli space of HCMU surfaces with specified genus and conical angles, suggesting a topological approach to this topic. As a first application, we present a unified proof of the angle constraints on HCMU surfaces. Using the same approach, we establish an existence theorem for HCMU surfaces of any genus with a single conical point, which is also a saddle point.
Finally, we determine the dimension of the moduli space, defined as the number of independent continuous parameters. This is achieved by examining several geometric deformations of HCMU surfaces and the various relationships between the quantities in the data set representation.
\end{abstract} 

\maketitle

\tableofcontents

\section{Introduction}\label{sec:HCMU}
It is a natural problem to find a ``best'' metric in a conformal class of a Riemann surface, under certain measurement of ``cost''. This is another attempt to generalize the classical uniformization theorem to the case including singularities.
In this manuscript we focus on a class of such \Kahler\ metrics, called HCMU metrics, which were originally initiated by E. Calabi \cite{Calabi82extrm, Calabi85extrm} and X.-X. Chen \cite{Cxx98, Cxx99, Cxx00}, and developed
in \cite{WangZhu00, LinZhu02} etc. We shall study their geometric and topological properties, as well as moduli spaces that they form.

\subsection{Extremal K\"{a}hler metric and HCMU surface}\label{ssec:def_hcmu}

The original problem is proposed for compact complex manifold $\mathcal{M}$ of arbitrary dimension, by Calabi in his celebrated papers \cite{Calabi82extrm, Calabi85extrm}.
In a fixed \Kahler\ class, an \DEF{extremal \Kahler\ metric} is a critical point of the \DEF{Calabi energy} 
\begin{equation*}\label{eq:clb_eng}
     E(\rho) := \int_{\mathcal{M}} R^2 \ \rmd \rho
\end{equation*}
where $R$ is the scalar curvature of the metric $\rho$ in the \Kahler\ class. The Euler-Lagrange equations of $E(\rho)$ can be expressed as 
\begin{equation*}\label{eq:EL_eq_ext}
    R_{,\alpha\beta} = 0,\quad 1 \leq \alpha, \beta \leq \dim_\CC \mathcal{M} \ ,
\end{equation*}
where $R_{,\alpha\beta}$'s are the second-order  covariant derivatives of $R$. 

We will restrict to the case of $\dim_\CC\mathcal{M}=1$. 
Calabi \cite{Calabi82extrm} proved that an extremal \Kahler\ metric on a compact Riemann surface $M$ must has constant scalar curvature (\DEF{CSC}). This coincides with the classical uniformization theorem. 
Hence, to find extremal \Kahler\ metrics with non-constant scalar curvature (\DEF{non-CSC}), we must turn to singular metrics. X.-X. Chen in \cite{Cxx99} first gives an example of a non-CSC extremal \Kahler\ metric with singularities. 

\medskip
Let $\SYM{P}:=\{ p_1, \cdots p_n\}$ be a finite set on the compact Riemann surface $M$, and $\SYM{\vec{\alpha}} := (\alpha_1,\cdots,\alpha_n) \in (\RR_{>0} \setminus \{1\})^n$ be an \DEF{angle vector}. 
A conformal metric $\rho$ on $M$ is said to have a \DEF{conical singularity} at $p_i$ of cone angle $2\pi\alpha_i$ if there exists a local complex coordinate $z$ with $z(p)=0$ such that 
$ \rho = \ee^{2\varphi(z)} \abs{\rmd z}^2 $ on the punctured \nbhd\ of $0$, and that
\begin{equation}\label{eq:defn_cone}
    \varphi(z) - (\alpha_i-1) \ln \abs{z}
\end{equation}
is continuous at $0$ and smooth outside $0$. 
With this coordinate we have
$$ \Delta_\rho = \ee^{-2\varphi}\left( \frac{\pt^2}{\pt x^2} + \frac{\pt^2}{\pt y^2} \right) ,\quad 
\SYM{K} = - \Delta \varphi \cdot \ee ^{2\varphi} 
$$
where $K$ is the Gaussian curvature (equivalent to scalar curvature on Riemann surface). 
The Euler-Lagrange equation of the Calabi energy in a suitable Sobolev space of conformal metrics with finitely many singularities
gives rise to 
\begin{equation}\label{eq:EL_eq_RS}
    \Delta_{\rho}K + K^2 = C 
\end{equation}
with some constant $C$ (\cite{Calabi85extrm, Cxx99}). Or equivalently, 
\begin{equation}
    \frac{\pt}{\pt \overline{z}} K_{,zz} = 0 \ ,
\end{equation}
where $K_{,zz}$ is the second-order  covariant derivatives of $K$ (then $K_{,zz} \rmd z^2$ is a holomorphic \qdf\ on $M\setminus P$). See \cite{Cxx00} for details.


\begin{definition}[HCMU metric]\label{defn:HCMU}
We call $\rho$ an \DEF{HCMU metric} \index{HCMU!- metric} (the Hessian of the curvature of the metric is umbilical) if it is non-CSC and
\begin{equation}\label{eq:HCMU_0}  
    K_{,zz} =0 \ . 
\end{equation}    
Or equivalently (\cite[\S 2.1]{CCW05}), when $\rho$ is expressed as $\rho=\ee^{2\varphi(z)} \abs{\rmd z}^2$,
\begin{equation}\label{eq:HCMU}
    \frac{\pt^2 K}{\pt z^2} - 2 \frac{\pt K}{\pt z} \frac{\pt \varphi}{\pt z} =0.
\end{equation}
A Riemann surface $M$ endowed with an HCMU metric $\rho$ will be called an \DEF{HCMU surface}, denoted by $(M,\rho)$.  
\end{definition}

We always assume that an HCMU metric has finite area and finite Calabi energy, that is,
\begin{equation}
    A(\rho) = \int_{M \setminus P} \rmd \rho < +\infty, \quad
    E(\rho) = \int_{M \setminus P} K^2 \ \rmd \rho < +\infty \ .
\end{equation}

\begin{remark}
    Our study pays more attention on the geometric and topological aspect of HCMU surfaces and their moduli space. So we are not fixing the complex structure of underlying Riemann surfaces. 
\end{remark}

Another type of singularity is also concerned. By \eqref{eq:defn_cone},  
$\lim_{z\to 0} \frac{ \varphi(z)}{ \ln \abs{z} } = \alpha_i -1 $. 
Now we say that $\rho$ has a \DEF{cusp singularity} at $p_i$  if
\begin{equation}
    \lim_{z\to0} \frac{ \varphi(z)+\ln \abs{z} }{ \ln \abs{z} } = 0 \ . 
\end{equation} 
Hence a cusp singularity can be regarded as a conical singularity with angle zero. 

\bigskip
\subsection{Character 1-form}
Every HCMU metric induces a meromorphic 1-form on the underlying Riemann surface, which encodes many geometric information. 
Accordingly, ideas in the study of meromorphic differentials may be transplanted to HCMU surfaces. That is the starting point and motivation of this work, although differential itself is not seriously used later. 

\medskip
Given an HCMU surface $(M,\rho)$, let $\SYM{\mathfrak{C}} := \{\ p \in M\ \lvert\ \rmd K(p)=0 \ \}$ be the set of critical points of the curvature function. 

\begin{proposition}[\cite{Cxx00, CCW05}]\label{prop:vector_fields}
    The complex gradient vector field 
    \[ \SYM{{\rm grad}^{(1,0)} K} := \ii \mathrm{e}^{-2\varphi} \frac{\partial K}{\partial \overline{z}} \frac{\partial}{\partial z} \] 
    is holomorphic and has no zeros on $M \setminus \mathfrak{C}$. The integral curves of its imaginary part \SYM{$\vec{H}$} are geodesics.
    The real part \SYM{$\vec{V}$} of ${\rm grad}^{(1,0)} K$  is a Killing vector field perpendicular to $\vec{H}$, and its integral curve is the level set of $K$ 
\end{proposition}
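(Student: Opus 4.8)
The plan is to read the two governing equations in divergence form and let them do the work: the HCMU equation \eqref{eq:HCMU} says $\pt_z\bigl(\ee^{-2\varphi}\pt_zK\bigr)=0$, and the Euler--Lagrange equation \eqref{eq:EL_eq_RS} says that $\Delta_\rho K$ is a polynomial in $K$. \emph{Holomorphicity and zeros.} Conjugating $\pt_z(\ee^{-2\varphi}\pt_zK)=0$ (both $K$ and $\varphi$ are real) gives $\pt_{\bar z}(\ee^{-2\varphi}\pt_{\bar z}K)=0$, so the local coefficient $\ii\,\ee^{-2\varphi}\pt_{\bar z}K$ of ${\rm grad}^{(1,0)}K$ is holomorphic in $z$. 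Since ${\rm grad}^{(1,0)}K$ is, up to a nonzero constant, $\ii$ times the $(1,0)$-part of the Riemannian gradient ${\rm grad}_\rho K$ (equivalently, the metric dual of $\bar\partial K$), it is a globally well-defined holomorphic section of $T^{1,0}M$ over $M\setminus P$; and because $K$ is real it vanishes at a point iff $\rmd K$ does there, i.e. precisely on $\mathfrak C$. Hence ${\rm grad}^{(1,0)}K$ has no zeros on $M\setminus\mathfrak C$.

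\emph{Real and imaginary parts.} Expanding with $\pt_z=\tfrac12(\pt_x-\ii\pt_y)$ gives ${\rm grad}^{(1,0)}K=\vec V+\ii\,\vec H$, where $\vec H$ is a positive multiple of ${\rm grad}_\rho K$ and $\vec V$ is its image under the complex structure $J$; in particular $\vec V\perp\vec H$, $|\vec V|_\rho=|\vec H|_\rho$, and $\vec V(K)=0$. The last identity shows that every integral curve of $\vec V$ lies in a level set of $K$; on $M\setminus\mathfrak C$, where $\vec V\neq 0$, it is open in that level set and so is a connected component of it.

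\emph{Integral curves of $\vec H$ are geodesics.} Here I would first establish Chen's identity. Using $\pt_z(\ee^{-2\varphi}\pt_zK)=0$ and $K_{z\bar z}=\tfrac14\ee^{2\varphi}\Delta_\rho K=\tfrac14\ee^{2\varphi}(C-K^2)$ from \eqref{eq:EL_eq_RS}, a direct computation gives
\[
 \pt_z\!\bigl(|{\rm grad}_\rho K|_\rho^2\bigr)=\pt_z\!\bigl(4\ee^{-2\varphi}\,\pt_zK\,\pt_{\bar z}K\bigr)=4\ee^{-2\varphi}\,\pt_zK\,K_{z\bar z}=(C-K^2)\,\pt_zK=\pt_z\!\bigl(CK-\tfrac13K^3\bigr),
\]
and the same for $\pt_{\bar z}$, so $|{\rm grad}_\rho K|_\rho^2=P(K)$ for a cubic polynomial $P$ with constant coefficients. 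Thus $K$ is transnormal on $M\setminus\mathfrak C$: with $\nu:={\rm grad}_\rho K/|{\rm grad}_\rho K|_\rho$, the standard identity $\nabla_{{\rm grad}_\rho K}{\rm grad}_\rho K=\tfrac12{\rm grad}_\rho\bigl(|{\rm grad}_\rho K|_\rho^2\bigr)=\tfrac12P'(K)\,{\rm grad}_\rho K$, combined with $\nu(K)=|{\rm grad}_\rho K|_\rho$, forces $\nabla_\nu\nu=0$. Since $\vec H$ is a positive multiple of $\nu$, its integral curves are geodesics.

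\emph{$\vec V$ is Killing.} From the explicit formula, $\iota_{\vec V}\omega_\rho$ is a constant multiple of $\rmd K$, where $\omega_\rho$ is the area form of $\rho$; so $\mathcal L_{\vec V}\omega_\rho=0$ by Cartan's formula ($\vec V$ is, up to a constant, the Hamiltonian vector field of $K$). On the other hand $\vec V$ is the real part of the holomorphic vector field ${\rm grad}^{(1,0)}K$, so its flow preserves $J$. A vector field on a surface whose flow preserves both the area form and the conformal structure preserves the metric; hence $\vec V$ is Killing. (Equivalently, on $M\setminus\mathfrak C$ one has $\rho=\tfrac{4}{P(K)}\,\partial K\odot\bar\partial K$, manifestly determined by $J$ and $K$, both of which are $\vec V$-invariant.) The only step beyond formal manipulation is Chen's identity, and that is where I expect the real content to lie: it is exactly the point at which the Euler--Lagrange equation \eqref{eq:EL_eq_RS} upgrades the bare HCMU condition into the transnormality of $K$, everything else being bookkeeping with the two divergence-form equations and standard surface geometry.
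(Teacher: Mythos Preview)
The paper does not supply its own proof of this proposition; it is stated with a citation to \cite{Cxx00, CCW05} and used as background. So there is nothing in the paper to compare your argument against, and your write-up is in fact more detailed than anything that appears here.

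That said, your proof is correct and essentially reconstructs the reasoning of the cited sources. A few remarks. Your identification of the HCMU equation \eqref{eq:HCMU} with $\pt_z(\ee^{-2\varphi}\pt_zK)=0$, and the conjugation step giving holomorphicity of $\ii\,\ee^{-2\varphi}\pt_{\bar z}K$, are exactly right. The derivation of what you call Chen's identity, $|{\rm grad}_\rho K|_\rho^2=P(K)$ with $P$ cubic, is the substantive step, and your computation combining $\pt_z(\ee^{-2\varphi}\pt_zK)=0$ with $K_{z\bar z}=\tfrac14\ee^{2\varphi}(C-K^2)$ is clean; this is indeed the mechanism in \cite{Cxx00} (and the cubic $P$ is what reappears later in the paper as equation \eqref{eq:core_diff_eq}). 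The transnormality argument for geodesics via $\nabla_{{\rm grad}\,K}{\rm grad}\,K=\tfrac12\,{\rm grad}\,|{\rm grad}\,K|^2$ is standard and correctly applied. Your Killing argument---area-preserving (Hamiltonian for $K$) plus $J$-preserving (real part of a holomorphic field) implies isometric flow on a surface---is a slightly slicker packaging than the direct verification one usually sees, but it is valid and arguably more conceptual.
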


\begin{definition}[\cite{CqWyy11, CWX15}]
    We call the dual 1-form $\omega$ of ${\rm grad}^{(1,0)} K$ the \DEF{character 1-form} of $\rho$.
    More precisely, $\omega$ is a meromorphic 1-form defined on $M$ such that $\omega({\rm grad}^{(1,0)} K) = \ii/4$, where $\ii/4$ is a technical constant.
\end{definition}

The character 1-form for HCMU metrics with conical singularities is originally studied in \cite{CqWyy11}, later in \cite{CWX15} for metrics with cusp singularities. 
We package results of the two papers in the following proposition.
\begin{proposition}[\cite{CqWyy11, CWX15}]
    Let $K$ be the curvature function of an HCMU surface $(M, \rho)$, whose maximum and minimum are $K_0, K_1$ respectively with
    \begin{equation}
        K_0>0, \quad K_0 > K_1 \geq -\frac{K_0}{2} \ .
    \end{equation}
    Then 
    the character 1-form $\omega$ on $M$ and has the following properties:
    \begin{enumerate}
        \item $\omega$ is meromorphic and only has simple poles,
        \item the residue of $\omega$ at each pole is a real number,
        \item $\omega+\overline{\omega}$ is exact outside the poles of $\omega$.
    \end{enumerate}
    Furthermore, the metric $\rho$, the curvature $K$ and the character 1-form $\omega$ satisfy:
    \begin{equation}
    \renewcommand\arraystretch{1.3}
        \left\{ \begin{tabular}{ll}
             $\rmd K$ & $= -\frac13 (K-K_0)(K-K_1)(K+K_0+K_1) \cdot(\omega+\overline{\omega})$ , \\
             $g$ & $= -\frac43 (K-K_0)(K-K_1)(K+K_0+K_1) \cdot\omega\overline{\omega}$ , \\
             $K(p)$ & $\in (K_1,K_0)$ , for any $p \in M$ other than the poles of $\omega$. \\
        \end{tabular} \right.
    \end{equation}
\end{proposition}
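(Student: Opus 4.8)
\emph{Strategy and the cubic identity.} I would reduce the whole statement to a first-order ODE for the curvature $K$ along the geodesic foliation tangent to $\vec H$, and then read off the structure of $\omega$ from that ODE together with the local models of $\rho$ near its singularities. Since $\rho$ is HCMU, $K_{,zz}=0$, so the traceless part of the Hessian of $K$ vanishes and $\mathrm{Hess}\,K=\tfrac12(\Delta_\rho K)\,\rho$; substituting the Euler--Lagrange equation \eqref{eq:EL_eq_RS}, $\Delta_\rho K=C-K^2$, gives $\mathrm{Hess}\,K=\tfrac12(C-K^2)\,\rho$. Restricting to a unit-speed integral curve of $\vec H$ --- a geodesic on which $\nabla K$ is tangent, by Proposition~\ref{prop:vector_fields} --- yields $\ddot K=\tfrac12(C-K^2)$, and a first integral gives
\[
|\nabla K|^2=-\tfrac13K^3+CK+D=:R(K)
\]
for a constant $D$; the identity extends by continuity to all of $M$, as the critical set $\mathfrak C$ has empty interior. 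Since $K_0,K_1$ are critical values they are roots of $R$, and, $R$ having no quadratic term, its three roots sum to $0$, so the third root is $-(K_0+K_1)$ and $R(K)=-\tfrac13(K-K_0)(K-K_1)(K+K_0+K_1)$.

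\emph{Passage to $\omega$ and the value constraint.} In an isothermal chart with $\rho=\ee^{2\varphi}\abs{\rmd z}^2$ I would write ${\rm grad}^{(1,0)}K=\eta(z)\,\partial_z$ with $\eta=\ii\,\ee^{-2\varphi}\,\partial_{\bar z}K$ holomorphic; the normalization $\omega({\rm grad}^{(1,0)}K)=\ii/4$ then forces $\omega=\frac{\ii}{4\eta}\,\rmd z=\frac{\ee^{2\varphi}}{4\,\partial_{\bar z}K}\,\rmd z$. A short computation using $|\nabla K|^2=4\,\ee^{-2\varphi}\,\partial_z K\,\partial_{\bar z}K$ and $\overline{\partial_{\bar z}K}=\partial_z K$ verifies $\rmd K=|\nabla K|^2(\omega+\overline\omega)$ and $\rho=4|\nabla K|^2\,\omega\overline\omega$, and substituting the cubic identity gives the two displayed equalities. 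For the value constraint, $K_1\ge-K_0/2$ places the third root $-(K_0+K_1)$ at or below $K_1$, so $R$ is nonnegative on $[K_1,K_0]$ and vanishes there only at the endpoints; since $R(K)=|\nabla K|^2>0$ on $M\setminus\mathfrak C$, we get $K\in(K_1,K_0)$ on $M\setminus\mathfrak C$. Any point $p$ with $K(p)\in\{K_0,K_1\}$ is a critical point of $K$: if $p$ is a smooth point then $\partial_{\bar z}K(p)=0$ while $\ee^{2\varphi(p)}\ne0$, so $\eta(p)=0$ and $\omega$ has a pole at $p$; and if $p$ is a singular point of $\rho$ then $\omega$ has a pole there by the local analysis below. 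Hence $K(p)\in(K_1,K_0)$ whenever $p$ is not a pole of $\omega$.

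\emph{Properties (1)--(3).} From $\omega=\frac{\ii}{4\eta}\,\rmd z$ with $\eta$ holomorphic and not identically zero (as ${\rm grad}^{(1,0)}K$ is nonvanishing on the dense set $M\setminus\mathfrak C$), $\omega$ is meromorphic with poles exactly at the isolated zeros of $\eta$ inside smooth charts, the extension across the conical points and cusps of $\rho$ being supplied by the explicit local normal forms of HCMU metrics from \cite{CCW05, CWX15}. The poles are simple: at a smooth critical point this is the nondegeneracy of $\mathrm{Hess}\,K=\tfrac12(C-K^2)\rho$ there --- the only failure, $C=K(p)^2$, forces $K(p)=K_1=-K_0/2$, a value attained only at cusps --- and at the singular points it is built into those local models. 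For (3), on $M$ minus its singular and critical points one has $\omega+\overline\omega=\rmd K/R(K)=\rmd(G\circ K)$ for a fixed antiderivative $G$ of $1/R$ on $(K_1,K_0)$, and this exact $1$-form extends across the deleted points that are not poles, so $\omega+\overline\omega$ is exact on $M$ minus the poles. For (2), near each pole $\omega+\overline\omega=\rmd(G\circ K)$ with $G\circ K$ single-valued on the punctured disk, so $\oint_\gamma(\omega+\overline\omega)=0$ for a small loop $\gamma$ about the pole; then $\oint_\gamma\omega=-\overline{\oint_\gamma\omega}$ is purely imaginary and the residue $\tfrac{1}{2\pi\ii}\oint_\gamma\omega$ is real.

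\emph{Main obstacle.} The hard part will be the local analysis at the singular points: one must produce (or import) normal forms for an HCMU metric near a conical singularity and near a cusp sharp enough to show that $\omega$ extends with a pole of order exactly one and with a residue that is visibly real under the chosen orientation conventions. The cusp case is the delicate one --- there $K_1=-K_0/2$, the cubic $R$ degenerates (its double root $-(K_0+K_1)$ coincides with $K_1$), and a separate parabolic local model is required --- which is precisely why the analyses of both \cite{CqWyy11} and \cite{CWX15} are invoked. A secondary point of care is the order of the argument, since the value constraint, the exactness of $\omega+\overline\omega$, and the reality of the residues are interdependent yet all ultimately rest on the cubic identity established first.
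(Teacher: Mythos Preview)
The paper does not prove this proposition; it is stated as a summary of results imported from \cite{CqWyy11} and \cite{CWX15}, with no argument given beyond the sentence ``We package results of the two papers in the following proposition.'' So there is no proof in the paper to compare against.

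That said, your proposal is a sound reconstruction of the argument in those references. The route through $\mathrm{Hess}\,K=\tfrac12(\Delta_\rho K)\rho$, the Euler--Lagrange identity, and the first integral $|\nabla K|^2=-\tfrac13(K-K_0)(K-K_1)(K+K_0+K_1)$ is exactly the mechanism used in \cite{CCW05,CqWyy11}; your passage to $\omega$ via $\omega=\tfrac{\ii}{4\eta}\,\rmd z$ and the verification of the two displayed formulas are correct, and the exactness/residue argument via $\omega+\overline\omega=\rmd(G\circ K)$ is the standard one. Your identification of the ``main obstacle'' is also accurate: the simplicity of the poles at conical extremal points and at cusps really does require the local normal forms worked out in \cite{CqWyy11} (conical case) and \cite{CWX15} (cusp case), and your sketch correctly defers to those. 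One small point worth making explicit in a full write-up: at a \emph{saddle} conical point of angle $2\pi n$ the vector field ${\rm grad}^{(1,0)}K$ has index $1-n<0$, hence a pole as a meromorphic section, so $\omega$ acquires a \emph{zero} of order $n-1$ there rather than a pole; this is consistent with the value constraint (since $K$ lies strictly between $K_1$ and $K_0$ at saddles) but deserves a sentence so the reader sees that the set of poles of $\omega$ is exactly the set of extremal points.
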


The curvature function $K$ together with two vector fields $\vec{H}, \vec{V}$ encodes many geometric properties of HCMU surfaces. We summarize some of them in the following. They lead to the football decomposition (\cite{CCW05}, also see Section \ref{ssec:football_decomp}), which is the base point of this work.
\begin{proposition}[\cite{Cxx98, Cxx99, Cxx00, LinZhu02, CCW05, CWX15}]\label{prop:curvature}
    Let $(M,\rho)$ be an HCMU surface with finite conical or cusp singularities, and $K$ be its curvature function. 
    \begin{enumerate}
        \item $K$ extends continuously over the singularities.
        \item The values of local minimum (or maximum) of $K$ are all the same. 
        \item The limit of $K$ at any cusp singularity is negative and achieves minimum.
        \item The singularities of $\vec{V}$ or $\vec{H}$ is the finite union of all the singularities of metric $\rho$ and smooth critical points of $K$. Any saddle point of $K$ must be a conical singularity with $2k\pi\ (k\in\ZZ_{>1})$ angle of the HCMU metric.
    \end{enumerate}    
\end{proposition}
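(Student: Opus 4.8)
The plan is to reduce everything to the first-order ODE satisfied by $K$ along the geodesic flow of $\vec H$. Taking the $\rho$-norm of $\rmd K$ in the two displayed relations (using $g=-\frac43(K-K_0)(K-K_1)(K+K_0+K_1)\,\omega\overline\omega$ to compute the norm of $\omega+\overline\omega$) yields, up to the standard normalisation,
\begin{equation*}
  \abs{\nabla K}^2_\rho \;=\; -\tfrac13\,(K-K_0)(K-K_1)(K+K_0+K_1)\;=:\;\Phi(K)
\end{equation*}
on $M\setminus(\mathfrak{C}\cup P)$. Since $\nabla K$ is orthogonal to the Killing field $\vec V$, which is tangent to the level sets of $K$, the gradient is proportional to $\vec H$, whose integral curves become unit-speed geodesics after reparametrisation (Proposition~\ref{prop:vector_fields}); hence along arc length $s$ one has $(\rmd K/\rmd s)^2=\Phi(K)$, so $K$ is monotone in $s$ with values in $[K_1,K_0]$. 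Because $K_0>0$ and $-K_0/2\le K_1<K_0$, the cubic $\Phi$ is positive on $(K_1,K_0)$, vanishes simply at $K_0$ and at $K_1$ (the latter being a double root exactly when $K_1=-K_0/2$), and has its remaining root $-K_0-K_1\le K_1$; thus $K$ must tend to $K_0$ or $K_1$, reaching a simple root in finite arc length but only asymptotically reaching a double root.

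Assertions~(1) and~(2) follow from this. A smooth critical point of $K$ is a zero of ${\rm grad}^{(1,0)}K$, hence a pole of the character $1$-form $\omega$; feeding this into $g=-\frac43(K-K_0)(K-K_1)(K+K_0+K_1)\,\omega\overline\omega$ forces $\Phi(K)=0$ there, and the only roots of $\Phi$ lying in $[K_1,K_0]$ are $K_0$ and $K_1$, so the $K$-value at such a point is the global maximum or minimum, which, being attained on the closed surface $M$, is a local extremum: this is~(2). For the continuous extension~(1) at a singularity $p_i$, I would integrate the Euler--Lagrange equation~\eqref{eq:EL_eq_RS} to get $\int_M\abs{\nabla K}^2\,\rmd\rho=\int_M(K^3-CK)\,\rmd\rho<\infty$, so $K$ has finite Dirichlet energy near $p_i$; combined with the boundedness of $K$ and equation~\eqref{eq:HCMU_0}, a removable-singularity argument followed by elliptic regularity gives a continuous extension, whose value the ODE above identifies as a root of $\Phi$ reached along the geodesics entering $p_i$.

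For~(3): a cusp is an end of infinite geodesic length (immediate from its defining asymptotics), so its limiting $K$-value is only reached asymptotically; by the last paragraph this forces the limit to be a double root of $\Phi$, i.e.\ $K_1=-K_0/2$, which is negative since $K_0>0$ and equals the minimum of $K$. For the first part of~(4): the points where $\vec V$, $\vec H$, or ${\rm grad}^{(1,0)}K$ fail to be defined or vanish form the union of $P$ with the smooth zeros of ${\rm grad}^{(1,0)}K$ — that is, the smooth critical points of $K$ — and this set is finite because a non-zero meromorphic vector field on the compact $M$ has isolated zeros and poles.

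It remains to prove the angle constraint in~(4). A saddle point $q$ of $K$ is a critical point that is not a local extremum; by~(1)--(2) it cannot be a smooth point, so it is a conical singularity of some angle $2\pi\alpha_q$, and, not being a local extremum, its $K$-value lies in the open interval $(K_1,K_0)$, so $\Phi(K(q))\ne 0$. Hence in $g=-\frac43(K-K_0)(K-K_1)(K+K_0+K_1)\,\omega\overline\omega$ the polynomial factor is a non-vanishing continuous function near $q$, and the conical profile $\abs{z}^{2(\alpha_q-1)}\abs{\rmd z}^2$ of $g$ at $q$ must be produced by $\omega\overline\omega$ alone; since $\omega$ is meromorphic this forces $\alpha_q-1\in\ZZ$, and since $\alpha_q=0$ gives a cusp (a minimum, not a saddle) and $\alpha_q=1$ a smooth point, we conclude $\alpha_q=k\in\ZZ_{>1}$, with local model $K=K(q)+\operatorname{Re}(a z^{k})+\cdots$ and level set $\{K=K(q)\}$ having $2k\ge4$ prongs. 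I expect this last step — justifying that the displayed structural relations and the meromorphy of $\omega$ survive in punctured neighbourhoods of the $p_i$, pinning down the local models, upgrading the $W^{1,2}$ extension of $K$ in~(1) to a continuous one, and ruling out a conical point with extreme $K$-value being a multi-pronged ``centre'' masquerading as a saddle — to be the main obstacle; everything else is formal once the first-integral ODE is in hand.
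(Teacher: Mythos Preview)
The paper does not supply its own proof of this proposition: it is stated as a summary of results from the cited literature \cite{Cxx98, Cxx99, Cxx00, LinZhu02, CCW05, CWX15}, with no argument given. So there is no ``paper's proof'' to compare against directly; I can only comment on the soundness and positioning of your approach.

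Your strategy --- derive everything from the first-integral ODE $(\rmd K/\rmd s)^2=\Phi(K)$ obtained via the character $1$-form $\omega$ --- is clean and, once the structural relations are granted, essentially correct in outline. The arguments for (2), (3), and the angle-integrality in (4) are the right ones. However, there is a genuine \emph{circularity risk} you should flag more prominently: the character $1$-form and its structural relations $g=-\tfrac{4}{3}\Phi(K)\,\omega\overline\omega$, $\rmd K=-\tfrac{1}{3}\Phi(K)(\omega+\overline\omega)$ come from \cite{CqWyy11, CWX15}, which postdate most of the papers cited for Proposition~\ref{prop:curvature} and, in their own arguments, already \emph{use} the curvature properties (1)--(4) as input (in particular the continuous extension of $K$ and the classification of critical points). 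The original proofs in \cite{Cxx00, CCW05} proceed directly from the PDE $K_{,zz}=0$ and the finite-area/finite-energy hypotheses, establishing the boundedness and extension of $K$ first and only then building the global picture; the $\omega$-formalism is a repackaging of those conclusions, not an independent route to them.

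So your write-up is a correct \emph{a posteriori} synthesis, but as a self-contained proof it leans on a proposition whose proof in the literature presupposes what you are proving. The honest fix is either to cite the original PDE arguments for (1) (as you partially do with the $W^{1,2}$/removable-singularity sketch) and then use $\omega$ only for (2)--(4), or to note explicitly that you are reorganising known results rather than giving an independent derivation. The technical gaps you list at the end --- extending the structural relations across singularities, upgrading $W^{1,2}$ to $C^0$ --- are exactly where the circularity bites, and they are the substance of the original papers.
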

 
\begin{definition}
     The points with local maximum (minimum) curvature are called \DEF{local maximum (minimum) points}. Both of them are called \DEF{extremal points} of an HCMU surface. A smooth extremal point is an extremal point but not a conical or cusp singularity. 
     A saddle point of curvature function is call a \DEF{saddle point} of the HCMU surface.     
\end{definition}

We shall see the role of these points when the decomposition theorem is stated. On the other hand, they induce a stratification of moduli space discussed later. 

\bigskip
\subsection{The structure of HCMU surfaces}
As many other geometric objects, such as hyperbolic surfaces, HCMU surfaces can be built from smaller bricks. 

It is shown in \cite{CCW05} that an HCMU surface can be divided into a finite number of pieces, called football, by cutting along some geodesic segments. We will review this result in Section \ref{sec:football}. 
Such structure provides a topological method to construct HCMU surfaces. 

\medskip
Base on this football decomposition, we provide a detailed version of the structure of HCMU surfaces, which is the main tool in this work. 
\begin{theorem}\label{thm:main_data_rep}
     Most HCMU surface can be uniquely represented as a data set 
     \[ \left( \mathbb{A}, \PP^+ \sqcup \PP^-; K_0, R; \mathcal{W, L} \right). \]
     In the set, $\AA$ is a subdivision of the underlying surface into topological polygons; $\PP^+ \sqcup \PP^+$ is partition of the vertices of $\AA$ such that every edge in $\AA$ connects points in different part; $K_0, R$ are positive constants; $\mathcal{W, L}$ are positive functions on the edges and surfaces of the subdivision $\AA$. 
\end{theorem}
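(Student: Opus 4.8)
The plan is to upgrade the football decomposition of \cite{CCW05} (reviewed in Section~\ref{sec:football}) to an \emph{intrinsic} cell decomposition of $M$, to read the constants and functions of the data set off that decomposition, and then to check that the data set determines $(M,\rho)$ up to isometry. The first move is to isolate the graph $G\subset M$ along which the surface is cut into footballs: by Propositions~\ref{prop:vector_fields} and~\ref{prop:curvature} its vertices are forced to be the local maxima, local minima and saddle points of $K$, and its edges are the separatrix geodesics --- the integral curves of $\vec{H}$ that limit onto an extremal point, or that leave a saddle and run to the first critical point of $K$ they meet. The decisive claim at this stage is that $G$ is \emph{canonical}, built from $(M,\rho)$ with no auxiliary choice. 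This is precisely where the word ``most'' enters: on the rotationally symmetric footballs, and on a short, explicitly listable family of their nearest relatives, the football decomposition exists only after one picks an extra meridian, so $G$ (and hence the data set) is not canonical and these surfaces must be excluded. For every other HCMU surface I would prove canonicity by showing that each football occurring in any decomposition is a \emph{maximal} region foliated by $\vec{H}$-orbits running monotonically between two ``poles'', so that the collection of footballs and their adjacencies is recovered from the level-set topology of the curvature function alone.

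Next I would turn $G$ into the subdivision $\AA$. Cutting $M$ along $G$, after the natural refinement at the saddle points --- whose cone angles are $2k\pi$ with $k\ge 2$ by Proposition~\ref{prop:curvature}(4) --- yields $\AA$: its $2$-cells are the closed footballs, each a topological polygon bounded by geodesic arcs, its $1$-cells are those arcs, and its $0$-cells are the extremal and saddle points. Two facts drive the rest. First, along every edge $K$ is strictly monotone, since an edge is a nonconstant $\vec{H}$-orbit and ${\rm grad}^{(1,0)}K$ has no zero off $\mathfrak C$ (Proposition~\ref{prop:vector_fields}); second, the extremal values of $K$ are global constants, $K_0$ at every local maximum and $K_1$ at every local minimum (Proposition~\ref{prop:curvature}(2)). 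Declaring $\PP^+$ to consist of the vertices into which $K$ decreases along every incident edge and $\PP^-$ the vertices into which it increases, the monotonicity forces each edge to join $\PP^+$ to $\PP^-$; the delicate combinatorial point, handled by the saddle refinement, is to check that every vertex --- in particular every saddle --- lands in exactly one of the two classes. The continuous data are then harvested: $K_0:=\max_M K$ and a second intrinsic positive scalar $R$ fixing the curvature range (so that $K_1$, hence the residues of the character $1$-form, are determined from $K_0$, $R$ and the combinatorics), together with the metric sizes of the pieces, recorded as the values of $\mathcal L$ on the footballs and the lengths and twists along the gluing edges as the values of $\mathcal W$. Here one invokes the structure of footballs from Section~\ref{sec:football}: each is pinned down up to isometry by its cone angles and its pole curvatures, which are in turn tied to $K_0$, $R$ and the combinatorial data by global relations (Gauss--Bonnet and the finiteness of area and Calabi energy).

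Finally, uniqueness of the representation has two halves. That the data set is well defined follows from the canonicity of $G$ together with the naturality of the construction --- any isometry of $(M,\rho)$ preserves $K$, hence carries $G$ to $G$ and the whole decomposition to itself. That it is faithful, i.e. that $(\AA,\PP^+\sqcup\PP^-;K_0,R;\mathcal W,\mathcal L)$ determines $(M,\rho)$, is the reverse construction: reassemble each football from $K_0$, $R$ and its angles, then reglue along the edges using the prescribed $\mathcal W$. I expect essentially all of the difficulty to live in the first two steps: identifying exactly which HCMU surfaces must be discarded for $G$ to be canonical, performing the refinement at the saddle points so that the bipartite colouring is consistent there, and verifying that the bookkeeping has neither dropped a genuine continuous parameter nor retained one that is secretly constrained --- by a holonomy condition around a vertex, say, or by a compatibility among the values $\mathcal W(e)$ forced by the residue data. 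Translating \cite{CCW05} into the language of cell complexes is, by comparison, routine.
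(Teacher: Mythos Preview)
Your decomposition is not the one the paper uses, and the mismatch is exactly where your argument gets stuck. You take $G$ to be the cut locus of the football decomposition, so that vertices are \emph{all} critical points (extremal and saddle), edges are meridian segments between them, and $2$-cells are the football pieces. The paper instead passes to the strip decomposition into bigons and then records it \emph{dually}: a vertex of $\AA$ is an extremal point only, an edge of $\AA$ is a single arc drawn inside each bigon from its top (maximum) vertex to its bottom (minimum) vertex, and a complementary polygon of $\AA$ corresponds to a \emph{saddle point}. With that choice the bipartition $\PP^+\sqcup\PP^-$ is simply maxima versus minima, and every edge automatically joins the two colours because every bigon runs from a maximum to a minimum. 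Your ``delicate combinatorial point'' at the saddles is a symptom of having chosen the wrong skeleton: under your $G$, a saddle has incident edges along which $K$ both increases and decreases, so it cannot be consistently coloured by your rule, and no local ``refinement'' repairs this without effectively rediscovering the dual picture.

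This also explains why your reading of $\mathcal W$, $\mathcal L$ and of ``most'' drifts. In the paper $\mathcal W$ assigns to each edge the top angle of the corresponding bigon (divided by $2\pi$), and the balance equations at a vertex then recover the cone angle at that extremal point; $\mathcal L$ assigns to each complementary polygon the common meridian distance from its saddle to the adjacent maxima. The genericity condition (``most'') is not about footballs needing an auxiliary meridian but the precise requirement that no meridian segment joins two saddle points, so that each bigon boundary carries exactly one marked point and the half-bigons around a saddle of angle $2\pi\alpha$ assemble into a single $(2\alpha)$-gon. Your plan would go through once you replace $G$ by this dual mixed-angulation; as written, the colouring step and the identification of $\mathcal W,\mathcal L$ are genuine gaps.
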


A complete version will be state as Theorem \ref{thm:hcmu_data} in Section \ref{sec:glue_data}, where precise definitions of all involved concepts are given. 
This representation is efficient. Every data set above will also recover a unique HCMU surface. 
Therefore, the study of HCMU surfaces can be streamlined by focusing on the relevant data of the underlying topological surfaces, 
allowing the analysis to be conducted independently.
We point out that not every HCMU surface can be efficiently represented. But this is enough for some basic properties of the moduli space, like the dimension count.  

\medskip
Let us talk a little more about these data. Roughly speaking, each edge in the subdivision $\AA$ represents a football in the decomposition. The vertices are the extremal points, while the faces represent saddle points. The graph $\mathcal{G}:=(\AA, \PP^+ \sqcup \PP^-)$ itself can be regarded as a dessin d'enfant on topological surface. 
The constant $K_0$ records the curvature at maximum points, and $R$ is a parameter essentially equivalent to the curvature at minimum points. We chose this rather than the curvature for convenience during the application of our representation. 
$\mathcal{W}$ records one of the cone angles in each football represented by edges of $\AA$. $\mathcal{L}$ records the position of saddles points when viewed from a football.

It is already recognized in \cite{CCW05} that ``the construction of an HCMU metric from football is a combination problem''. The first two data $(\AA, \PP^{+} \PP^{-})$ precisely describe the combinatorial structure involved. The remaining 4 data encode geometric parameters of the surface.

\medskip
The idea behind can be traced back to the study of \qdf s, initiated by Strebel \cite{Strebel} among many others. The subdivision used here is similar to the WKB triangulation used by Gaiotto-Moore-Neitzke \cite{GMN13adv} and Bridgeland-Smith \cite{BrSm15}. We will adopt a generalized version by Barbieri-M\"{o}ller-Qiu-So \cite{BMQS24} in Section \ref{sec:glue_data}. 
Said in a different way, the data $\AA$ and $\mathbb{P}^{+} \sqcup \mathbb{P}^{-}$ are topological skeleton of the character 1-form. Hence it is possible to study generic HCMU surfaces without referring the underlying Riemann surface.

\bigskip
\subsection{Moduli spaces and main results}\label{ssec:M_space}
Usually a moduli space is a space collecting all geometric objects of the same type. For HCMU surfaces, we are mostly interested in the moduli space with prescribed genus and assignment of cone angles, varying complex structure. The main results of this work are some primary global properties of such moduli space. 

Let $ \SYM{\vec{\alpha}} := (\alpha_1, \cdots, \alpha_n) \in \left( \mathbb{R}_{\geq 0} \setminus \{1\} \right) ^n$ be an \DEF{angle vector} to be prescribed. We are interested in finding all HCMU surfaces with given genus and conical singularities of cone angle $2\pi\alpha_1, \cdots, 2\pi\alpha_n$. 
Here $\vec{\alpha} \neq \vec{0}$ but we allow some of $\alpha_i$'s be zero, corresponding to cusp singularities. 

\medskip
\noindent\textbf{Convention.} For an angle vector $\vec{\alpha}$ as above, it is always assumed that the first $k$ components $\alpha_1, \cdots, \alpha_k$ are the only integers, and that the last $q$ components $\alpha_{n-q+1}, \cdots, \alpha_n$ are the only zeros (if there is any).

\newcommand{\Mhcmun}{\mathcal{M}hcmu_{g,n}(\vec{\alpha})}
\newcommand{\MhcmunT}{\mathcal{M}hcmu_{g,n}(\vec{\alpha};\vec{T})}
\newcommand{\Mhcmu}[2]{\mathcal{M}hcmu_{#1}(#2)}
\begin{definition}\label{defn:M_hcmu} 
    The (geometric) \DEF{moduli space} \SYM{$\Mhcmun$} is the set of isometry classes of HCMU surfaces of genus $g$ with $n$ conical or cusp singularities, whose angle vector is prescribed to be $\vec{\alpha}$. Here a cusp singularity is regarded as a cone point of zero angle. 
\end{definition}

Not all vectors are achievable by some HCMU surfaces. There are constraints on the angle vector, partially discussed in \cite{CCW05, Lzy07}. 
Our first result recovers the necessary and sufficient conditions to the existence of HCMU surfaces, with a uniform proof, friendly for visualization. 

\begin{theorem}[Angle constraints]\label{thm:angle_cnstr_hcmu}
    Given the genus $g\geq0$ and an angle vector $\vec{\alpha} \in \left( \mathbb{R}_{\geq 0} \setminus \{1\} \right) ^n$ as above, define $m_0, a_0$ as (the summation is zero when $k=0$)  
    \begin{align}
        m_0 &:= \sum_{i=1}^{k} \alpha_i - (2g-2+n) \ , \label{eq:m0} \\
        a_0 &:= \sum_{i=1}^{k} (\alpha_i-1) - (2g-2) \geq m_0 \ . \label{eq:a0}
    \end{align}
    Then $\Mhcmun$ is non-empty if and only if the following holds.
    \begin{enumerate}[(A). ]
        \item When none of $\alpha_i$ is zero, 
        \begin{enumerate}[(\Alph{enumi}.1). ]
            \item $a_0 \geq 3,\ m_0 \geq 0$, or
            \item $a_0 = 2,\ m_0 =1$, or
            \item $a_0 = 2,\ m_0 =0$ and $\alpha_{n-1} \neq \alpha_{n}$ ($k = n-2$ in this case). 
        \end{enumerate}
        \item When there are $q>0$ zeros among $\alpha_i$'s, 
        \begin{enumerate}[\quad \quad \quad]
            \item $a_0 \geq q+1,\ m_0 \geq 0$ .
        \end{enumerate} 
    \end{enumerate}
\end{theorem}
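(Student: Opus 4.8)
The plan is to reduce the existence question to a counting problem on the bipartite graph $\mathcal{G}=(\AA,\PP^+\sqcup\PP^-)$ that the football decomposition (Section~\ref{sec:football}) and the data representation (Theorem~\ref{thm:main_data_rep}, in the precise form of Theorem~\ref{thm:hcmu_data}) attach to an HCMU surface. Recall the dictionary on the closed genus-$g$ surface: vertices of $\mathcal{G}$ are the extremal points, coloured by $\PP^\pm$ according to whether $K=K_0$ or $K=K_1$ there; edges are footballs, each joining a $\PP^+$-vertex to a $\PP^-$-vertex; and a face with $2k$ sides carries a saddle point of cone angle $2k\pi$ when $k\geq2$ (Proposition~\ref{prop:curvature}(4)), and carries no singularity when $k=1$ (a bigon face). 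Two elementary relations drive everything: Euler's formula $V-E+F=2-2g$ for $\mathcal{G}$, and the side-count $\sum_{\text{faces}}(\#\text{sides})=2E$.

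\emph{Necessity.} Given an HCMU surface realizing $\vec\alpha$, Proposition~\ref{prop:curvature}(3)--(4) forces every non-integer $p_i$ to be an extremal vertex, every zero-angle $p_i$ to be an extremal vertex in $\PP^-$, and every saddle face to be some $p_i$ with $\alpha_i\in\ZZ_{\geq2}$; after a harmless adjustment of $\AA$ one may take each such integer $p_i$ to be a saddle face unless it is explicitly an extremal vertex. Substituting the dictionary into the two relations (a saddle of angle $2k\pi$ contributes $k$ to $\tfrac12\sum(\#\text{sides})=E$) I expect to land on the identities
\[
a_0 \;=\; V+\!\!\sum_{\substack{p_i\text{ extremal}\\ \alpha_i\in\ZZ}}\!\!(\alpha_i-1),
\qquad
m_0 \;=\; (\text{number of smooth extremal points})+\!\!\sum_{\substack{p_i\text{ extremal}\\ \alpha_i\in\ZZ}}\!\!\alpha_i ,
\]
with $V$ the total number of extremal points. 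Since $K$ is non-constant, $V\geq2$, hence $m_0\geq0$ always; if no $\alpha_i$ vanishes then every integer extremal $\alpha_i$ is $\geq2$, so $a_0\geq V\geq2$, and $a_0=2$ forces $V=2$ with no integer extremal point, so that $m_0$ counts the at most two smooth extremal points and lies in $\{0,1,2\}$. This already delivers $a_0\geq3,m_0\geq0$ (case A.1) unless $a_0=2$; in the remaining range one must still exclude $m_0=2$ and show that $m_0=0$ (equivalently $k=n-2$) demands $\alpha_{n-1}\neq\alpha_n$, and both refinements come not from combinatorics but from the gluing/compatibility equations of Theorem~\ref{thm:hcmu_data}: with $V=2$ and the two extremal points carrying too symmetric a profile the system degenerates to a CSC metric, the only escape requiring a non-integer extremal point, and two of them to have distinct angles when both extremal points are non-integer marked. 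Cusps are treated in parallel: each zero-angle point now charges $-1$ to the extremal sum but forces additional incident footballs and vertices, and tracking this cost, together with Proposition~\ref{prop:curvature}(3) pinning all cusps at the common minimum, upgrades the bound to $a_0\geq q+1$.

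\emph{Sufficiency.} Conversely, from $\vec\alpha$ obeying (A) or (B) I build a data set and invoke the converse half of Theorem~\ref{thm:hcmu_data}. Take the integer points with $\alpha_i\geq2$ to be saddle faces of $2\alpha_i$ sides, the remaining $p_i$ to be vertices (zero-angle ones in $\PP^-$), adjoin $m_0$ auxiliary smooth extremal vertices, and set the number of footballs to $\sum_{i\leq k}\alpha_i$, inserting harmless bigon faces to absorb any slack; the inequalities $m_0\geq0$ and $a_0\geq3$ (resp.\ $a_0\geq q+1$) are precisely what let such a connected bipartite cellular map exist on a closed genus-$g$ surface. With the combinatorics fixed, pick $K_0>0$ freely and solve the compatibility equations of Theorem~\ref{thm:hcmu_data} for $R>0$ and the positive edge/face functions $\mathcal{W},\mathcal{L}$: in the generic range (A.1) these are open conditions with an evident solution, whereas for (A.2), (A.3) and (B) one must exhibit the solution by hand and check that the curvature inequality $K_0>K_1\geq-K_0/2$, encoded by $R>0$, stays compatible --- the hypothesis $\alpha_{n-1}\neq\alpha_n$ being exactly what lets the lone-football system close up, and the cusp case being where $K_1<0$ is forced.

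\emph{Main obstacle.} The genuinely delicate point is that necessity is not purely combinatorial: excluding $a_0=2,m_0=2$, deriving $\alpha_{n-1}\neq\alpha_n$ in (A.3), and producing the sharper bound $a_0\geq q+1$ for cusps all hinge on analysing when the continuous parameters of Theorem~\ref{thm:hcmu_data} can actually be solved --- and dually, the sufficiency constructions in these rigid regimes must exhibit such solutions explicitly while keeping the curvature constraint alive, which I expect to be the most laborious step. A further technical chore is to realize the prescribed bipartite map with its prescribed face degrees on a closed genus-$g$ surface, and to confirm that the finitely many HCMU surfaces lying outside the data representation of Theorem~\ref{thm:main_data_rep} still respect the constraints, most cleanly by exhibiting them as degenerations of representable ones.
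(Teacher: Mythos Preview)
Your approach is essentially the paper's: necessity via Euler/Poincar\'e--Hopf on the bipartite graph $\mathcal{G}$ together with the ratio constraint $R<1$ for the $a_0=2$ edge cases, and sufficiency by an explicit construction of a bipartite cellular map admitting a positive weight function. The paper organizes this through an intermediate \emph{refined} statement (Theorem~\ref{thm:angle_cnstr_hcmu_new}) in which a subset $Z$ of the integer indices is prescribed to be the saddle set; your formulas for $a_0,m_0$ with the correction term $\sum_{\text{integer extremal}}(\alpha_i{-}1)$ are exactly the monotonicity step that passes from a general $Z$ back to $Z=\{1,\dots,k\}$.

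Three small corrections. First, there is no ``slack'' to absorb with bigon faces: once you take $V=a_0$, $E=\sum_{i\le k}\alpha_i$ and one $2\alpha_i$-gon for each $i\le k$, both Euler's formula and the side count $\sum 2\alpha_i=2E$ hold on the nose, so the map has exactly $k$ faces. (Bigon faces would in any case violate the hypothesis $\mathbf{w}\in(2\ZZ_{>0})^{j_0}$ of Theorem~\ref{thm:hcmu_data}.) The paper realizes this map concretely by starting from a single $(4g{+}2)$-gon with one black and one white vertex and then subdividing (Lemmas~\ref{lem:canonical_polygon} and~\ref{lem:poly_poly}); you will need some equally explicit construction, since ``the inequalities are precisely what let such a map exist'' is not a proof. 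Second, the cusp bound is simpler than you suggest: by Proposition~\ref{prop:curvature}(3) the cusps are \emph{exactly} the minimum points, so there are $q$ white vertices and at least one black vertex, whence $a_0\ge V\ge q+1$ immediately---no tracking of ``charges'' is needed. Third, the worry about surfaces outside the data representation is moot here: the necessity argument uses only Poincar\'e--Hopf for $\vec H$ and the ratio formula, both valid for every HCMU surface regardless of genericity, and the sufficiency construction already outputs a generic surface.
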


\begin{remark}\label{rmk:angle_cnstr_hcmu}
    $a_0 \geq 2$ is always required.

    \begin{enumerate}[(i). ]
        
        \item If $k=0$, then $a_0 = 2-2g \geq 2$ implies $g=0$ and $a_0=2$. Then $m_0 = -(2g-2+n) = 2-n \geq 0$ imples $n=1,2$. All surfaces in such cases are HCMU footballs. 

        \item If $g=0, k>0$, since $\alpha_i \geq 2$ for $1 \leq i \leq k$, we have $a_0 = \sum_{i=1}^{k} (\alpha_i-1) + 2 \geq 3$. Hence the only constraint is $m_0 \geq 0$. This coincides the results of \cite{CCW05}. 
        
        \item When $g>0, k>0$, the two inequalities are independent of each other. 
        For example, let $g\geq3$ and $\vec\alpha=(g,g)$. Then $m_0 = 0$ while $a_0 = 0$.
        On the other hand, let $\vec{\beta} = (g+1, g+2, \lambda, \lambda, \lambda, \lambda, \lambda, \lambda)$ for some $\lambda \notin \ZZ$. Then $a_0 = 3$ while $m_0 = -1$. 

        \item In the particular case where $g=0$ and $n=1$, $\vec{\alpha}$ is allowed to be $\vec{0}$. These are HCMU footballs with one cusp and a smooth maximum point.
    \end{enumerate}
\end{remark}

\medskip
Following the same idea for the existence theorem, our next result concerns HCMU surfaces with a single conical singularity of angle $2\pi\alpha$, where $\alpha\in\ZZ_{>1}$. 
If the conical singularity is an extremal point, then it is known to be an HCMU football. Hence we shall require it to be a saddle point. 
In such cases, there are extra smooth extremal points on the surface. We give an existence result based on the number of maximum and minimum points. Genus zero case is also obtained in \cite{MyjWzq24}. 

\begin{theorem}\label{thm:S_alpha}
    Given the genus $g\geq0$ and a positive integer $\alpha \geq 2g+2$, there exists a genus $g$ HCMU surface with a single conical singularity of angle $2\pi\alpha$ which is also a saddle point, together with $p$ smooth maximum points and $q$ smooth minimum points, if and only if the following holds: 
    \begin{enumerate}[(1). ] 
        \item $p>q>0$ and $p+q=\alpha-2g+1$;  
        \item when $g=0$, then 
        \begin{enumerate}[(A). ] 
            \item $q=1$, or 
            \item $q>1$ and $q \nmid p$.
        \end{enumerate}
        \item  when $g>0$, there is no extra condition.
    \end{enumerate}
\end{theorem}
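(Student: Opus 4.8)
The plan is to run the statement through the data representation of Section~\ref{sec:glue_data} (Theorem~\ref{thm:hcmu_data}) together with the football analysis of Section~\ref{sec:football}, turning it into a question about bipartite graphs on $\Sigma_g$ carrying positive weights. An HCMU surface of the type in the statement corresponds to a data set $(\AA,\PP^+\sqcup\PP^-;K_0,R;\mathcal W,\mathcal L)$ in which $\mathcal G=(\AA,\PP^+\sqcup\PP^-)$ is a connected bipartite graph embedded in $\Sigma_g$ with $|\PP^+|=p$ (the smooth maxima), $|\PP^-|=q$ (the smooth minima) and a single $2$-cell, namely the unique conical point, which is the saddle. Euler's formula forces the number of edges to be $E=p+q+2g-1$; since, by the construction in Section~\ref{sec:glue_data}, the $2$-cell corresponding to a saddle of cone angle $2\pi\alpha$ has degree $2\alpha$, and there is only one $2$-cell, $2\alpha=2E$, i.e.\ $\alpha=p+q+2g-1$. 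This is exactly the equation in part~(1), and the hypothesis $\alpha\ge 2g+2$ is just the statement $p+q\ge3$, the room needed for $p>q\ge1$. What then remains is the necessity of $p>q$, the dichotomy for $g=0$, and the absence of a further constraint for $g>0$.

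For the continuous data, the football description (Section~\ref{sec:football}) attaches to each edge $e$ one positive parameter $a_e$, a cone angle of the corresponding football at its maximum pole; the football's angle at its minimum pole is strictly smaller than $a_e$, and the gluing conditions become the linear vertex equations $\sum_{e\ni v}a_e=2\pi$ for $v\in\PP^+$ and $\sum_{e\ni u}c\,a_e=2\pi$ for $u\in\PP^-$, where $c\in(0,1)$ depends only on $K_0,R$ and attains every value in $(0,1)$; the remaining data $\mathcal L$ can then be chosen so that the footballs glue consistently along the separatrices. Summing the two families gives $2\pi p=\sum_e a_e=(2\pi/c)\,q$, whence $c=q/p$; as $c\in(0,1)$ this proves $p>q$ is necessary. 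Conversely, fixing $c=q/p$ makes the vertex system consistent, and the surface exists iff there is a graph $\mathcal G$ as above admitting a strictly positive solution of it.

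For $g=0$, $\mathcal G$ must be a bipartite tree $T$ with parts of sizes $p,q$, on which the vertex system has a unique solution; cutting $T$ along an edge $e$ into the component $C$ containing its white endpoint and the complement, one computes $a_e>0$ iff $q\,|W\cap C|>p\,|B\cap C|$, i.e.\ iff every piece hanging below a black vertex is strictly blacker than $T$ itself. If $q=1$ the star works ($a_e\equiv2\pi$). If $q\ge2$ and $q\mid p$, write $p=mq$ and weight whites by $+1$, blacks by $-m$ (total weight $0$): positivity says every white-side piece has positive weight, so at any black vertex of degree $d$ the $d$ hanging subtrees carry positive integer weights summing to $m$, hence $d\le m$, and then $E=p+q-1=\sum_{b}\deg(b)\le mq=p$ contradicts $q\ge2$; so no such surface exists. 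If $q\ge2$ and $q\nmid p$, one exhibits a suitable tree: for $\gcd(p,q)=1$ a caterpillar --- an alternating spine through all $q$ black vertices, with the remaining white leaves distributed so that every partial count lands in the non-degenerate window prescribed by the positivity inequalities --- does it, while for $\gcd(p,q)>1$ one uses a branched "spider'' avoiding the balanced cuts that caused the obstruction (e.g.\ for $(p,q)=(6,4)$, a degree-$3$ black hub joined through three whites to three degree-$2$ blacks each carrying a white leaf). Verifying $a_e>0$ on such a tree is a direct check.

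For $g>0$ one constructs $\mathcal G$ directly: the edge budget $E=p+q+2g-1$ is large enough that every black vertex can be given degree at least $\lfloor p/q\rfloor+1$ (impossible for a tree when $q\mid p$, but possible here because $2g\ge1$), and the extra $2g$ edges, inserted as $g$ handles, give the vertex system a $2g$-dimensional solution space without altering its solvability, so the new parameters can be pushed to small positive values and the system solved with all coordinates positive. Hence a surface exists for every admissible $p>q\ge1$, with no extra condition. The main obstacles I expect are: (a) making the reduction rigorous --- in particular controlling the $\mathcal L$-data and checking that the single $2$-cell really closes up into a cone point of angle exactly $2\pi\alpha$ with matching separatrices --- which rests on the football decomposition of Section~\ref{sec:football}; and (b) the explicit sufficiency constructions, namely the branched trees for $\gcd(p,q)>1$ and the unicellular bipartite maps on $\Sigma_g$ for $g>0$; the necessity half is, by contrast, the clean degree count above.
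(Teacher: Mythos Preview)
Your reduction is exactly the paper's: the data-set representation (Theorem~\ref{thm:hcmu_data}) turns the problem into finding a connected bipartite graph on $\Sigma_g$ with $p$ maxima, $q$ minima, a single complementary $2\alpha$-gon, and a positive weight function solving the balance equations with ratio $R=q/p$; condition~(1) and $p>q$ then drop out as you say (cf.\ Lemma~\ref{lem:S_alpha.1}). Your worry~(a) is unfounded: once the graph and weights exist, $K_0$ and $\mathcal L$ may be chosen freely, and the single face automatically closes to a saddle of angle $2\pi\alpha$ --- this is precisely the content of Theorem~\ref{thm:hcmu_data}.

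Your necessity argument for $g=0$, $q\mid p$ is \emph{different} from the paper's and rather clean. The paper (Lemma~\ref{lem:unique_BE_tree} and Corollary~\ref{cor:com_divisor}) strips leaves inductively to show every weight is divisible by $\gcd(p,q)$, hence constantly $q$ when $q\mid p$, forcing every maximum to be a leaf; you instead bound the degree of each minimum vertex by $p/q$ via the subtree potential and finish with the handshake identity $E=\sum_{\text{min}}\deg$. Both are short; yours avoids the induction. (Your colour convention is opposite to the paper's, but this is harmless.)

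The genuine gaps are the sufficiency constructions, which you yourself flag as obstacle~(b):

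\emph{Genus zero, $\gcd(p,q)>1$, $q\nmid p$.} Your $(6,4)$ spider is correct, but one example is not a construction. The paper's Proposition~\ref{prop:non_coprime_case} handles the general case by a splicing trick: take the coprime caterpillar for $(p/\lambda,q/\lambda)$, locate a degree-$2$ vertex (which always exists when $q/\lambda>1$), split there, and chain $\lambda$ copies together; the inherited weights scaled by $\lambda$ solve the balance equations.

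\emph{Positive genus.} Your dimension heuristic does not establish existence of a \emph{positive} solution. Knowing the balance system has a $2g$-dimensional kernel (Proposition~\ref{prop:rank_hcmu}) says nothing about whether the affine solution set meets the open positive orthant; if the underlying tree solution has non-positive entries (as it must when $q\mid p$), adding handle edges with small weights only perturbs the tree weights linearly and need not repair the signs. You also need the resulting map to be \emph{unicellular} on $\Sigma_g$, a nontrivial constraint on where the extra edges go. The paper treats $q\nmid p$ by grafting the planar tree onto the canonical $(4g)$-mixed angulation $\AA_g^o$ of Lemma~\ref{lem:canonical_polygon}, and for $q\mid p$ gives a bespoke construction: $q$ star-copies linked by $q$ closing arcs and $2g-1$ further arcs threaded through the sides of the $(4g+2)$-gon, with the weights written down explicitly. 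You will need something of comparable specificity here.
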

The inequality $\alpha \geq 2g+2$ comes from the angle constraint in Theorem \ref{thm:angle_cnstr_hcmu}. The condition $p>q$ is basically required by the geometry of HCMU football. The remained ones are the genuine extra constraints. 
Roughly speaking, with the data set representation, we provide a graph theoretical explanation of why higher genus cases have less constraint and how the arithmetic condition emerges.

\medskip
Our final result is the dimension count of the moduli space with prescribed cone angle. 
These studies of moduli space provide a primary answers to a problem suggested by Q. Chen, X. X. Chen and Y. Y. Wu at the end of \cite{CCW05}.

\begin{theorem}[Dimension of HCMU moduli space]\label{thm:dim_hcmu}
    Let $\alpha$ be an angle vector, following the convention and conditions of Theorem \ref{thm:angle_cnstr_hcmu}. Recall that $k$ is the number of positive integers among $\alpha_i$'s.
    
    Then the real dimension $\textbf{dim}$ of $\Mhcmun$ is given by the follows.
    \begin{enumerate}[(A). ]
        \item If $k>0$ and there is no cusp, then $\textbf{dim}=2g+2k$. 
        \item If $k>0$ and there are $q>0$ cusps, then $\textbf{dim}=2g+2k+q-1$.
        \item If $k=0$, then $g=0,\ n=1$ or $2$, and $\textbf{dim}=1$. 
    \end{enumerate}
\end{theorem}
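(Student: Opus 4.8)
The plan is to read off the dimension directly from the data set representation of Theorem~\ref{thm:main_data_rep} (made precise in Theorem~\ref{thm:hcmu_data}). Since the discrete data $(\AA,\PP^{+}\sqcup\PP^{-})$ carries no continuous modulus, one fixes a realizable combinatorial type $\mathcal G=(\AA,\PP^{+}\sqcup\PP^{-})$ compatible with $(g,\vec\alpha)$, computes the dimension $d_{\mathcal G}$ of the corresponding stratum of $\Mhcmun$, and takes $\textbf{dim}\,\Mhcmun=\max_{\mathcal G}d_{\mathcal G}$. Write $V^{\pm}=|\PP^{\pm}|$, $V=V^{+}+V^{-}$, and let $E$ and $F$ be the numbers of edges and of faces of $\AA$, so that $V-E+F=2-2g$.

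Consider case~(A) first: $k>0$ and no cusp. For a fixed $\mathcal G$ the continuous parameters are the two global constants $K_0,R$ (equivalently $K_0$ and the minimal curvature $K_1$), the widths $\mathcal W(e)$ over the $E$ edges, and the saddle positions $\mathcal L(f)$ over the $F$ faces, i.e.\ $2+E+F$ real numbers; the integer cone angle prescribed at a face is already fixed by the combinatorics of $\mathcal G$, not by a continuous relation. The only continuous relations are the angle conditions at the $V$ vertices: at each vertex $v$ the pole angles of the footballs glued at $v$ sum to the prescribed angle there (equal to $2\pi$ at a smooth extremal point). The positions $\mathcal L(f)$ do not enter these equations, so they remain $F$ free parameters. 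Since each $\mathcal W(e)$ records one pole angle of the football $e$, and since by the structural identity $\rmd K=-\frac13(K-K_0)(K-K_1)(K+K_0+K_1)(\omega+\overline{\omega})$ recalled above the two pole angles of a football are proportional with ratio a strictly increasing function of the scale-invariant quantity $K_1/K_0$ (explicitly $\frac{K_0+2K_1}{2K_0+K_1}$ for the minimum pole over the maximum pole), vanishing as $K_1\to-\frac{K_0}{2}$, a direct Jacobian computation --- using this monotonicity, the connectedness of $\mathcal G$, and the fact that each edge of $\mathcal G$ has exactly one endpoint in each of $\PP^{\pm}$ --- shows the $V$ angle relations are independent. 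Hence the stratum is a manifold of dimension $d_{\mathcal G}=(2+E+F)-V=2F+2g$ by Euler's formula. Finally every saddle point is a conical singularity of integer cone angle at least $4\pi$ (Proposition~\ref{prop:curvature}), hence one of the $k$ points with integral $\alpha_i$, so $F\le k$; conversely the gluing constructions underlying Theorem~\ref{thm:angle_cnstr_hcmu} provide a realizable $\mathcal G$ realizing all these $k$ points as saddles, one per face, so $\max_{\mathcal G}F=k$ and $\textbf{dim}=2g+2k$. The lower bound may equivalently be seen by exhibiting $2g+2k$ manifestly independent deformations: rescaling $K_0$, varying $K_1$, stretching individual footballs, and sliding saddle points.

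Case~(B), $k>0$ with $q>0$ cusps, differs only in bookkeeping. A cusp is a minimal point, so by Proposition~\ref{prop:curvature} the minimal curvature is forced to $K_1=-\frac{K_0}{2}$; as all local minima share the same curvature this pins $K_1/K_0$ globally and removes one parameter. On the other hand the prescribed angle at a cusp vertex is $0$, which a cuspidal football automatically has at its cusp pole, so each of the $q$ cusp vertices imposes no relation; the remaining $V-q$ relations stay independent by the same argument (now using $q\ge1$). Hence $d_{\mathcal G}=(1+E+F)-(V-q)=2F+2g+q-1$, and again $\max F=k$, so $\textbf{dim}=2g+2k+q-1$. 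For case~(C), $k=0$, the efficient representation is unavailable since there are no saddle points, and by Remark~\ref{rmk:angle_cnstr_hcmu}(i),(iv) one is forced to $g=0$ and $n\in\{1,2\}$, i.e.\ a single HCMU football; such a surface is determined by $K_0$, $K_1$ and the width of its unique football subject to the prescribed pole angles (a smooth pole contributing the normalization $2\pi$, a cusp pole contributing $K_1=-\frac{K_0}{2}$), and since the two pole angles have a fixed ratio depending only on $K_1/K_0$ these equations cut out a curve, so $\textbf{dim}=1$.

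The step I expect to be the main obstacle is the independence of the angle system --- full rank $V$ in case~(A), $V-q$ in case~(B) --- uniformly over the stratum, together with the verification that the good locus (where $K_0>0$, $K_1/K_0\in(-\frac{1}{2},1)$, all $\mathcal W(e)>0$, all $\mathcal L(f)$ interior to their footballs, and the combinatorial constraints of $\mathcal G$ hold) is nonempty and open. This rests on the explicit description of the football angle functions, in particular the strict monotonicity of $\frac{K_0+2K_1}{2K_0+K_1}$, and on a connectivity argument for the bipartite graph $\mathcal G$, with degenerate configurations excluded by hand. A secondary, purely combinatorial, point is that the maximal number of faces over realizable types compatible with $(g,\vec\alpha)$ equals $k$: the inequality $F\le k$ is immediate, but attaining it draws on the constructions behind Theorems~\ref{thm:angle_cnstr_hcmu} and~\ref{thm:S_alpha}.
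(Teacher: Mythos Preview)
Your plan is essentially correct and lands on the same formula, but it diverges from the paper in one structural point and skips one ingredient the paper supplies explicitly.

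\medskip
\textbf{Treatment of the ratio $R$.} You carry $R$ (equivalently $K_1/K_0$) as a continuous unknown and then argue the $V$ balance relations are independent as functions of $(R,\mathcal W)$. That Jacobian claim is correct: bipartiteness and connectedness of $\mathcal G$ force any null combination to have constant coefficients on $\PP^{+}$ and on $\PP^{-}$, and the $R$-column (whose white entries are $\sum_{e\in E(y_j)}\mathcal W(e)>0$) kills that constant. The paper organizes the same count differently. It first observes (Proposition~\ref{prop:ratio_value}, Corollary~\ref{cor:choice_K0K1}) that once $\mathcal G$ is fixed the equations force $R=\dfrac{A^{-}+m^{-}}{A^{+}+m^{+}}$, so $R$ is not a continuous parameter at all but takes finitely many values indexed by the integer $m^{+}$. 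With $R$ pinned, the balance system in $\mathcal W$ alone has rank exactly $V-1$ (Proposition~\ref{prop:rank_hcmu}): the sum of the black rows equals the sum of the white rows. Your ``rank $V$ in $E+1$ variables'' and the paper's ``rank $V-1$ in $E$ variables'' give the same $d_{\mathcal G}=2F+2g$, but the paper's route isolates the structural fact that the set of ratios is finite. The ``monotonicity of $\tfrac{K_0+2K_1}{2K_0+K_1}$'' you invoke is only a change-of-variable device here; the real work is the bipartite/connected argument.

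\medskip
\textbf{Reaching $F=k$.} You get $\max_{\mathcal G}F=k$ by citing the existence constructions. The paper instead introduces the split deformation (Definition~\ref{defn:split_hcmu}, Proposition~\ref{prop:low_dim_bd_hcmu}) and shows that any stratum with $Z\subsetneq\{1,\dots,k\}$ sits as a lower-dimensional boundary of one with $Z=\{1,\dots,k\}$. Either suffices for the bare dimension count; the paper's version also explains how the refined pieces fit together.

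\medskip
\textbf{A missing step.} The data set of Theorem~\ref{thm:hcmu_data} parametrizes only \emph{generic} surfaces (Definition~\ref{defn:generic_hcmu}); footballs and surfaces with a meridian segment joining two saddles are excluded. Your computation of $\max_{\mathcal G}d_{\mathcal G}$ therefore bounds the dimension of the generic locus, not of $\Mhcmun$. The paper closes this gap with the twist deformation (Definition~\ref{def:twist}, Proposition~\ref{prop:generic_hcmu}): every non-generic surface is a limit of generic ones along a one-parameter family, so the non-generic locus has strictly smaller dimension. You should add this step; ``the good locus is nonempty and open'' in your obstacle list is about openness \emph{inside} a fixed stratum, not about why non-generic surfaces contribute nothing extra.
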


``Dimension'' here indicates the independent geometric parameters that determine the isometric classes of HCMU surfaces. The strategy is similar to the one for dihedral cone spherical surfaces in \cite{LuXu24}. 
We are not seriously talking about the topology of the moduli space. However, since every surface is built from smaller fundamental shapes, and we can completely parameterize each shape, one can think the topology to be induced by Gromov-Hausdorff convergence.

\bigskip
\subsection{Organization of this paper}
Section \ref{sec:football}, \ref{sec:glue_data} describe and develop the main concepts and tools used in this work. 
Each of Section \ref{sec:exist_thm}, \ref{sec:S_alpha}, \ref{sec:dim_hcmu} is devoted to one of the main theorems. Readers may turn to the beginning of each section for the idea and summary of the proof. 

\medskip
In Section \ref{sec:football}, we focus on how to build an HCMU surface from small bricks. We start by a review on HCMU football and its geometry, together with the football decomposition. Then we turn to our modified version of strip decomposition for further usage. 

In Section \ref{sec:glue_data}, we fully present our data set representation of HCMU surface. The key concept is mixed-angulation of surface, which is a subdivision, natural generalizing the triangulation and encoding the information of decomposition. All the other data are based on this mixed-angulation. 

In Section \ref{sec:exist_thm}, the refined moduli space is introduced, before the proof of existence theorem. Then the necessity and sufficiency part of Theorem \ref{thm:angle_cnstr_hcmu} is given separately. 

In section \ref{sec:S_alpha}, the proof of Theorem \ref{thm:S_alpha} is divided into genus zero case and positive genus case. The necessity part for genus zero case is given first. Then the sufficiency part of two cases are shown by construction respectively.

In the last section, we finish the dimension count. It is divided into four tasks. See the beginning of Section \ref{sec:dim_hcmu} for the outline of this part.

\bigskip
\section{The geometry of HCMU footballs and surfaces} \label{sec:football}
We begin the study of HCMU surfaces with footballs. The first two subsections review the geometry of HCMU football and the football decomposition. 
After a re-parametrization, the character line element is introduced as a brick of both footballs and surfaces. Then we state our modified decomposition where footballs are replaced by bigons. 
This section end up with a concrete example.  

\bigskip
\subsection{HCMU football}\label{ssec:football}
Footballs are the very first examples of HCMU surfaces, yet they are actually the fundamental shapes to build arbitrary HCMU surfaces. 
This subsection reviews the definition, the parameterization and basic properties of HCMU footballs in \cite[\S2.2]{CCW05}. 

\begin{definition}\label{defn:hcmu_football} 
    An \DEF{HCMU football} is a rotationally symmetric genus-zero HCMU surface $(M,\rho)$ with two conical singularities. 
    In fact, the singularities are the two extremal points of $K$.
\end{definition}

Let $p, q$ be the two conical singularities of angle $\alpha, \beta$ and $\alpha>\beta>0$. Let the induced distance $\mathrm{dist}_{\rho}(p,q)$ between the two points be $l >0$. 
The sphere is parameterized as $(u,\theta) \in [0,l]\times[0,2\pi]$, where $(u,0), (u,2\pi)$ are identified for any $u\in[0,l]$. The line $\{u=0\}$ corresponds to the cone point $p$, and $\{u=l\}$ corresponds to $q$. 

In this coordinate, the metric is expressed as
\begin{equation}
    \rho = \rmd u^2 + f^2(u) \rmd \theta^2 \ .
\end{equation} 
All the boundary conditions of $f(u)$ are given by
\begin{equation}\label{eq:hcmu_ftb}
    \renewcommand\arraystretch{1.2}
    \left\{ \begin{tabular}{l}
         $f(0)=f(l)=0$ , \\
         $f'(0)=\alpha, \ f'(l)=-\beta$ , \\
         $f(u)>0, \ \forall\ 0<u<l$ .
    \end{tabular} \right.
\end{equation}
The curvature function $K$, depending only on $u$-parameter, is shown to be decreasing monotonically from $p$ to $q$. If \SYM{$K_0:=K(0)$, $K_1:=K(l)$}, then $K$ satisfies the differential equation 
\begin{equation}\label{eq:core_diff_eq}
    {K'}^2 = -\frac13 (K-K_0)(K-K_1)(K+K_0+K_1) \ . 
\end{equation}
Let $A(\rho)$ be the total area of this HCMU football. 
Then $f$ and $K$ satisfy the following equations: 
\begin{equation}\label{eq:K_f_equation}
    K = - f'' / f ,\quad  K' = c \cdot f, \textrm{ where } c=\frac{2\pi}{A(\rho)}(K_1-K_0) \ .
\end{equation}
The cone angles follow the relations 
\begin{equation}\label{eq:angle_curvature}
    \renewcommand\arraystretch{1.5}
    \bigg\{\begin{tabular}{l}
         $\alpha = \frac{A(\rho)}{12\pi} (2K_0+K_1)$ \\
         $\beta  = \frac{A(\rho)}{12\pi} (K_0+2K_1)$ \\
    \end{tabular}
    , \quad
    \bigg\{\begin{tabular}{l}
         $K_0 = \frac{4\pi}{A(\rho)} (2\alpha-\beta)$ \\
         $K_1 = \frac{4\pi}{A(\rho)} (2\beta-\alpha)$ \\
    \end{tabular} \ .
\end{equation}
Therefore it is required that 
\begin{equation}
    K_0>0, \quad -\frac{K_0}{2} < K_1 < K_0 \ .
\end{equation}

\begin{theorem}[Theorem 2.7, \cite{CCW05}]
    If the area $A(g)$ and angles $\alpha, \beta$ at both extremal points are given, there exists a unique rotationally symmetric HCMU metric in $S^2$, which is an HCMU football.       
\end{theorem}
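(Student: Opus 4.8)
The plan is to convert the entire statement into a single first-order ODE and to read off every remaining quantity from it. Given the data $A>0$ (the prescribed area) and $\alpha>\beta>0$, I would first \emph{define} $K_0$ and $K_1$ by the right-hand formulas in \eqref{eq:angle_curvature}, namely $K_0=\tfrac{4\pi}{A}(2\alpha-\beta)$ and $K_1=\tfrac{4\pi}{A}(2\beta-\alpha)$. An immediate check gives $K_0>0$ (since $2\alpha-\beta>\beta>0$), $K_1<K_0$ (equivalent to $\beta<\alpha$), $K_1>-K_0/2$ (equivalent to $\beta>0$), and $K_0\neq K_1$ (equivalent to $\alpha\neq\beta$). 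Consequently the cubic $P(K):=-\tfrac13(K-K_0)(K-K_1)(K+K_0+K_1)$ has three distinct real roots $-(K_0+K_1)<K_1<K_0$, is strictly positive on the open interval $(K_1,K_0)$, and has \emph{simple} zeros at the two endpoints $K_1$ and $K_0$.

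Next I would let $K$ be the decreasing solution of \eqref{eq:core_diff_eq}, i.e.\ of $K'=-\sqrt{P(K)}$, with $K(0)=K_0$. Separating variables, $u=\int_{K(u)}^{K_0}\rmd s/\sqrt{P(s)}$; because $P$ has only simple zeros at the endpoints, this improper integral converges at both ends, so it defines a smooth strictly decreasing function $K:[0,l]\to[K_1,K_0]$ with $l:=\int_{K_1}^{K_0}\rmd s/\sqrt{P(s)}<\infty$, $K(0)=K_0$, $K(l)=K_1$, and $K'(0)=K'(l)=0$; a local analysis near the endpoints yields $K_0-K(u)=O(u^2)$ and $K(u)-K_1=O((l-u)^2)$ and shows that $K$ is even about $u=0$ and about $u=l$. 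Differentiating \eqref{eq:core_diff_eq} gives $K''=\tfrac12 P'(K)$ and then $K'''=-KK'$. Now set $c:=\tfrac{2\pi}{A}(K_1-K_0)<0$, $f:=K'/c$, and $\rho:=\rmd u^2+f^2(u)\,\rmd\theta^2$ on $[0,l]\times[0,2\pi]$ with $(u,0)\sim(u,2\pi)$. Then $f\geq0$, $f>0$ on $(0,l)$, $f(0)=f(l)=0$; $f''=K'''/c=-Kf$, so $K=-f''/f$ is indeed the Gaussian curvature of $\rho$; and $f'(0)=K''(0)/c=\tfrac12 P'(K_0)/c=-\tfrac16(K_0-K_1)(2K_0+K_1)/c=\tfrac{A}{12\pi}(2K_0+K_1)=\alpha$ by \eqref{eq:angle_curvature}, and likewise $f'(l)=-\beta$; thus all of \eqref{eq:hcmu_ftb} and \eqref{eq:K_f_equation} hold. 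Since $f$ is odd about $u=0$ we have $f(u)=\alpha u+O(u^3)$, so near $u=0$ the metric $\rho$ is isometric up to higher order to the standard cone $|z|^{2(\alpha-1)}|\rmd z|^2$, i.e.\ $u=0$ is a conical point of angle $2\pi\alpha$ in the sense of \eqref{eq:defn_cone} (and likewise $u=l$ has angle $2\pi\beta$), while away from these two points $\rho$ is smooth. Integrating, $2\pi\int_0^l f\,\rmd u=\tfrac{2\pi}{c}(K_1-K_0)=A$, so the area is as prescribed. Finally $\rho$ is HCMU: it is non-CSC because $K$ is non-constant, and for a rotationally symmetric metric the relations \eqref{eq:K_f_equation}--\eqref{eq:core_diff_eq} force $K_{,zz}=0$ — concretely $\Delta_\rho K+K^2=2K''+K^2$ has vanishing $u$-derivative, and the stronger pointwise condition follows as in \cite[\S2.2]{CCW05}. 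This exhibits the desired football.

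For uniqueness I would argue that all of these choices were forced. If $(S^2,\rho)$ is any rotationally symmetric HCMU metric with area $A$ and cone angles $\alpha>\beta>0$ at its two extremal points, then by the normal form recalled above it equals $\rmd u^2+f^2\,\rmd\theta^2$ with $f$ subject to \eqref{eq:hcmu_ftb}, its curvature $K$ is monotone between the two extremal points and solves \eqref{eq:core_diff_eq} with $K_0=K(0)$ and $K_1=K(l)$, and these are pinned to the values computed above through \eqref{eq:angle_curvature}; moreover $f=K'/c$ with the same $c$. A non-constant monotone solution of the first-order equation \eqref{eq:core_diff_eq} with $K(0)=K_0$ is unique — the only other solution through that point is the constant $K\equiv K_0$, excluded by the non-CSC requirement — so $K$, $l$ and $f$ are determined, and hence $\rho$ is determined up to the rotation in $\theta$, which is an isometry.

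The single place that needs genuine care is the degeneracy of \eqref{eq:core_diff_eq} at its endpoints, where $\sqrt{P}$ is only H\"{o}lder continuous of exponent $\tfrac12$: the finiteness of $l$, the smoothness of $K$ at $u=0,l$, the precise boundary values $f'(0)=\alpha$ and $f'(l)=-\beta$, and the uniqueness of the non-constant branch all hinge on the zeros of $P$ at $K_0$ and $K_1$ being \emph{simple} — which is exactly the content of the inequalities $K_0>0$ and $-K_0/2<K_1<K_0$, i.e.\ of $\alpha>\beta>0$. Everything else is routine bookkeeping against \eqref{eq:hcmu_ftb}, \eqref{eq:K_f_equation} and \eqref{eq:angle_curvature}.
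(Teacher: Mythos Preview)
Your proof is correct and complete. The paper itself does not prove this statement: it is quoted verbatim as Theorem~2.7 of \cite{CCW05} and then simply used. So there is no ``paper's own proof'' to compare against; you have supplied the argument that the paper outsources.

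Your route is exactly the one implicit in the surrounding framework of \S\ref{ssec:football}: invert \eqref{eq:angle_curvature} to pin down $K_0,K_1$, integrate the separable first-order ODE \eqref{eq:core_diff_eq} using that the cubic $P$ has \emph{simple} zeros at $K_0$ and $K_1$ (this is the crux, and you isolate it correctly), recover $f=K'/c$ via \eqref{eq:K_f_equation}, and then verify \eqref{eq:hcmu_ftb}, the cone-angle asymptotics, the area, and the HCMU condition. Your computations check out, including the pleasant identity $K'''=-KK'$ (from $P''(K)=-2K$) that gives $f''=-Kf$, and the observation that $K'=cf$ forces $\Delta_\rho K=2K''$ so that $\Delta_\rho K+K^2$ is constant. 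The uniqueness argument---that the only freedom in \eqref{eq:core_diff_eq} at the singular initial value $K(0)=K_0$ is between the constant branch and the non-constant one, and non-CSC excludes the former---is the right way to handle the $\tfrac12$-H\"older degeneracy you flag at the end.

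One cosmetic remark: your appeal to \cite[\S2.2]{CCW05} for the stronger pointwise condition $K_{,zz}=0$ is appropriate here, since that is precisely where the equivalence between the rotationally symmetric ODE system \eqref{eq:K_f_equation}--\eqref{eq:core_diff_eq} and the HCMU equation \eqref{eq:HCMU} is established; you could also verify it directly in a conformal coordinate $z$ with $u=u(|z|)$, but the citation is in the spirit of how the present paper treats this result.
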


\medskip
We can also include the case of $K_1=-K_0/2$ corresponding to cusp singularities. 
Since $K'$ is not identically zero in \eqref{eq:core_diff_eq} when $K_1=-K_0/2$, one can directly solve the differential equation.

\begin{proposition}\label{prop:limit_cusp}
    When $K_1=-K_0/2$, the solution to differential equation 
    \eqref{eq:core_diff_eq} with the only boundary condition $ K(0) = K_0 >0 $ satisfies $\lim_{u\to+\infty} K(u) = -K_0/2$ .
\end{proposition}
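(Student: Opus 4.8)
The plan is to turn the first-order equation \eqref{eq:core_diff_eq} into an explicit autonomous ODE and read off the asymptotics from its phase line. Substituting $K_1=-K_0/2$ and using $K+K_0+K_1 = K+K_0/2$, one gets
\[
    {K'}^2 \;=\; -\tfrac13(K-K_0)\bigl(K+\tfrac{K_0}{2}\bigr)^2 \;=\; \tfrac13(K_0-K)\bigl(K+\tfrac{K_0}{2}\bigr)^2 \;=:\; \phi(K),
\]
so a real solution must satisfy $-K_0/2\le K\le K_0$, and $\phi$ is positive on $(-K_0/2,K_0)$ with a simple zero at $K_0$ and a \emph{double} zero at $-K_0/2$. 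Since the curvature of an HCMU football decreases away from its maximum point $K(0)=K_0$, the relevant branch is $K'=-\sqrt{\phi(K)}$; note $K'(0)=0$ and $K''(0)<0$, so the non-constant solution leaves $K_0$ and strictly decreases into $(-K_0/2,K_0)$.

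Next I would establish global existence on $[0,+\infty)$ and identify the limit simultaneously via the quadrature $u=\int_{K(u)}^{K_0}\phi(K)^{-1/2}\,\rmd K$. Near $K=K_0$ the integrand is $\asymp (K_0-K)^{-1/2}$, hence integrable: the solution genuinely detaches from $K_0$ in finite $u$, consistent with there being a single boundary condition at $u=0$. As $K\downarrow -K_0/2$, the double zero of $\phi$ makes the integrand $\asymp\mathrm{const}\cdot(K+K_0/2)^{-1}$, so the integral diverges logarithmically. Therefore $K(u)$ can never reach $-K_0/2$ in finite time; the solution extends to all $u\ge 0$, stays strictly decreasing and bounded below by $-K_0/2$, and $u\to+\infty$ corresponds precisely to $K(u)\to -K_0/2$. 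Equivalently, the substitution $w:=\sqrt{K_0-K}$ converts $K'=-\sqrt{\phi(K)}$ into the smooth equation $2w'=\tfrac{1}{\sqrt3}(a^2-w^2)$ with $a:=\sqrt{3K_0/2}$ and $w(0)=0$, which integrates to $\ln\tfrac{a+w}{a-w}=\tfrac{a\,u}{\sqrt3}$; letting $u\to+\infty$ forces $w\to a^{-}$, i.e. $K_0-K\to 3K_0/2$, i.e. $K\to -K_0/2$.

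The only point requiring a word of care — not really an obstacle — is that the initial value problem $K'=-\sqrt{\phi(K)}$, $K(0)=K_0$ is not uniquely solvable at the endpoint $K=K_0$, since $\sqrt{\phi(\cdot)}$ is merely $\tfrac12$-H\"older there and $K\equiv K_0$ also solves it; this constant solution is CSC and is therefore excluded by the HCMU hypothesis, and once $K<K_0$ (equivalently, in the $w$-variable, everywhere on $[0,+\infty)$) the equation is locally Lipschitz, so the non-constant football solution is unique and the computation above applies to it. As a consistency check, the limiting value $-K_0/2$ is negative, matching the behaviour of the curvature at a cusp recorded in Proposition \ref{prop:curvature}.
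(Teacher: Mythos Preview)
Your proof is correct and, at its core, follows the same route as the paper: separate variables in the autonomous equation and integrate explicitly. Your substitution $w=\sqrt{K_0-K}$ leading to $\ln\tfrac{a+w}{a-w}=\tfrac{a u}{\sqrt3}$ is precisely the paper's formula $\mathrm{arctanh}\bigl(\sqrt{(K_0-K)/(3K_0/2)}\bigr)=\tfrac{\sqrt{K_0}}{2\sqrt2}\,u$ rewritten, since $\mathrm{arctanh}(w/a)=\tfrac12\ln\tfrac{a+w}{a-w}$ and $a/(2\sqrt3)=\sqrt{K_0}/(2\sqrt2)$.

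The one genuine addition on your side is the qualitative phase-line argument: observing that $\phi(K)$ has a simple zero at $K_0$ and a double zero at $-K_0/2$, so the quadrature converges at the top and diverges logarithmically at the bottom. This yields the conclusion without ever writing down the closed form, and also explains conceptually why the cusp sits at infinite $u$-distance. The paper does not make this observation; it goes straight to the antiderivative. Your remark about non-uniqueness at the endpoint $K=K_0$ (the constant CSC solution versus the decreasing HCMU branch) is also a nice point the paper leaves implicit.
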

\begin{proof}
    We simply solve the differential equation
    \begin{equation*}
        {K'(u)}^2 = -\frac13 (K-K_0)(K+\frac{K_0}{2})^2
    \end{equation*}
    without extra condition on the limit when $v\to+\infty$. 
    Since
    \begin{equation*}
        \rmd\left( - \frac{2 \mathrm{arctanh}\left( \sqrt{\frac{a-x}{a}} \right) }{\sqrt{a}} \right)
        = \frac{\rmd x}{ x \sqrt{a-x}} \ ,
    \end{equation*}
    we have 
    \begin{equation}\label{eq:sol_cusp}
        \mathrm{arctanh}\left( \sqrt{\frac{K_0-K(u)}{\frac32 K_0}} \right) = 
        \frac{\sqrt{K_0}}{2\sqrt{2}} u \ .
    \end{equation}
    Here 
    $$ \mathrm{arctanh}(x) = \frac12 \ln\left( \frac{1+x}{1-x} \right) , \quad -1<x<1 \ .$$
    We see $K(u) \to K_1=-K_0/2$ if and only if $u \to +\infty$. Hence $\lim_{u\to+\infty} K(u) = K_1$ is automatically satisfied when $K_1 = -K_0/2$. 
\end{proof}

\begin{lemma}\label{lem:limit_cusp_f}
    When $K_1=-K_0/2$, the asymptotic behavior of $f(u)$ is 
    \begin{equation}
        f(u), f'(u)=O(\ee^{-C_0 u})
    \end{equation}
    for some positive constant $C_0$ when $u\to+\infty$. 
    Hence the boundary condition $f(+\infty) = 0 $ in \eqref{eq:hcmu_ftb} is automatically satisfied, and $f'(+\infty) =0$. This verifies again that a cusp can be regarded as a cone point of zero angle. 
\end{lemma}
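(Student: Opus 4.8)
The plan is to read off the asymptotics of $f$ and $f'$ directly from the asymptotics of the curvature $K$, which Proposition \ref{prop:limit_cusp} already supplies in closed form. The bridge is \eqref{eq:K_f_equation}: since $K'=c\cdot f$ and hence $K''=c\cdot f'$, with $c$ a fixed nonzero constant, it suffices to prove that $K'$ and $K''$ decay exponentially. So the first step is to turn \eqref{eq:sol_cusp} into a quantitative decay statement for $K$. Writing $E:=\ee^{\sqrt{K_0/2}\,u}$, equation \eqref{eq:sol_cusp} becomes $\sqrt{(K_0-K(u))/(\tfrac{3}{2} K_0)}=(E-1)/(E+1)$, so that $1-\sqrt{(K_0-K(u))/(\tfrac{3}{2} K_0)}=2/(E+1)=O(\ee^{-\sqrt{K_0/2}\,u})$. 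Multiplying by the companion factor $1+\sqrt{(K_0-K(u))/(\tfrac{3}{2} K_0)}\to 2$ gives $1-(K_0-K(u))/(\tfrac{3}{2} K_0)=O(\ee^{-\sqrt{K_0/2}\,u})$, which is exactly
\[
   K(u)+\tfrac{K_0}{2}=\tfrac{3}{2} K_0\Bigl(1-\tfrac{K_0-K(u)}{\tfrac{3}{2} K_0}\Bigr)=O(\ee^{-C_1 u}),\qquad C_1:=\sqrt{K_0/2}.
\]

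The second step transfers this to $f$ and $f'$. From \eqref{eq:core_diff_eq} with $K_1=-K_0/2$ one has ${K'}^2=\tfrac{1}{3}(K_0-K)(K+\tfrac{K_0}{2})^2$, and since $K(u)\in(-\tfrac{K_0}{2},K_0)$ the factor $\sqrt{K_0-K(u)}$ stays bounded; therefore $|K'(u)|=\tfrac{1}{\sqrt{3}}\sqrt{K_0-K(u)}\,\bigl|K(u)+\tfrac{K_0}{2}\bigr|=O(\ee^{-C_1 u})$, whence $f(u)=|K'(u)|/|c|=O(\ee^{-C_1 u})$. For $f'$, differentiating \eqref{eq:core_diff_eq} and dividing by $2K'$ (allowed for $u>0$, where $K'<0$) yields the polynomial identity $K''=\tfrac{1}{2}(K+\tfrac{K_0}{2})(\tfrac{K_0}{2}-K)$; the second factor is bounded, so $|K''(u)|=O(\ee^{-C_1 u})$ and $f'(u)=|K''(u)|/|c|=O(\ee^{-C_1 u})$. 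Taking $C_0$ to be any constant in $(0,C_1]$ gives $f(u),f'(u)=O(\ee^{-C_0 u})$, and in particular $f(+\infty)=f'(+\infty)=0$, as claimed. (Alternatively, one can avoid the identity for $K''$ by noting that $f''=-Kf>0$ for large $u$ forces $f'(+\infty)=0$, and then integrating $f''=-Kf$ over $[u,+\infty)$ together with $f(v)=O(\ee^{-C_1 v})$.)

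There is no serious obstacle here; the content is essentially bookkeeping. One must unwind the $\mathrm{arctanh}$ in \eqref{eq:sol_cusp} carefully enough to see a genuine exponential rate for $K+\tfrac{K_0}{2}$ rather than merely $K+\tfrac{K_0}{2}\to 0$, and check that a single exponent $C_0$ controls $f$ and $f'$ simultaneously. It is also worth recording, to legitimize the use of \eqref{eq:K_f_equation}, that the cusp football has finite area: \eqref{eq:angle_curvature} with $K_1=-K_0/2$ gives $\beta=0$ and $A(\rho)=8\pi\alpha/K_0<\infty$, so $c=\tfrac{2\pi}{A(\rho)}(K_1-K_0)=-\tfrac{3\pi K_0}{A(\rho)}$ is finite and nonzero, and $\int_0^{\infty}f\,\rmd u=\tfrac{1}{c}\bigl(K(+\infty)-K(0)\bigr)=\tfrac{A(\rho)}{2\pi}$ indeed recovers this area.
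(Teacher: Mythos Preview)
Your proof is correct and follows essentially the same route as the paper: extract the exponential decay of $K(u)+\tfrac{K_0}{2}$ from the explicit formula \eqref{eq:sol_cusp}, then use \eqref{eq:K_f_equation} and the factorization of ${K'}^2$ to bound $f$, and the differentiated identity $K''=\tfrac{1}{2}(K+\tfrac{K_0}{2})(\tfrac{K_0}{2}-K)$ to bound $f'$. The only cosmetic difference is that the paper names the rate $C_0:=\sqrt{K_0/2}$ from the outset, whereas you call it $C_1$; your additional remarks on finite area and the alternative argument for $f'$ are extras not present in the paper.
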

\begin{proof}
    Actually, let $C_0:=\sqrt{K_0/2}$ and $C_1:=\sqrt{3}C_0$, by \eqref{eq:sol_cusp} we have
    $$ \sqrt{K_0-K} = C_1 \frac{\ee^{C_0 u} -1}{\ee^{C_0 u} + 1} \ \longrightarrow\ C_1 \quad  (\textrm{ as } u\to+\infty\ ) \ . $$
    Then, together with $C_1^2= 3K_0/2$, 
    \begin{align*}
        K(u) + \frac{K_0}2 &= \frac23 K_0 - C_1^2 \left( \frac{\ee^{C_0 u}-1}{\ee^{C_0 u}+1} \right)^2 = C_1^2 \left( 1- (\frac{\ee^{C_0 u}-1}{\ee^{C_0 u}+1})^2 \right) \\
        &= \frac{4 C_1^2 \ee^{C_0 u}}{(\ee^{C_0 u}+1)^2} = O(\ee^{-C_0 u}) \quad (\textrm{ as } u\to+\infty\ ) \ .
    \end{align*}
    By \eqref{eq:K_f_equation} and \eqref{eq:angle_curvature}, $c = - C_1^4/6\alpha$ is always negative. Since $K' = c\cdot  f$, we have 
    $$ f(u) \ \sim\  \abs{K'(u)} \ \sim\  \left( K(u)+\frac{K_0}{2} \right) \sqrt{K_0-K(u)} = O(\ee^{-C_0 u}) \ . $$

    By a direct computation we know
    $$ K''(u) = \frac12 \left(\frac{K_0}{2} + K(u)\right) \left(\frac{K_0}{2} - K(u)\right) \ . $$
    Since $K_0/2 - K(u) \to K_0$ and $K(u) + K_0/2 \sim O(\ee^{-C_0 u})$ when $u\to+\infty$ as shown before, we have 
    $$ f'(u) \ \sim\  K''(u) = O(\ee^{-C_0 u}) \ . $$
\end{proof}

\begin{remark}
    It is possible to consider $K_1=K_0$ as well. In such case, the limit metric has constant positive curvature. Up to a renormalization, these extremal \Kahler\ metrics are cone spherical co-axial metrics. See, for example, \cite{CWWX15, Erm20, GT23, LuXu24} for more about these metrics. 
    Since we are considering non-CSC metrics on surface, the case of $K_1=K_0$ is excluded in this work. 
\end{remark}

\bigskip
\subsection{Football decomposition of HCMU surface}\label{ssec:football_decomp}
Now we provide a quick review on the football decomposition theorem in \cite{CCW05}. 

\begin{definition}
    An integral curve of the vector field $\vec{H}$ in Proposition \ref{prop:vector_fields} is called a \DEF{meridian}. 
    Hence a meridian is always a geodesic. 
\end{definition}
In the last subsection, a line with constant $\theta$-parameter is a meridian on an HCMU football. 
Meridians will serve as the cut locus for decomposition. 

\begin{lemma}[Lemma 3.1, \cite{CCW05}]\label{lem:glue_hcmu} 
    For $i=1,2$, let $(S_i,\rho_i)$ be an HCMU football of cone angle $\alpha_i>\beta_i>0$ and area $A(\rho_i)$. 
    Then they can be smoothly glued along two meridians or segments of two meridians, if and only if 
    \begin{equation}
        \frac{\alpha_1}{\alpha_2} = \frac{\beta_1}{\beta_2} = \frac{A(\rho_1)}{A(\rho_2)} \ .
    \end{equation}
Let $K_0^{(i)}, K_1^{(i)}$ be the maximum and minimum curvature on $(S_i,\rho_i)$. 
By \eqref{eq:angle_curvature}, the desired equation holds if and only if $K_0^{(1)}=K_0^{(2)}$ and $K_1^{(1)}=K_1^{(2)}$. 
\end{lemma}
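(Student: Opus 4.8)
The plan is to reduce the smoothness of a gluing along meridians to a comparison of the local rotationally symmetric normal forms of the two footballs near a meridian, and then to read off from \eqref{eq:core_diff_eq} and \eqref{eq:angle_curvature} that this normal form is controlled exactly by the pair $(K_0^{(i)},K_1^{(i)})$. Throughout I would use the facts recorded around \eqref{eq:hcmu_ftb}--\eqref{eq:K_f_equation}: along any meridian the curvature is a function $K=K(u)$ of arclength alone, decreasing monotonically from $K_0^{(i)}$ to $K_1^{(i)}$ and solving \eqref{eq:core_diff_eq}; the meridian is a geodesic (an integral curve of $\vec H$); and the metric coefficient satisfies $f_i(u)=\frac{A(\rho_i)}{2\pi\,(K_1^{(i)}-K_0^{(i)})}\,K'(u)$ by \eqref{eq:K_f_equation}.

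For the ``if'' direction I would assume $K_0^{(1)}=K_0^{(2)}=:K_0$ and $K_1^{(1)}=K_1^{(2)}=:K_1$. Then the two curvature profiles solve the same ODE \eqref{eq:core_diff_eq} with the same initial value $K(0)=K_0$, hence agree; in particular the two meridians have equal length $l=\int_{K_1}^{K_0}\bigl(-\tfrac13(K-K_0)(K-K_1)(K+K_0+K_1)\bigr)^{-1/2}\,\rmd K$, and the metric coefficients satisfy $f_1/f_2\equiv A(\rho_1)/A(\rho_2)$. Consequently, after the linear rescaling $\theta\mapsto\lambda\theta$ of the angular coordinate with $\lambda=A(\rho_1)/A(\rho_2)$, a tubular neighbourhood of a meridian of $(S_1,\rho_1)$ becomes isometric to a tubular neighbourhood of a meridian of $(S_2,\rho_2)$, by an isometry that is the identity on the core meridians (each carrying the metric $\rmd u^2$). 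Slitting each football along (a segment of) a meridian and gluing the resulting geodesic boundaries according to this correspondence, a neighbourhood of the seam in the glued surface is then locally isometric to the smooth surface $(S_1,\rho_1)$, so the HCMU metric extends smoothly across the seam; at the endpoints of a seam segment the two sectors contributed from the two sides assemble, by the same local model, into either a smooth point or a conical singularity of the expected angle.

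For the ``only if'' direction I would suppose the two footballs are glued smoothly along a meridian segment $\sigma$, so that $K$ is smooth near $\sigma$ and its restriction to $\sigma$ is the common value of the curvature read from each side: the profiles $\kappa_1(u),\kappa_2(u)$ coincide on $\sigma$, and are non-constant (indeed strictly monotone). Each satisfies $(\kappa_i')^2=P_i(\kappa_i)$ with the cubic $P_i(x)=-\tfrac13\bigl(x-K_0^{(i)}\bigr)\bigl(x-K_1^{(i)}\bigr)\bigl(x+K_0^{(i)}+K_1^{(i)}\bigr)$, so $P_1$ and $P_2$ agree on the non-degenerate interval $\kappa_1(\sigma)$ and are therefore equal as polynomials. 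Hence $P_1$ and $P_2$ have the same roots; since $K_0^{(i)}$ is the largest root and $K_1^{(i)}$ the second largest (using $K_0^{(i)}>K_1^{(i)}\geq -K_0^{(i)}/2\geq -(K_0^{(i)}+K_1^{(i)})$), I conclude $K_0^{(1)}=K_0^{(2)}$ and $K_1^{(1)}=K_1^{(2)}$. Finally \eqref{eq:angle_curvature} gives $\alpha_i=\frac{A(\rho_i)}{12\pi}(2K_0^{(i)}+K_1^{(i)})$ and $\beta_i=\frac{A(\rho_i)}{12\pi}(K_0^{(i)}+2K_1^{(i)})$, together with the corresponding inversion formulas; reading these in both directions shows that $K_0^{(1)}=K_0^{(2)},\ K_1^{(1)}=K_1^{(2)}$ is equivalent to $\frac{\alpha_1}{\alpha_2}=\frac{\beta_1}{\beta_2}=\frac{A(\rho_1)}{A(\rho_2)}$, which also establishes the last assertion of the lemma.

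The step I expect to be the main obstacle is the smoothness of the glued metric across the seam in the ``if'' direction: gluing two surfaces along geodesic boundaries of equal length is automatic only up to class $C^{1,1}$, and the real content is that the explicit rotationally symmetric model near a meridian --- ultimately a consequence of \eqref{eq:core_diff_eq} and \eqref{eq:K_f_equation} --- lets one realise a neighbourhood of the seam as an honest smooth HCMU surface. Everything else (the monotonicity of $K$ along meridians, the ``a cubic is determined by its values on an interval'' step, and the bookkeeping of the cone angles at the endpoints of a seam segment) I expect to be routine.
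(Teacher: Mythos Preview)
The paper does not give its own proof of this lemma: it is quoted verbatim from \cite{CCW05}, and only the final sentence (the equivalence with $K_0^{(1)}=K_0^{(2)},\ K_1^{(1)}=K_1^{(2)}$) is added as an algebraic remark via \eqref{eq:angle_curvature}. So there is nothing to compare against beyond that; your argument stands on its own and is correct.

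Two minor points worth tightening. In the ``only if'' direction, gluing along a \emph{segment} of a meridian only forces $\kappa_1(u_1+t)=\kappa_2(u_2\pm t)$ for some shifts $u_1,u_2$ (and a possible orientation flip), not literally $\kappa_1(u)=\kappa_2(u)$. This is harmless for your argument, since you only use $(\kappa_i')^2=P_i(\kappa_i)$ to conclude that $P_1$ and $P_2$ agree on the common range of values, but it would be cleaner to say so. For the root identification, note that the lemma assumes $\beta_i>0$, so $K_1^{(i)}>-K_0^{(i)}/2$ strictly and the three roots of $P_i$ are genuinely distinct and ordered; in the borderline cusp case $K_1=-K_0/2$ the cubic has a double root and one would argue slightly differently.

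Your ``if'' direction is exactly the mechanism the paper later formalises through the character line element (Definition~\ref{defn:line_element}) and the renormalised $(v,\phi)$-coordinate of \S\ref{ssec:hcmu_strip_decomp}: the normalised warped function $h=f/\alpha$ depends only on $(K_0,K_1)$, so after the angular rescaling a tubular neighbourhood of a meridian in either football is isometric to the same smooth model, and smoothness across the seam follows. Your identification of this step as the real content is on target.
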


This result suggests that, since we are not fixing the underlying Riemann surface, it seems more convenient to focus on the extremal value of curvature and regard the cone angles as some quantities transverse to meridians. 
This is the point which motivates us to introduce the concept of {\it character line element} next subsection. 

But before that, let us introduce the following theorem indicating that any HCMU surface can be built from footballs, on which 
this work is based.  

\begin{theorem}[Theorem 1.3, \cite{CCW05}] \label{thm:hcmu_decomp}
    Let $(M, \rho)$ be an HCMU surface, then there are finitely many meridian segments which connect extremal points and saddle points. 
    In fact, $M$ can be divided into a finite number of pieces by cutting along these segments where each piece is locally isometric to an HCMU football.
\end{theorem}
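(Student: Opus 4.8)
The plan is to treat the curvature function $K$ as a substitute for a Morse function and to use the two vector fields of Proposition \ref{prop:vector_fields}: the field $\vec{H}$, whose integral curves are the meridians, plays the role of the gradient of $K$, while $\vec{V}$, the Killing field tangent to the level sets of $K$, supplies the rotational symmetry that will make each piece a football. First I would record that $\mathfrak{C}$ is finite. By Proposition \ref{prop:curvature}(4) its points together with the metric singularities form the singular set of $\vec{V}$ and $\vec{H}$, the smooth critical points among them being isolated because $\mathrm{grad}^{(1,0)}K$ is holomorphic and not identically zero on $M\setminus\mathfrak{C}$; as $M$ is compact and carries only finitely many conical and cusp singularities, $\mathfrak{C}$ is finite. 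Moreover it consists exactly of the extremal points (the maxima where $K=K_0$ and the minima where $K=K_1$, cusps being minima by Proposition \ref{prop:curvature}(3)) and the saddle points, each a conical point of angle $2\pi k$ with $k>1$.

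Next I would analyse the separatrices. A local study of \eqref{eq:HCMU} near a saddle $s$ of cone angle $2\pi k$ --- equivalently, of the zero of the holomorphic field $\mathrm{grad}^{(1,0)}K$ there --- shows that only finitely many meridians issue from $s$, along each of which $K$ is strictly monotone; since there are finitely many saddles, there are finitely many such separatrices in total, and, being integral curves of the line field of $\vec{H}$, distinct separatrices are disjoint off $\mathfrak{C}$ and none self-intersects (a closed or self-crossing meridian is excluded by the strict monotonicity of $K$ along it). I would then show that each separatrix $\sigma$, oriented so that $K$ increases, limits to a single point of $\mathfrak{C}$: its $\omega$-limit set $L$ is non-empty, compact and connected since $M$ is compact, $K$ is constant on $L$ by monotonicity, $L$ is flow-invariant and so contains no regular point (through which $K$ would vary), hence $L\subseteq\mathfrak{C}$, and since $L$ is connected and $\mathfrak{C}$ finite, $L$ is one point $p$. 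A local computation near $p$ in the conical or smooth normal form of the metric, again using that $K$ is monotone along $\sigma$, shows that $\sigma$ reaches $p$ at finite distance unless $p$ is a cusp, in which case $\sigma$ is a properly embedded ray of infinite length with $K\to K_1$. The same applies in the decreasing direction, so $\overline{\sigma}$ is a meridian segment --- finite, or infinite with a cusp end --- joining two points of $\mathfrak{C}$, i.e. joining extremal and/or saddle points. This proves the first assertion.

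Let $\Gamma:=\mathfrak{C}\cup\bigcup_\sigma\overline{\sigma}$, a finite embedded graph in $M$ by the previous paragraph, and let $\Omega$ be a connected component of $M\setminus\Gamma$. On $\Omega$ one has $dK\neq0$ and $\vec{V}\neq0$, because the singularities of $\vec{V}$ lie in $\mathfrak{C}\subseteq\Gamma$. The flow of $\vec{V}$ is isometric and preserves $K$ (its orbits are level sets of $K$), hence preserves $\vec{H}$ and carries meridians to meridians. Fixing a meridian $\mu\subset\Omega$, parametrising it by arclength $u$ and taking the $\vec{V}$-flow time $\theta$ as a second coordinate, I get local coordinates in which, because $\vec{V}=\partial_\theta$ is Killing and $\vec{H}$ is orthogonal to $\vec{V}$, the metric is the warped product $\rho=\rmd u^2+f(u)^2\,\rmd\theta^2$ and $K=K(u)$; substituting this into \eqref{eq:HCMU} reproduces exactly the football system \eqref{eq:core_diff_eq}, \eqref{eq:K_f_equation}. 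Hence $\Omega$ is locally isometric to an HCMU football (to an entire football when the $\vec{V}$-orbit through a point of $\mu$ closes up with full period $2\pi$, to a sector of one otherwise), and since $\Gamma$ is finite there are finitely many such pieces.

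The hard part will be the control of the separatrices in the middle step: ruling out recurrence of the gradient-like flow (handled above by monotonicity of $K$ plus compactness of $M$) and, more delicately, the local analysis of $\sigma$ near its limiting critical point that separates the finite-distance case from the genuine cusp end. The recognition step in the last paragraph --- that a $\vec{V}$-invariant HCMU metric is forced into the football form --- is essentially the derivation of the football in \S 2.2 of \cite{CCW05} specialised to $\Omega$, and should carry over with no change.
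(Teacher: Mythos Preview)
The paper does not supply a proof of this theorem: it is quoted verbatim as Theorem~1.3 of \cite{CCW05} and used as a black box on which the rest of the paper (the strip decomposition, the data-set representation) is built. So there is no in-paper argument to compare your proposal against.

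That said, your sketch is a reasonable reconstruction of how such a proof would go, and the main ingredients are the right ones: finiteness of $\mathfrak{C}$ from the holomorphicity of $\mathrm{grad}^{(1,0)}K$, the separatrix analysis using strict monotonicity of $K$ along meridians to pin down $\omega$-limits inside $\mathfrak{C}$, and the recognition of the warped-product football form on each complementary region via the Killing field $\vec{V}$. One place that would need tightening if you were to write this out in full: the coordinates $(u,\theta)$ you build from a single meridian $\mu$ and the $\vec{V}$-flow are a priori only local on $\Omega$. To identify $\Omega$ with a football (or a sector of one) you need the global statement that this chart covers all of $\Omega$, i.e.\ that every $\vec{V}$-orbit in $\Omega$ is closed with a common period (or that the flow parametrises a strip); this follows because $K$ is a proper submersion on $\Omega$ with connected circle fibres, but it is worth stating explicitly rather than leaving implicit in ``closes up with full period $2\pi$''. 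The period need not be $2\pi$, of course --- it is $2\pi\alpha$ for the top angle $\alpha$ of the resulting football.
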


We will give an equivalent modified version soon, which is more friendly to combine ideas and methods from the study of meromorphic differentials.

\bigskip
\subsection{Character line element}
We present a re-parameterization of HCMU football in the spirit of measured foliation. 
This leads us to smaller elements to determine an HCMU football. 

Let $(u,\theta)$ as before in Section \ref{ssec:football}. Define 
$$ (v,\phi) := (u,\alpha \theta) \quad \textrm{ and } 
\quad \rho = \rmd v^2 + h^2(v) \rmd \phi ^2 \ . $$
Then $\rmd\phi = \alpha\cdot \rmd\theta$, and $h(v) = f(v)/\alpha $. \eqref{eq:hcmu_ftb} turns into
\begin{equation}\label{eq:hcmu_ftb_2}
    \renewcommand\arraystretch{1.2}
    \left\{ 
    \begin{tabular}{l}
         $h(0)=h(l)=0$, \\
         $h'(0)=1, \ h'(l)=-\beta/\alpha$, \\
         $h(u)>0, \ \forall\ 0<u<l$, \\
         $v\in[0,l],\ \phi\in[0,2\pi\alpha]$.
    \end{tabular}
    \right.
\end{equation}
This can be regarded as a kind of normalization, with the advantage of additivity: adding the cone angle corresponding to the union of intervals for $\phi$. And now the initial conditions for $h,h'$ are independent of the cone angle. The terminal value $h'(l)$ record the ratio of the two cone angles, which is more essential. See Figure \ref{fig:hcmu_football} as an accessible illustration.

Also note that, since $K=-f''/f$ in $(u,\theta)$ coordinates and $h=f/\alpha$, the differential equation for $K$ in $(v,\phi)$ coordinates coincides with \eqref{eq:core_diff_eq}. 

\begin{figure}
    \makebox[\textwidth][c]{\includegraphics[width=\linewidth]{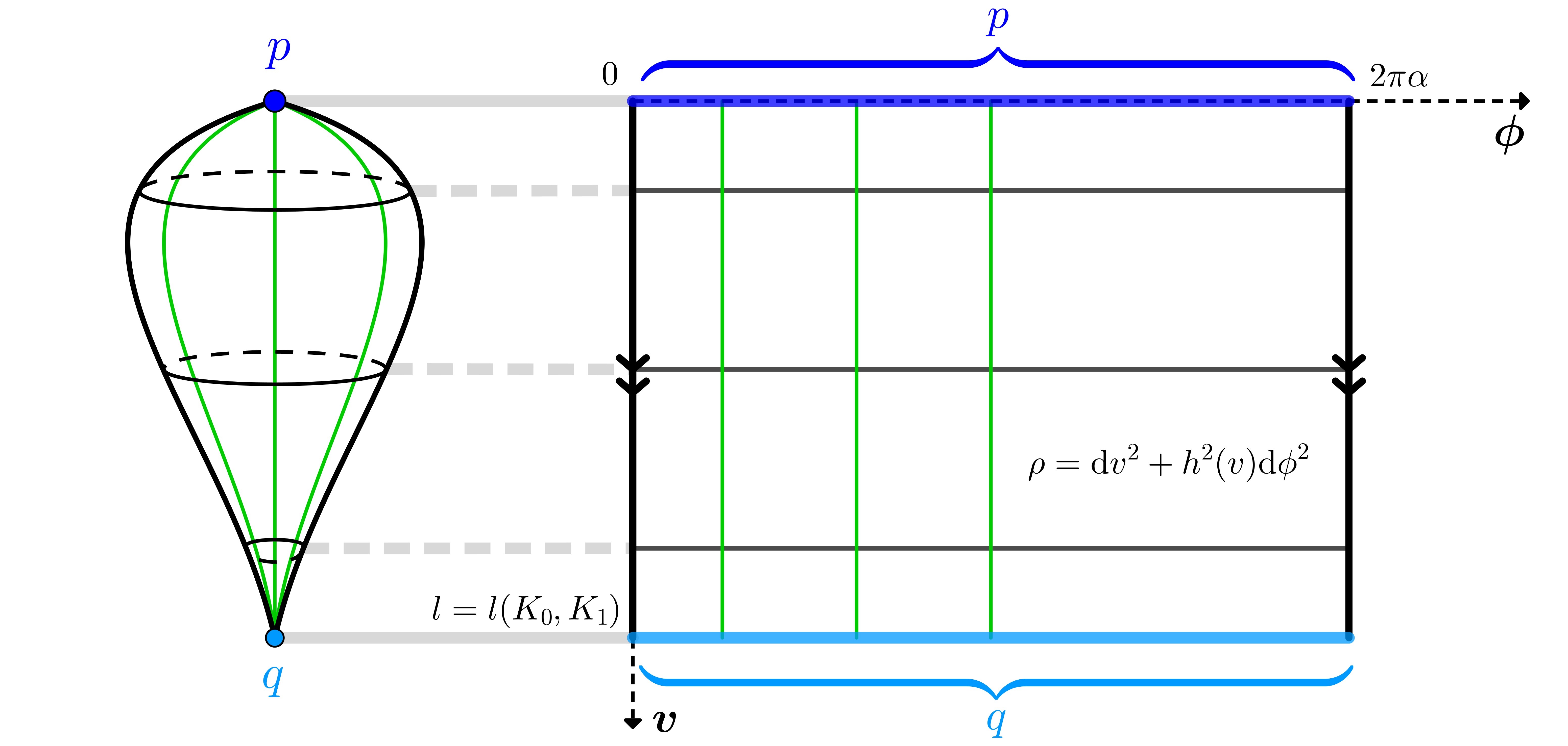}}
    \caption{\footnotesize A typical HCMU football with its $(v,\phi)$ coordinates. Note that $[0,l]\times\{0\}$ and $[0,l]\times\{2\pi\alpha\}$ are glued with respect to $\phi$-parameter. $\{0\}\times [0,2\pi\alpha]$ collapses to the conical singularity $p$, and $\{l\}\times [0,2\pi\alpha]$ collapses to $q$. Every green line is a meridian line as the character line element.}
    \label{fig:hcmu_football}
\end{figure}

\medskip
With this slightly modified parametrization, we introduce the character line element, which is the fundamental brick to build an HCMU football. 
\begin{definition}[Character line element]\label{defn:line_element}
    Given two constant $K_0, K_1$ satisfying 
    \begin{equation}\label{eq:K0K1_extd}
        K_0 >0 ,\quad K_0 > K_1 \geq -\frac{K_0}{2}  \ .
    \end{equation}
    An HCMU \DEF{character line element} of extremal curvatures $K_0, K_1$ is 
    a closed interval $[0,l] \subset [0,+\infty]$ endowed with a $C^2$-function $K(v)$, 
    satisfying the following: 
    \begin{enumerate}[(1). ]
        \item $K(v)$ is decreasing, with initial value $K(0)=K_0$.
        \item $K(v)$ satisfies the differential equation
        \begin{equation}
            3 {K'}^2 = -(K-K_0)(K-K_1)(K+K_0+K_1) \ .
        \end{equation}
        \item $l=l(K_0,K_1)\in[0,+\infty]$ is the unique solution of $K(v)=K_1$, called the \DEF{length} of this character line element.
    \end{enumerate}
    
    The \DEF{warped function} \SYM{$h(v)$} on the interval $[0,l]$ is defined to be 
    \begin{equation}\label{eq:density_h}
        h(v) := \frac{K'(v)}{\overline{c}}, \quad \textrm{ where }
        \overline{c} := -\frac16 (K_0-K_1)(2K_0+K_1) \ . 
    \end{equation} 
\end{definition}
Intuitively, an HCMU football is weaved by some width of character line element. Hence the character line element is the ``brick of bricks''. 

Also note that, by Proposition \ref{prop:limit_cusp} and Lemma \ref{lem:limit_cusp_f}, when $K_1=-K_0/2$, it is reasonable to define $l=+\infty$ and $h(+\infty)=h'(+\infty)=0$. 
All boundary conditions in \eqref{eq:hcmu_ftb_2} is still satisfied. So we allowing $K_1=-K_0/2$ in \eqref{eq:K0K1_extd}. 

\begin{remark}
    The name of ``warped function'' indicates that an HCMU football is actually a warped product of Riemannian manifolds. This concept can be found, for examples, in \cite[Chapter 7]{ONeill83} or \cite[Chapter 4]{GTM171}.  
\end{remark}

\newcommand{\Ratio}{R_{atio}(K_0, K_1)}
\begin{definition}[Ratio]\label{defn:ratio}
    The \DEF{ratio} of an HCMU character line element of extremal curvatures $K_0, K_1$ is defined to be 
    \begin{equation}\label{eq:Ratio} 
         \SYM{\Ratio} := \frac{K_0+2K_1}{2K_0+K_1} \ .
    \end{equation}
By the football decomposition Theorem \ref{thm:hcmu_decomp}, together with the second part of Lemma \ref{lem:glue_hcmu}, all character line elements weaving a football in the decomposition are the same. 
    Then the \DEF{ratio} $R$ of an HCMU surface is defined to be the ratio of this common character line element. 
\end{definition}

If the extremal curvatures $K_0, K_1$ of HCMU footballs are fixed, then by \eqref{eq:angle_curvature} the ratio of two cone angles $\beta, \alpha$ at the extremal point with curvature $K_1, K_0$ is always $\beta / \alpha = \Ratio$. 
This allows us to control the cone angles at minimum points by those at maximum points and the ration of surface. In other words, we shall only regard one side of extremal points as independent variables. We will adopt this idea all over the proofs of main theorems.

\bigskip
\subsection{Strip decomposition of HCMU surface}\label{ssec:hcmu_strip_decomp}
In our modified version of Theorem \ref{thm:hcmu_decomp}, the pieces of decomposition are bigons instead of footballs. 

\begin{definition}\label{defn:hcmu_bigon}
    An \DEF{HCMU bigon} of extremal curvatures $K_0, K_1$ and \DEF{top angle} $\alpha$, denoted by \SYM{$B_{\alpha}(K_0, K_1)$}, is a topological disk parameterized by 
    \[ (v, \phi) \in [0,l] \times [0, 2\pi\alpha] \]
    endowed with the metric 
    \[ \rho = \rmd v^2 + h^2(v) \rmd \phi^2 \ , \]
    where $h(v)$ is the warped function of character line element of extremal curvatures $K_0, K_1$ in Definition \ref{defn:line_element}. 
    Here $K_0,K_1$ satisfies \eqref{eq:K0K1_extd}. 

    As the footballs, the line $\{v=0\}$ collapses to one point on the boundary, called the \DEF{top vertex} of the bigon. And $\{v=l\}$ collapses to another point called \DEF{bottom vertex}, of \DEF{bottom angle} $\beta = \Ratio\cdot \alpha$. 
\end{definition}

The two boundaries of $B_{\alpha}(K_0, K_1)$, connecting the two vetices, are HCMU geodesics. If they are glued isometrically according to the $v$-parameter, one gets an HCMU football denoted by \SYM{$S_{\alpha}(K_0, K_1)$}. The cone points corresponding to $\{v=0\}$ and $\{v=l\}$ are also called the \DEF{top and bottom vertex} of the football, respectively, with the \DEF{top and bottom angles} defined similarly.  
In the literature like \cite{CCW05}, such football is denoted by $S^2_{\{\alpha, \beta\}}$ when $\beta / \alpha = \Ratio$. Compare Figure \ref{fig:hcmu_football} again.

\begin{corollary}[Strip decomposition for HCMU surfaces]\label{cor:hcmu_strip_decomp}
    Every HCMU surface can be decomposed into finitely many HCMU bigons by cutting along finite number of meridian segments. These segments are geodesics connecting critical points of curvature $K$. And all these HCMU bigons have the same extremal curvatures. 
\end{corollary}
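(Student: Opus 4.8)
The plan is to deduce the strip decomposition directly from the football decomposition of Theorem \ref{thm:hcmu_decomp} together with the re-parameterization introduced in Section \ref{ssec:football_decomp} and the gluing compatibility in Lemma \ref{lem:glue_hcmu}. The only real content beyond Theorem \ref{thm:hcmu_decomp} is (a) replacing each football piece by a pair of bigons obtained by slitting it along a meridian, so that the pieces become simply connected, and (b) checking that all resulting bigons share the same extremal curvatures $K_0, K_1$. Since Corollary \ref{cor:hcmu_strip_decomp} is billed as a ``modified version'' of Theorem \ref{thm:hcmu_decomp}, I expect the argument to be short, with essentially no new analysis.

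First I would invoke Theorem \ref{thm:hcmu_decomp} to obtain a finite collection of meridian segments $\gamma_1, \dots, \gamma_N$ connecting extremal and saddle points of $K$, cutting $(M,\rho)$ into finitely many pieces $F_1, \dots, F_M$, each locally isometric to an HCMU football. Next I would record that, by Proposition \ref{prop:curvature}(2) and the second statement of Lemma \ref{lem:glue_hcmu}, whenever two football pieces are glued along a common meridian segment the maximum and minimum curvatures must agree; since $M$ is connected, propagating this equality through the adjacency graph of the pieces shows that all the $F_j$ have a common pair of extremal curvatures $K_0, K_1$ with $K_0 > 0$ and $K_0 > K_1 \geq -K_0/2$ (the endpoint $K_1 = -K_0/2$ being the cusp case admitted in Definition \ref{defn:line_element}).

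Then, for each football piece $F_j$, I would cut it open along one meridian from its top extremal point to its bottom extremal point. In the $(v,\phi)$ coordinates of Section \ref{ssec:football_decomp}, a football $S_\alpha(K_0,K_1)$ is exactly $[0,l]\times[0,2\pi\alpha]$ with the two sides $\phi = 0$ and $\phi = 2\pi\alpha$ identified and $\{v=0\}$, $\{v=l\}$ collapsed; cutting along, say, the image of $\{\phi = 0\}$ recovers the bigon $B_\alpha(K_0, K_1)$ of Definition \ref{defn:hcmu_bigon}, whose warped function $h(v)$ is that of the character line element of extremal curvatures $K_0, K_1$. (If a piece was only \emph{locally} isometric to a football, i.e.\ already not the whole rotationally symmetric sphere, then it is a sub-bigon and one argues the same way by restricting the $\phi$-range; I would note this case explicitly.) Adding the new cut meridians to the old family $\{\gamma_i\}$ gives a finite set of meridian segments — still geodesics connecting critical points of $K$ by Proposition \ref{prop:vector_fields} and Proposition \ref{prop:curvature}(4) — along which $M$ decomposes into the finitely many bigons $B_{\alpha_j}(K_0, K_1)$.

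The main obstacle, such as it is, lies in the bookkeeping of step two: verifying carefully that the extremal curvatures are genuinely global, in particular handling saddle points where several football pieces meet and ensuring the adjacency graph of pieces is connected so the equality $K_0^{(i)} = K_0^{(j)}$, $K_1^{(i)} = K_1^{(j)}$ truly propagates to every piece. This is immediate once one observes that each cut meridian segment lies on the boundary of exactly two pieces (or of one piece glued to itself), so that the dual graph of the decomposition is connected because $M$ is; I would spell this out in a sentence. Everything else is a direct translation of already-established facts into the bigon language.
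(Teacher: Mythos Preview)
Your proposal is correct and matches the paper's own reasoning: the paper gives no explicit proof of this corollary, treating it as immediate from the football decomposition (Theorem~\ref{thm:hcmu_decomp}), and your argument is precisely the natural unpacking of that step. In particular, the combination of Theorem~\ref{thm:hcmu_decomp} with the second part of Lemma~\ref{lem:glue_hcmu} to conclude that all pieces share common extremal curvatures $K_0,K_1$ is exactly what the paper invokes (see Definition~\ref{defn:ratio}); note also that Proposition~\ref{prop:curvature}(2) already gives the globality of $K_0,K_1$ directly, so your adjacency-graph propagation, while correct, is not strictly needed.
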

This is the HCMU version of strip decomposition in \cite{LuXu24}, or the strip decomposition of meromorphic differentials (e.g. \cite{BrSm15}). 
The number of bigons, the division of the boundaries of the bigons, and the pairing of boundary segments are called the \DEF{gluing pattern} of the strip decomposition. This is a purely combinatorial data. We will study this data in details during Section \ref{sec:glue_data}. 

Conversely, we can obtain an HCMU surface by gluing several bigons of the same extremal curvatures. 
All the top (bottom) vertices of the bigons are glued to the maximum (minimum) points of the surface. 
A boundary point of a bigon, other than the two vertices, that is glued to a saddle cone point is considered as a \DEF{marked boundary point} of the closed bigon before gluing. 
A saddle point of cone angle $2\pi n\ (n\in\ZZ_{>1})$ is glued from $2n$ marked points. 

\medskip
Since an HCMU football is completely determined by its extremal curvatures and top angle, and the football or strip decomposition is canonical, we know that these pieces in the decomposition completely determine an HCMU surface.   
\begin{corollary}\label{cor:identification}
    Two HCMU surfaces are isometric if and only if they have the same strip decomposition and gluing pattern.
\end{corollary}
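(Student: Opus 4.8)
This is an equivalence, so I would establish the two implications separately.

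\emph{Sufficiency} (same strip decomposition and gluing pattern $\Rightarrow$ isometric). The plan is to assemble a global isometry out of isometries of the individual bigons. By Definition \ref{defn:hcmu_bigon}, an HCMU bigon $B_{\alpha}(K_0,K_1)$ is completely determined, as a metric surface with boundary, by the triple $(K_0,K_1,\alpha)$: its warped function $h$ is the one attached to the character line element of extremal curvatures $K_0,K_1$ (Definition \ref{defn:line_element}), and $\alpha$ fixes the $\phi$-range $[0,2\pi\alpha]$. Two surfaces with the same strip decomposition share the extremal curvatures $K_0,K_1$ and come with a bijection between their families of bigons preserving top angles; hence each pair of corresponding bigons admits an isometry, and it can be chosen to respect the $v$-parameter along the boundary arcs, since $v$ is arc length measured from the top vertex along a boundary meridian and is therefore intrinsic. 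Because the two gluing patterns coincide — the subdivisions of the bigon boundaries into segments, the pairing of these segments, and the matching of the $v$-coordinate on paired segments are the same — these bigon isometries are mutually compatible along every glued edge, so they descend to a homeomorphism $\Phi\colon M_1\to M_2$ that is a local isometry on the interior of each bigon and across every gluing. Since $\rho_1,\rho_2$ are honest HCMU metrics (smooth away from the cone points, and smooth across glued meridians by Lemma \ref{lem:glue_hcmu}), $\Phi$ is an isometry of HCMU surfaces.

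\emph{Necessity} (isometric $\Rightarrow$ same strip decomposition and gluing pattern). Let $\Phi\colon (M_1,\rho_1)\to(M_2,\rho_2)$ be an isometry. Gaussian curvature is an isometry invariant, so $K_2\circ\Phi=K_1$; thus $\Phi$ matches maxima with maxima, minima with minima and saddle points with saddle points, and by Proposition \ref{prop:curvature}(4) it carries the full critical set of $K$ — the union of the metric singularities and the smooth critical points — bijectively to the corresponding set of $M_2$. Moreover the imaginary part $\vec{H}=\mathrm{Im}\,\mathrm{grad}^{(1,0)}K$ equals $\tfrac14\nabla_{\rho}K$, hence is intrinsic, so $\Phi$ maps $\vec{H}_1$ to $\vec{H}_2$ and therefore meridians to meridians. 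Consequently $\Phi$ sends the system of cutting meridian segments of the canonical strip decomposition of $M_1$ (built, by Corollary \ref{cor:hcmu_strip_decomp} and Theorem \ref{thm:hcmu_decomp}, out of these meridians and critical points) to a system of the same kind on $M_2$, hence onto the strip decomposition of $M_2$. This gives a bijection between the two families of bigons respecting the boundary subdivisions, the segment pairings and the $v$-coordinate — i.e.\ an identification of gluing patterns — and since $\Phi$ restricts to an isometry on each bigon, corresponding bigons have equal extremal curvatures and equal top angles. Thus the two strip decompositions agree.

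\textbf{Main obstacle.} The delicate point, invoked in both directions, is the \emph{canonicity} of the strip decomposition. On an HCMU football there is a whole circle of meridians and no saddle point, so one must check that cutting along any one of them yields the same bigon $B_{\alpha}(K_0,K_1)$ up to isometry, and, more generally, that the separatrix meridians emanating from the saddle points and the behaviour of the meridian foliation near the smooth extremal points are determined intrinsically, so that the cut locus — and hence the combinatorial gluing pattern — is an unambiguous invariant of $(M,\rho)$. Once this is pinned down (a consequence of Theorem \ref{thm:hcmu_decomp}, the rotational symmetry of HCMU footballs, and Lemma \ref{lem:glue_hcmu}), the remainder of the argument is routine bookkeeping of how the bigon isometries fit together along the $v$-parametrized gluings.
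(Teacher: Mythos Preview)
Your proposal is correct and follows exactly the reasoning the paper has in mind: the sentence immediately preceding the corollary (``Since an HCMU football is completely determined by its extremal curvatures and top angle, and the football or strip decomposition is canonical, we know that these pieces in the decomposition completely determine an HCMU surface'') is the paper's entire justification, and you have simply unpacked it carefully in both directions. Your identification of the canonicity of the cut locus as the crux --- and your handling of it via the intrinsic nature of $\vec{H}=\tfrac14\nabla_\rho K$ together with the rotational symmetry in the football case --- is precisely what is being used implicitly.
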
 

\begin{figure}[th]
    \centering
    \subfigure[The strip decomposition into bigons.]{
    \includegraphics[width=0.8\textwidth]{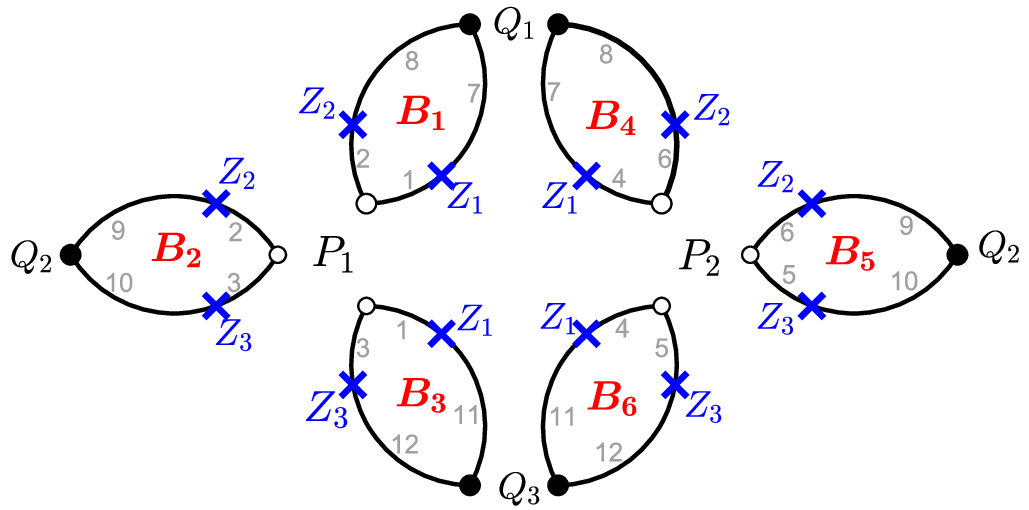}
    \label{fig:Calabi_1}
    }

    \subfigure[The strip decomposition drawn on the sphere.]{
    \includegraphics[width=0.8\textwidth]{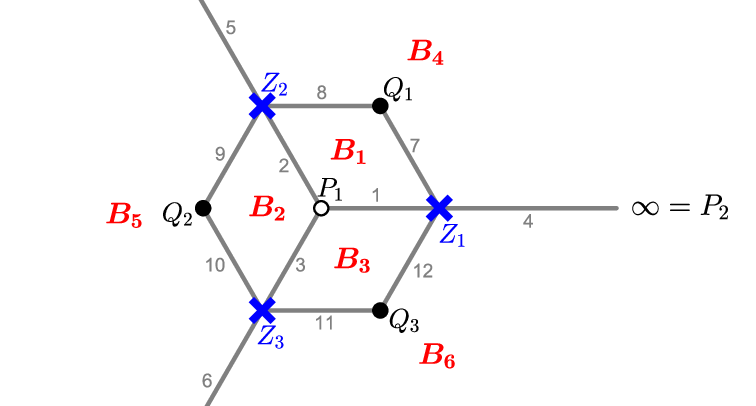}
    \label{fig:Calabi_2}
    }
    \caption{\footnotesize Calabi's example of an genus zero HCMU surface with 3 conical singularities of angle $4\pi$.}
    \label{fig:Calabi_pt1}
\end{figure}
We end this section with Calabi's example of HCMU surface. The precise description can be found in \cite[\S3.1]{Cxx00}. 
\begin{example}\label{eg:Calabi_1}
    Let $S_{\pi}(K_0, K_1)$ be an HCMU football of top angle $\pi$ and $K_1=K_0/4$. Then $\Ratio=\frac23$ and the bottom angle is $\frac23 \pi$. 
    Calabi's example is an HCMU surface in $\Mhcmu{0,3}{2,2,2}$ with nice symmetry. It admits a 6-sheet cover of $S_{\pi}(K_0, K_1)$, branched at the two cone points and another smooth point. There are 3 preimages of the top vertex and 2 preimages of the bottom vertex, all smooth. The three cone points of angle $4\pi$ are the preimage of the remained branched value, and are the saddle points of the HCMU surface. 
    Hence its strip decomposition consists of 6 identical bigons $B_{\pi}(K_0, K_1)$, with the marked boundary  points lies at exactly the same position. 
    
    Figure \ref{fig:Calabi_1} shows how to glue 6 HCMU bigons $B_{\pi}(K_0, K_1)$ to Calabi's example. The top vertex of a bigon is always denoted by a black vertex. The marked boundary points of each bigon are represented by cross symbols. The boundary segments labeled with the same number are glued in pair.  
    
    Figure \ref{fig:Calabi_2} draws the bigons and cut locus topologically on the original surface. It is regarded as a Riemann sphere, with the infinity a preimage of bottom vertex. $Z_1,Z_2,Z_3$ are the only conical singularities, all of angle $4\pi$. The label of edges matching the gluing data in Figure \ref{fig:Calabi_1}. 
\end{example}

\begin{remark}
    A complete classification of HCMU surfaces in $\Mhcmu{0,3}{2,2,2}$ is given by Wei-Wu \cite{WzqWyy19}.
\end{remark}

\bigskip
\section{Representing HCMU surfaces as data sets}\label{sec:glue_data}
This section shows how to faithfully represent most HCMU surfaces by a bunch of data, namely the full version of Theorem \ref{thm:main_data_rep} (see Theorem \ref{thm:hcmu_data}). 
There are both discrete topological data and continuous geometric parameters. 
Such method is inspired by the study of moduli space for meromorphic differentials. Our notations follow the generalized version by Barbieri-M\"{o}ller-Qiu-So in \cite{BMQS24}.

\bigskip
\subsection{Generic surface}\label{ssec:generic}
Every HCMU surface admits a unique strip decomposition. But the data set representation may not work efficiently on all the surfaces. 

\begin{definition}\label{defn:generic_hcmu}
    An HCMU surface $(M,g)$ is said to be \DEF{generic} if each boundary of the bigons in its strip decomposition contains exactly one marked point.
    This is equivalent to the absence of a meridian segment connecting two saddle points. 

    An HCMU football is appointed to be non-generic. 
\end{definition}

Basically, this is the same as what happens to most meromorphic differentials. Such topics about differentials are discussed in \cite{BrSm15, BMQS24}. 
In our case, the differentials are the character 1-forms with simple poles \cite{CqWyy11, CWX15}, whose structures are simpler. 

Here HCMU footballs are ruled out, because they do not contain any saddle point. Hence we do not talk about strip decomposition of a football, even though it is already glued from a bigon. 
In any case, HCMU footballs are simple enough themselves. Such convention brings convenience in later arguments. 

\medskip 
We will show that most HCMU surfaces are generic. That is, generic surfaces form a top-dimensional subset (possibly not connected), and non-generic ones always appear at the boundary. Hence they are indeede ``generic'' in the usual sense. 
This will be proved in Section \ref{ssec:twist}, which can be regarded as a simplified version of results on differentials mentioned above.

\bigskip
\subsection{Mixed-angulation as gluing data}\label{ssec:mixed_angult}
This subsection clarifies the concept of mixed-angulation. 
Most parts agree with \cite{BMQS24}. Adjustments to the names and conditions have been made to suit our specific usage. 

We start with the setting for topological surface. Also see \cite[\S 2]{FST08} or \cite[\S 3]{BMQS24}. We shall use a slightly different but simpler version, with empty boundary. 

\begin{definition}[Marked surface]\label{defn:mk_suf}    
A \DEF{marked surface} $\SYM{\mathbf{S}} := (S, \PP)$ consists of a connected smooth surface $S$ with a fixed orientation, together with a finite set $\SYM{\PP} := \{p_j\}_{j=1}^p$ of points regarded as \DEF{punctures}. 
\end{definition}

\noindent{\bf Convention.} If a bold capital letter $\mathbf{X}$ represents a marked surface, then the underlying topological surface is always denoted by the same unbolded capital letter $X$. And vice versa. 

\begin{definition}
    An \DEF{open arc} is (an isotopy class of) a curve $\gamma:[0,1]\to S$ such that its interior is in $S\setminus \PP$ and its endpoints are in $\PP$. 
    An (open) \DEF{arc system} $\SYM{\AA} := \{\gamma_i\}$ is a finite collection of open arcs on ${\bf S}$ such that each of them has no self-intersection in $S\setminus \PP$ and any two open arcs in $\AA$ have no intersection in $S\setminus \PP$. 
\end{definition}

\begin{definition}\label{defn:order_vec}
    An \DEF{order vector} $\SYM{{\bf w}} := (w_1,\cdots, w_r) $ is a positive 
    integer vector, up to permutations of the components. 
 An order vector ${\bf w}$ is said to be \DEF{compatible} with a marked surface ${\bf S}=(S,\mathbb{P})$ if 
    \[ \sum_{i=1}^r w_i = 4g-4 + 2|\mathbb{P}|=4g-4+2p \ . \]
\end{definition}
The order vector is called ``weight function'' on decorated points in \cite{BMQS24}. But we shall leave this name for the function on the edges later. 

\begin{definition}[Mixed-angulation]\label{defn:mix_angulation}
    Given a marked surface ${\bf S}$ and an order vector ${\bf w}$ compatible with ${\bf S}$. 
    A \DEF{${\bf w}$-mixed angulation} of ${\bf S}$ is an arcs system $\AA$ that divides $S$ into $r$ \DEF{complementary polygons} $\FF=\{F_1, \cdots, F_r\}$ such that $F_i$ is exactly a topological $(w_i+2)$-gon for all $1\leq i\leq r$. 
\end{definition}
The term ``$+2$'' is inherited from the setting for \qdf s: a zero of order $w$ in a \qdf\ is a cone of angle $(w+2)\pi$ in the corresponding flat surface.  

We are allowing self-folded objects in the setting. 
Hence a mixed-angulation is merely a cell-decomposition of the surface, not necessarily a simplicial complex decomposition. 
\begin{definition}\label{defn:self_fold}
    An open arc of a mixed-angulation is \DEF{self-folded} if it connects one puncture to a different one such that it is the only arc emitting from the latter puncture.
    A complementary polygon is \DEF{self-glued} if two of its edges are glued to the same open arc of the mixed-angulation.
    
    Equivalently, a self-folded arc is obtained by gluing two successive edges of one polygon. 
    And if the two sides of an arc belong to the same complementary polygon, then the polygon is self-glued. See Figure \ref{fig:poly_self_folded} as examples.
\end{definition}

\begin{figure}
    \makebox[\textwidth][c]{\includegraphics[width=0.61\textwidth]{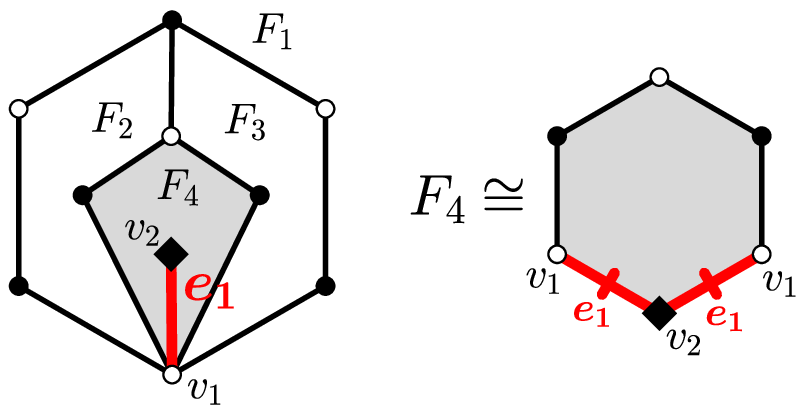}}
    \caption{\footnotesize A $(4,4,4,4)$-mixed angulation of sphere, with a self-folded arc $e_1$ and a self-glued polygon $F_4$.}
    \label{fig:poly_self_folded}
\end{figure}

\medskip \noindent
\textbf{Convention.} Two ${\bf w}$-mixed angulation $\AA_i$ on marked surface ${\bf S_i}=(S, \PP_i)\ (i=1,2)$, with the same underlying topological surface $S$, are said to be equivalent 
if there exits an \ortprehomeo\ $\Psi: S \to S$ such that $\Psi(\PP_1) = \PP_2$ as subsets and $\Psi(\AA_1) = \AA_2$ as homotopy classes relative to $\PP_2$. 
From now on a mixed-angulation will always refer to its equivalent class. 

This equivalence is introduced to cancel out the effect of relabeling on the surface. 
Since we are considering the moduli space of isometry class, rather than other space (like \teich\ space), the action of \homeo s should be regarded as trivial one.  We adopted this convention solely to simplify the description and will not delve deeper into topics such as the mapping class group.

\begin{proposition}
[Mixed-angulation induced from HCMU metric]\label{defn:AA_q} ~\\
    Let $(M,\rho)$ be a generic HCMU surface, with $r>0$ saddle points of cone angle $2\pi\cdot(\alpha_1, \cdots, \alpha_{j_0}),\ \alpha_j\in\ZZ_{>1}$. 
     Define an order vector ${\bf w}\in (2\ZZ_{>0})^{j_0}$ as 
     $$ w_j := 2\alpha_j -2 \ .$$ 
     There is a ${\bf w}$-mixed angulation $\SYM{\AA(\rho)}$ on $M$ induced from $\rho$ as below{\rm :} 
     \begin{itemize}
         \item The puncture set $\PP$ on the marked surface ${\bf M}:=(M, \PP)$ consists of all extremal points of the HCMU metric.  
         \item Let $B_1, \cdots, B_s$ be the pieces of its strip decomposition. Represent each $B_i$ by an open arc $e_i$ isotopic to an interior meridian of the bigon. Then define 
         $\AA(\rho):=\{e_i\}_{i=1}^s$. 
     \end{itemize}
\end{proposition}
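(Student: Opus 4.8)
The plan is to verify that the construction in Proposition \ref{defn:AA_q} actually yields a well-defined $\mathbf{w}$-mixed angulation in the sense of Definition \ref{defn:mix_angulation}. First I would check that $\AA(\rho) = \{e_i\}$ is a legitimate arc system: each $e_i$ is an interior meridian of a bigon $B_i$, and by Corollary \ref{cor:hcmu_strip_decomp} the bigons are glued along meridian segments joining critical points of $K$, while the two vertices of each bigon collapse to extremal points, which are exactly the punctures in $\PP$. So each $e_i$ has its interior in $M \setminus \PP$ and endpoints in $\PP$; since the $e_i$'s live in the interiors of disjoint bigons (after isotopy pushing them off the cut locus), they have no self-intersections and no mutual intersections in $M \setminus \PP$. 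The genericity hypothesis guarantees each bigon boundary carries exactly one marked (saddle) point, which is the feature that makes the combinatorics clean.

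Next I would identify the complementary polygons. Cutting $M$ along the union of meridian segments of the strip decomposition produces the bigons $B_i$; each $e_i$ is isotopic to a meridian, so cutting $M$ along $\AA(\rho)$ instead produces regions that deformation-retract onto neighborhoods of the saddle points. Concretely, the complement of $\AA(\rho)$ is a disjoint union of cells, one around each saddle point $p_j$: a saddle of cone angle $2\pi\alpha_j$ is, by the discussion preceding Corollary \ref{cor:identification}, glued from $2\alpha_j$ marked boundary points of bigons, hence the cell around it is bounded by $2\alpha_j$ arc-sides, i.e.\ it is a topological $2\alpha_j$-gon. Setting $w_j := 2\alpha_j - 2$ this is a $(w_j+2)$-gon, matching the required shape. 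I would also remark that self-folded arcs and self-glued polygons are permitted (Definition \ref{defn:self_fold}), which is why no simplicial-complex hypothesis is needed — e.g.\ a bigon both of whose boundary segments are glued to each other, or a saddle whose surrounding polygon touches a single arc twice, is allowed.

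Then I would check compatibility of $\mathbf{w}$ with the marked surface $\mathbf{M}$ in the sense of Definition \ref{defn:order_vec}, namely $\sum_j w_j = 4g - 4 + 2|\PP|$. This is a Poincar\'e–Hopf / Euler-characteristic count: $|\PP|$ is the number of extremal points and $\sum_j (\alpha_j - 1)$ measures the total ``excess angle'' concentrated at saddle points. One way is to apply the Euler-characteristic formula to the cell decomposition of $M$ given by $\AA(\rho)$: vertices are the $|\PP|$ extremal points, edges are the $s$ bigons, faces are the $j_0$ saddle polygons, so $|\PP| - s + j_0 = 2 - 2g$; combined with the fact that each bigon has two ends and the total corner count around saddles is $\sum_j 2\alpha_j = 2s$ (each bigon contributes two marked boundary points counted... here one must be careful — actually each bigon contributes exactly one marked point by genericity, but that marked point appears as two corners of the surrounding polygon), one gets $\sum_j w_j = \sum_j (2\alpha_j - 2) = 2s - 2 j_0$ and then $2s - 2j_0 = 2(|\PP| - 2 + 2g + j_0) - 2j_0 = 4g - 4 + 2|\PP|$. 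I would also double-check consistency with the angle data via Proposition \ref{prop:curvature}(4), which forces every saddle to be a conical point of integer angle $\geq 2$, so $\mathbf{w} \in (2\ZZ_{>0})^{j_0}$ as claimed.

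The main obstacle I expect is the bookkeeping in the last step, specifically getting the relationship between $s$ (number of bigons), $j_0$ (number of saddles), $|\PP|$ (number of extremal points), and $g$ exactly right, including a correct accounting of self-folded arcs and self-glued polygons, and making sure that "one marked point per bigon boundary" (genericity) translates to the right number of sides ($2\alpha_j$, not $\alpha_j$) on each complementary polygon. A clean way around the delicate corner-counting is to bypass the direct count and instead invoke the classification of how character $1$-forms with simple poles decompose (the references \cite{BrSm15, BMQS24}, or the strip decomposition in \cite{LuXu24}): the strip decomposition of the character $1$-form is literally a $\mathbf{w}$-mixed angulation in their sense, so the compatibility identity is inherited. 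I would present the Euler-characteristic argument as the primary proof and mention the differentials viewpoint as a sanity check.
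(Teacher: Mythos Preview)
Your approach is essentially the paper's: verify that each complementary region is a $2\alpha_j$-gon around a single saddle. The paper's proof is terser than yours (it does not separately check the arc-system axioms or the compatibility identity, taking both as immediate), but the substance is the same.

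One point to clean up. In your corner count you write ``each bigon contributes exactly one marked point by genericity, but that marked point appears as two corners of the surrounding polygon.'' This is a misreading of Definition~\ref{defn:generic_hcmu}: genericity says each \emph{boundary edge} of a bigon carries exactly one marked point, and a bigon has two boundary edges, so each bigon carries \emph{two} marked points, one on each side. The interior meridian $e_i$ separates them, producing two half-bigons, each containing one marked point; these half-bigons are the corners of the complementary polygons. Your identity $\sum_j 2\alpha_j = 2s$ is correct, but the reason is simply ``$2s$ half-bigons $=$ total corners,'' not that a single marked point is counted twice. With this fixed your Euler computation goes through cleanly, and indeed the compatibility check $\sum w_j = 4g-4+2|\PP|$ is then automatic from the cell decomposition $|\PP|-s+j_0=2-2g$ together with $s=\sum_j\alpha_j$, so you can present it as a one-line consequence rather than a separate verification.
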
 
\begin{proof}    
We only need to show that the complementary polygons are prescribed by the cone angles of saddle points. 
Since $(M,\rho)$ is generic, each bigon in the strip decomposition contains exactly 2 marked points on its boundary. And our choice of the open arc representing this bigon separates the 2 marked points. 
Now a saddle point $z_j$ of cone angle $2\pi \alpha_j\ (\alpha_j\in\ZZ_{>1})$ is glued from $2\alpha_j$ marked points, each belonging to a half bigon (see the paragraphs after Corollary \ref{cor:hcmu_strip_decomp}). 
These $2\alpha_j$ half bigons are glued, along boundary segments, to a connected component of $M \setminus \AA(\rho)$. 
Then it is a topological $2\alpha_j$-gon, and the only saddle point inside it is $z_j$ itself. 
Hence we end up with a ${\bf w}$-mixed angulation, whose complementary polygons are 1-1 corresponding to the saddle points of $\rho$. 
See Figure \ref{fig:polya_example} as an example. Also see the dictionary in Table \ref{tab:dict_diff2aglt} later. 
\end{proof}

\begin{figure}
    \makebox[\textwidth][c]{\includegraphics[width=1.1\textwidth]{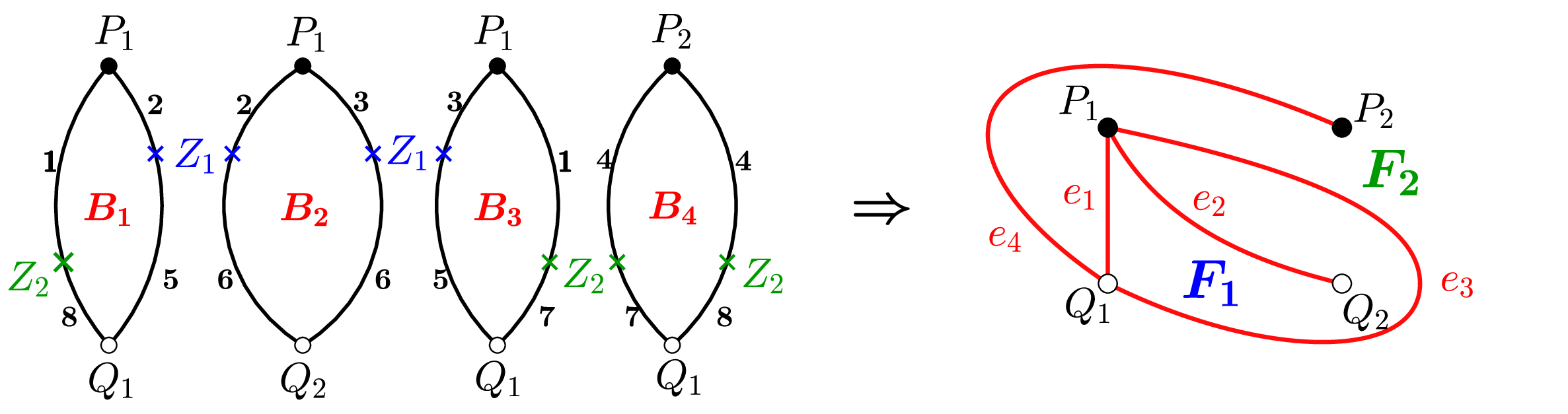}}
    \caption{\footnotesize From strip decomposition to mixed-angulation.  
    (Left). A strip decomposition of a genus 0 surface. Here $Z_1, Z_2$ are the saddle points of angle $4\pi$, and $P_1, P_2$ are two maximum points while $Q_1, Q_2$ are two minimum points. 
    (Right). The induced $(2,2)$-mixed angulation of the sphere. The infinity is contained in complementary polygon $F_2$. $e_2, e_4$ are self-folded edges.}
    \label{fig:polya_example}
\end{figure}

What happens to non-generic surfaces? By the same construction above in such surfaces, each complementary region may contains more than one saddle points. The number of boundary arcs in each region may not faithfully represent the individual cone angle of each saddle points inside it. 
It is still possible to achieve a finer presentation by adding additional data and refining subsequent methods, which we will not explore further in this work.

\bigskip
\subsection{Weighted bi-colored graph and balance equation}\label{ssec:weight_bicolor}

Now let $(M,\rho)$ be a generic HCMU surface. The puncture set $\PP$ and the induced mixed-angulation $\AA(\rho)$ will form a graph on the surface $M$. 
Let's introduce some common notations in graph theory first. 
\begin{definition} 
Let $\mathcal{G}=(V,E)$ be a simple graph without multi-edge or loop.
    \begin{itemize}
        \item If $v \in V$ is a vertex of edge $e \in E$, then we define \SYM{$v \in e$}.
        \item $\SYM{E(v)} := \defset{ e \in E }{ v \in e }$ is the set of all edges connecting to $v$. 
        \item The degree $\SYM{\deg (v)} := \abs{E(v)}$  
        of $v$ is the number of edges connecting to it. 
    \end{itemize} 
\end{definition}

\begin{definition}\label{defn:bicolor}
    A \DEF{bi-colored graph} $\mathcal{G}=(V,E)$ is a graph such that 
    \begin{itemize}
        \item the vertex set $V$ admits a partition $V^+\sqcup V^-$; 
        \item every edge $e\in E$ connects a point in $V^+$ and a point in $V^-$. 
    \end{itemize}
We shall regard vertices in $V^+$ as \DEF{black points}, and $V^-$ as \DEF{white points}. Then every edge in a bi-colored graph connects a black point to a white point.  
\end{definition} 
This is also called a bipartite graph in classical graph theory. We will use the notation $\mathcal{G}=(V^+\sqcup V^-, E)$ to emphasize the partition of vertices. 

\begin{definition}\label{defn:weight_graph}
    A \DEF{weighted graph} is a graph $\mathcal{G}=(V,E)$ together with a positive \DEF{weight function} $\mathcal{W}:E\to \RR_{>0}$ on the edge set. 
\end{definition}

\begin{proposition}\label{defn:weight_bicolor}
    The mixed-angulation $\AA(\rho)$ induced by a generic HCMU surface $(M,\rho)$ canonically corresponds to a weighted bi-colored graph $\mathcal{G}(\rho) := (\PP^+ \sqcup \PP^-, \AA(\rho); \mathcal{W})$. The weight function $\mathcal{W}: \AA(\rho)\to \RR_{>0}$ records the top angle of bigons in strip decomposition divided by $2\pi$.  
    In addition, $\mathcal{G}(\rho)$ is always connected. 
\end{proposition}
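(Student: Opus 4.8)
The plan is to unpack Proposition \ref{defn:weight_bicolor} into three independent assertions and verify each in turn: (i) the vertex set $\PP$ carries a natural bipartition $\PP^+ \sqcup \PP^-$ for which every arc of $\AA(\rho)$ joins the two parts; (ii) assigning to each arc $e_i$ the quantity (top angle of the bigon $B_i$)$/2\pi$ is a well-defined positive weight function; and (iii) the resulting graph $\mathcal{G}(\rho)$ is connected. For (i), recall from Corollary \ref{cor:hcmu_strip_decomp} and the paragraphs following it that, in the strip decomposition, the top vertex of every bigon is glued to a maximum point of the curvature and the bottom vertex to a minimum point. Set $\PP^+$ to be the set of maximum points and $\PP^-$ the set of minimum points; by Proposition \ref{prop:curvature}(2) the extremal values are globally two numbers $K_0 > K_1$, so no point can be simultaneously a maximum and a minimum, and $\PP = \PP^+ \sqcup \PP^-$ is a genuine partition. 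Each arc $e_i$ is isotopic to an interior meridian of $B_i$ (Proposition \ref{defn:AA_q}), hence its two endpoints are the top and bottom vertices of $B_i$, i.e.\ one point in $\PP^+$ and one in $\PP^-$. This also shows $\mathcal{G}(\rho)$ is a genuine bi-colored graph in the sense of Definition \ref{defn:bicolor} (note that $\mathcal{G}(\rho)$ has no loops since an arc joins a maximum to a minimum, and we may treat any parallel arcs as a multigraph or subdivide — I would remark that the "simple graph" hypothesis in the graph-theory definitions is only for notational convenience and the bi-colored/weighted structure makes sense on the cell complex $\AA(\rho)$ regardless).

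For (ii), each bigon $B_i$ in the strip decomposition is, by Corollary \ref{cor:hcmu_strip_decomp}, an HCMU bigon of the common extremal curvatures $K_0, K_1$ of the surface, hence of the form $B_{\alpha_i}(K_0,K_1)$ for a uniquely determined top angle $2\pi\alpha_i$ with $\alpha_i > 0$ (Definition \ref{defn:hcmu_bigon}); set $\mathcal{W}(e_i) := \alpha_i$. Since the strip decomposition is canonical (Corollary \ref{cor:identification}) and the arcs $e_i$ are in bijection with the bigons $B_i$, this is a well-defined function $\AA(\rho) \to \RR_{>0}$, and positivity is immediate from $\alpha_i > 0$. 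Thus $\mathcal{G}(\rho) = (\PP^+\sqcup\PP^-, \AA(\rho); \mathcal{W})$ is a weighted bi-colored graph in the sense of Definitions \ref{defn:bicolor} and \ref{defn:weight_graph}.

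For (iii), connectedness: the bigons $B_1,\dots,B_s$ cover $M$ and $M$ is connected, so the adjacency graph whose nodes are the bigons and whose edges record gluings along boundary meridian segments is connected. I would translate this into connectedness of $\mathcal{G}(\rho)$ as follows. Given two punctures $z, z' \in \PP$, pick a path in $M$ from $z$ to $z'$; after a small perturbation it meets the cut locus (the union of meridian segments of the decomposition) transversally in finitely many points and passes through a sequence of bigons $B_{i_1}, B_{i_2}, \dots, B_{i_m}$ with consecutive ones sharing a boundary segment. Consecutive shared boundary segments lie on a common meridian of the decomposition running from a maximum point to a minimum point, so within this chain one produces an alternating walk of vertices and arcs in $\mathcal{G}(\rho)$ connecting a vertex of $B_{i_1}$ to a vertex of $B_{i_m}$; since $z$ is a vertex of $B_{i_1}$ and $z'$ of $B_{i_m}$, concatenating gives a walk from $z$ to $z'$ in $\mathcal{G}(\rho)$. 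Hence $\mathcal{G}(\rho)$ is connected.

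The main obstacle I expect is the bookkeeping in step (iii) — turning "$M$ is connected and tiled by bigons" into an explicit alternating walk in the bi-colored graph requires being careful about how the boundary geodesics of the bigons sit along the global meridian segments of the decomposition, and about the case where a bigon is glued to itself. A clean way to avoid the transversality argument is instead to observe that $\AA(\rho)$ is a cell-decomposition of $M$ (each $e_i$ is a 1-cell, each complementary polygon a 2-cell, and $\PP$ the 0-cells): the 1-skeleton of a connected CW complex is connected, which is exactly the statement that $\mathcal{G}(\rho)$ is connected. I would present this CW-complex argument as the primary proof of (iii) and relegate the tiling picture to an informal remark.
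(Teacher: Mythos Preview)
Your treatment of (i) and (ii) matches the paper's proof essentially verbatim: set $\PP^+$ and $\PP^-$ to be the maximum and minimum points, observe that each arc $e_i$ runs from one to the other because it is isotopic to an interior meridian of $B_i$, and define $\mathcal{W}(e_i)$ as the top angle of $B_i$ divided by $2\pi$.

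For (iii), your transversality sketch is the paper's own argument: pick a path between two punctures, perturb it off the saddle points, record the sequence of bigons it traverses, and use genericity to conclude that consecutive bigons in the sequence share an extremal-point vertex (because every boundary segment of a bigon in a generic surface has an extremal point at one end). Your proposed primary argument---that $\AA(\rho)$ is a CW decomposition of the connected surface $M$ and hence its $1$-skeleton is connected---is a genuinely different and cleaner route. It sidesteps the bookkeeping about which boundary segment is crossed and does not need to invoke genericity a second time (genericity is already baked into the construction of $\AA(\rho)$ in Proposition~\ref{defn:AA_q}). The paper's path-tracing argument, on the other hand, is more hands-on and makes the edge-by-edge walk in $\mathcal{G}(\rho)$ explicit, which is closer in spirit to how the combinatorics is used later (e.g.\ in the rank computation of Proposition~\ref{prop:rank_hcmu}). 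Either approach is fine here; your CW argument is the more economical one.
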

\begin{proof}
    Let $\PP^+, \PP^-$ be the set of maximum and minimum points of the HCMU surface respectively. Then every open arc in the mixed-angulation, regarded as an edge of the graph, connects a point in $\PP^+$ to a point in $\PP^-$. 
    If an open arc $e_i \in \AA(\rho)$ represents a bigon $B_i$ in the strip decomposition, then define $\mathcal{W}(e_i)$ to be the top angle of $B_i$ divided by $2\pi$. 

    Since $M$ is connected, for any $x_1\neq x_2 \in \PP$, there is a simple open arc $\gamma$ connecting them. We always assume the intersection of $\gamma$ and the cut locus of strip decomposition is finite. Applying local isotopy if necessary, we also assume $\gamma$ is disjoint from the saddle points. 
    Record the bigons that $\gamma$ passes through in order. Since $(M, \rho)$ is generic, two bigons $B_i, B_j$ appear as a adjacent pair in the sequence if and only if $\gamma$ cross a common boundary of them. Hence $e_i, e_j \in \AA(\rho)$ share a vertex in $\PP$, which is the extremal point on that common boundary segment. This induce a path from $x_1$ to $x_2$ in $\mathcal{G}(\rho)$. 
\end{proof}

Now let $\mathcal{W}$ be the weight function induced by a generic HCMU surface $(M,\rho)$ as above. Assume $x_i \in \PP^+$ is a maximum point of cone angle $2\pi\beta_i$. Here a smooth extremal point is always regarded as a cone point of angle $2\pi$. 
Then the sum of all top angles glued to $x_i$ must be $2\pi\beta_i$, according to the strip decomposition and the geometry of bigons. 
This is interpreted as the \DEF{balance equation} on the weight function at $x_i$:
\begin{equation}\label{eq:balance_black}
    \sum_{e\in E(x_i) } \mathcal{W}(e) = \beta_i \ .
\end{equation} 
Similarly, according to the geometry of bigons, if $R$ is the ratio of $(M,\rho)$ in Definition \ref{defn:ratio}, then the balance equation at a minimum point $x_j \in \PP^-$ is 
\begin{equation}\label{eq:balance_white}
    R\cdot \sum_{e\in E(x_j) } \mathcal{W}(e) = \beta_j \ .
\end{equation} 
The phrase ``balance equation'' comes from theory of network flows \cite{NetworkFlows}. One can regarded the top angles as the amount of cargo delivering from warehouses in $\PP^+$, the edges and weights as a transportation network, and $R$ as the damage rate. Then the balance equations simply count the amount of cargo each warehouse in $\PP^-$ will receive. 
In the problem of finding HCMU surfaces with prescribed cone angles, one equivalently needs to find suitable transportation networks fitting the distribution of cargo. 

\begin{remark}\label{rmk:ribbon_graph}
    It seems that the concept of mixed-angulation and bi-colored graph are somehow overlapping. 
    This is true to a certain extent, but the choice of concept depends on the content. 
    When talking about a mixed-angulation, there is always a background surface. The bi-colored graph is embedded into this surface, but can be described independently of the surface.  

    In later application, we need to construct a particular mixed-angulation on some surface. Since the topology of the surface is complicated, it would be easier to construct some graph on the plane first, then embed it into the surface. 
    So we shall switch between the mixed-angulation on surface and bi-colored graph on plane, due to the convenience of proof. 

    One alternative way to overcome this difference is using the concept of ``ribbon graph'', which is a key tool in the study of differentials. However, we are not going to that deep in this work. So it is not introduced here. ~\hfill $\square$
\end{remark}

\bigskip
\subsection{Data set representation of HCMU surfaces}\label{ssec:data_rep}
Now that we've encoded the gluing data of bigons as mixed-angulation of surface, and the top angles as a weighted function on the arcs of the mixed-angulation. 
The final data to determine is the position of the marked boundary  points. This will be encoded as the distance function. 
 
\begin{definition}\label{defn:face_funct}
    A \DEF{face function} of a mixed-angulation $\AA$ is a positive function $\mathcal{L}:\FF \to \RR_{>0}$ on the set $\FF$ of complementary polygons.  
\end{definition}

\begin{proposition}\label{defn:dist_funct}
    For a generic HCMU surface $(M,\rho)$, the length of meridian segments from saddle points to maximum points induces a well-defined face function $\mathcal{L}(\rho)$ of the mixed-angulation $\AA(\rho)$. 
    If $l$ is the length of the common character line element of $(M,\rho)$, then $0<\mathcal{L}(\rho)<l$. 
\end{proposition}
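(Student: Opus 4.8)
The plan is to verify two things: first, that each meridian segment from a saddle point to a maximum point has a well-defined length depending only on the data intrinsic to the HCMU surface (not on auxiliary choices), and second, that assembling these lengths over the boundary arcs of a single complementary polygon produces a single consistent value that can be assigned to that polygon, i.e.\ that the function is well-defined on $\FF$ rather than merely on (saddle point, meridian segment) pairs.

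First I would recall that by Corollary \ref{cor:hcmu_strip_decomp} every HCMU bigon in the strip decomposition has the same extremal curvatures $K_0, K_1$, hence the same underlying character line element $[0,l]$ with $l=l(K_0,K_1)$. A complementary polygon $F$ of $\AA(\rho)$ corresponding to a saddle point $z_j$ of cone angle $2\pi\alpha_j$ is glued from $2\alpha_j$ half-bigons, as established in the proof of Proposition \ref{defn:AA_q}. Each such half-bigon is a half of some $B_{\alpha}(K_0,K_1)$, parametrized by $(v,\phi)\in[0,l]\times[0,\pi\alpha]$ (say), and the marked boundary point through which $z_j$ is glued sits at some parameter value $v = v_0 \in (0,l)$ along the boundary geodesic $\{\phi = 0\}$ or $\{\phi = \pi\alpha\}$. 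The key geometric fact — coming from the rigidity of HCMU footballs and Lemma \ref{lem:glue_hcmu} — is that when bigons are glued isometrically along meridian segments, the identification respects the $v$-parameter, so the value $v_0$ measured in one half-bigon agrees with the value measured in the adjacent one around $z_j$. Therefore the quantity $\mathcal{L}(\rho)(F) := v_0$, the common $v$-coordinate of the marked boundary point $z_j$, is well-defined; geometrically it is exactly the length of the meridian segment from $z_j$ to the maximum point (top vertex, at $v=0$) of any bigon whose boundary contains that marked point. Since $z_j$ is an interior marked point of each closed bigon (not a vertex), we have $0 < v_0 < l$, giving the claimed bound $0 < \mathcal{L}(\rho)(F) < l$.

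The remaining point is independence of the choices made when passing from the surface to $\AA(\rho)$: the strip decomposition itself is canonical (Theorem \ref{thm:hcmu_decomp} and the discussion after Corollary \ref{cor:hcmu_strip_decomp}), and the meridian through a saddle point is an integral curve of $\vec H$, hence intrinsically determined; so the set of meridian segments from $z_j$ toward maximum points, together with their lengths, is an invariant of $(M,\rho)$. Finally, a face function is by Definition \ref{defn:face_funct} just a positive real-valued function on $\FF$, and since $\FF$ is in bijection with the set of saddle points of $\rho$ (Proposition \ref{defn:AA_q}), the assignment $F \mapsto v_0$ described above is precisely such a function. I expect the main obstacle to be the well-definedness across the $2\alpha_j$ half-bigons surrounding a single saddle point: one must argue carefully that all of them see $z_j$ at the same $v$-coordinate, which rests on the fact that adjacent bigons are glued by $v$-preserving isometries along their shared meridian boundaries and that a meridian through a smooth interior point has constant $v$ along a level set of $K$ — equivalently, that $K(z_j)$ is the same no matter which bigon one computes it in, which is guaranteed because $K$ extends continuously over the surface (Proposition \ref{prop:curvature}(1)) and the relation between $K$ and the $v$-coordinate along a character line element is monotone.
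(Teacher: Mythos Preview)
Your proposal is correct, and the argument is complete. The route you take differs from the paper's, though both are short.

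The paper argues \emph{combinatorially with lengths}: around a saddle point $z_j$, the $2\alpha_j$ meridian segments emitted from $z_j$ alternate between ending at a maximum point and ending at a minimum point; any two consecutive ones form one full boundary of a bigon, hence their lengths sum to $l$. From this alternating constraint it follows that all segments to maximum points have a common length $a$, and all segments to minimum points have length $l-a$. Your argument instead uses the \emph{curvature as an invariant}: the gluing of bigons respects the $v$-parameter (equivalently, $K$ is continuous on $M$ and strictly monotone in $v$ along the character line element), so $z_j$ sits at a single well-defined $v_0\in(0,l)$ regardless of which bigon one reads it in. Your approach is a bit more conceptual and immediately explains \emph{why} the length is independent of the half-bigon; the paper's approach is more elementary, using only the fixed total length $l$ of each bigon boundary and the alternating pattern, without invoking the curvature function or the monotonicity of $K(v)$.
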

\begin{proof}
    Follow the setting in the proof of Proposition \ref{defn:AA_q}. The key point is that the length of boundary geodesics in every bigon all equal to $l$. 
    
    If $z_j$ is a saddle point of cone angle $2\pi\alpha_j,\ \alpha_j\in\ZZ_{>1}$, then the complementary polygon containing it is glued by $2\alpha_j$ half bigons. 
    The $4\alpha_j$ boundary segments of these half bigons glued to $2\alpha_j$ meridian segments in pairs, with a cyclic order surrounding $z_j$. 
    These segments, serving as the cut locus of the strip decomposition, end at maximal and minimum points alternately. 
    Any two adjacent segments in the cyclic order form a boundary of a bigon in the decomposition. 
    Since the length of all boundaries of these bigons are $l$, the length of any two adjacent segments in the cyclic order sums to $l$. 
    Hence the meridian segments emitting from $z_j$ and ending at maximal points all have the same length. 
    This is defined to be the distance $\mathcal{L}(\rho)(z_j)\in(0,l)$ from $z_j$ to the maximal points along the meridian. 
    
    Now that each complementary polygon corresponds to a unique saddle point, we obtain a well-defined face function $\mathcal{L}(\rho)$ of $\AA(\rho)$, controlling the position of saddle points. 
    Check Figure \ref{fig:Calabi_2} again for above reasoning.  
    (Note that all meridian segments from a fixed saddle points to minimum points also have the same length. But it will be infinity when minimum points are cusps, by Proposition \ref{prop:limit_cusp}. The distance to maximum points is always finite.)
\end{proof}

\medskip
Now we can state the complete version of Theorem \ref{thm:main_data_rep}.  
All the correspondence between objects on an HCMU surface and in the data set representation are summarized in Table \ref{tab:dict_diff2aglt}. 

\begin{theorem}[Data set representation of generic HCMU surface]\label{thm:hcmu_data}
    There is a 1-1 correspondence between all generic genus $g\geq0$ HCMU surfaces with conical or cusp singularities and the 
    following data sets on a genus $g$ smooth surface $S$: 
    \[ \left( \AA, \PP^+\sqcup \PP^-; K_0, R; \mathcal{W}, \mathcal{L} \right) \]
    where 
    \begin{enumerate}[1. ]
        \item $\AA$ is a ${\bf w}$-mixed angulation of a marked surface ${\bf S}=(S, \PP)$, with ${\bf w}
        \in (2\ZZ_{>0})^{j_0}$ for some positive integer $j_0$; 
        \item $\PP^+\sqcup \PP^-$ is a black and white coloring of $\PP$ such that $\mathcal{G}=(\PP^+\sqcup \PP^-, \AA)$ is a bi-colored graph; 

        \item $K_0>0$ and $0\leq R <1$ are two constants; 
        \item $\mathcal{W}: \AA \to \RR_+$ is a weight function on the set of arcs in the mixed-angulation; 
        \item $\mathcal{L}: \FF \to (0,l)$ is a face function on the set $\FF$ of complementary polygons of $\AA$, with $l=l(K_0,K_1)$ the length of character line element of extremal curvatures $K_0$, $K_1= \frac{2R-1}{2-R}\cdot K_0$. 
    \end{enumerate}
\end{theorem}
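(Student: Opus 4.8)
The plan is to prove the theorem in two directions, since it asserts a bijection. For the forward direction (HCMU surface $\to$ data set), essentially all the work has already been assembled in the preceding propositions: given a generic HCMU surface $(M,\rho)$, we take the strip decomposition of Corollary \ref{cor:hcmu_strip_decomp}, which is canonical, so that $\AA := \AA(\rho)$ is the induced $\mathbf{w}$-mixed angulation of Proposition \ref{defn:AA_q} with $\mathbf{w} = (2\alpha_1-2,\dots,2\alpha_{j_0}-2)$; we set $\PP^+ \sqcup \PP^-$ to be the maximum/minimum points of $K$; we take $K_0 := \max K$ and $R := R_{atio}(K_0,K_1)$ as in Definition \ref{defn:ratio}, noting that Proposition \ref{defn:weight_bicolor} gives the weighted bi-colored graph structure (so $\mathcal{W} := \mathcal{W}(\rho)$) and Proposition \ref{defn:dist_funct} gives the face function $\mathcal{L} := \mathcal{L}(\rho)$ with values in $(0,l)$. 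So the first half is a matter of citing these results and checking that $R \in [0,1)$ (which follows from $-K_0/2 \le K_1 < K_0$, with $R = 0$ exactly when $K_1 = -K_0/2$, i.e. the cusp case) and that the compatibility condition $\sum w_j = 4g-4+2|\PP|$ holds, which is a Euler-characteristic / combinatorial count for the mixed-angulation already built into Definition \ref{defn:mix_angulation}.

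For the reverse direction (data set $\to$ HCMU surface), I would argue by explicit construction. Given $(\AA, \PP^+ \sqcup \PP^-; K_0, R; \mathcal{W}, \mathcal{L})$, first reconstruct the extremal curvatures: set $K_1 := \frac{2R-1}{2-R} K_0$ (inverting \eqref{eq:Ratio}), so that \eqref{eq:K0K1_extd} holds, and let $l = l(K_0,K_1)$ be the length of the associated character line element from Definition \ref{defn:line_element}. For each arc $e_i \in \AA$ build the HCMU bigon $B_{\mathcal{W}(e_i)}(K_0,K_1)$ of Definition \ref{defn:hcmu_bigon}; these all share the same character line element and hence the same boundary-geodesic length $l$. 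The combinatorial data of the mixed-angulation tells us how the boundary segments of these bigons are to be paired: each complementary polygon $F_j$ (a $(w_j+2)$-gon) dictates that $2\alpha_j = w_j + 2$ half-bigon corners are cyclically glued around a single point, and the face value $\mathcal{L}(F_j) \in (0,l)$ fixes where on each boundary geodesic the marked (saddle) point sits — the complementary segment then has length $l - \mathcal{L}(F_j)$, which is what makes adjacent-segment lengths sum to $l$ and guarantees a consistent gluing, exactly as in the proof of Proposition \ref{defn:dist_funct} read backwards. Gluing along these meridian segments by isometries respecting the $v$-parameter yields a closed surface with a metric that is HCMU away from the glued vertices (Lemma \ref{lem:glue_hcmu} guarantees smoothness of the metric across the interiors of the cut locus because all bigons have the same extremal curvatures), with the prescribed cone structure: conical points of angle $2\pi\beta_i$ at each $x_i \in \PP^+$ by the balance equation \eqref{eq:balance_black}, angle $2\pi R \beta_i$ at $x_j \in \PP^-$ by \eqref{eq:balance_white} (or cusps when $R=0$, via Proposition \ref{prop:limit_cusp} and Lemma \ref{lem:limit_cusp_f}), and angle $2\pi\alpha_j = \pi(w_j+2)$ saddle points at the center of each $F_j$. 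The genus of the result is $g$ by construction since $\AA$ lives on $S$.

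Finally I would check that the two maps are mutually inverse. That the round trip data $\to$ surface $\to$ data returns the original data is essentially definitional: the strip decomposition of the glued surface is the one we glued it from (because the bigons were built from character line elements, the cut locus is recovered as the meridian segments through saddle points), so $\AA$, the coloring, $\mathcal{W}$, $\mathcal{L}$, $K_0$, $R$ all come back unchanged. The other round trip, surface $\to$ data $\to$ surface, is precisely the content of Corollary \ref{cor:identification}: two HCMU surfaces with the same strip decomposition and gluing pattern are isometric, and the data set records exactly the gluing pattern (via $\AA$, $\PP^+\sqcup\PP^-$, $\mathcal{L}$) together with the metric parameters of the pieces (via $K_0$, $R$, $\mathcal{W}$), so reconstructing from the data gives back an isometric surface.

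The main obstacle I expect is the reverse-direction gluing: verifying that the combinatorial pairing prescribed by a general mixed-angulation — including self-folded arcs and self-glued polygons (Definition \ref{defn:self_fold}), and in positive genus where the cyclic orderings around saddle points interlock globally — can always be realized by actual isometric gluings of the bigons, and that the resulting surface is connected, orientable, closed of genus $g$, with the metric genuinely extending to an HCMU metric (not merely a piecewise-smooth one) across every component of the cut locus. The smoothness across cut-locus interiors is handled by Lemma \ref{lem:glue_hcmu}; the delicate bookkeeping is the cone-angle accounting at the three types of special points and confirming that no unexpected singularity appears along edges, which is where the hypothesis that $\mathbf{w}$ is compatible with $\mathbf{S}$ and that $\mathcal{G}$ is bi-colored and connected does the essential work.
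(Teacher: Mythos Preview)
Your proposal is correct and follows essentially the same approach as the paper: forward direction by citing Corollary \ref{cor:hcmu_strip_decomp} and Propositions \ref{defn:AA_q}, \ref{defn:weight_bicolor}, \ref{defn:dist_funct}; reverse direction by building the bigons $B_{\mathcal{W}(e_i)}(K_0,K_1)$, placing marked boundary points via $\mathcal{L}$, gluing along meridian segments with Lemma \ref{lem:glue_hcmu}, and reading off cone angles from the balance equations; bijectivity via Corollary \ref{cor:identification}. The paper's write-up is slightly more explicit about one bookkeeping device you leave implicit --- it orients each edge of $\mathcal{G}$ from black to white so as to unambiguously speak of the ``left'' and ``right'' complementary polygons $F_i^L, F_i^R$ of each arc, which pins down which boundary of $B_i$ carries the marked point at distance $\mathcal{L}(F_i^L)$ versus $\mathcal{L}(F_i^R)$ --- but this is exactly the mechanism underlying your ``adjacent-segment lengths sum to $l$'' remark, and the self-folded case you flag as a concern is dispatched in one line.
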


\begin{table}[t]
\centering
\renewcommand\arraystretch{1.25}
    \begin{tabular}{c|c}
    \hline
    \textbf{generic HCMU surface}    &    \textbf{data set representation} \\ \hline \hline
    maximum points    &    black points $\PP^+$ \\ \hline
    maximum points    &    white points $\PP^-$ \\ \hline
    bigons in strip decomposition    &   arcs in mixed-angulation $\AA$ \\ \hline
    a saddle point of angle $2\pi\alpha$    &    a complementary $(2\alpha)$-gon in $\FF$ \\ \hhline{=|=}
    top angles of bigons    &   weight function $\mathcal{W}:\AA \to \RR_{>0}$ \\ \hline
    position of saddle points     &    face function $\mathcal{L}:\FF \to (0,l)$ \\ \hline
    cone angle at extremal points     &    balance equations \\ \hline
    \multicolumn{2}{c}{ the ratio $R\in[0,1) $ } \\ \hline
    \multicolumn{2}{c}{ the maximum curvature $K_0 >0$ } \\ \hline
    \multicolumn{2}{c}{ the minimum curvature $K_1 = \frac{2R-1}{2-R}\cdot K_0$ } \\ \hline
    \multicolumn{2}{c}{ the length of character line element $l=l(K_0,K_1)$ } \\ \hline
    \end{tabular}
\vspace{5pt}
\caption{\footnotesize A dictionary between generic HCMU surface and its data set representation. The first 4 lines are discrete data, the middle 3 lines are continuous data, and the last 4 lines are continuous parameters in common. }
\label{tab:dict_diff2aglt}
\end{table}

\begin{proof}
    The procedure from an generic HCMU surface to a desired data set is the combination of Corollary \ref{cor:hcmu_strip_decomp}, Proposition \ref{defn:AA_q}, \ref{defn:weight_bicolor} and \ref{defn:dist_funct}. 

    \medskip
    The reverse procedure also holds. Let $\{e_i\}_{i=1}^{s}$ be the arc of mixed-angulation $\AA$, and $K_0, K_1, R, l$ be the constants given by the third and fifth conditions above. 
    First orient $\mathcal{G}$ so that every edge $e_i\in\AA$ start from a black point in $\PP^+$. This is feasible because $\mathcal{G}$ is bi-colored.
    
    The two functions $\mathcal{W, L}$ together with the constant $K_0, R$ determine the shape of all bigons to be glued, together with the positions of marked boundary  points. 
    For an arc $e_i\in \AA$, let $F_i^L, F_i^R \in \FF$ be the complementary polygons on the left and right hand side of $e_i$ with respect to the orientation 
    of $\mathcal{G}$. 
    Then construct the bigon $B_i$ to be the one of extremal curvatures $K_0, K_1$ and top angle $\mathcal{W}(e_i)$. Put it on the plane so that the top vertex is on the top. Then the marked point on the left (right) boundary of $B_i$ has the distance of $\mathcal{L}(F_i^L)$ ($\mathcal{L}(F_i^R)$) away from the top vertex. 

    Two bigons $B_i, B_j$ are glued along boundary segments connecting to their top vertices, if and only if $e_i, e_j$ are two successive boundary side of some polygon $F_k\in \FF$ with a common black vertex in $\PP^+$. When this happens, we may assume that $B_i$ is on the left of $F_k$ with respect to our given orientation. Hence $F_k$ is on the right side of $e_i$ and the left side of $e_j$. Then the upper right boundary segment of $B_i$ has the same length as the upper left boundary segment of $B_j$, all equal to $\mathcal{L}(F)$ by construction. They can be glued smoothly by Lemma \ref{lem:glue_hcmu}. 
    In particular, if $e_i$ is self-folded with a degree 1 black vertex $x\in\PP^+$, then the upper left and upper right boundary segment of $B_i$ are glued in pair. 
    The case for bottom vertices are similar. 

    By the half-bigon argument used in the proof of Proposition \ref{defn:AA_q}, the resulting surface is a genus $g$ HCMU surface whose induced mixed-angulation is precisely $\AA$. 
    A complementary polygon $F_i\in\FF$ corresponds to a saddle point of angle $(w_i+2)\pi$. There are $j_0$ saddle points in total. 
    
    All punctures in $\PP$ corresponds to extremal points. We temporarily regard a smooth extremal point as a cone point of angle $2\pi$. 
    For any $x_i\in\PP$, define $\alpha_i := \sum_{e\in E(x_i)} \mathcal{W}(e)$. 
    If $x_i\in\PP^+$ is a black point, then it corresponds to a maximum point of angle $2\pi\alpha_i$. 
    If $x_j\in\PP^-$ is a white point, then it corresponds to a minimum point of angle $2\pi R \alpha_j $.

    \medskip
    Finally, by Corollary \ref{cor:identification}, this correspondence is a bijection.  
\end{proof}

\begin{figure}[b]
    \subfigure[]{
    \includegraphics[width=0.8\textwidth]{fig2/Calabi_2.eps}
    }

    \subfigure[]{
    \includegraphics[width=0.75\textwidth]{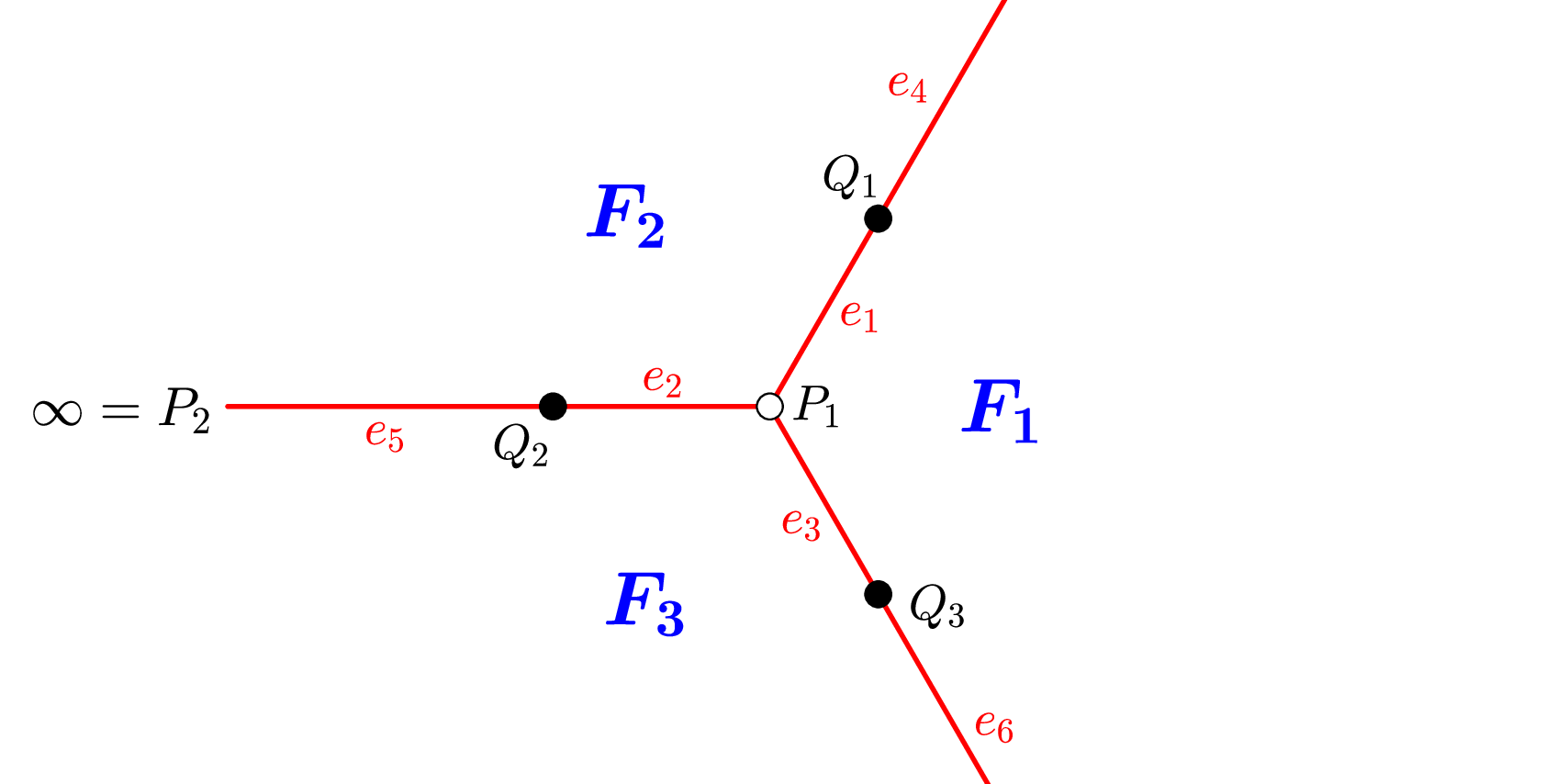}
    }
    \label{fig:Calabi_3_b}
    
    \caption{\footnotesize The mixed $(2,2,2)$-angulation of the sphere in Calabi's example.}
    \label{fig:Calabi_3}
\end{figure}

Let's conclude this section by checking Calabi's example again.    
\begin{example}  
    The induced mixed-angulation of Calabi's example in Figure \ref{fig:Calabi_pt1} is show in Figure \ref{fig:Calabi_3} (For the reader's convenience, Figure \ref{fig:Calabi_2} is copied to here). It is drawn as a bi-colored graph on sphere, with infinity being a white vertex.  

    As before, the ratio is $R=\frac23$. Recall that the strip decomposition consists of 6 identical bigons $B_{\pi}(K_0, K_1)$, with the marked boundary  points lies at exactly the same position. Then $\mathcal{W}$ is a constant weight function valuing at $\frac12$, and $\mathcal{L}$ is also a constant equal to the length of segment labeled by 7---12 (all the same). 

    The balance equation at each black vertex reads as $\frac12+\frac 12 = 1$. The equation at each white vertex reads as $\frac23(\frac12+\frac12+\frac12) = 1$. Hence all the 5 extremal points are actually smooth.
\end{example}

\newpage
\section{Existence theorem for HCMU surfaces}\label{sec:exist_thm} 
The first application of data set representation is the angle constraints for HCMU surfaces. This proof of Theorem \ref{thm:angle_cnstr_hcmu} has more topological and graph theoretical flavour. 

Before the proof of existence theorem, we will introduce a refined moduli space, where the type of each conical singularity is also prescribed. This is equivalent to fix the stratum of meromorphic differentials in which the character 1-forms lie. Then we state the refined version of angle constraints (Theorem \ref{thm:angle_cnstr_hcmu_new}). The original theorem is then a direct corollary.  

The necessity and sufficiency of the refined angle constraints are established in two subsections. The proof of necessity involves a detailed study on the number of extremal and saddle points. The proof of sufficiency is done by a graph theoretical construction on topological surfaces. 

\subsection{Type partition and refined moduli space}
Recall that every conical or cusp singularity is either a saddle point or an extremal point (Proposition \ref{prop:curvature}). 
When constructing an HCMU surface with prescribed angles, the cone angle of a saddle point must be an integer multiple of $2\pi$. On the other hand, the cone angle of a maximum or minimum point can be arbitrary positive number. 
Hence there is possibly a choice of type as which every conical singularity is realized. Equivalently, we are choosing whether each cone singularity is realized as a zero or a pole of the character 1-form. 
This leads to the following definition of refined moduli space. Our main theorems on moduli spaces are derived from the results on refined ones. 

\begin{definition}\label{defn:type_ptt_hcmu} 
    Given an angle vector $\vec{\alpha} \in \left( \mathbb{R}_{\geq 0} \setminus \{1\} \right) ^n $.
    A \DEF{type partition} of $\vec{\alpha}$ is a partition $\SYM{\vec{T}}=( Z, P^+, P^- )$ of $\{1,\cdots,n\}$ satisfying that
    \begin{itemize}
        \item $\forall i\in Z$, $\alpha_{i}\in \ZZ_{>1}$;
        \item if there exists some $\alpha_k=0$, then $P^- := \{ \ i\ \lvert\ \alpha_i=0\ \}$; 
        otherwise there is no restriction on $P^\pm$.
    \end{itemize}
\end{definition}

\begin{definition}[Refined moduli space] \label{defn:M_hcmu_T}
    \SYM{$\MhcmunT$} is the subset of $\Mhcmun$ consisting of isometry classes of surfaces such that
    \begin{itemize}
        \item $\forall i\in Z\ \ $, the $i$-th singularity $p_i$ is a saddle point of angle $2\pi\alpha_i$; 
        \item $\forall j\in P^+ $, the $j$-th singularity $p_j$ is a maximum point of angle $2\pi\alpha_j$; 
        \item $\forall k\in P^- $, the $k$-th singularity $p_k$ is a minimum point of angle $2\pi\alpha_k$; 
    \end{itemize}
\end{definition}

\noindent{\bf Convention.} 
When the components of $\vec{\alpha}$ share some same positive value, and if two type partitions are differed by a re-labeling of cone points with the same angle, we regarded them as the same (or equivalent) partition. 
For example, let $(\alpha_1,\alpha_2,\alpha_3,\alpha_4,\alpha_5) = (4, \frac12, \frac12, \frac13, \frac13)$. Then $(\{1\},\{2\},\{3,4,5\})$ and $(\{1\},\{3\},\{2,4,5\})$ are the same partition, but different from $(\{1\},\{4\},\{2,3,5\})$.

\medskip
Theorem \ref{thm:angle_cnstr_hcmu} is a direct corollary of the following refined version. 

\begin{theorem}[Refined angle constraints]\label{thm:angle_cnstr_hcmu_new}
    Always assume that $k>0$ and $Z\subset\{1,\cdots, k\}$ non-empty with $\abs{Z}=j_0>0$.
    Let
    \begin{align}
        m(Z) &:= \sum_{i\in Z} \alpha_i - (2g-2+n) \ , \label{eq:m(Z)} \\
        a(Z) &:= \sum_{i\in Z} (\alpha_i-1) - (2g-2) \geq m(Z) \ . \label{eq:a(Z)}
    \end{align}
    Then for a fixed $Z$, there exists a suitable type partition $\vec{T}=(Z,P^+,P^-)$ with non-empty $\MhcmunT$, if and only if the following conditions hold.
    \begin{enumerate}[(A). ]
        \item When none of $\alpha_i$ is zero, 
        \begin{enumerate}[(\Alph{enumi}.1).]
            \item $a(Z) \geq 3,\ m(Z) \geq 0$, or 
            \item $a(Z) = 2,\ m(Z) =1$, or
            \item $a(Z) = 2,\ m(Z) =0$ and $\alpha_{n-1} \neq \alpha_{n}$ (in this case $j_0 = k = n-2$). 
        \end{enumerate}
         In particular, when $\MhcmunT$ is non-empty, it always contains an HCMU surface with exactly 1 minimum point, i.e. $\abs{P^-} \leq 1$.
        \item When there are $q>0$ zeros among $\alpha_i$'s, 
            $a(Z) \geq q+1,\ m(Z) \geq 0$ .
    \end{enumerate}
   
\end{theorem}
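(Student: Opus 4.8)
The proof runs through the data set representation, Theorem~\ref{thm:hcmu_data}: a surface lies in $\MhcmunT$ for some $\vec T=(Z,P^+,P^-)$ exactly when there is an admissible data set $(\AA,\PP^+\sqcup\PP^-;K_0,R;\mathcal W,\mathcal L)$ realizing the prescribed cone angles. Write $N^+,N^-$ for the numbers of maximum, resp. minimum, points (smooth ones included), $s$ for the number of bigons in the strip decomposition, and $j_0=|Z|$ for the number of saddle points. Euler's formula $|\PP|-s+j_0=2-2g$ for the mixed-angulation, together with $\sum_{i\in Z}(2\alpha_i-2)=\sum_{F\in\FF}(\deg F-2)=2s-2j_0$, yields
\[ N^++N^-=|\PP|=\sum_{i\in Z}(\alpha_i-1)-(2g-2)=a(Z),\qquad \#\{\text{smooth extremal points}\}=a(Z)-(n-j_0)=m(Z). \]
Summing the balance equations \eqref{eq:balance_black} over $\PP^+$ and \eqref{eq:balance_white} over $\PP^-$ forces $R$ to equal $\bigl(\sum_{k\in P^-}\alpha_k+s^-\bigr)\big/\bigl(\sum_{j\in P^+}\alpha_j+s^+\bigr)$, where $s^\pm$ counts the smooth maxima/minima, and the admissibility conditions of Theorem~\ref{thm:hcmu_data} require $0\le R<1$ ($R=1$ being the excluded CSC case, $R=0$ the cusp case). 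HCMU footballs are excluded from the start since $j_0>0$, and for non-generic, non-football surfaces the two displayed identities survive verbatim by the same Euler-characteristic count carried out on the football decomposition of Theorem~\ref{thm:hcmu_decomp}.

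\textbf{Necessity.}
Assume $\MhcmunT\neq\emptyset$. From the dictionary, $a(Z)=N^++N^-\ge2$ and $m(Z)\ge0$. If $q>0$ of the $\alpha_i$ vanish, then $R=0$ and the white balance equations force every minimum point to have angle $0$ (Proposition~\ref{prop:limit_cusp}); hence all $q$ cusps are precisely the minimum points, $N^-=q$ and $N^+\ge1$, so $a(Z)\ge q+1$, which is (B). If instead $a(Z)=2$, there is one maximum $P$ and one minimum $Q$, and $R$ equals (angle at $Q$)/(angle at $P$); then $m(Z)=2$ would make both angles $2\pi$ and $R=1$, impossible, so $m(Z)\le1$, and $m(Z)=0$ makes both $P,Q$ prescribed cone points whose angles must then be distinct, while $a(Z)=2$ forces $g\ge1$ (since $\alpha_i-1\ge1$ for $i\in Z\subseteq\{1,\dots,k\}$ rules out $g=0$) and, matched against the labeling convention, pins these angles down as $\alpha_{n-1}\neq\alpha_n$ with $j_0=k=n-2$. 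Collecting cases gives exactly (A.1)--(A.3).

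\textbf{Sufficiency.}
It suffices to produce one admissible data set. First fix the type partition: split $\{1,\dots,n\}\setminus Z$ into $P^+\sqcup P^-$ and choose $s^+,s^-\ge0$ with $s^++s^-=m(Z)$, $N^\pm\ge1$, and $R:=\bigl(\sum_{P^-}\alpha+s^-\bigr)\big/\bigl(\sum_{P^+}\alpha+s^+\bigr)<1$. When $a(Z)\ge3$ this is elementary (load the $+$ side with the largest prescribed extremal angle, or with extra smooth maxima); when $a(Z)=2$, conditions (A.2)/(A.3) are exactly what makes $R<1$ achievable; when there are $q$ cusps, put all of them into $P^-$ with $s^-=0$, giving $R=0$ and $N^+=a(Z)-q\ge1$. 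Set $K_0=1$, $K_1=\tfrac{2R-1}{2-R}$, and $l=l(K_0,K_1)$. Next build a connected bi-colored graph $\mathcal G$ on a genus-$g$ surface with $N^+$ black and $N^-$ white vertices and $j_0$ complementary faces, the face of the saddle $p_i$ being a topological $2\alpha_i$-gon (consistent since $\sum_i 2\alpha_i=2s$ with $s=a(Z)+j_0+2g-2$), arranged so that it is generic (Definition~\ref{defn:generic_hcmu}) and the balance equations \eqref{eq:balance_black}--\eqref{eq:balance_white} admit a positive solution $\mathcal W:\AA\to\RR_{>0}$; finally take any $\mathcal L:\FF\to(0,l)$. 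Theorem~\ref{thm:hcmu_data} then yields the required surface.

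\textbf{The hard part.}
The one substantial step is this last construction of $\mathcal G$: on a surface of the prescribed genus, realize the prescribed black/white vertex counts and prescribed face degrees by a generic bi-colored graph that moreover supports a positive weight function solving the linear balance system. I would carry it out by an explicit incremental recipe --- first a planar ($g=0$) model assembling the required polygons into a wheel/chain pattern in the spirit of \cite{CCW05}, then verifying feasibility of the balance system over $\RR_{>0}$ by a Hall/network-flow argument that exploits the strict inequality $R<1$ and the slack in $\mathcal W$, and finally attaching $g$ handles by the standard polygon splicing without destroying feasibility; the thin stratum $a(Z)=2$ is treated separately by gluing $j_0+2g$ bigons cyclically around the $P$--$Q$ axis. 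Everything else --- the dictionary identities and the case bookkeeping in the necessity part --- is routine once this is in place.
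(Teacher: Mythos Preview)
Your necessity argument is correct and essentially identical to the paper's (which packages it as Corollary~\ref{cor:necessity} via the Poincar\'e--Hopf index count in Lemma~\ref{lem:extrm_set}). The numerical dictionary you set up is exactly Lemma~\ref{lem:extrm_set} and Lemma~\ref{lem:number_b}.

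The gap is in sufficiency. You correctly identify the ``hard part'' --- producing a bi-colored mixed-angulation with prescribed face degrees and a positive solution to the balance system --- but you do not carry it out; the sketch (``planar wheel/chain, then Hall/network-flow feasibility, then handle splicing'') is plausible but is not a proof, and in particular the feasibility step is where all the content lives. The paper sidesteps this entirely by a key simplification you do not make: it always constructs the surface with \emph{exactly one} minimum point. Concretely, start from the canonical $(4g)$-angulation of $S_g$ with one black and one white vertex (Lemma~\ref{lem:canonical_polygon}), then subdivide its single complementary $(4g+2)$-gon into the required $(w_i+2)$-gons by adding $j_0-1$ diagonal arcs together with $L=a(Z)-2$ self-folded arcs carrying new black vertices (Lemma~\ref{lem:poly_poly}; this is where $a(Z)\ge 2$ is used). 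Now every edge meets the unique white vertex, so the white balance equation is just $R$ times the sum of all black equations and is automatically satisfied; the black equations are solved trivially by $\mathcal W(e)=\beta_{\tau(i)}/\deg(x_i)$. Assigning the \emph{smallest} expected extremal angle to the white vertex makes $R=\beta_{\min}/(\sum\beta_i-\beta_{\min})\le 1/2$ when $a(Z)\ge3$, and the two $a(Z)=2$ subcases (A.2), (A.3) fall out by hand with a constant weight function. The cusp case adds $q-1$ self-folded white vertices before the subdivision, and since $R=0$ the white equations vanish.

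This one-white-vertex trick is what turns your ``hard part'' into a two-lemma exercise, and it also delivers the ``in particular, $|P^-|\le1$'' clause of the statement for free --- something your outline does not address, since you allow arbitrary $N^-$ from the start.
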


\begin{remark}\label{rmk:j0_positive}
    As Remark \ref{rmk:angle_cnstr_hcmu}-(i), when $k=0$ or $j_0=0$, the moduli space must consist completely of HCMU footballs. 
    Hence we shall omit these cases and assume that $k \geq j_0 >0$ unless otherwise stated.    
\end{remark}

This theorem can be regarded as a re-statement of results in \cite{CCW05} and \cite{Lzy07}. In fact, it is proved by a similar idea, but with the language of mixed-angulation and weighted bi-colored graph. The process here is more intuitive and friendly for visualization, and also useful for the study of the whole moduli space. 

The angle constraints for arbitrary type partition seems to be challenging. Its difficulty will be partially revealed during our study of HCMU surface with one conical singularity in Section \ref{sec:S_alpha}.
    
\begin{table}[t]
\centering
\renewcommand\arraystretch{1.3}
\begin{tabular}{ccc|cl}
\multicolumn{3}{c|}{no cusp}                                                                                                      & \multicolumn{2}{c}{$q>0$ cusps}          \\ \hhline{===|==}
\multicolumn{1}{c|}{$a(Z) \geqslant 3$} & \multicolumn{2}{c|}{$a(Z)=2$}                                                          & \multicolumn{2}{c}{$a(Z) \geqslant q+1$} \\ \cline{1-3}
\multicolumn{1}{c|}{$m(Z) \geqslant 0$}  & \multicolumn{1}{c|}{$m(Z) = 1$}  & $m(Z) = 0$                                          & \multicolumn{2}{c}{$m(Z) \geqslant 0$}        \\
\multicolumn{1}{c|}{}                    & \multicolumn{1}{c|}{$n-j_0 = 1$} & $n-j_0 = 2$                                         & \multicolumn{2}{c}{}                     \\
\multicolumn{1}{l|}{}                    & \multicolumn{1}{l|}{}            & \multicolumn{1}{l|}{$\alpha_{n-1} \neq \alpha_{n}$} & \multicolumn{1}{l}{}          &         
\end{tabular}
\vspace{3pt}
\caption{\footnotesize A summary of Theorem \ref{thm:angle_cnstr_hcmu_new} (similar for Theorem \ref{thm:angle_cnstr_hcmu}).} 
\label{tab:angle_cnst}
\end{table}

\medskip
\begin{proof}[Proof of Theorem \ref{thm:angle_cnstr_hcmu} by Theorem \ref{thm:angle_cnstr_hcmu_new}] ~

    We only prove part (A). The case with cusps is similar, since every cusp must be a minimum point. 
    Obviously, $\Mhcmun$ is non-empty if and only if there exists some type partition $\vec{T}=(Z, P^+, P^-)$ with non-empty $\MhcmunT$. As mentioned in Remark \ref{rmk:j0_positive}, we only consider the case where $\abs{Z}>0$. 
    
    When $Z=\{1, \cdots, k\}$, $m(Z)=m_0,\ a(Z)=a_0$ and conditions in the two theorems are identical. 
    Hence the constraints in Theorem \ref{thm:angle_cnstr_hcmu}-(A) are always sufficient. 

    On the other hand, since $\alpha_i\geq2$ for all $1\leq i \leq k$, whenever $Z_1\subset Z_2 \subset \{1,\cdots,k\}$, $m(Z_1)\leq m(Z_2)$ and $a(Z_1)\leq a(Z_2)$. And if $Z_1\subsetneq Z_2$, the inequality is also strict. 
    Assume $\MhcmunT$ is non-empty for some type partition $\vec{T}=(Z, P^+, P^-)$. By the three cases in Theorem \ref{thm:angle_cnstr_hcmu_new}-(A), $a(Z)\geq2, m(Z)\geq 0$ always hold. 
    If $Z = \{1,\cdots,k\}$ then we already know conditions in the two theorems are the same.
    Otherwise $Z\subsetneq \{1,\cdots,k\}$, then $m_0>m(Z)\geq0,\ a_0 > a(Z) \geq 2$, and (A.1) of Theorem \ref{thm:angle_cnstr_hcmu} is satisfied. 
    Hence the constraints in Theorem \ref{thm:angle_cnstr_hcmu}-(A) are also necessary. 
\end{proof}

\bigskip
\subsection{The necessity of angle constraints}\label{ssec:necessity}
The proof here is an application of \PH\ formula for vector fields on surfaces. 

The \DEF{index} at a singularity of a vector field is defined as the total change in angle of the tangent vector when going around a small primitive loop around the singularity. 
The \DEF{\PH\ formula} asserts that, for a vector field on a smooth surface with finitely many singularities, the total sum of the index over all singularities equals to the Euler number. 
See \cite[Part Two, Chatper III]{Hopf83}, as an example, for the precise definition and proof of \PH\ formula. 

Suppose $(M,\rho) \in \MhcmunT$ with type partition $\vec{T}=(Z, P^+, P^-)$, $1 \leq \abs{Z} = j_0 \leq k$. 
Assume that there are $m^+\geq0$ and $m^-\geq0$ number of extra smooth maximum and minimum points on $(M,\rho)$. 

\begin{lemma}\label{lem:extrm_set}
    We have
    \begin{equation}\label{eq:PH_for_hcmu}
        \sum_{i\in Z} (1-\alpha_i) + (\abs{P^+}+m^+) + (\abs{P^-}+m^-) = 2-2g \ .
    \end{equation}
    Hence,  
    \begin{equation}\label{eq:extra_pole_hcmu}
        m = m(Z) := \sum_{i\in Z}\alpha_i - (2g-2+n) = m^+ + m^- 
    \end{equation}
    is exactly the number of smooth extremal points, and
    \begin{equation}\label{eq:pole_hcmu}
        a = a(Z) := \sum_{i\in Z}(\alpha_i -1) - (2g-2) = (\abs{P^+} + m^+) + (\abs{P^-} + m^-) 
    \end{equation}
    is the total number of all extremal points for every surface in $\MhcmunT$. 
\end{lemma}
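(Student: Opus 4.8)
The plan is to apply the Poincar\'{e}--Hopf formula to the Killing vector field $\vec{V}$ (equivalently $\vec{H}$) on the closed surface $M$, and to bookkeep the index contribution of each singularity. By Proposition \ref{prop:vector_fields} and Proposition \ref{prop:curvature}, the singular set of $\vec{V}$ is precisely the union of the extremal points, the saddle points, and the remaining conical/cusp singularities; but by the type partition every singularity of $\rho$ is either an extremal point (those in $P^+\sqcup P^-$, plus the $m^+$ smooth maxima and $m^-$ smooth minima) or a saddle point (those indexed by $Z$). So I would first enumerate:
\begin{itemize}
    \item each extremal point, whether smooth (cone angle $2\pi$) or a conical point of angle $2\pi\alpha_j$;
    \item each saddle point, a conical point of angle $2\pi\alpha_i$ with $\alpha_i\in\ZZ_{>1}$.
\end{itemize}

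Next I would compute the index of $\vec{V}$ at each type of point. Since $\vec{V}$ is the real part of the complex gradient ${\rm grad}^{(1,0)}K$ and its integral curves are the level sets of $K$ (circles around an extremal point), at a \emph{smooth} extremal point $\vec{V}$ looks like a center (rotational Killing field), contributing index $+1$. At a conical extremal point of angle $2\pi\alpha_j$ the same rotational picture holds but the cone angle scales the total turning, giving index $+\alpha_j$ (equivalently, in a uniformizing coordinate where the cone is flattened, one picks up the factor $\alpha_j$; one should double-check the sign convention so that a maximum and a minimum both contribute $+\alpha_j$, which matches the $\abs{P^+}+\abs{P^-}+m^++m^-$-type terms appearing with a plus sign). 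At a saddle point of angle $2\pi\alpha_i$, the field $\vec{H}$ (hence $\vec{V}$) has the local model of the gradient/level-set field of a function with a genuine saddle on a cone of angle $2\pi\alpha_i$; a flat saddle on angle $2\pi$ has index $-1$, and scaling the cone angle by $\alpha_i$ turns this into index $1-\alpha_i$. Summing all indices and equating with $\chi(M)=2-2g$ gives exactly \eqref{eq:PH_for_hcmu}:
\[
    \sum_{i\in Z}(1-\alpha_i) + (\abs{P^+}+m^+) + (\abs{P^-}+m^-) = 2-2g \ .
\]

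Finally, the two displayed consequences \eqref{eq:extra_pole_hcmu} and \eqref{eq:pole_hcmu} are pure algebra: rearrange \eqref{eq:PH_for_hcmu} using $n = j_0 + \abs{P^+} + \abs{P^-}$ (the conical/cusp singularities split into $j_0=\abs{Z}$ saddles and $\abs{P^+}+\abs{P^-}$ non-smooth extremal points) and the definitions \eqref{eq:m(Z)}, \eqref{eq:a(Z)}. Writing $\sum_{i\in Z}(1-\alpha_i) = -\sum_{i\in Z}(\alpha_i-1)$ and substituting gives $m^++m^- = \sum_{i\in Z}\alpha_i - (2g-2) - (\abs{P^+}+\abs{P^-}+j_0) = \sum_{i\in Z}\alpha_i - (2g-2+n) = m(Z)$, which identifies $m(Z)$ with the number of smooth extremal points; similarly $a(Z)$ comes out as the total number $(\abs{P^+}+m^+)+(\abs{P^-}+m^-)$ of extremal points.

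The main obstacle is the careful, convention-consistent computation of the local indices of $\vec{V}$ at a conical point — both at an extremal cone point and at a saddle cone point — including getting the signs right so that the two cases combine into \eqref{eq:PH_for_hcmu}. I would handle this by passing to a local conformal coordinate in which the cone metric becomes $|w|^{2(\alpha-1)}|dw|^2$, expressing $\vec{V}$ (or $\vec{H}$) explicitly there using Proposition \ref{prop:vector_fields}, and reading off the winding number directly; alternatively one can cite the standard index formula for Killing fields and for gradient-type fields on cone surfaces. Everything after the index computation is routine substitution.
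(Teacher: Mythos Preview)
Your overall plan---Poincar\'e--Hopf for $\vec{V}$ (or $\vec{H}$)---is exactly what the paper does, and your saddle-point index $1-\alpha_i$ is correct. But there is a real error at conical extremal points: you claim the index there is $+\alpha_j$, whereas it is $+1$ regardless of the cone angle. The index is a purely topological winding number read off in a smooth coordinate $z$; near an extremal point the integral curves of $\vec{V}$ are small topological circles around $z=0$, so the direction of $\vec{V}$ winds exactly once, giving index $+1$. Equivalently, the character $1$-form $\omega$ has a \emph{simple} pole at every extremal point (conical or smooth), hence ${\rm grad}^{(1,0)}K$ has a simple zero there and index $+1$. Your heuristic ``the cone angle scales the total turning'' is what misleads you: at a saddle it accidentally yields the right answer because a cone of angle $2\pi\alpha_i$ genuinely produces $2\alpha_i$ hyperbolic sectors, but at a center there is still a single elliptic sector and no rescaling occurs.

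This error also makes your write-up internally inconsistent. With index $+\alpha_j$ at each point of $P^+\cup P^-$, Poincar\'e--Hopf would give
\[
\sum_{i\in Z}(1-\alpha_i)\;+\;\sum_{j\in P^+\cup P^-}\alpha_j\;+\;m^+ + m^- \;=\; 2-2g,
\]
which is \emph{not} \eqref{eq:PH_for_hcmu}; you nonetheless wrote down the correct formula with $\abs{P^+}+\abs{P^-}$ in place of $\sum_j\alpha_j$, so your stated index does not match the identity you then use. Once you correct the extremal index to $+1$, the rest of your argument (the algebraic derivation of $m(Z)$ and $a(Z)$ from $n=j_0+\abs{P^+}+\abs{P^-}$) is fine and coincides with the paper.
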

\begin{proof}
Recall that $\vec{H}$ is the real gradient of curvature function $K$. As Proposition \ref{prop:curvature}, a singularities of $\vec{H}$ is either a saddle point or a extremal points of curvature function $K$. 
By the local behavior of its integral curve, the index at every extremal point is $+1$. The index at a saddle point of cone angle $2\pi\alpha\ (\alpha\in\ZZ_{>1})$ is $1-\alpha$. 
\end{proof}

For subsequent use, we denote by 
\begin{equation*}
    \SYM{p} := \abs{P^+} + m^+ \quad \textrm{and} \quad \SYM{q} := \abs{P^-} + m^-
\end{equation*} 
the total number of maximum and minimum points. Also let 
\begin{equation*}
    \SYM{A^+} := \sum_{i \in P^+} \alpha_i \quad \textrm{and} \quad \SYM{A^-} := \sum_{i \in P^-} \alpha_i 
\end{equation*}
be the sum of cone angles at maximum and minimum points.
All these quantities are summarized in Table \ref{tab:extrm_set}.

Note that when there are $q>0$ cusps, the minimum points are exactly the cusp singularities. Hence $m^-=0$ and $\abs{P^-}=q$ by definition.
So it is always reasonable to denote the total number of minimum points by $q$.

\begin{table}[t]
\centering
\renewcommand\arraystretch{1.3}
\begin{tabular}{l|c|c||c}
\multicolumn{1}{c|}{} & maximum point & minimun point & total number \\ \hline
conical point         & $\abs{P^+}$   & $\abs{P^-}$   & $n-j_0$      \\ \hline
smooth point          & $m^+$         & $m^-$         & $m=m(Z)$     \\ \hhline{=|=|=#=}
total number          & $p$           & $q$           & $a=a(Z)$     \\ \hline
total angle           & $A^+ + m^+$   & $A^- + m^-$   &             
\end{tabular}
\vspace{3pt}
\caption{\footnotesize A summary of settings for extremal points.}
\label{tab:extrm_set}
\end{table}

\begin{lemma}
    For an HCMU surface in $\MhcmunT$, its ratio is given by
    \begin{equation}\label{eq:ratio_given_type}
        R=\Ratio = \frac{\sum_{j\in P^-} \alpha_j + m^-}{\sum_{i\in P^+} \alpha_i + m^+} = \frac{A^- + m^-}{A^+ + m^+} \ .
    \end{equation}
\end{lemma}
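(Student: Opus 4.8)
The plan is to obtain the formula by computing, in two different ways, the total top and bottom cone angles of the HCMU bigons appearing in a strip decomposition of $(M,\rho)$.

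First I would apply Corollary \ref{cor:hcmu_strip_decomp} to cut $(M,\rho)$ along finitely many meridian segments into HCMU bigons $B_1,\dots,B_s$, all of which share the same pair of extremal curvatures $K_0, K_1$. By Definition \ref{defn:hcmu_bigon}, the bottom angle of each $B_i$ equals $\Ratio$ times its top angle, where $R=\Ratio$ is the ratio of $(M,\rho)$, well defined by Definition \ref{defn:ratio}.

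Next I would count the two totals $\sum_{i=1}^{s}(\text{top angle of }B_i)$ and $\sum_{i=1}^{s}(\text{bottom angle of }B_i)$. The top (resp. bottom) vertices of the bigons are glued precisely to the maximum (resp. minimum) points of the surface, and around an extremal point of cone angle $2\pi\beta$ the relevant corners of the incident bigons fill up the whole angle $2\pi\beta$. Summing over all maximum points---the conical ones contributing $A^+=\sum_{j\in P^+}\alpha_j$ and each of the $m^+$ smooth ones contributing $1$, in the notation of Table \ref{tab:extrm_set}---gives a total top angle of $2\pi(A^++m^+)$, and the identical bookkeeping at the minimum points gives a total bottom angle of $2\pi(A^-+m^-)$. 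Since each bottom angle is $R$ times the corresponding top angle, adding up the $s$ relations yields $2\pi(A^-+m^-)=R\cdot 2\pi(A^++m^+)$; dividing by $2\pi(A^++m^+)>0$ (there is always at least one maximum point) gives \eqref{eq:ratio_given_type}. For a generic surface this is exactly the sum of the balance equations \eqref{eq:balance_black} over all black vertices together with the sum of \eqref{eq:balance_white} over all white vertices, each arc of $\AA(\rho)$ being counted once at each of its two endpoints.

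I do not expect a real obstacle here. The only points needing care are the convention that a smooth extremal point is treated as a cone point of angle $2\pi$, and the degenerate cusp case $K_1=-K_0/2$, in which $\Ratio=0$ matches $m^-=0$ and $A^-=0$; the single structural input is that the strip decomposition, and hence $R$, is canonical, so that the count above is unambiguous.
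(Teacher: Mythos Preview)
Your proposal is correct and follows essentially the same approach as the paper: both arguments use the strip decomposition, observe that the bottom angle of each bigon is $R$ times its top angle, and then sum the top and bottom angles by grouping at the maximum and minimum points respectively. The paper phrases this last step as ``summing up all the balance equations \eqref{eq:balance_black} and \eqref{eq:balance_white}'', which is exactly your bookkeeping; your version has the minor advantage of being stated directly via the strip decomposition and hence applies transparently to non-generic surfaces as well.
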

\begin{proof}
    In Definition \ref{defn:ratio}, the ratio of an HCMU surface equals to the ratio of the common character line element. Hence it is also the ratio of the bottom and top angle of every HCMU bigon in its strip decomposition. 
    Now that each top (bottom) vertex is glued to a unique maximum (minimum) point of the surface, and the angle at the extremal points are given by the balance equation \eqref{eq:balance_black} or \eqref{eq:balance_white}. By summing up all these balance equation for maximum and minimum respectively, we have the desired equation \eqref{eq:ratio_given_type}. 
    Note that every HCMU surface contains at least one maximum point, $A^+ + m^+$ is always positive. 

    When there are cusp singularities, they form all the minimum points and are regarded as cone of zero angle. The ratio is identically zero in this case.  
\end{proof}

 \begin{lemma}\label{lem:number_b}
    For any generic $(M,g)\in\MhcmunT$, 
    the total number $b$ of bigons in its strip decomposition is 
    \begin{equation}\label{eq:b(Z)}
        \SYM{b = b(Z)} := n + m + (2g-2) = \sum_{i \in Z} \alpha_i \ .
    \end{equation}
\end{lemma}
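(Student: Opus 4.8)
The plan is to count bigons in two independent ways and check the results agree. First I would use the local structure of the strip decomposition: each bigon $B_i$ has exactly one top vertex and one bottom vertex, and these are glued to maximum and minimum points respectively. So summing the balance equation \eqref{eq:balance_black} over all maximum points (including the $m^+$ smooth ones, regarded as cone points of angle $2\pi$) gives
\[
\sum_{x\in\PP^+\ \text{(incl. smooth)}} \sum_{e\in E(x)}\mathcal{W}(e) = A^+ + m^+,
\]
and the left-hand side is $\sum_{e\in\AA}\mathcal{W}(e)$ since every edge has exactly one black endpoint. The number of bigons is the number of edges $b=|\AA|$, but this alone only controls the weighted sum, not $b$ itself — so the weight count is not the right tool for counting bigons directly. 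Instead I would count half-bigons around saddle points.

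The cleaner route: a saddle point of cone angle $2\pi\alpha_j$ ($j\in Z$) is glued from $2\alpha_j$ marked boundary points, hence from $2\alpha_j$ half-bigons (as recalled after Corollary \ref{cor:hcmu_strip_decomp} and used in the proof of Proposition \ref{defn:AA_q}). Every bigon contributes exactly $2$ half-bigons, one containing each of its two boundary marked points — wait, a generic bigon has exactly one marked boundary point per boundary side, i.e. two marked points total, cutting it into... I would instead argue via the mixed-angulation: each complementary $(2\alpha_j)$-gon $F_j$ has $2\alpha_j$ edges, each edge $e_i$ being counted with multiplicity equal to the number of sides of $F_j$ it bounds (at most two, if $e_i$ is self-folded or $F_j$ self-glued). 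Summing the side-counts of all complementary polygons double-counts each arc:
\[
\sum_{j\in Z} 2\alpha_j = \sum_{i} (\text{number of polygon-sides bounded by } e_i) = 2|\AA| = 2b,
\]
since every arc has exactly two sides, each lying on some complementary polygon. This gives $b = \sum_{i\in Z}\alpha_i$ immediately.

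Finally I would verify the second expression $b = n+m+(2g-2)$ is the same number, using Lemma \ref{lem:extrm_set}: by \eqref{eq:extra_pole_hcmu}, $\sum_{i\in Z}\alpha_i = m(Z) + (2g-2+n) = m + n + (2g-2)$, so the two formulas for $b$ coincide, and the lemma is proved. The main obstacle — really the only subtlety — is bookkeeping the self-folded arcs and self-glued polygons correctly in the double-counting: one must phrase it as "sides of polygons" rather than "arcs incident to polygons" so that a self-folded arc is counted twice and the identity $\sum(\text{sides of }F_j)=2|\AA|$ holds without exception. An alternative, perhaps more transparent, derivation is a direct Euler-characteristic computation: the mixed-angulation has $V=|\PP|=n+m$ vertices (counting smooth extremal points), $E=b$ edges, and $F=j_0=|Z|$ faces, so $2-2g = V-E+F = (n+m) - b + j_0$, giving $b = n+m+j_0-(2-2g) = n+m+(2g-2)+j_0$; comparing with $\sum_{i\in Z}\alpha_i$ via $\sum_{i\in Z}(\alpha_i-1)=a(Z)$ and $a(Z)=b$... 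I would present whichever of these two arguments (double-counting polygon sides, or Euler characteristic) is shortest, and cross-check against \eqref{eq:b(Z)}'s two stated forms.
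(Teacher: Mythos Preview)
Your primary argument---double-counting the sides of the complementary polygons in the mixed-angulation to get $\sum_{j\in Z}2\alpha_j = 2|\AA| = 2b$, then invoking \eqref{eq:extra_pole_hcmu} for the second expression---is correct and complete. Your remark about phrasing the count as ``sides of polygons'' rather than ``arcs incident to polygons'' is exactly the right bookkeeping for self-folded and self-glued cases.

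The paper takes a different but equally short route: it computes the Euler characteristic directly from the \emph{strip decomposition}, not the mixed-angulation. Each bigon is viewed as a topological quadrilateral (two vertices, two marked boundary points), so the cell decomposition has $V = n+m$ vertices (all saddle and extremal points), $E = 4b/2 = 2b$ edges (boundary segments glued in pairs), and $F = b$ faces (the bigons themselves). Then $2-2g = (n+m)-2b+b$ gives $b = n+m+(2g-2)$, and \eqref{eq:extra_pole_hcmu} supplies the equality with $\sum_{i\in Z}\alpha_i$. Your double-counting obtains $b=\sum\alpha_i$ first and the Euler-type expression second; the paper does the reverse. Both are one-line once the right decomposition is named.

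One correction to your \emph{alternative} Euler argument on the mixed-angulation: the vertex set $\PP$ consists only of extremal points, so $V = |\PP| = a = (n-j_0)+m$, not $n+m$. With the correct $V$ you get $2-2g = (n-j_0+m) - b + j_0 = n+m-b$, which recovers $b = n+m+(2g-2)$ on the nose and resolves the stray $+j_0$ you noticed. This corrected version is in fact the dual of the paper's computation: the mixed-angulation and the strip decomposition are dual cell structures, so the two Euler counts are two sides of the same identity.
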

\begin{proof}
    For a generic HCMU surface, each component of its strip decomposition can be regared as a topological quadrilateral. Since $M$ is an orientable surface, all the edges of the bigons as topological quadrilaterals are glued in pairs. Hence we can count the Euler number of $M$ by this cell decomposition as 
    \begin{equation}
        \chi(M) = 2-2g = (n+m) - (4b/2) + b = m+n-b \ . 
    \end{equation}
    The desired result is then obtained together with \eqref{eq:extra_pole_hcmu}.
\end{proof}

\begin{corollary}\label{cor:necessity}
    The conditions in 
    Theorem \ref{thm:angle_cnstr_hcmu_new} are necessary. 
\end{corollary}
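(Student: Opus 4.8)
\textbf{Proof proposal for Corollary \ref{cor:necessity}.}

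The plan is to show that the existence of any surface in $\MhcmunT$ forces the numerical conditions of Theorem \ref{thm:angle_cnstr_hcmu_new}, by combining the counting lemmas just established with two elementary geometric facts about HCMU footballs: every bigon has a strictly larger top angle than bottom angle (since $R = \Ratio < 1$ when the metric is non-CSC), and every extremal point has strictly positive cone angle. First I would dispose of the universally valid inequalities. From Lemma \ref{lem:extrm_set} we have $a(Z) = p + q$ with $p = |P^+| + m^+ \geq 1$ and $q = |P^-| + m^- \geq 1$ (each HCMU surface has at least one maximum and at least one minimum point), so $a(Z) \geq 2$ always. The condition $m(Z) \geq 0$ is immediate from \eqref{eq:extra_pole_hcmu}, since $m(Z) = m^+ + m^- \geq 0$. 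This already gives the ``$m(Z)\geq 0$'' half of every case.

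Next I would extract the ratio constraint. By \eqref{eq:ratio_given_type}, $R = (A^- + m^-)/(A^+ + m^+)$, and since the metric is non-CSC we have $0 \le R < 1$, hence $A^- + m^- < A^+ + m^+$, i.e. the total cone angle (counted with the $2\pi$-normalization) at minimum points is strictly less than that at maximum points. When there are no cusps, I would use this together with the trivial bounds $A^+ \ge |P^+|$ and $A^- \ge |P^-|$ (each conical extremal point has angle $> 2\pi$, i.e. normalized weight $>1$, while a smooth one contributes exactly $1$) to rule out the forbidden configurations. The case analysis is: if $a(Z) = 2$ then $p = q = 1$, so there is exactly one maximum point and one minimum point; if in addition $m(Z) = 0$ then both are conical singularities, forcing $n - j_0 = 2$ and $P^+ = \{i\}$, $P^- = \{j\}$ with $\alpha_i = A^+ > A^- = \alpha_j$, which is exactly case (A.3) after relabeling so that $\{i,j\} = \{n-1,n\}$; and $a(Z)=2, m(Z)=1$ is case (A.2). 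The remaining possibility $a(Z)=2, m(Z)\ge 2$ cannot occur because $m(Z) = m^+ + m^- \le p + q - 2 = a(Z) - 2 = 0$ when both of $p,q$ exceed the conical counts — more carefully, $m(Z) \le a(Z) - (n-j_0) \le a(Z)$ and when $a(Z)=2$ one checks $m(Z) \le 1$. So when $a(Z) = 2$ only $m(Z) \in \{0,1\}$ survive, matching (A.2) and (A.3); when $a(Z) \ge 3$ there is no further restriction beyond $m(Z)\ge 0$, which is (A.1).

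For the cusp case (B), the point is that all $q > 0$ minimum points are the cusps, so $R = 0$, $m^- = 0$, $|P^-| = q$, and $A^- = 0$. Then $a(Z) = p + q$ with $p = |P^+| + m^+$. I would argue $p \ge 2$: if $p = 1$ the unique maximum point would have to absorb, via the balance equations, all of the finitely many bigons whose bottoms are distributed among $q \ge 1$ distinct cusp points, and a degree count on the bi-colored graph $\mathcal{G}$ (which is connected by Proposition \ref{defn:weight_bicolor}) shows this is impossible once $q \ge 1$ and there is at least one saddle, because a connected bi-colored graph with one black vertex and $q\ge 1$ white vertices is a star with $q$ edges, giving only $q$ bigons, but the $j_0 \ge 1$ complementary polygons each need at least $2\cdot 2 = 4$ sides — actually the cleanest route is: a connected bipartite graph on $1 + q$ vertices has exactly $q$ edges and is a tree, hence bounds only one complementary region (genus $0$, one face), contradicting $j_0 \ge 1$ unless $g$ or the face count forces more edges; I would phrase this as $p \ge 2$, giving $a(Z) = p + q \ge q + 2 \ge q+1$. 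Combined with $m(Z) \ge 0$ this is exactly (B).

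\textbf{Main obstacle.} The genuinely delicate point is the sharp lower bounds on the number of extremal points on each side — namely establishing $p \ge 2$ in the cusp case, and verifying that the degenerate configurations with $a(Z) = 2$ are precisely the three listed footballs-with-few-points rather than something more exotic. This requires invoking the connectivity of $\mathcal{G}(\rho)$ together with the face-count relation $b = b(Z) = \sum_{i\in Z}\alpha_i$ from Lemma \ref{lem:number_b} and the Euler characteristic bookkeeping, rather than the Poincar\'e--Hopf count alone; I expect most of the write-up's length to go into this combinatorial ruling-out, with the inequalities $a(Z)\ge 2$, $m(Z)\ge 0$ and the ratio bound being essentially immediate.
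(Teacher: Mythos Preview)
Your case-(A) analysis has a gap in the subcase $a(Z)=2,\ m(Z)=2$. You assert ``when $a(Z)=2$ one checks $m(Z)\le 1$'', but this is false: since $m(Z)=a(Z)-(n-j_0)$, one has $m(Z)=2$ exactly when $j_0=n$ (all prescribed angles are integers realized as saddles, $P^\pm=\varnothing$). This case must be excluded, and the paper does so using the tool you already set up but did not apply here: with $m^+=m^-=1$ and $A^\pm=0$ one gets $R=(A^-+m^-)/(A^+ + m^+)=1$, contradicting the non-CSC hypothesis. So the fix is easy, but as written the argument is incomplete.

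Your case-(B) argument aims at the wrong target. The condition in Theorem \ref{thm:angle_cnstr_hcmu_new}(B) is $a(Z)\ge q+1$, which is equivalent to $p\ge 1$, not $p\ge 2$. The inequality $p\ge 1$ is immediate (every HCMU surface has at least one maximum point), and that is exactly what the paper uses. Your attempt to show $p\ge 2$ is both unnecessary and false: surfaces with a single maximum point and $q$ cusps do exist (indeed the sufficiency construction in the paper builds them when $a(Z)=q+1$). Your graph-theoretic sketch breaks down because the bi-colored graph is allowed to have multiple edges between the same pair of vertices, so a graph with one black and $q$ white vertices need not be a star and need not be a tree. Drop the $p\ge 2$ claim entirely; $p\ge 1$ gives $a(Z)=p+q\ge q+1$ and you are done.
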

\begin{proof} 
    Assume there is no cusp first. $m=m^+ + m^- \geq 0$ is by definition.
    There is at least 1 maximum and 1 minimum point on any HCMU surface. Hence the total number of all extremal points is $a=p+q \geq 1+1$. 

    In the rare case where $a=2$, there is exactly 1 maximum and 1 minimum point. We have $n-j_0+m=a=2$. Then $m\geq0$ implies $n-2 \leq j_0 \leq n$. 

    \begin{itemize}
        \item If $ j_0 = n-2$, then $m = m^+ = m^- =0$. Since $p\geq1, q\geq1$, we have $\abs{P^\pm}=1$. The ratio $R$ of an HCMU surface is strictly smaller than 1. By \eqref{eq:ratio_given_type}, we have  
        \[ R = \frac{A^- + m^-}{A^+ + m^+} = \frac{A^-}{A^+} < 1 . \]
        Here $(A^+, A^-)$ is an ordering of the two remained angles $\{\alpha_{n-1}, \alpha_{n}\}$. Then they must not be equal.

        \item If $ j_0 = n-1$, then $m=\abs{P^+}+\abs{P^-}=1$. One of the two extremal points is a cone point and the other a smooth point. Since the ratio is not 1, the remained angle $\alpha_n \neq 1$. 

        \item If $j_0 = n$, then $P^\pm = \varnothing$ and $m=2$. $p, q\geq1$ implies $m^+ = m^- = 1$. Then 
        \[ R = \frac{A^- + m^-}{A^+ + m^+} = \frac{m^-}{m^+} = 1, \]
        leading to a CSC surface. Such case is forbidden. 
    \end{itemize}

    \medskip
    Assume there are $q>0$ cusps next. 
    Then the minimum points are exactly the cusps. Hence $m^- = 0$ and $\abs{P^-}=q$. We have $a = p+q \geq 1+q$. 
\end{proof}

\bigskip
\subsection{The sufficiency of angle constraintt}\label{ssec:sufficiency}
The sufficiency part of Theorem \ref{thm:angle_cnstr_hcmu_new} requires the construction of HCMU surfaces in the refined moduli space $\MhcmunT$. Our strategy for
Case (A) of the theorem is to construct first a particular mixed-angulation with a single white point, then endowing the bi-colored graph with a suitable weight function. 
The resulting generic surface contains one minimum point, very similar to the ones constructed in \cite{CCW05, Lzy07}. Hence the following part can be regarded as a re-statement of these results, but with a different language. 


All the constructions are based on the following two elementary facts.

\begin{lemma}[Canonical mixed-angulation]\label{lem:canonical_polygon}
    On any closed surface $S_g$ of genus $g>0$, there is a canonical $(4g)$-mixed angulation $\AA_g^o$ such that $\abs{ \AA_g^o } = 2g+1$ and the induced bi-colored graph $\mathcal{G}_g^o$ has exactly 1 black and 1 white vertex. 

    When $g=0$, pick two different points and a simple arc connecting them. This can be regarded as a $(0)$-mixed angulation with an unique complementary bigon. So we shall allow $g=0$ in the above. 
\end{lemma}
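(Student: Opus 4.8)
The plan is to build the arc system explicitly from the standard polygon model of a closed orientable surface. Recall that $S_g$ is obtained from a $(4g)$-gon $P$ by identifying its boundary edges in the familiar pattern $a_1 b_1 a_1^{-1} b_1^{-1} \cdots a_g b_g a_g^{-1} b_g^{-1}$; under this identification all $4g$ vertices of $P$ are glued to a single point of $S_g$. First I would place this single vertex, call it $w$, together with one extra point $x$ chosen in the interior of $P$. I want a bi-colored graph with $V^+ = \{x\}$ and $V^- = \{w\}$, so every arc must run from $x$ to $w$. The natural choice is to take, for each of the $2g$ identified edge-classes of $P$, one arc joining $x$ to a boundary point on that edge; since the two sides of a given edge-class are glued, such an arc returns to the same puncture $w$ at both of its apparent endpoints in $P$ but is a genuine arc from $x$ to $w$ on $S_g$. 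That gives $2g$ arcs. To get a cell decomposition rather than just an arc system I add one more arc $\gamma_0$ from $x$ to $w$ that is homotopically trivial in $P$ (it cuts off a small disk around $x$ containing no identifications), bringing the total to $|\AA_g^o| = 2g+1$ as required, and ensuring $x$ has degree $2g+1>0$ and $w$ has degree $2g+1>0$, so $\mathcal{G}_g^o$ is connected with exactly one black and one white vertex.

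Next I would verify the combinatorial type of the complement. Cutting $S_g$ along the $2g$ edge-arcs recovers (a deformation of) the open polygon $P$ with $x$ and $w$ on its boundary; together with $\gamma_0$ this splits the picture into exactly two complementary regions: a small bigon bounded by the two sides of $\gamma_0$, and one large region. A direct Euler-characteristic count confirms $r = 2$: with $V = 2$ punctures, $E = 2g+1$ arcs, and the surface relation $V - E + F = 2 - 2g$ for the CW-structure, one gets $F = 2g+1 - 2g = 1$ face plus we must be careful — in a mixed-angulation $F$ counts complementary polygons, and the arcs are counted with multiplicity two on the boundary, so the correct bookkeeping is $2 - 2g = |\PP| - |\AA| + r$, giving $r = 2g - 1 + \ldots$; the cleanest route is instead to invoke the compatibility identity of Definition \ref{defn:order_vec}: an order vector ${\bf w}$ compatible with ${\bf S}_g = (S_g, \{x,w\})$ must satisfy $\sum w_i = 4g - 4 + 2\cdot 2 = 4g$, and I will arrange the arcs so there is a single complementary polygon, forcing a one-component order vector ${\bf w} = (4g)$, i.e. a $(4g+2)$-gon — wait, the lemma asserts a $(4g)$-mixed angulation, meaning the single complementary polygon is a $(4g+2)$-gon? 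No: a ${\bf w}$-mixed angulation with ${\bf w}=(4g)$ has its one face a $(4g+2)$-gon by Definition \ref{defn:mix_angulation}. So I would instead exhibit the complement as having exactly one face which, when we traverse its boundary, passes $4g+2$ arc-sides; tracing the boundary word of $P$ together with the puncture identifications gives precisely this, so the complementary region is a $(4g+2)$-gon and $\AA_g^o$ is a $(4g)$-mixed angulation.

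Then I would check bi-coloredness and connectedness: by construction every arc joins $x \in V^+$ to $w \in V^-$, so no monochromatic edge exists, and since $x$ has positive degree the graph $\mathcal{G}_g^o$ is connected. For $g = 0$, the stated construction — two distinct punctures and a single simple arc between them — has $|\AA_0^o| = 1 = 2\cdot 0 + 1$, one black and one white vertex, and the complement is a single bigon, which is by definition a $(0)$-mixed angulation; this matches the general formula $|\AA_g^o| = 2g+1$ and is consistent with the compatibility identity $\sum w_i = 4\cdot 0 - 4 + 2\cdot 2 = 0$, interpreted as the empty order vector together with a single digon face of the degenerate type. So I would simply record $g=0$ as the base case and note it satisfies all stated properties.

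The main obstacle I anticipate is not any deep difficulty but rather the careful bookkeeping of how the single complementary polygon acquires exactly $4g+2$ sides once the edges of the fundamental polygon are identified — one must track which arc-sides of $P$ get glued and confirm that the boundary of the complement, read off as a cyclic word, has the right length and does not split into several polygons. I would handle this with a small figure (analogous to Figure \ref{fig:poly_self_folded}) showing the case $g=1$ explicitly — a square with arcs to an interior point $x$ plus the trivial loop $\gamma_0$, giving $|\AA_1^o| = 3$ and a single hexagonal face — and then remark that the general $g$ case is the evident iteration of this picture.
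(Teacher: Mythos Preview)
Your construction has a genuine gap: the description of the $2g$ arcs is internally inconsistent. You say each such arc joins the interior point $x$ to ``a boundary point on that edge'' and then claim ``such an arc returns to the same puncture $w$ at both of its apparent endpoints in $P$''. But an arc from $x$ to $w$ has one endpoint at $x$, not two endpoints at $w$; and a boundary point on an edge of the $4g$-gon (e.g.\ a midpoint) is not the vertex $w$ at all. None of the natural readings --- arc to an edge-midpoint, arc crossing an edge and returning to $x$, or the boundary edge itself (a loop at $w$) --- produces a simple arc from $x$ to $w$. So the arc system $\AA_g^o$ is never actually defined. A second problem: you explicitly say the extra arc $\gamma_0$ ``cuts off a small disk'' and that the complement has ``exactly two complementary regions: a small bigon \ldots\ and one large region'', but a $(4g)$-mixed angulation must have a \emph{single} complementary $(4g+2)$-gon. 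You notice this tension mid-proof and announce you will ``arrange the arcs so there is a single complementary polygon'', but never carry this out; the Euler-count paragraph trails off without a resolution.

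The paper's argument avoids all of this by using a different polygon model: take a regular $(4g+2)$-gon and identify opposite parallel sides by translation. The $4g+2$ vertices then fall into exactly two equivalence classes, alternating around the boundary; declaring one class black and the other white makes every one of the $2g+1$ identified edges a bi-colored arc, and the interior of the polygon is the single complementary $(4g+2)$-gon. The Euler count $2 - (2g+1) + 1 = 2-2g$ confirms the genus. This is both shorter and sidesteps the need for an auxiliary interior point or any delicate boundary-word tracing. Your $4g$-gon-with-interior-point idea \emph{can} be made to work --- one needs $2g+1$ pairwise non-isotopic arcs from $x$ to the corner $w$ whose complement is connected --- but you would have to specify those arcs concretely (e.g.\ one arc to each of $2g+1$ chosen corners of $P$, checked to cut $P$ into a single disk) and drop the spurious $\gamma_0$.
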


For the next lemma, we shall also consider subdivision of polygons into topological polygons as mixed-angulation. The vertices of the polygon are regarded as puncutres on the boundary. Similar to Section \ref{ssec:weight_bicolor}, the vertices and arcs also form a bi-colored graph, but including those on the boundary. 

\begin{lemma}[Subdivision of a polygon]\label{lem:poly_poly}
    Let ${\bf w}\in (2\ZZ_{>0})^{j_0}$ be an even order vector and $K$ a positive integer such that 
    \[ 2L := \sum_{i=1}^{j_0} w_i - (2K-2) \geq 0 \ .\]
    Then a $2K$-gon admits a ${\bf w}$-mixed angulation endowed with a bi-colored graph structure, by adding $(j_0-1)$ diagonal arcs, $L$ self-folded arcs and $L$ black vertices. 
Here a \DEF{diagonal arc} means an interior open arc connecting a black and a white vertex.
The two vertices can be adjacent ones, and two arcs may share the same pair of vertices.
\end{lemma}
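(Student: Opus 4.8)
The plan is to construct the desired $\mathbf{w}$-mixed angulation of a $2K$-gon explicitly, handling the combinatorial bookkeeping by induction on $j_0$ (the number of faces to be cut out). The key identity to keep in mind throughout is the Euler-type count: if a $2m$-gon is subdivided into complementary polygons of sizes $c_1, \dots, c_t$ (all even) using $d$ interior open arcs and $a$ added interior vertices, then $\sum_i c_i = 2m + 2d$ and the number of complementary polygons is $d - a + 1$ when the added vertices are all of degree one inside (self-folded situation) — more precisely one tracks the Euler characteristic of the disk directly. I would first record this elementary counting lemma (it is exactly the same bookkeeping as in the proof of Lemma~\ref{lem:number_b}), so that the numerology $2L = \sum w_i - (2K-2)$ is seen to be forced, not ad hoc.

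First I would dispose of the base case $j_0 = 1$. Here we must realize a single $(w_1+2)$-gon inside a $2K$-gon, with $2L = w_1 - (2K-2)$, i.e. $w_1 + 2 = 2K + 2L$, using $0$ diagonal arcs, $L$ self-folded arcs and $L$ black vertices. The picture is: start from the $2K$-gon, pick $L$ of its white vertices (recall the boundary is already bi-colored, alternating black/white since every arc connects a black to a white vertex, and $2K$ is even so this is consistent), and at each chosen white vertex attach a self-folded arc enclosing a new degree-one black vertex. Each such move increases the boundary size of the single complementary face by $2$ (we travel out along the self-folded arc, around the new black vertex, and back), so after $L$ moves the face is a $(2K+2L)$-gon $=(w_1+2)$-gon, as required; it uses $L$ self-folded arcs and $L$ new black vertices and $0$ diagonals, matching the statement. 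One must check the colors match: the self-folded arc is attached at a white vertex and its far endpoint is the new black vertex, so it is a legitimate edge of a bi-colored graph, and the enlarged complementary polygon still has alternating colors on its boundary.

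For the inductive step, suppose $j_0 \ge 2$ and the claim holds for order vectors of length $j_0 - 1$. Given $\mathbf{w} = (w_1, \dots, w_{j_0})$ and $2K$-gon $F$ with $2L = \sum w_i - (2K - 2) \ge 0$: I would cut off one complementary $(w_{j_0}+2)$-gon from $F$ by a single diagonal arc $\delta$. Concretely, choose a black and a white vertex on $\partial F$ (possibly adjacent, possibly the same pair already used — this flexibility is exactly why the statement permits multi-arcs) so that $\delta$ separates $F$ into a $(w_{j_0}+2)$-gon $F'$ and a residual polygon $F''$. The sizes must add correctly: if $|\partial F| = 2K$ and $\delta$ cuts off $F'$ of size $s'$, then $|\partial F''| = 2K - s' + 2$. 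We want $s' = w_{j_0}+2$ possibly after first enlarging $F'$ by the same self-folded-arc trick as in the base case, absorbing some of the "slack" $L$ into $F'$. Let me allocate $L_0$ of the $L$ self-folded arcs to $F'$: then $F'$ should end up a $(w_{j_0}+2)$-gon built from a $(w_{j_0}+2-2L_0)$-gon with $L_0$ self-folds, which needs $w_{j_0}+2-2L_0 \ge 2$, i.e. $L_0 \le w_{j_0}/2$; choose $L_0 = \min(L, w_{j_0}/2)$. The residual polygon $F''$ is then a $2K''$-gon with $2K'' = 2K - (w_{j_0}+2-2L_0) + 2 = 2K - w_{j_0} + 2L_0$, and the remaining order vector is $(w_1, \dots, w_{j_0-1})$ with remaining slack $2(L - L_0)$; one verifies $2(L - L_0) = \sum_{i<j_0} w_i - (2K'' - 2)$ and that this is $\ge 0$ (this is where the choice $L_0 = \min(L, w_{j_0}/2)$ matters — it guarantees the residual slack stays nonnegative), so the inductive hypothesis applies to $F''$. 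Tallying: we used $1$ diagonal for $\delta$, plus $(j_0 - 2)$ diagonals inside $F''$, total $j_0 - 1$; and $L_0 + (L - L_0) = L$ self-folded arcs and $L$ black vertices. This closes the induction.

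The main obstacle I anticipate is not any single hard step but the careful verification that the residual data $(2K'', (w_1,\dots,w_{j_0-1}), L - L_0)$ always satisfies the hypothesis of the lemma — in particular that $2(L-L_0) \ge 0$ and that $K'' \ge 1$ so that $F''$ is an honest polygon — and that the color constraints are preserved at every cut (the diagonal $\delta$ must genuinely connect a black vertex to a white vertex of $\partial F$, and after cutting, both $\partial F'$ and $\partial F''$ must remain properly bi-colored, which forces $s'$ and $2K''$ to be even — automatically true from the arithmetic above). A clean way to package all of this is to phrase the induction on the pair $(j_0, L)$ with a fixed "running polygon," peeling off either one face (decreasing $j_0$) or one self-fold (decreasing $L$) at each stage; then each elementary move is visibly color-consistent and size-consistent, and the statement follows by composing the moves. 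I would include one figure showing the peel-off step to make the bi-coloring transparent.
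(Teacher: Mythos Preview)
Your proposal is correct and follows essentially the same strategy as the paper: induct on $j_0$, handle $j_0=1$ by attaching $L$ self-folded arcs (each adding one interior black vertex), and for $j_0\ge 2$ peel off one complementary face with a single diagonal plus possibly some self-folds, then recurse on the residual polygon. The paper organizes the inductive step as an explicit two-case split (according to whether $w_1+2<2K$ or $w_1+2\ge 2K$), whereas you unify the two cases via the allocation $L_0=\min(L,w_{j_0}/2)$; the arithmetic checks you outline for $K''\ge 1$ and $L-L_0\ge 0$ are exactly what the paper verifies in each case separately.

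One small correction to your base case: you write ``pick $L$ of its white vertices'', but $L$ can exceed the number $K$ of white boundary vertices (for instance $K=1$, $w_1$ large). You must allow several self-folded arcs to be attached at the same white vertex, which is permitted since the statement explicitly allows two arcs to share the same pair of endpoints; the paper's phrasing ``connect each to one of the white vertex'' already builds this in.
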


See Figure \ref{fig:canonical_poly} and Figure \ref{fig:polya_poly} as illustration for the two lemmas respectively. Basically, these figures explain the construction we are giving. 
The proof of the two lemmas are postponed after the proof of sufficiency of angle constraint.

\begin{figure}
    \setlength{\abovecaptionskip}{0pt}
    \includegraphics[width=0.7\textwidth]{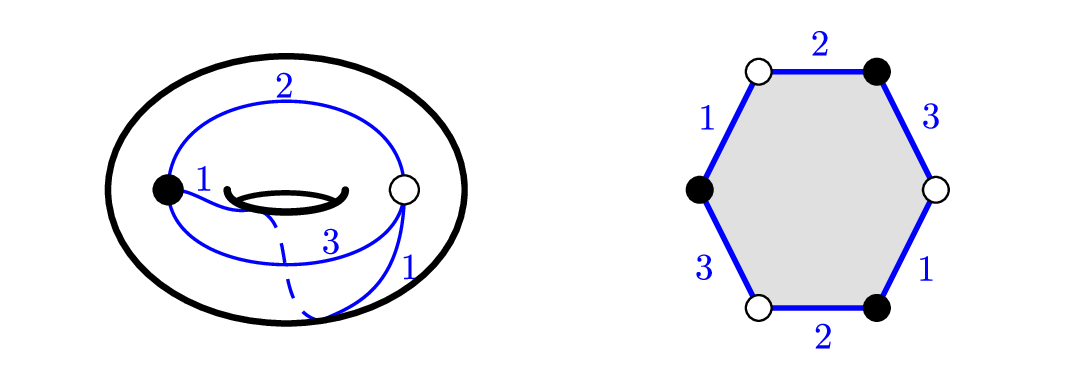}
    \caption{\footnotesize The canonical mixed-angulation of Lemma \ref{lem:canonical_polygon} for $g=1$ }
    \label{fig:canonical_poly}
\end{figure}
\begin{figure}
    \setlength{\abovecaptionskip}{2pt}
    \includegraphics[width=\textwidth]{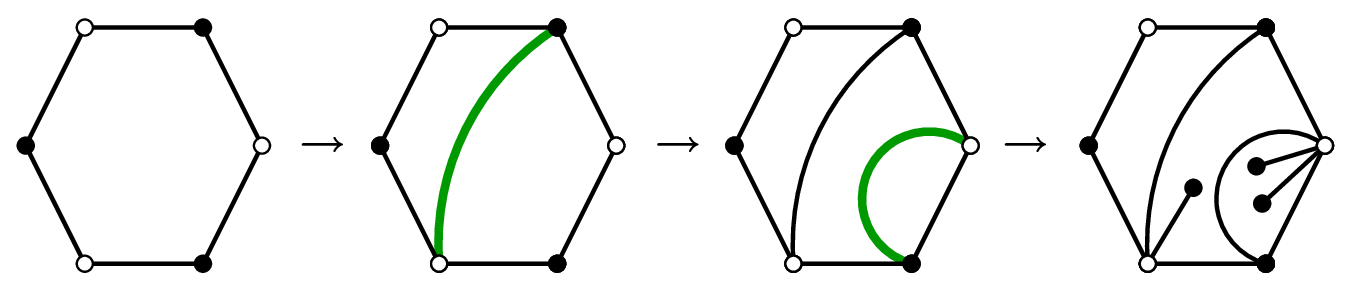}
    \caption{\footnotesize An example of Lemma \ref{lem:poly_poly} with $K=3, {\bf w}=(2,4,4)$. }
    \label{fig:polya_poly}
\end{figure}

\medskip
Some preparations are needed before the proof of sufficiency. Now assume $Z$ is fixed in the type partition. 
Recall that in Proposition \ref{defn:AA_q}, the relation between the angle vector and order vector is given by
$$ w_j = 2 \alpha_j - 2 \ . $$
Define $\vec{\beta} := \left( ( \alpha_k )_{k \notin Z}, 1, \cdots, 1 \right)$ be the \DEF{expected angle vector}, consisting of all the angles which are expected to be realized as extremal points, together with $m$ instances of 1 representing smooth extremal points. We shall regard them as cone points of angle $2 \pi$, for convenience.

\begin{proof}[Proof of Theorem \ref{thm:angle_cnstr_hcmu_new}.(A)] 
    To find the suitable choice of type partition $\vec{T}$ with fixed $Z$, and to construct an HCMU surface in $\MhcmunT$, we construct the corresponding data set in 2 steps.

    In The first step, we construct a ${\bf w}$-mixed angulation with a single white vertex in the coloring of its puncture set. 

    In the next step, we properly assign the expected angles to the punctures, and show that such an assignment always admits a weight function. The ratio is uniquely determined.

    Finally, the choice maximum curvature $K_0$ and the face function $\mathcal{L}$ is arbitrary and do not affect the existence. Hence the 4 elements in the data set constructed above is enough.
    
    \bigskip
    \textbf{(Step 1).} The construction of $(\AA, \PP^+ \sqcup \PP^-)$. 

    By Lemma \ref{lem:extrm_set}, Lemma \ref{lem:number_b}, Table \ref{tab:extrm_set} and Proposition \ref{defn:AA_q}, we are going to construct a ${\bf w}$-mixed angulation with
    \begin{itemize}
        \item $a= \sum_{i\in Z} (\alpha_i-1) - (2g-2)$ punctures,
        \item $b= n+m+2g-2 =\sum_{i\in Z} \alpha_i$ arcs,
        \item $\abs{Z} =j_0$ complementary polygons,
        \item a black and white coloring $\PP^+ \sqcup \PP^-$ of punctures. 
    \end{itemize}
    In fact, we will set $\abs{\PP^-}=\abs{P^-}+ m^- =1$ and $\abs{\PP^+} = \abs{P^+} + m^+ = m-1$.

    Let $\mathbb{A}_g^o$ be the canonical $(4g)$-mixed angulation in Lemma \ref{lem:canonical_polygon} and $\mathbb{P}^{-}$ be the white vertex. Its unique complementary polygon can be regarded as a bi-colored cycle of length $4g+2$ on the plane. Next we add more arcs and black vertices inside this polygon.
    Since 
    $$ 2L :\triangleq \sum_{i \in Z}{w_i} - 4g = 2 \sum_{i \in Z}\left(\alpha_{i}-1\right) -4g = 2a+(4g-4)-4g = 2a-4 \geq 0 \ ,$$
    we can apply Lemma \ref{lem:poly_poly} to the $(4g+2)$-gon obtained above. 
    We obtain a ${\bf w}$-mixed angulation by adding 
    \begin{itemize}
        \item $j_0 -1$ diagonal arcs,
        \item $L=\sum_{i \in Z}\left(\alpha_i-1\right)-2g =\sum_{i \in Z} \alpha_i-\left(2g+j_0\right)$ self-folded arcs,
        \item $L=a-2$ black vertices.
    \end{itemize}
    Drawing back the subdivision of this polygon on the surface, we have
    \begin{itemize}
        \item $(a-2)+1=a-1$ black vertices,
        \item $1$ white vertex,
        \item $(2g+1)+\left(j_0-1\right)+L = (2g+j_0) + \left(\sum_{i \in Z} \alpha_i-(2g+j_0)\right) = \sum_{i \in Z} \alpha_i$ arcs.
    \end{itemize}
    This is the desired ${\bf w}$-mixed angulation on $S$ indeed.

    \bigskip
    \textbf{(Step 2).} The construction of $R, \mathcal{W}$.  

    Let $\mathbb{P}^{+}=\left\{x_1, \cdots, x_a \right\}$ and $\mathbb{P}^-=\left\{x_0\right\}$. 
    We are going to assign the components of the expected angle vector
    $\vec{\beta} = \left( ( \alpha_k )_{k \notin Z}, 1, \cdots, 1 \right)$ 
    to $\PP$. The point is always assign the smallest expected angle to the unique white vertex.
    
    Let $\beta_{\min} := \min_{1 \leq k \leq a}\left\{\beta_k\right\}$ be the smallest component of $\vec{\beta}$.
    If such assignment admits a weight function, then similar to \eqref{eq:ratio_given_type}, the expected ratio of the constructed surface should be 
    $$ R=\frac{\beta_{\min}}{\sum_{i=1}^a \beta_i-\beta_{\min}} \ .$$

    \medskip
    \textbf{Case (A.1).} $a\geq3,\ m\geq0$. Then $\abs{\PP} = a \geq 3$.
    
    $\sum_{i=1}^a \beta_i \geq 3 \beta_{\min}$. Hence $ 0<R \leq \frac{1}{2} $. 
    Assign the remained components of $\vec{\beta}$ to black vertices in arbitrary order. Denote by $\beta_{\tau(i)}$ the expected angle assigned to $x_i \in \mathbb{P^+}$. 
    There is an obvious solution to the balance equation: 
    for any $x_i \in \mathbb{P}^{+}$ and $e \in E\left(x_i\right) \subset \mathbb{A}$, define
    $$ \mathcal{W}(e)=\frac{\beta_{\tau (i)}}{\deg\left(x_i\right)} \ . $$
    Then the total sum of the balance equation at $x_i$ is directly $\beta_{\tau (i)}$. Since every $e \in \mathbb{A}$ ends at the unique white vertex $x_0$, the balance equation at $x_0$ reads as
    \begin{align*}
    R \cdot \sum_{e \in \mathbb{A}} \mathcal{W}(e) & =R \cdot \sum_{x_i \in \mathbb{P}^{+}} \left(\sum_{e \in E\left(x_i\right)} \mathcal{W}(e) \right) =R \cdot \sum_{x_i \in \mathbb{P}^{+}} \beta_{\tau(i)} \\
    & =R \cdot\left(\sum_{k=1}^a \beta_k-\beta_{\min}\right)=\beta_{\text {min }} .
    \end{align*}
    We result in a surface realizing the expected angle vector at the extremal points. $P^\pm$ in the type partition is obtained by ruling out the smooth points in $\mathbb{P}^\pm$ according to our assignment of expected angle. Hence $\abs{P^-} \leq \abs{\mathbb{P}^-} = 1$. 
    
    \medskip
    \textbf{Case (A.2).} $a=2,\ m=1$. 

    When $a=2$, $L=a-2=0$ and no extra black vertex is added. By our convention on $\vec{\alpha}$, we have $\vec{\beta}=\left(\alpha_n, 1\right),\ \alpha_n \neq 1$. Then $R=\frac{\beta_{\min }}{\beta_{\max}}<1$. We shall assign $\beta_{\min}$ to $x_0$ and $\beta_{\max}$ to $x_1$.
    An obvious weight function is a constant function 
    $$ \mathcal{W}\equiv\frac{\beta_{\max }}{2 g+1} \ . $$
    Since every $e\in\AA$ is an arc connecting $x_0$ to $x_1$, the balance equation \eqref{eq:balance_white} at $x_0$ automatically holds.

    \medskip
    \textbf{Case (A.3).} $a=2,\ m=0$ and $\alpha_{n-1}\neq\alpha_n$.

    As previous case, $\vec{\beta}=\left(\alpha_{n-1}, \alpha_n\right)$, and we assign $\beta_{\max}$ to $x_1$ and $\beta_{\min }$ to $x_0$. Then $R=\frac{\beta_{\min}}{\beta_{\max}}<1$ by the assumption $\alpha_{n-1}\neq\alpha_n$. And a similar constant weight function
    $$ \mathcal{W}\equiv\frac{\beta_{\max }}{2 g+1} \ . $$
    satisfies the requirement.
\end{proof}

\begin{proof}[Proof of Theorem \ref{thm:angle_cnstr_hcmu_new}.(B)] 
    The case with cusps are similar. One tiny adjustment in previous (Step 1) would make it work again.

    By the assumption, $\abs{\mathbb{P}}=a \geq q+1$. Since the cusps would be the only minimum points, we shall construct a ${\bf w}$-mixed angulation with
    \begin{itemize}
        \item $q>0$ white vertices,
        \item $a-q = \sum_{i\in Z}\left(\alpha_i-1\right)-(2g-2)-q$ black vertices, 
        \item $b=n+m+2g-2=\sum_{i \in Z} \alpha_i$ arcs . 
    \end{itemize}
    $\mathbb{A}_g^o$ is obtained by Lemma \ref{lem:canonical_polygon} again. But before applying Lemma \ref{lem:poly_poly}, we need a small modification. 
    Add $q-1 \geq 0$ different white vertices inside the complementary $(4g+2)$-gon of $\mathbb{A}_g^o$, and choose $(q-1)$ disjoint self-folded arcs connecting each of them to some black vertex inside the $(4g+2)$-gon. 
    We obtain a topological $(4g+2)+2(q-1)=\left(4g+2q\right)$-gon.
    Now that 
    \begin{align*}
        2L &:= \sum_{i\in Z} w_i - (4g+2q-2) \\
        &= (2 a+4g-4) - (4g+2q-2) = 2(a-q-1) \geq 0
    \end{align*}
    by the assumption $a \geq q+1$. Hence we can apply Lemma \ref{lem:poly_poly} in this $(4g+2)$-gon to obtain a ${\bf w}$-mixed angulation of it.

    Drawing this back on the surface, we have
    \begin{itemize}
        \item $(q-1)+1=q$ white vertices,
        \item $1+L=1+(a-q-1)=a-q$ black vertices,
        \item $(2g+1)+(q-1)+(j_0-1)+L = 
        \sum_{i\in Z}(\alpha_i -1)$ arcs.
    \end{itemize}
    This is the desired ${\bf w}$-mixed angulation again.

    The weight function is even simpler to construct. Since $R\equiv0$ in such case, the balance equation \eqref{eq:balance_white} at each white vertices vanishes. 
    For an arbitrary assignment of expected angle, let $\beta_{\tau(i)}$ as before. Then for any $x_i \in \mathbb{P}^{+}$ and $e\in E(x_i)$, 
    $$ \mathcal{W}(e):=\frac{\beta_{\tau(i)}}{\deg(x_i)} $$
    is a solution, for the same reason as Case (A.1).
\end{proof}

The proof of the two lemmas are elementary but interesting. 

\begin{proof}[Proof of Lemma \ref{lem:canonical_polygon}]
    Take a regular $(4g+2)$-gon on the Euclidean plane, and pairing the opposite parallel sides by translations. Then all its vertices are divided into two quotient classes, and any two adjacent vertices belong to different class. 
    By computing its Euler number, the result surface is of genus $g$. 
    Take one of the quotient classes to be the black vertex on surface, and the other be the white vertex, we get the desired $(4g)$-mixed angulation. 
\end{proof}

\begin{proof}[Proof of Lemma \ref{lem:poly_poly}]
    This is done by an induction on $j_0$. 

    If $j_0=1$. Then no diagonal arc is needed. Add directly $L=\frac{1}{2} w_1-(K-1)$ different black vertices inside the polygon, and connect each to one of the white vertex by $L$ disjoint self-folded arcs.

    Now assume the Lemma holds for order vectors of length $j_0-1 \geq 1$. And ${\bf w}$ is another order vector of length $j_0$, satisfying the conditions for some $K \in \ZZ_{>0}$, i.e. $2L := \sum w_i-(2 K-2) \geq 0$.

    If $w_1=2z_1-2<2K-2$, then we pick arbitrary $(w_1+1)$ successive edges of the $(2K)$-gon, and add a diagonal arc cutting of these edges. Since $w_1 \in \mathbb{Z}_{>1}$. this diagonal arc must connect a black and a white vertex. The result is a $(w_1+2)$-gon and a $(2K-w_1)$ goon. Now that the remained $(j_0-1)$ components of order vector satisfies
    $$ \left(\sum_{i=2}^{j_0} w_i\right) - \left(2K - w_1 - 2\right) = \sum_{i=1}^{j_0} w_i - (2 K-2) \geq0 \ . $$
    Hence we can apply the induction hypothesis on the $(2K-w_1)$-gon.

    Otherwise, if $w_1+2 \geq 2K$, we simply pick an arbitrary diagonal arc connecting two adjacent vertices, dividing the polygon into a $(2K)$-gon and a bigon.
    For the $(2K)$-gon, we add $\frac{1}{2}\left(w_1+2-2 K\right)$ black vertices and self-folded arcs as before, to obtain a topological $(w_1+2)$-gon.
    For the bigon, we have $2L' := \sum_{i=2}^{j_0} w_i-(2-2) \geq 2$. By induction hypothesis, we add $(j_0-2)$ diagonal arcs, $L'$ black vertices and $L'$ self-folded arcs to make it a $(w_2, \cdots, w_{j_0})$-mixed angulation.

    Putting them together, we have a ${\bf w}$-mixed angulation, with extra
    \begin{itemize}
        \item $\frac12(w_1+2-2K) + L' = \frac12\left( \sum w_i - (2K-2) \right) = L$ black vertices,
        \item $1+(j_0-2) =j_0-1$ diagonal arcs,
        \item $\frac12(w_1+2-2K) + L' = L$ self-folded arcs.
    \end{itemize}
    This is the desired mixed-angulation. 
\end{proof}

\bigskip
\section{HCMU surfaces with one conical singularity}\label{sec:S_alpha}
This section is devoted to Theorem \ref{thm:S_alpha}, namely the classification of HCMU surfaces with a unique cone, which is also a saddle point. In other words, we are considering $\vec{\alpha}=(\alpha) \in \mathbb{Z}_{>1}$ and type partition $\vec{T}=(\{1\}, \varnothing, \varnothing)$.

As pointed out in Remark \ref{rmk:angle_cnstr_hcmu}-(i), the HCMU surfaces without saddle point must be footballs. 
So when $g>0$, the unique conical singularity must be a saddle point. Hence $\Mhcmu{g,1}{\alpha} = \Mhcmu{g,1}{(\alpha);\vec{T}}$. 
Together with football cases, Theorem \ref{thm:S_alpha} can be regarded as a coarse classification of all HCMU surfaces with one conical singularity. 

Before concrete construction of surfaces with one conical singularity, we show that the first condition in Theorem \ref{thm:S_alpha} is a general requirement. 
So In the follows subsections, these conditions are always assumed.
\begin{lemma}\label{lem:S_alpha.1}
    The conditions $\alpha \geq 2g+2$ and $p>q>0$, $p+q=\alpha-2g+1$ in Theorem \ref{thm:S_alpha} are necessary in all cases.
\end{lemma}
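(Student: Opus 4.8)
The plan is to deduce everything from the \PH\ bookkeeping of Lemma~\ref{lem:extrm_set} together with the ratio identity~\eqref{eq:ratio_given_type}, specialised to the data of the present situation: $n=1$, $\vec{\alpha}=(\alpha)$ with $\alpha\in\ZZ_{>1}$, and type partition $\vec{T}=(\{1\},\varnothing,\varnothing)$, so that $Z=\{1\}$, $|P^+|=|P^-|=0$, and the numbers of smooth maximum and minimum points are $m^+=p$ and $m^-=q$. Any HCMU surface of the kind described lies in $\Mhcmu{g,1}{(\alpha);\vec{T}}$, hence all the lemmas of Section~\ref{sec:exist_thm} apply to it verbatim.

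First I would read off the identity $p+q=\alpha-2g+1$. By \eqref{eq:extra_pole_hcmu} (equivalently \eqref{eq:pole_hcmu}, since $|P^\pm|=0$) the number of smooth extremal points is $m=m(Z)=\alpha-(2g-2+n)=\alpha-2g+1$. In this configuration the only conical point is the prescribed saddle, so every extremal point is smooth; therefore $p+q=m^++m^-=m=\alpha-2g+1$.

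Next I would establish $q\geq 1$, $p\geq 1$, and then $p>q$. Since the HCMU metric is non-CSC, its curvature $K$ is non-constant and attains a strict maximum and a strict minimum on the compact surface; these are a maximum point and a minimum point of the HCMU surface in the sense of Proposition~\ref{prop:curvature}, and, being smooth by the previous paragraph, they contribute $p=m^+\geq 1$ and $q=m^-\geq 1$; in particular $q>0$. For $p>q$ I would invoke \eqref{eq:ratio_given_type}: the ratio of the surface is $R=\frac{A^-+m^-}{A^++m^+}=\frac{q}{p}$ because $A^+=A^-=0$, while the ratio of the common character line element satisfies $R=\Ratio<1$, as $K_1<K_0$ gives $R-1=\frac{K_1-K_0}{2K_0+K_1}<0$ (the denominator is positive since $K_1\geq-K_0/2$). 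Hence $q/p<1$, i.e. $p>q$. Finally $\alpha\geq 2g+2$ is automatic: $p>q\geq 1$ forces $p+q\geq 3$, and combined with $p+q=\alpha-2g+1$ this gives $\alpha-2g+1\geq 3$.

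I do not anticipate a real obstacle here, since the statement is pure arithmetic on top of Lemma~\ref{lem:extrm_set} and~\eqref{eq:ratio_given_type}. The only two points deserving a sentence of care are: (i) confirming that in this configuration the maximum and minimum points genuinely enter through $m^\pm$ rather than $|P^\pm|$, i.e. that they cannot coincide with the saddle point; and (ii) the strictness $R<1$, which is precisely the exclusion of the CSC case and is where the non-CSC hypothesis on the metric is used.
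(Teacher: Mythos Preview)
Your proposal is correct and follows essentially the same approach as the paper: both derive $p+q=\alpha-2g+1$ from the \PH\ count in Lemma~\ref{lem:extrm_set}, then use the ratio identity~\eqref{eq:ratio_given_type} to obtain $R=q/p<1$, hence $p>q$. The only cosmetic difference is that the paper cites Theorem~\ref{thm:angle_cnstr_hcmu_new} to exclude $a=m=2$ and conclude $a\geq 3$ (hence $\alpha\geq 2g+2$), whereas you unpack that step directly by arguing $p\geq 1$, $q\geq 1$, and $p>q$ force $p+q\geq 3$; this is exactly the content of the necessity proof of that theorem in this special case.
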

\begin{proof}
    Since the only conical singularity is a saddle point, every extremal point is a smooth one, that is, $a=m$. 
    Hence $a=m=2$ is not allowed by Theorem \ref{thm:angle_cnstr_hcmu_new}, we have $a=(\alpha-1)-(2g-2) \geq 3$. This implies $\alpha \geq 2g+2$, and $\Mhcmu{g,1}{(\alpha)}$ is always non-empty. 
    
    By \eqref{eq:ratio_given_type}, the expected ratio of the surface should be 
    $R =\frac{m^-}{m^+} =\frac{q}{p}$.
    Hence $q<p$ is always required. 
    Finally,
    $$ p+q=a=m=\alpha-(2g-2+1)=\alpha-2g+1 \ . $$
\end{proof}

\begin{remark}
    By the proof in this section, we also see why there is less constraints in positive genus cases. 
    When $g=0$, all the induced bi-colored graph is actually a planar graph. So positive genus allows more complexity for the bi-colored graph, bringing more possibility for the existence of weight function.
\end{remark}

\medskip
To construct a surface in $\Mhcmu{g,1}{(\alpha),\vec{T}}$ with $p$ maximum points and $q$ minimum point, we need to construct a $(2\alpha-2)$-mixed angulation $\AA$ with
\begin{itemize}
    \item $p$ black points $\PP^+$ and $q$ white points $\PP^-$,
    \item $b=\alpha$ arcs.
\end{itemize}

Let $\PP^+ := \{ x_1, \cdots, x_p\}$ and $\PP^-:=\{ y_1, \cdots, y_q\}$. When a bi-colored graph $\mathcal{T}$ is given, the expected angle at every vertex is $1$. Recall that the expected ratio is always $q/p$. Hence we need to find a weight function $\mathcal{W}:\AA \to \RR_{>0}$ satisfying the following balance equations: 
\begin{align*}
\sum_{e \in E(x_i)} \mathcal{W}(e) &= 1, \qquad\quad\  \forall x_i \in \mathbb{P}^{+} \ , \\
\sum_{e \in E(y_j)} \mathcal{W}(e) &= \frac1R =\frac{p}{q}, \quad \forall y_j \in \mathbb{P}^{-} \ .
\end{align*}
For convenience, we multiply $\mathcal{W}$ by $q>0$, then the balance equations for this case read as
\begin{align}\begin{split} \label{eq:BE_pq}
\sum_{e \in E(x_i)} \mathcal{W}(e) &= q, \quad \forall x_i \in \mathbb{P}^{+} \ , \\
\sum_{e \in E(y_j)} \mathcal{W}(e) &= p, \quad \forall y_j \in \mathbb{P}^{-} \ .
\end{split}\end{align}

\bigskip
\subsection{Weighted bi-colored planar tree}\label{ssec:weight_bicolor_tree}
The genus zero case is a little bit subtle. We need some preparation before its proof.
Since $g=0$, the bi-colored graph $\mathcal{G}$ is actually a planar graph. However, since
$$ \abs{\AA} = \alpha = p+q+2g-1 = p+q-1 = \abs{\PP} -1 \ , $$
this planar graph is actually a planar tree with $(p+q)$ nodes. 
This can also be obtained by the Jordan theorem, since its complementary is a single simply-connected region. 

\begin{lemma}\label{lem:unique_BE_tree}
    For a given bi-colored planar tree $\mathcal{T}=(\mathbb{P}^{+} \sqcup \mathbb{P}^{-}, \AA)$ as above, the solution of weight function $\mathcal{W}$ to the balance equation \eqref{eq:BE_pq} is unique, if there is any. Furthermore, any solution takes values within $\{1,2, \cdots, q\}$.
\end{lemma}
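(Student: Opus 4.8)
The plan is to argue by a leaf-stripping induction on the bi-colored planar tree $\mathcal{T}$, which simultaneously establishes uniqueness of the solution $\mathcal{W}$ to \eqref{eq:BE_pq} and the integrality constraint $\mathcal{W}(e)\in\{1,\dots,q\}$. First I would recall that a finite tree with at least two nodes has a leaf; say $v$ is a leaf with unique incident edge $e$. If $v\in\PP^+$, then the balance equation at $v$ forces $\mathcal{W}(e)=q$; if $v\in\PP^-$, it forces $\mathcal{W}(e)=p$. In either case the value of $\mathcal{W}$ on this pendant edge is completely determined with no freedom, which is the seed of uniqueness.

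Next I would perform the reduction: delete the leaf $v$ and its edge $e$, obtaining a smaller bi-colored tree $\mathcal{T}'$ on $\PP\setminus\{v\}$. The neighbour $w$ of $v$ sees its balance-equation right-hand side decreased by the now-known value $\mathcal{W}(e)$ (i.e.\ $q$ or $p$), while all other balance equations are unchanged. So solving \eqref{eq:BE_pq} on $\mathcal{T}$ is equivalent to solving a modified system on $\mathcal{T}'$ with one altered constant. By induction on the number of nodes, the solution on $\mathcal{T}'$ — if it exists — is unique, hence so is the solution on $\mathcal{T}$; and if at any stage a forced value is impossible (negative, or the bookkeeping becomes inconsistent), there is simply no solution, which is the ``if there is any'' clause. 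The base case is a single edge joining one black and one white node, where the two balance equations read $\mathcal{W}(e)=q$ and $\mathcal{W}(e)=p$; this is consistent only when $p=q$, but in our situation $p>q$, so in fact the genuinely relevant base of the induction is a two-edge path or we just note the statement is vacuously about existence.

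For the integrality bound $\mathcal{W}(e)\le q$ I would track, along the leaf-stripping, the running right-hand sides. At a white leaf the forced value is exactly $p$, which need not be $\le q$ — so I must be careful: the claim is about the \emph{rescaled} weight (we multiplied $\mathcal{W}$ by $q$), and one should re-examine which normalization makes ``values in $\{1,\dots,q\}$'' correct. The clean way is: root the tree at a white vertex, orient every edge toward the root, and observe that $\mathcal{W}(e)$ equals the number of black leaves (counted appropriately, or the total ``black mass'' $= q\cdot(\text{number of black nodes in the subtree hanging off }e)$, balanced against white nodes) in the component cut off by $e$; since each subtree contains between $1$ and all of the black/white nodes, and the total is constrained by $p+q$, the forced edge values land in the asserted range. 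I would make this precise by a conservation-of-flow argument: interpreting the balance equations as a network flow (as flagged after \eqref{eq:balance_white}), $\mathcal{W}(e)$ is the net flow across $e$, and on a tree this is rigidly determined by the node-demands on either side, giving both uniqueness and the explicit bound.

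The main obstacle I anticipate is getting the integrality/range statement exactly right rather than uniqueness, which is essentially automatic on a tree. Specifically I need to verify that every forced edge value is a \emph{positive integer} $\le q$ under the stated rescaling — the positivity is what can fail (and correctly corresponds to non-existence of an HCMU surface for that combinatorial type), and the upper bound $q$ requires knowing that the black/white node counts in each subtree obey the right inequality. I would handle this by choosing the root on the white side and showing inductively that for each edge $e$ cutting off a subtree $\mathcal{T}_e$ with $b_e$ black and $w_e$ white nodes, the forced value is $\mathcal{W}(e)=p\,w_e - q\,b_e$ (or its negative, depending on orientation), and then using $b_e\ge1$, $w_e\ge1$, $b_e\le p$, $w_e\le q$ together with $p+q=\alpha-2g+1$ to pin it into $\{1,\dots,q\}$; the inequalities $p>q>0$ are exactly what keep this interval non-degenerate.
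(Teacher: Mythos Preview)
Your leaf-stripping induction for uniqueness is correct and essentially matches the paper's proof; the paper deletes all current leaves simultaneously rather than one at a time, but this is cosmetic.

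The gap is in your treatment of the range bound $\mathcal{W}(e)\in\{1,\dots,q\}$. You miss a one-line observation that dissolves all the difficulties you anticipate: every edge $e$ is incident to some black vertex $x_i\in\PP^+$, and the balance equation there reads $\sum_{e'\in E(x_i)}\mathcal{W}(e')=q$ with all summands positive, so $\mathcal{W}(e)\le q$ immediately. This is the paper's first step. It also resolves your worry about white leaves: if a positive solution exists, a white leaf would force weight $p>q$ on its pendant edge, contradicting $\mathcal{W}\le q$; hence all \emph{initial} leaves of $\mathcal{T}$ are black. After stripping, white vertices may become leaves of the reduced tree, but then their pendant-edge value is $p$ minus a sum of already-determined positive integers, so integrality propagates. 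Positivity is part of the hypothesis (``if there is any''), and together with integrality and the bound $\le q$ this gives $\mathcal{W}(e)\in\{1,\dots,q\}$.

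Your flow formula (it should read $\mathcal{W}(e)=q\,b_e-p\,w_e$, with $b_e,w_e$ the black and white counts in the subtree on the black side of $e$) is a valid alternative that yields uniqueness and integrality at once, but extracting $\le q$ purely from the subtree counts is not straightforward, and the inequalities you list at the end do not deliver it; you would still need the black-vertex observation above. There is also no normalization issue to untangle: \eqref{eq:BE_pq} is already the rescaled system, and the lemma is stated for exactly this $\mathcal{W}$.
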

\begin{proof}
    Assume \eqref{eq:BE_pq} has a positive solution $\mathcal{W}:\AA \to \mathbb{R}_{>0}$. The uniqueness can also be obtained by the rank of connection matrix, introduced in Section \ref{ssec:cnnt_matrix_hcmu}. But we shall provide an alternative proof for later usage.

    Since $\mathcal{T}$ is a planar connected tree, it must contains at least 1 (actually 2) vertex of degree 1. Let $v \in \mathbb{P}$ be a point with $\deg(v)=1$, and let $E(v)=\{e\}$. By \eqref{eq:BE_pq}, $\mathcal{W}\leq q$. Hence $v$ must be a black point with $\mathcal{W}(e)=q$. 

    Now delete all vertices in $\mathbb{P}$ of degree 1, and the edges in $\AA$ connecting to them. Denote the resulted graph by $\mathcal{T}^{(1)} := \left(\mathbb{P}^{(1)}, \mathbb{A}^{(1)}\right)$. 
    Equivalently, $\mathcal{T}^{(1)}$ is the full subgraph spanned by $\mathbb{P}^{(1)} := \defset{v \in \mathbb{P}}{\deg(v)>1}$. 
    Since $\mathcal{T}$ is a connected planar tree, so does $\mathcal{T}^{(1)}$. And the number of vertices and edges of $\mathcal{T}^{(1)}$ is strictly smaller than $\mathcal{T}$.
    As before, there exists at least one $v\in\PP^{(1)}$ with $\deg_{\mathcal{T}^{(1)}}(v)=1$. Here $\deg_{\mathcal{T}^{(1)}}$ means we are counting the degree inside the subgraph $\mathcal{T}^{(1)}$. For any such vertex, let $e_v$ be the unique edge in $\AA^{(1)}$ connecting to $v$.
    Then the \eqref{eq:BE_pq} at that point reads as
    $$ \mathcal{W}(e_v) + \sum_{e\in E(v)\setminus \AA^{(1)}}\mathcal{W}(e) = p \quad \text {or} \quad q $$
    The only unknown weight is $\mathcal{W}(e_v)$. Since $\mathcal{W}(e)$ is positive for all $e\in \AA \setminus \AA^{(1)}$, and we have assumed $\mathcal{W}$ admits a positive solution, we see $\mathcal{W}(e_v)$ is also an integer.

    Similarly, we continue delete all vertex in $\mathcal{T}^{(1)}$ of degree 1 and edges connecting to them. The result graph $\mathcal{T}^{(2)}$ is a connected planar tree again, with strictly less vertices and edges. Then the above argument can be repeated.  
    By induction, this procedure must stop after finite times. Hence $\mathcal{W}$ is unique and always takes integer value.
\end{proof}

\begin{corollary}\label{cor:com_divisor}
    If $\lambda\in\ZZ_{>0}$ is a common divisor of $p,q$, then $\lambda \lvert \mathcal{W}(e)$ for all $e\in\AA$. 
\end{corollary}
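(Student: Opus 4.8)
The plan is to re-run, essentially verbatim, the leaf-peeling induction from the proof of Lemma~\ref{lem:unique_BE_tree}, but tracking divisibility by $\lambda$ in place of mere integrality. Lemma~\ref{lem:unique_BE_tree} already supplies what is needed as input: the weight function $\mathcal{W}$ exists and takes integer values, so the assertion $\lambda \mid \mathcal{W}(e)$ is meaningful, and the only genuinely new observation is the trivial one that both constants $p$ and $q$ on the right-hand sides of \eqref{eq:BE_pq} are divisible by $\lambda$.

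First I would dispose of the base case of the peeling. Since $\mathcal{T}$ is a finite connected tree it has a vertex $v$ with $\deg(v)=1$, say $E(v)=\{e\}$; the balance equation \eqref{eq:BE_pq} at $v$ forces $\mathcal{W}(e)=q$ (if $v\in\PP^+$) or $\mathcal{W}(e)=p$ (if $v\in\PP^-$), and $\lambda$ divides both. For the inductive step I would form, exactly as in Lemma~\ref{lem:unique_BE_tree}, the tree $\mathcal{T}^{(1)}=(\PP^{(1)},\AA^{(1)})$ obtained by deleting all degree-one vertices of $\mathcal{T}$ together with their incident edges, pick a leaf $v$ of $\mathcal{T}^{(1)}$ with unique incident edge $e_v\in\AA^{(1)}$, and read off the balance equation at $v$ as
\[ \mathcal{W}(e_v) \;=\; p - \sum_{e\in E(v)\setminus\AA^{(1)}}\mathcal{W}(e) \qquad\text{or}\qquad \mathcal{W}(e_v)\;=\;q - \sum_{e\in E(v)\setminus\AA^{(1)}}\mathcal{W}(e). \]
Each edge occurring in that sum was removed at the previous stage, hence carries a weight divisible by $\lambda$ by the induction hypothesis; since $p$ and $q$ are also divisible by $\lambda$, so is $\mathcal{W}(e_v)$. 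Iterating along the chain $\mathcal{T}\supset\mathcal{T}^{(1)}\supset\mathcal{T}^{(2)}\supset\cdots$, which terminates because the number of vertices strictly drops at each stage, covers every edge of $\AA$ and yields $\lambda\mid\mathcal{W}(e)$ for all $e\in\AA$.

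I do not anticipate a real obstacle: this corollary is an immediate upgrade of the argument already written for Lemma~\ref{lem:unique_BE_tree}, and the single subtlety — that successive peeling truly exhausts $\AA$ — was settled there. The only mild care needed is to phrase the induction as being on the nested trees $\mathcal{T}^{(j)}$ rather than on individual edges, so that ``the weight was determined at the previous stage'' is unambiguous. As an alternative one could argue via the connection matrix of Section~\ref{ssec:cnnt_matrix_hcmu}: the system \eqref{eq:BE_pq} has a unique solution and its right-hand side vanishes modulo $\lambda$, so reducing mod $\lambda$ and invoking the same triangular (peeling) structure forces the solution to be $\equiv 0$ componentwise; but the direct leaf-peeling proof above is shorter and self-contained.
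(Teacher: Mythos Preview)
Your proposal is correct and matches the paper's own proof essentially verbatim: both re-run the leaf-peeling induction of Lemma~\ref{lem:unique_BE_tree}, observing that at each stage the single unknown weight is expressed as $p$ or $q$ minus previously determined weights, all divisible by $\lambda$. The only cosmetic difference is that the paper notes (from the proof of Lemma~\ref{lem:unique_BE_tree}) that degree-one vertices are necessarily black, so $\mathcal{W}(e)=q$ for all $e\in\AA\setminus\AA^{(1)}$, whereas you allow either color---but since $\lambda$ divides both $p$ and $q$ this has no effect.
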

\begin{proof}
    Following the previous proof, for all $e\in \AA \setminus \AA^{(1)}$, $\mathcal{W}(e)=q$. 
    In the balance equation \eqref{eq:BE_pq}, every known term is divided by $\lambda$, then so does the only unknown term $\mathcal{W}(e_v)$. 
    The induction holds for the same reason. 
\end{proof}

This leads to the arithmetic condition for genus zero case.
\begin{corollary}\label{cor:necs_S_alpha}
    Condition (2) for the genues zero case in Theorem \ref{thm:S_alpha} is necessary.
\end{corollary}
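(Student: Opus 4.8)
The plan is to argue by contradiction, using the bi-colored planar tree attached to a genus zero surface in $\Mhcmu{0,1}{(\alpha);\vec{T}}$. Suppose such a surface exists with $p$ smooth maximum points and $q$ smooth minimum points; by Lemma \ref{lem:S_alpha.1} we already know $p>q>0$ and $p+q=\alpha+1$, and we now assume in addition that $q>1$ and $q\mid p$, aiming for a contradiction. As recalled just before Lemma \ref{lem:unique_BE_tree}, the induced bi-colored graph $\mathcal{T}=(\PP^+\sqcup\PP^-,\AA)$ is a tree with $p+q$ vertices and $\abs{\AA}=\alpha=p+q-1$ edges, equipped with a weight function $\mathcal{W}:\AA\to\RR_{>0}$ solving the rescaled balance equations \eqref{eq:BE_pq}.

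First I would pin down $\mathcal{W}$ completely. Since $q\mid p$, the integer $q$ is a common divisor of $p$ and $q$, so Corollary \ref{cor:com_divisor} gives $q\mid\mathcal{W}(e)$ for every edge $e\in\AA$; on the other hand Lemma \ref{lem:unique_BE_tree} forces $\mathcal{W}(e)\in\{1,\dots,q\}$. Together these imply that $\mathcal{W}$ is the constant weight function $\mathcal{W}\equiv q$. Reading off the balance equation \eqref{eq:BE_pq} at an arbitrary black vertex $x_i\in\PP^+$ then gives $q\cdot\deg(x_i)=q$, i.e. $\deg(x_i)=1$ for every black vertex.

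The last step is a counting contradiction: since $\mathcal{T}$ is bi-colored, every edge has exactly one endpoint in $\PP^+$, so $\abs{\AA}=\sum_{x_i\in\PP^+}\deg(x_i)=\abs{\PP^+}=p$; but $\abs{\AA}=\alpha=p+q-1$, whence $q=1$, contradicting $q>1$. This rules out the excluded case $q>1,\ q\mid p$ and shows condition (2) is necessary. The arithmetic here is immediate once $\mathcal{W}$ is determined, so the only place demanding care is the standing use of the tree/data-set representation: one should make sure that every surface in $\Mhcmu{0,1}{(\alpha);\vec{T}}$ is generic — equivalently that a single saddle point cannot support a self-connecting meridian segment in the strip decomposition — or else dispose of any non-generic surface by a direct argument. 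I expect this genericity point to be the only real obstacle, the remainder being a short deduction from Lemma \ref{lem:unique_BE_tree} and Corollary \ref{cor:com_divisor}.
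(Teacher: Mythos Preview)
Your argument is correct and follows the paper's approach closely: both use Corollary~\ref{cor:com_divisor} together with Lemma~\ref{lem:unique_BE_tree} to force $\mathcal{W}\equiv q$ and hence $\deg(x_i)=1$ for every black vertex. The only difference is in the final contradiction: the paper observes that a path in the connected tree $\mathcal{T}$ joining two distinct white vertices must pass through some black vertex of degree $\geq 2$, whereas you count edges to get $p=\abs{\AA}=p+q-1$. Both work; your counting is marginally cleaner.

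On the genericity worry you flag: it is automatically resolved in this setting and you need not leave it as a gap. A non-generic surface would require a meridian segment connecting two saddle points, but here there is only one saddle point, and a meridian is an integral curve of the gradient field $\vec{H}$ of the curvature $K$ (Proposition~\ref{prop:vector_fields}), along which $K$ is strictly monotone. Hence no meridian segment can start and end at the same curvature value, so the single saddle cannot be joined to itself; every surface in $\Mhcmu{0,1}{(\alpha);\vec{T}}$ is generic.
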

\begin{proof}
    If there is a weight function $\mathcal{W}$ when $g=0$ and $q \lvert p$, then $q \lvert \mathcal{W}$ by the pervious lemma. 
    Since $\mathcal{W} \in\{1, \cdots q\}$ and $q>1$, $\mathcal{W}$ must be constantly $q$. 
    Now for any $x_i\in\PP^+$, $q= \sum_{e\in E(x_i)} \mathcal{W}(e) = \deg(x_i)\cdot q$. Hence $\deg(x_i)\equiv 1$. 
    
    On the other hand, $q>1$ means there are at least $2$ different white vertices. Because $\mathcal{T}$ is connected, any path in $\mathcal{T}$ connecting 2 different white vertices must contains a black vertex of degree $\geq2$. This is a contradiction.  
\end{proof}

\bigskip
\subsection{Genus zero case}\label{ssec:clsfy_g_0}
When $q=1$, the choice for $\mathcal{T}$ is unique: a white vetex connected by $p$ edges from $p$ black vertices. In fact this is the one we have constructed in Section \ref{ssec:sufficiency}. So we only consider $q>1$ below.

We first consider the case with co-prime $p,q$, which is a number theory problem. Then the remained case can be obtained from co-prime case. It is recommended to browse Figure \ref{fig:tree_7,3} and Figure \ref{fig:tree_14,6} before the proof. 

\begin{proposition}\label{prop:coprime_case}
    For co-prime $(p,q)$, there exists a weighted bi-colored tree $\mathcal{T}=(\mathbb{P}^{+} \sqcup \mathbb{P}^{-}, \AA; \mathcal{W})$ with $\abs{\mathbb{P}^{+}} =p, \abs{\mathbb{P}^{-}}=q$ such that $\mathcal{W}$ satisfies balance equations \eqref{eq:BE_pq}. 
\end{proposition}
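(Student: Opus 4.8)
The plan is to argue by induction on $p+q$, exploiting the subtractive Euclidean algorithm together with the evident color-swapping symmetry of the problem. Observe first that the system \eqref{eq:BE_pq} is unchanged if one simultaneously exchanges $\PP^+\leftrightarrow\PP^-$ and $p\leftrightarrow q$; hence the existence of a suitable weighted bi-colored tree for the pair $(p,q)$ is equivalent to that for $(q,p)$, and we may always arrange $p\ge q$. For the base case $p+q=2$, coprimality forces $p=q=1$, and a single edge of weight $1$ joining one black and one white vertex satisfies \eqref{eq:BE_pq}.

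For the inductive step, let $p+q\ge 3$; then $p\ne q$ and, after recoloring if necessary, $p>q\ge 1$. Thus $p-q\ge 1$, $\gcd(p-q,q)=\gcd(p,q)=1$, and $(p-q)+q=p<p+q$, so the induction hypothesis provides a weighted bi-colored tree $\mathcal{T}'$ with $p-q$ black vertices, each with incident weights summing to $q$, and $q$ white vertices, each with incident weights summing to $p-q$. I would then form $\mathcal{T}$ by attaching to \emph{each} of the $q$ white vertices of $\mathcal{T}'$ a new black leaf joined by an edge of weight $q$. The result is still a tree (only pendant vertices were added), it has $(p-q)+q=p$ black and $q$ white vertices, and the balance equations \eqref{eq:BE_pq} hold at every vertex: each new black leaf meets a single edge of weight $q$; each old black vertex retains its incident edges, still summing to $q$; and at each white vertex the incident weights now sum to $(p-q)+q=p$. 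Hence $\mathcal{T}$ is the required weighted bi-colored tree. All weights produced are positive integers (in fact in $\{1,\dots,q\}$), consistent with Lemma \ref{lem:unique_BE_tree}; and since every finite tree is planar, $\mathcal{T}$ can be embedded in $S^2$, its complement a single $2\alpha$-gon, giving the genus-zero $(2\alpha-2)$-mixed angulation demanded in this subsection.

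The argument is essentially routine once two ingredients are in place: the symmetry that reduces to $p>q$, and the observation that the operation ``attach a weight-$q$ black pendant at every white vertex'' simultaneously raises every white balance from $p-q$ to $p$ and creates exactly $q$ legitimate new black vertices. The only point requiring care is the bookkeeping — tracking the passage of counts $(p-q,q)\mapsto(p,q)$ and of coprimality through the recursion, and recoloring inside recursive calls whenever the first entry of the pair becomes the smaller one. For instance, unwinding the recursion for $(7,3)$ through the chain $(7,3),(4,3),(1,3)\cong(3,1),(2,1),(1,1)$ and building back up reproduces the tree in Figure \ref{fig:tree_7,3}. The non-coprime case treated next will then be reduced to this one by a rescaling argument, in the spirit of Corollary \ref{cor:com_divisor}.
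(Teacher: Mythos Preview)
Your inductive argument via the subtractive Euclidean algorithm is correct and complete: the color-swap symmetry is genuine, the pendant-attachment step preserves the tree structure and the balance equations exactly as you say, and coprimality descends through $(p,q)\mapsto(p-q,q)$. This is a genuinely different route from the paper's. The paper lines up the $p$ black and $q$ white vertices and builds the tree edge by edge from partial sums of $q$ and $p$ (essentially reading off the three-distance pattern of multiples of $q$ modulo $p$), producing a caterpillar-shaped tree in which every black vertex has degree at most $2$. Your recursion instead produces more branched trees; for $(7,3)$ you obtain a star with a central degree-$3$ black vertex joined by weight-$1$ edges to the three whites, each carrying two weight-$3$ black leaves. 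So your parenthetical claim that the unwound recursion ``reproduces the tree in Figure~\ref{fig:tree_7,3}'' is not right --- you get a different (but equally valid) solution, in fact the other tree shown in Figure~\ref{fig:tree_7_3_all}.

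One downstream consequence worth flagging: the paper's proof of Proposition~\ref{prop:non_coprime_case} relies on the specific shape of its coprime tree --- in particular that every black vertex has degree $\le 2$ and that some black vertex has degree exactly $2$ --- in order to perform the splitting-and-splicing construction. Your trees need not have any degree-$2$ black vertex (the $(7,3)$ example has black degrees $3,1,1,1,1,1,1$), so your closing remark that the non-coprime case follows ``by a rescaling argument'' is too optimistic; the paper's Proposition~\ref{prop:non_coprime_case} is more than rescaling, and would require a different splice if fed your trees.
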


\begin{proof}
Line up $p$ black vertices and $q$ white vertices on two parallel lines on the plane. We use $e=(x, y)$ to represent an edge $e \in \AA$ with vertices $x \in \mathbb{P}^{+}, y \in \mathbb{P}^{-}$. 
Now we shall inductively define the edge $e_i$ and is weight $\mathcal{W}(e_i)$, where $1 \leq i \leq p+q-1$. Also let 
$\mathcal{W}_i := \mathcal{W}(e_1) +\mathcal{W}(e_2)+ \cdots + \mathcal{W}(e_i)$. 

First, let $e_1=(x_1, y_1),\ \mathcal{W}(e_1)=q$. Then $q=\mathcal{W}_1 <2q,\ 0\leq \mathcal{W}_1 <p$. If $e_1,\cdots, e_{k-1}$ and $\mathcal{W}(e_1),\cdots, \mathcal{W}(e_{k-1})$ are defined, where $2\leq k< p+q-1$,
then $\mathcal{W}_1, \cdots, \mathcal{W}_{k-1}$ are also known. 
Assume
\begin{align*}
I \cdot q \leq & \mathcal{W}_{k-1} < (I+1) \cdot q, \\
J \cdot p \leq & \mathcal{W}_{k-1} < (J+1) \cdot p,
\end{align*}
with $I \in \{1,2, \cdots, p-1\},\ J \in\{0,1, \cdots, q-1 \}$. 
The two equality on the left can not hold simultaneously unless $k=p+q$ when all edges and weights are defined. Now we define the $k$-th edge to be $e_k := (x_{I+1}, y_{J+1})$, and
$$ \mathcal{W}(e_k) := \min \left\{ (J+1)p - \mathcal{W}_{k-1},\quad (I+1)q - \mathcal{W}_{k-1} \right\} \ . $$
The two terms are different. It can be checked directly that $\forall 1 \leq k < p+q-1$, $e_k, e_{k+1}$ share exactly one common vertex. Hence the tree constructed is connected. The choice of $\mathcal{W}$ guarantee the balance equation \eqref{eq:BE_pq}. 

See Figure \ref{fig:tree_7,3} as an example with $(p,q)=(7,3)$. 
\end{proof}

\begin{figure}[t]
    \makebox[\textwidth][c]{\includegraphics[width=0.6\textwidth]{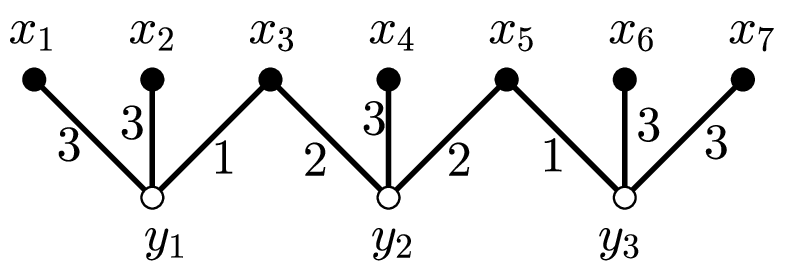}}
    \caption{\footnotesize The bi-colored tree with $(p,q)=(7,3)$ constructed in Proposition \ref{prop:coprime_case}}
    \label{fig:tree_7,3}
\end{figure}

\begin{proposition}\label{prop:non_coprime_case}
    If the greatest common divisor of $p,q$ is $1<\lambda<q$, then there also exists a weighted bi-colored tree $\mathcal{T}=(\mathbb{P}^{+} \sqcup \mathbb{P}^{-}, \AA; \mathcal{W})$ with $\abs{\mathbb{P}^{+}} =p, \abs{\mathbb{P}^{-}}=q$ such that $\mathcal{W}$ satisfies balance equation \eqref{eq:BE_pq}. 
\end{proposition}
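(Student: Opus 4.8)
The plan is to reduce to the coprime case of Proposition~\ref{prop:coprime_case} and then assemble $\lambda$ copies of the tree it produces. Write $p=\lambda p'$ and $q=\lambda q'$ with $\gcd(p',q')=1$; the hypotheses $p>q>1$ and $1<\lambda<q$ force $q'\geq 2$ and $p'>q'$, hence $p'\geq 3$. By Corollary~\ref{cor:com_divisor}, every solution of the balance equations \eqref{eq:BE_pq} on a bi-colored tree with $p$ black and $q$ white vertices takes all its values in $\lambda\ZZ_{>0}$; dividing by $\lambda$, the problem becomes: find such a tree carrying a positive \emph{integer} weight with black balance $q'$ and white balance $p'$. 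Running the argument of Lemma~\ref{lem:unique_BE_tree} in this normalization, on any fixed bi-colored tree $\mathcal{T}$ with $p$ black and $q$ white vertices the weight of an edge $e$ is forced to equal $\abs{\,q'\,b_e-p'\,w_e\,}$, where $(b_e,w_e)$ is the number of black and white vertices on one side of $e$; this is automatically a balanced weight, it is unique, and it is everywhere positive \emph{if and only if} no edge of $\mathcal{T}$ cuts off a set of exactly $rp'$ black and $rq'$ white vertices for some integer $r\geq 1$ (using $\gcd(p',q')=1$). As in Lemma~\ref{lem:unique_BE_tree}, such a tree necessarily has all leaves black, because $p'>q'$. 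So it remains only to exhibit one bi-colored tree on $p$ black and $q$ white vertices with no such ``balanced cut''.

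I would build it by induction on $m$, producing for each $m\geq 1$ a bi-colored tree $\mathcal{T}_m$ on $mp'$ black and $mq'$ white vertices having no edge that cuts off $(rp',rq')$ vertices with $1\leq r<m$; the case $m=\lambda$ is what is needed. For $m=1$ take $\mathcal{T}_1:=\mathcal{T}'$, the tree of Proposition~\ref{prop:coprime_case} applied to the coprime pair $(p',q')$ (cf.\ Figure~\ref{fig:tree_14,6} for $\lambda=2$, $(p',q')=(7,3)$); there is no balanced cut to check. For the step $m-1\to m$: fix a black leaf $x^\ast$ of $\mathcal{T}'$ with white neighbour $w^\ast$, detach it, and call $\mathcal{B}:=\mathcal{T}'\setminus\{x^\ast\}$ the resulting ``body'', a connected bi-colored tree with $p'-1$ black and $q'$ white vertices. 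Choose a black vertex $u$ of degree $\geq 2$ in $\mathcal{T}_{m-1}$ (one exists, since the zigzag tree $\mathcal{T}'$ has $q'-1\geq 1$ blacks of degree $2$ and these survive inside every $\mathcal{T}_m$) together with one of its white neighbours $v$, and form $\mathcal{T}_m$ from the disjoint union $\mathcal{T}_{m-1}\sqcup\mathcal{B}\sqcup\{x^\ast\}$ by adding the two edges $w^\ast u$ and $x^\ast v$. Since $\mathcal{B}$ and $x^\ast$ each attach to the connected tree $\mathcal{T}_{m-1}$ by a single edge, $\mathcal{T}_m$ is again a tree, with $mp'$ black and $mq'$ white vertices.

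It then remains to verify that $\mathcal{T}_m$ has no balanced cut, and this is the only non-routine point. Cutting $x^\ast v$ isolates $(1,0)$ and cutting $w^\ast u$ isolates $\mathcal{B}=(p'-1,q')$, with imbalances $q'$ and $-q'$, so both are harmless; interior edges of $\mathcal{B}$ are fine because the matching edges of $\mathcal{T}'$ already had positive weight (Proposition~\ref{prop:coprime_case}). For an interior edge $e$ of $\mathcal{T}_{m-1}$, cutting it splits $\mathcal{T}_{m-1}$ into $S$ and its complement; in $\mathcal{T}_m$ the side through $S$ additionally carries $\mathcal{B}$ when $u\in S$ and $x^\ast$ when $v\in S$, so, writing $(b_S,w_S)$ for the counts in $S$, the four possibilities give side-counts $(b_S,w_S)$, $(b_S{+}1,w_S)$, $(b_S{+}p'{-}1,w_S{+}q')$, $(b_S{+}p',w_S{+}q')$. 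The first and the last are never of the form $(rp',rq')$, by the inductive hypothesis applied to $\mathcal{T}_{m-1}$; the middle two arise only for edges $e$ that separate $u$ from $v$, hence only for the single edge $uv$ itself once $u,v$ are adjacent. So the construction goes through provided the edge $uv$ can be chosen so that its cut in $\mathcal{T}_m$ is not balanced, i.e.\ so that the $v$-side does not have $(rp'{-}1,rq')$ vertices for any $r$. Since one still has the whole family of edges incident to degree-$\geq 2$ black vertices to choose from, while the forbidden profiles form only an arithmetic progression, an admissible pair $(u,v)$ should always be available; I expect pinning this down — e.g.\ by tracking how $(b_S,w_S)$ varies along the candidate edges and using $p'\geq 3$ — to be the main (though elementary) obstacle, with everything before it being the reduction and the forced-weight criterion.
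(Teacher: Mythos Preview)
Your approach differs from the paper's. Both reduce to the coprime parameters $(p',q')$ and then build a tree on $(\lambda p',\lambda q')$ vertices satisfying the scaled balance equations; but the paper constructs the weights \emph{directly} by splicing pieces of the coprime tree $\mathcal{T}'$: it picks a degree-$2$ black vertex $x_I$ of $\mathcal{T}'$, splits the last leaf $x_{\bar p}$ into two copies carrying the two weights $\overline{\mathcal W}(e_K),\overline{\mathcal W}(e_{K+1})$ at $x_I$, and grafts copies of the two halves $I^\pm$ onto them. Every edge weight in the new tree is literally inherited from $\mathcal{T}'$, so positivity is automatic and nothing further needs checking. You instead recast positivity as the absence of ``balanced cuts'' $(rp',rq')$ and build $\mathcal{T}_m$ by attaching the body $\mathcal{B}$ and a fresh leaf $x^\ast$ to $\mathcal{T}_{m-1}$; your criterion and case analysis are correct, and the reduction to the single edge $uv$ is clean.

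The gap you flag is real, but it closes with a one-line counting argument that you are missing. Fix a black vertex $u$ of degree $d\geq 2$ in $\mathcal{T}_{m-1}$. Its $d$ white neighbours $v_1,\dots,v_d$ determine $d$ components of $\mathcal{T}_{m-1}\setminus\{u\}$, whose black/white profiles sum to $((m-1)p'-1,(m-1)q')$. If \emph{every} $v_i$-side were forbidden, say with profile $(r_ip'-1,r_iq')$, then summing the white coordinates gives $\sum r_i=m-1$, whence summing the black coordinates gives $(m-1)p'-d=(m-1)p'-1$, i.e.\ $d=1$, contradicting $d\geq 2$. Hence for \emph{every} degree-$\geq 2$ black $u$, at least one neighbour $v$ is admissible, and your induction goes through. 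The paper's direct splicing avoids this verification altogether; your balanced-cut criterion, on the other hand, gives a transparent obstruction that would adapt more readily to other target profiles.
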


\begin{proof}
Let $p=\lambda \overline{p},\ q=\lambda \overline{q}$ with $\overline{p}>\overline{q}>1$ co-prime, and $\overline{m} := \overline{p}+\overline{q}$.

Let $\mathcal{T}_{\overline{m}}=(\overline{\PP}^+ \sqcup \overline{\PP}^-, \overline{\AA}; \overline{\mathcal{W}})$ be the weighted bi-colored tree constructed in the last proposition. The degree of first and last black vertex in $\overline{\PP}^+$ must be $1$, due to the construction. 
Meanwhile, the degree of black vertices must be smaller than 3.
Otherwise at least one of the edge from it will connect to a degree 1 white vertex, which violates the balance equation. 
Since $\overline{q}>1$, there must be a black vertex with degree $\geq 2$, as pointed out in the proof of Corollary \ref{cor:necs_S_alpha}. Hence $\overline{\mathbb{P}}^{+}$ always contains a degree 2 black vertex. 
    
Let $x_I$ be one of them, $I \in\{2, \cdots, \overline{p}-1\}$, and $e_K,\ e_{K+1}$ be the two edges connecting $x_I,\ 1 < K < K+1 < \overline{m}-1$. 
Then $\overline{\mathcal{W}}(e_K) + \overline{\mathcal{W}}(e_{K+1})=\overline{q}$. 
By the connectedness of $\mathcal{T}_{\overline{m}}$, $x_I$ divide it into 2 components, which are trees as well. Let $I^+$ be the one containing $e_K$, and $I^-$ be the other one containing $e_{K+1}$. 
Note that $I^+$ always contains the degree 1 vertex $x_1$, and $I^-$ contains $x_{\overline{p}}$.

Now we construct the desired tree by induction. 
First replace $x_{\overline{p}}$ by two different black vertices $x_{\overline{p}}^{ \pm}$ and split $e_{\overline{m}-1} = (x_{\overline{p}}, y_{\overline{q}})$ into two edges 
$$ e_{\overline{m}-1}^\pm := (x_{\overline{p}}^{ \pm}, y_{\overline{q}}) $$
and define 
$$ \overline{\mathcal{W}}(e_{\overline{m}-1}^+) := \overline{\mathcal{W}}(e_{K}),\quad  \overline{\mathcal{W}}(e_{\overline{m}-1}^-) := \overline{\mathcal{W}}(e_{K+1}) . $$
Next, piece a copy of $I^-$ by gluing $x_I$ to $x_{\overline{p}}^+$, and piece a copy of $I^+$ by gluing $x_I$ to $x_{\overline{p}}^-$. 
We end up with a connected tree $\mathcal{T}_{2\overline{m}}$ with $2\overline{p}$ black and $2\overline{q}$ white vertices. 
The weights are inherited from $\overline{\mathcal{W}}$ naturally, then the balance equation with parameter $(\overline{p}, \overline{q})$ is still satisfied.

Now that $\mathcal{T}_{2\overline{m}}$ must contains a degree 1 black vertex. Hence the splitting and piecing procedure above can be applied repeatedly. By induction, after $(\lambda-1)$ times of operation, we obtain a connected tree $\mathcal{T}_{\lambda\overline{m}}$ with $\lambda \overline{p}=p$ black and $\lambda\overline{q}=q$ white vertices. 
The inherited weight function $\overline{\mathcal{W}}$ satisfies the balance equation with parameter $(\overline{p}, \overline{q})$. 
Then multiply $\overline{\mathcal{W}}$ by $\lambda$ gives a solution to the original balance equation \eqref{eq:BE_pq}. 
See Figure \ref{fig:tree_14,6} for an example from $(\overline{p},  \overline{q})=(7,3)$ to $(p, q)=(14,6)$.
\end{proof}

\begin{figure}[t]
    \makebox[\textwidth][c]{\includegraphics[width=\textwidth]{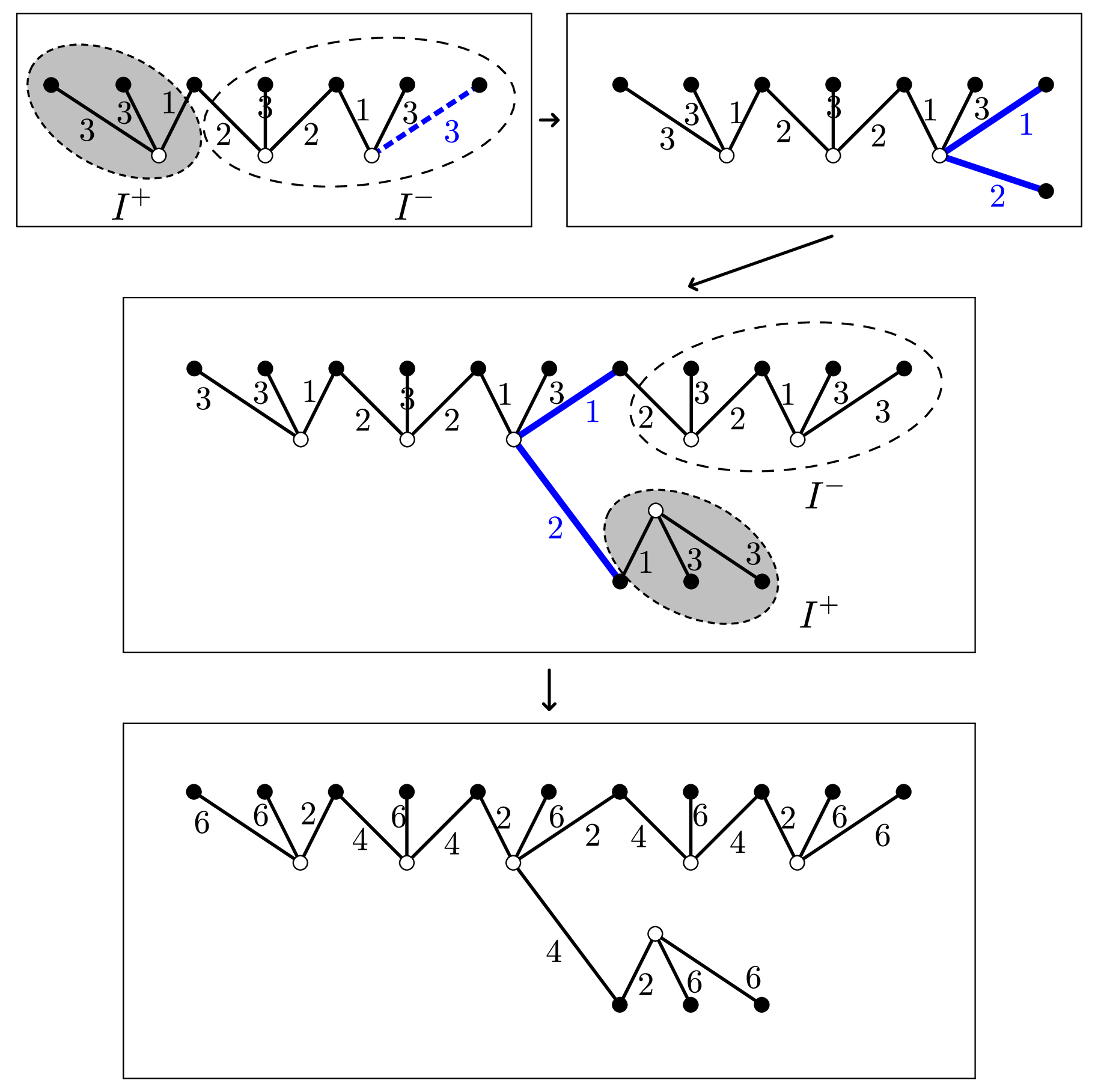}}
    \caption{\footnotesize An example with $(p,q)=(14,6)$ built from $(\overline{p}, \overline{q})=(7,3)$.}
    \label{fig:tree_14,6}
\end{figure}

Now the genus zero case of Theorem \ref{thm:S_alpha} is a combination of Lemma \ref{lem:S_alpha.1}, Corollary \ref{cor:necs_S_alpha}, Proposition \ref{prop:coprime_case} and \ref{prop:non_coprime_case}. 

\medskip
If the weight function $\mathcal{W}$ is multiplied by some positive real number $\lambda>0$, then the cone angle at all extremal points will be multiplied by $\lambda$ at same time. All other parts in the data set is unchanged. Hence we have the following corollary of genus zero case of Theorem \ref{thm:S_alpha}. 
This is not contained in Theorem \ref{thm:angle_cnstr_hcmu} or \ref{thm:angle_cnstr_hcmu_new}, because there are more minimum points. 
\begin{corollary}
    Let $\lambda>0$, integer $p>q>0$ and $\vec{\alpha}$ be an angle vector given by
    \[ \vec{\alpha} = (\ p+q-1\ ,\ \underbrace{\lambda\ ,\ \cdots \ ,\  \lambda}_{(p+q)\textrm{ times}}\ ) \ . \] 
    Also let $\vec{T}= (\{1\}, \{2,\cdots,p+1\}, \{p+2,\cdots,p+q+1\} )$ be a type partition. 
    Then $\Mhcmu{0,p+q+1}{\vec{\alpha};\vec{T}}$ is non-empty if and only if $q=1$ or $q\geq2,\ q \nmid p$.
\end{corollary}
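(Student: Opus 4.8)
The plan is to read this corollary off from the genus-zero case of Theorem~\ref{thm:S_alpha} together with the scaling observation made just above the statement. First I would record the bookkeeping: since $p>q>0$ we have $p+q\geq 3$, so $\alpha:=p+q-1$ is an integer $\geq 2$ and hence the first component of $\vec{\alpha}$ is a legitimate saddle-point angle; moreover $\lambda\neq 1$ is implicit in $\vec{\alpha}$ being an angle vector in $(\RR_{\geq0}\setminus\{1\})^{p+q+1}$, and with $g=0$ the identity $p+q=\alpha-2g+1$ holds automatically. Thus condition~(1) of Theorem~\ref{thm:S_alpha} is met for free (and condition~(3) is vacuous), so the only content to be proved is exactly condition~(2): $q=1$, or $q>1$ and $q\nmid p$.

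Next I would translate the moduli space through the data set representation of Theorem~\ref{thm:hcmu_data}. A surface in $\Mhcmu{0,p+q+1}{\vec{\alpha};\vec{T}}$ carries a single saddle point, hence is generic (a meridian segment cannot join the unique saddle point to another saddle point), so it corresponds bijectively to a data set $(\AA,\PP^+\sqcup\PP^-;K_0,R;\mathcal{W},\mathcal{L})$ on $S^2$ whose $\AA$ is a $(2\alpha-2)$-mixed angulation with one complementary polygon, with $|\PP^+|=p$ and $|\PP^-|=q$, and --- because $\PP^+$ (resp. $\PP^-$) consists of the prescribed maximum (resp. minimum) points of cone angle $2\pi\lambda$ --- whose weight function satisfies the balance equations \eqref{eq:balance_black}, \eqref{eq:balance_white} in the form $\sum_{e\in E(x)}\mathcal{W}(e)=\lambda$ for $x\in\PP^+$ and $R\sum_{e\in E(y)}\mathcal{W}(e)=\lambda$ for $y\in\PP^-$. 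Summing over $\PP^+$ and over $\PP^-$, exactly as in the derivation of \eqref{eq:ratio_given_type}, forces $R=q/p$.

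The heart of the argument is then the elementary bijection $\mathcal{W}\mapsto \tfrac{q}{\lambda}\mathcal{W}$: it fixes $\AA$, $\PP^\pm$, $K_0$, $R$ and $\mathcal{L}$ and sends a positive function to a positive function, so by Theorem~\ref{thm:hcmu_data} it identifies the data sets above with precisely the data sets used in \S\ref{ssec:clsfy_g_0} to build a genus-zero HCMU surface having one conical singularity of angle $2\pi\alpha$ which is a saddle point, together with $p$ smooth maximum points and $q$ smooth minimum points; indeed $\tfrac{q}{\lambda}\mathcal{W}$ is positive and satisfies the normalized balance equations \eqref{eq:BE_pq}. Hence $\Mhcmu{0,p+q+1}{\vec{\alpha};\vec{T}}$ is non-empty if and only if such a genus-zero surface exists, which by Theorem~\ref{thm:S_alpha} (with $g=0$ and $\alpha=p+q-1$) happens exactly when $q=1$, or $q>1$ and $q\nmid p$.

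I do not anticipate a serious obstacle; the only place I would be careful is the genericity remark, and if one prefers to avoid it the ``only if'' direction can instead be argued without reference to Theorem~\ref{thm:hcmu_data}, by rescaling $\mathcal{W}$ and invoking the arithmetic obstruction from Corollary~\ref{cor:necs_S_alpha} (through Lemma~\ref{lem:unique_BE_tree}) applied to the weighted bi-colored planar tree attached to any surface in the space, while the ``if'' direction only needs the single surface constructed in \S\ref{ssec:clsfy_g_0}, rescaled by $\lambda$.
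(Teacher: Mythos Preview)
Your proposal is correct and follows essentially the same approach as the paper: both arguments reduce the corollary to the genus-zero case of Theorem~\ref{thm:S_alpha} via the observation that multiplying the weight function $\mathcal{W}$ by a positive scalar rescales all cone angles at extremal points simultaneously while leaving the rest of the data set untouched. Your version is simply more explicit, spelling out the bookkeeping, the forced value $R=q/p$, and the genericity of single-saddle surfaces, whereas the paper compresses all of this into a one-sentence remark preceding the corollary.
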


\begin{remark}
    Even when $(p,q)$ is given, the choice of $\mathcal{T}$ is not unique. See the example of $(7,3)$ below. This implies that even if the number of extremal points are fixed, the moduli space of such HCMU surfaces is still not connected. 
    Detailed study on components of such moduli space involves an enumeration of weighted bi-colored trees. Such topics are discussed in \cite{Kyy13, Zak13}. 
\end{remark}

\begin{figure}[ht]
    \makebox[\textwidth][c]{\includegraphics[width=0.8\textwidth]{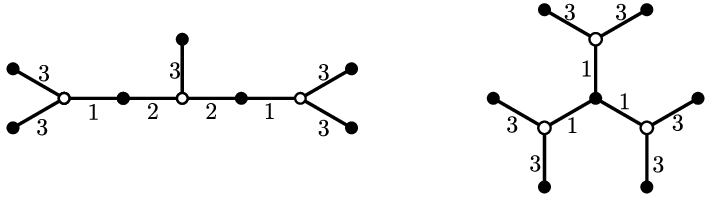}}
    \caption{\footnotesize The two choices of $\mathcal{T}$ when $(p,q)=(7,3)$.}
    \label{fig:tree_7_3_all}
\end{figure}

\bigskip
\subsection{Case of positive genera}\label{ssec:clsfy_g_pstv} 
As before, $q=1$ is actually the case in Section \ref{ssec:sufficiency}. So we always assume $q>1$. 
To construct a surface in $\Mhcmu{g,1}{\alpha}$ with $p$ maximum and $q$ minimum points, we need to construct a $(2\alpha-2)$-mixed angulation with
$\abs{\PP^{+}}=p,\ \abs{\PP^{-}}=q$ and $b=\alpha=p+q+2g-1$ arcs. 
Equivalently, we need to find a weighted bi-colored graph 
$\mathcal{G}=(\mathbb{P}^{+} \sqcup \mathbb{P}^{-}, \AA; \mathcal{W})$ on genus $g>0$ surface with 
\begin{itemize}
    \item $\abs{\PP^{+}}=p,\ \abs{\PP^{-}}=q$, 
    \item $b=\alpha=p+q+2g-1$ arcs,
    \item a unique complementary $(2\alpha)$-gon,
    \item $\mathcal{W}$ satisfying the balance equation \eqref{eq:BE_pq}.
\end{itemize}

\medskip
\begin{proposition}
    If $q \nmid p$, the desired weighted bi-colored graph $\mathcal{G}$ always exists.
\end{proposition}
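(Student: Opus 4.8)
The plan is to realize $\mathcal{G}$ by taking a genus-zero bi-colored tree that already carries the correct weights and then attaching $g$ handles, correcting the weights afterwards by a short perturbation. Since $q\nmid p$ we have $\gcd(p,q)<q$, so Proposition~\ref{prop:coprime_case} (when $\gcd(p,q)=1$) or Proposition~\ref{prop:non_coprime_case} (when $1<\gcd(p,q)<q$) supplies a bi-colored planar tree $\mathcal{T}=(\PP^+\sqcup\PP^-,\AA_0;\mathcal{W}_0)$ with $|\PP^+|=p$, $|\PP^-|=q$ and $\mathcal{W}_0$ solving the balance equations~\eqref{eq:BE_pq}. Embedded in $S^2$, the tree has a single complementary region, an open disk $F$ whose boundary word runs over each of the $p+q-1$ arcs twice.

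Next comes the topological step: fix a black vertex $a\in\PP^+$ and a white vertex $b\in\PP^-$ on $\partial F$, and attach the handles one at a time in the standard way. Inside the current disk face draw a new arc from $a$ to $b$, splitting the disk into two disks; attach a handle joining those two disks, so the complementary region becomes an annulus; then draw a second new arc from $a$ to $b$ cutting the annulus back into a disk. Each round adds exactly two arcs, both joining $a$ to $b$ (so the graph stays bi-colored), raises the genus by one, and keeps the complement a single disk. After $g$ rounds we obtain a connected bi-colored graph $\mathcal{G}=(\PP^+\sqcup\PP^-,\AA)$ on the closed orientable surface $S_g$ with $p+q$ vertices, $|\AA|=(p+q-1)+2g=\alpha$ arcs, and one complementary polygon; as every arc borders it twice, this polygon is a $2\alpha$-gon, matching the order vector $w_1=2\alpha-2$ of the single prescribed cone.

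It remains to produce the weights. Extend $\mathcal{W}_0$ to $\AA$ by assigning the value $0$ to each of the $2g$ new arcs; this still satisfies~\eqref{eq:BE_pq}, since the new arcs contribute nothing to any balance equation. The real solution set of~\eqref{eq:BE_pq} on $\mathcal{G}$ is an affine space whose direction is the kernel of the unsigned incidence map $\RR^{\AA}\to\RR^{\PP}$ (an arc maps to the sum of its two endpoints); since $\mathcal{G}$ is connected and bipartite this kernel has dimension $|\AA|-|\PP|+1=2g$. For the $i$-th new arc $e_i$ put $C_i:=e_i\cup P_i$, where $P_i\subset\mathcal{T}$ is the unique tree path between the endpoints of $e_i$; the cycle $C_i$ has even length, and its alternating $\pm1$ indicator $\chi_i$, normalized so that $\chi_i(e_i)=1$, lies in that kernel and is supported on $C_i$, hence vanishes on every other new arc. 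Then $\mathcal{W}:=\mathcal{W}_0+\sum_{i=1}^{2g}\delta_i\chi_i$ solves~\eqref{eq:BE_pq}, equals $\delta_i>0$ on $e_i$, and is strictly positive on every tree arc provided the $\delta_i>0$ are small enough; this is the required weight function. Feeding the data set $(\AA,\PP^+\sqcup\PP^-;K_0,R=q/p;\mathcal{W},\mathcal{L})$ with $K_0>0$ and $\mathcal{L}$ arbitrary into Theorem~\ref{thm:hcmu_data} then produces a surface in $\Mhcmu{g,1}{(\alpha);\vec{T}}$.

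The main obstacle is making the handle-attachment step precise: one must keep honest bookkeeping of vertices, arcs and faces through each round to be sure the result is an orientable surface of one higher genus whose complement is a single disk with bipartite boundary word. By contrast the weight argument is a two-line computation with the incidence matrix, and the arithmetic hypothesis $q\nmid p$ is used only to obtain the seed tree $\mathcal{T}$ (cf. Corollary~\ref{cor:necs_S_alpha}).
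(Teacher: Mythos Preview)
Your proof is correct and close in spirit to the paper's, but the execution differs in two noticeable ways.

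\textbf{Topological step.} Both proofs start from the genus-zero weighted tree $\mathcal{T}$ supplied by Propositions~\ref{prop:coprime_case}/\ref{prop:non_coprime_case} and then add higher-genus structure. The paper does this in one stroke: it takes the canonical $(4g)$-mixed angulation $\AA_g^o$ of Lemma~\ref{lem:canonical_polygon} (two vertices, $2g{+}1$ arcs, one $(4g{+}2)$-gon face), deletes one leaf $\overline{x}_p$ and its edge from $\mathcal{T}$, and grafts the remaining subtree into the polygon by merging $\overline{y}_q$ with the white vertex of $\AA_g^o$. You instead attach handles iteratively, adding two $a$--$b$ arcs per round. The resulting graphs are essentially the same (same vertex/edge/face counts, and if $a,b$ are adjacent in $\mathcal{T}$ your $2g$ new arcs plus the tree edge between them play exactly the role of the $2g{+}1$ arcs of $\AA_g^o$), but the paper's version avoids the round-by-round bookkeeping you flag as the main obstacle.

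\textbf{Weight step.} Here the approaches genuinely diverge. The paper writes down explicit weights: $q/(2g{+}1)$ on every arc of $\AA_g^o$ and the inherited tree weights elsewhere, checked by hand. You instead observe that the balance equations form an affine space of dimension $2g$ over the extended-by-zero solution $\mathcal{W}_0$, exhibit the fundamental cycles $\chi_i$ as a basis of the direction, and perturb. Your argument is cleaner linear algebra and would transplant unchanged to other handle-attachment schemes; the paper's is more concrete and requires no smallness parameter. Either way the arithmetic hypothesis $q\nmid p$ enters only through the existence of $\mathcal{T}$.
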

\begin{proof}
Let $\overline{\mathcal{T}} = ( \overline{\mathbb{P}}^{+}\sqcup \overline{\mathbb{P}}^{-}, \overline{\AA}, \overline{\mathcal{W}})$ be the planar tree constructed in Proposition \ref{prop:coprime_case} or \ref{prop:non_coprime_case}, 
with $\deg(\overline{x}_p)=1,\ \overline{x}_p \in \overline{\mathbb{P}}^{+},\ 
\overline{e}_0=(\overline{x}_p, \overline{y}_q) \in \overline{\AA}$. 
Then $\overline{\mathcal{W}}(\overline{e}_0)=q$. 

On the other hand, let $\AA_g^o$ be the canonical $(4g)$-mixed angulation in Lemma \ref{lem:canonical_polygon}, with $x, y$ be the unique black and white vertex.

Then we draw the subtree 
$\overline{\mathcal{T}} \setminus \{\overline{x}_p, \overline{e}_0\}$ 
on the unique complementary polygon of $\AA_g^o$, such that $\overline{y}_q$ coincident with $y$, and all the other vertices and edges lie in the interior. 
Let $\mathbb{P}^{+} := \left( \overline{\mathbb{P}}^{+} \setminus \{ \overline{x}_p \} \right) \sqcup \{x\},\ \mathbb{P}^{-} \cong \overline{\mathbb{P}}^{-}$ and 
$\AA := \AA_g^o \sqcup \left( \overline{\AA} \setminus \{\overline{e}_0\} \right)$. 

Viewing on the surface, we have $(p-1)+1=p$ black and $q$ white vertices, $(p+q-2)+(2g+1)=\alpha$ arcs. The complementary polygon has
$$ (4g+2) + 2 (p+q-2) = 2(p+q)+(4g-2) = 2 \alpha $$
sides. 
The weights on edges of the subtree is inherited from $\overline{\mathcal{T}}$. 
For all $(2g+1)$ edges of $\AA_g^o$, define its weight to be $q / (2g+1)$. Then the balance equation at $x$ and $y$ are easy to verify.
See Figure \ref{fig:graph_4_3.g=1} for such construction. 
\end{proof}
\begin{figure}[ht]
\makebox[\textwidth][c]{\includegraphics[width=\textwidth]{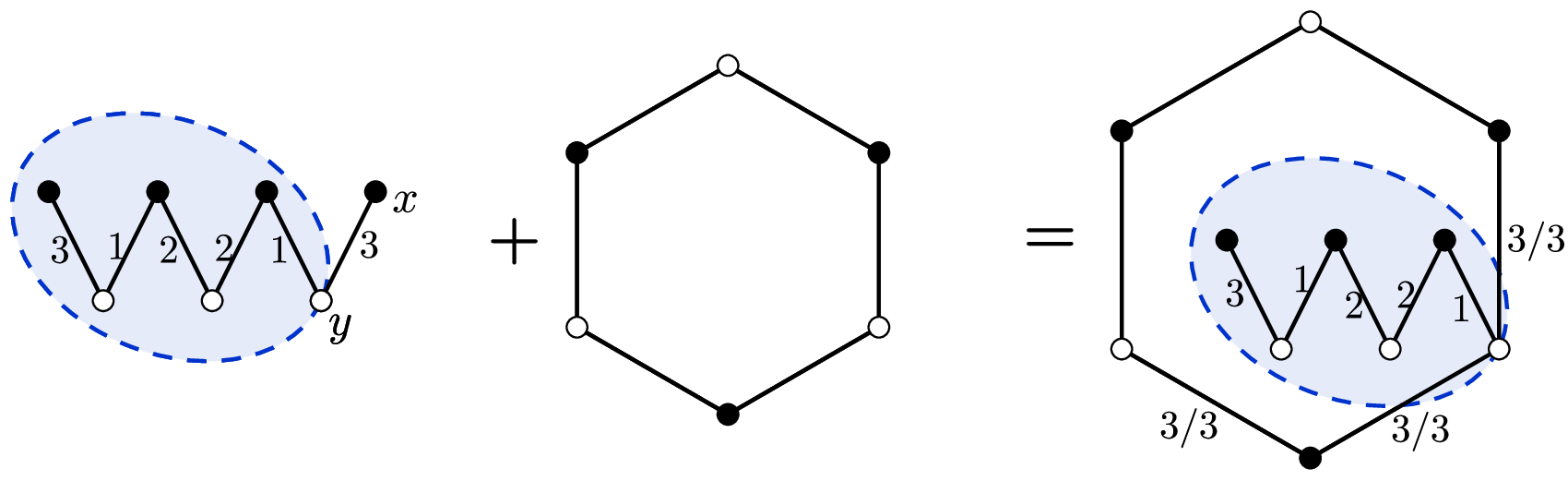}}
\caption{\footnotesize An example for $(p,q)=(4,3)$ with $g=1$.}
\label{fig:graph_4_3.g=1}
\end{figure}

\begin{proposition}
    If $p = k q$ for some $k\in\ZZ_{>1}$, then the desired weighted bi-colored graph also exists.  
\end{proposition}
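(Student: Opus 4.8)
The plan is to exhibit an explicit weighted bi-colored graph on the genus $g$ surface, using the $2g$ independent cycles (equivalently, the $2g$-dimensional affine space of solutions of the balance system) to bypass the rigidity that made the $g=0$, $q\mid p$ case impossible. Recall from Corollary~\ref{cor:necs_S_alpha} that when the underlying graph is a tree and every demand is a multiple of $q$, the unique weight function is forced to be the constant $q$, which forces every black vertex to be a leaf and contradicts connectedness; so for $g>0$ we must produce a graph that is genuinely not a tree and whose balance system therefore admits a positive, non-integral solution. Write $p=kq$ with $k\geq 2$, and recall we may assume $q\geq 2$. Since the incidence matrix of a connected bi-colored graph has rank $(\#\text{vertices})-1$ and the consistency condition $\sum_{\text{black}}q=\sum_{\text{white}}p$ reads $pq=qp$, the balance equations \eqref{eq:BE_pq} are always solvable over $\mathbb{R}$, so the only real task is to choose combinatorics for which the solution space meets the positive orthant.

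Concretely I would take the graph $\mathcal{G}$ on $S_g$ built as follows. Put $q$ white vertices $y_1,\dots,y_q$ and $q$ black vertices $b_1,\dots,b_q$ into a single bi-colored $2q$-cycle $y_1-b_1-y_2-b_2-\cdots-y_q-b_q-y_1$; attach $k-1$ further black leaf vertices to each $y_j$, which accounts for the remaining $p-q=(k-1)q$ black vertices and $(k-1)q$ edges; finally add $2g-1$ edges parallel to the single cycle edge $y_1b_1$, so that $y_1$ and $b_1$ are joined by $2g$ edges in all. Then $|\mathbb{P}^+|=p$, $|\mathbb{P}^-|=q$, and $|\mathbb{A}|=2q+(k-1)q+(2g-1)=p+q+2g-1=\alpha$, so $\beta_1(\mathcal{G})=2g$, and an Euler-characteristic count shows that a cellular embedding on $S_g$ has exactly one complementary region, necessarily a $2\alpha$-gon, i.e. the required $(2\alpha-2)$-mixed angulation. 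To guarantee that such a one-face embedding exists I would invoke Xuong's maximum-genus criterion: choosing the spanning tree consisting of all leaf edges, all but one cycle edge, and one copy of $y_1b_1$, the complementary $2g$ edges form a connected subgraph (they all meet $y_1$) with an even number of edges, so the Betti deficiency vanishes and $\mathcal{G}$ is upper-embeddable with maximum genus exactly $g$; equivalently, one draws the $2g+1$ edges between $y_1$ and $b_1$ with the one-face ribbon structure of $\mathbb{A}_g^o$ and attaches the rest in the plane.

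For the weight function I would set every leaf edge equal to $q$ (forced, since leaf black vertices have degree one), fix a parameter $a\in(0,q)$, assign weight $a$ to each cycle edge $y_jb_j$ and weight $q-a$ to each cycle edge $b_jy_{j+1}$, and split the bundle at $y_1b_1$ into $2g$ parallel edges each carrying $a/(2g)$; then check \eqref{eq:BE_pq} vertex by vertex. Each black cycle vertex reads $a+(q-a)=q$, each leaf black vertex reads $q$, and the bundle vertex $b_1$ reads $2g\cdot\frac{a}{2g}+(q-a)=q$; each white $y_j$ with $2\leq j\leq q$ receives $(k-1)q$ from its leaves plus $(q-a)+a=q$ from its two incident cycle edges, total $kq=p$, while $y_1$ receives $(k-1)q$ from its leaves, $a$ from the bundle, and $q-a$ from $b_qy_1$, again total $p$. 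All weights are positive, so after rescaling by $1/q$ we obtain a data set $(\mathbb{A},\mathbb{P}^+\sqcup\mathbb{P}^-;K_0,R;\mathcal{W},\mathcal{L})$ with $R=q/p$ and with $K_0>0$ and the face function $\mathcal{L}$ arbitrary; Theorem~\ref{thm:hcmu_data} then delivers the desired genus $g$ HCMU surface with one conical point of angle $2\pi\alpha$ (a saddle), $p$ smooth maxima and $q$ smooth minima.

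The step I expect to be the genuine obstacle is the topological one: ensuring that the combinatorial graph $\mathcal{G}$ really admits a one-face (hence genus-exactly-$g$) embedding, as opposed to merely having an Euler characteristic compatible with one. The maximum-genus argument above resolves this cleanly, but one can instead specify the ribbon structure by hand, modeled on the canonical mixed-angulation $\mathbb{A}_g^o$ of Lemma~\ref{lem:canonical_polygon}, and verify directly that the face-tracing permutation has a single orbit; an accompanying figure, in the spirit of Figures~\ref{fig:graph_4_3.g=1} and \ref{fig:tree_14,6}, makes this transparent in small cases such as $(p,q,g)=(4,2,1)$.
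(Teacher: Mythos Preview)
Your argument is correct and the construction is different from the paper's. The paper lines up $q$ copies of the star $K_{1,k}$ inside the canonical $(4g+2)$-gon of Lemma~\ref{lem:canonical_polygon}, chains them by $q$ extra arcs (one wrapping through the polygon identification), and finally adds $2g-1$ arcs from $x_k^{(1)}$ to $y^{(1)}$ that thread through the remaining side-pairs; the weight function then uses the specific values $q/2$, $q$, $q-1$ and $1/(2g-1)$. Your graph is instead a bi-colored $2q$-cycle with $(k-1)$ pendant black leaves at each white vertex and a bundle of $2g$ parallel edges at one cycle edge, and you appeal to Xuong's maximum-genus theorem rather than exhibiting the ribbon structure directly. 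Your weight function is cleaner: the free parameter $a\in(0,q)$ already shows a one-dimensional family of solutions, whereas the paper's weights are rigid. The trade-off is that the paper's proof is entirely self-contained and pictorial, while yours imports a nontrivial topological-graph-theory result; your own remark that one could instead specify the ribbon structure by hand (modelled on $\mathbb{A}_g^o$) would bring the two approaches closer.

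One small point: your description of the spanning tree is slightly garbled. What you want is the tree consisting of all leaf edges together with the $2q-1$ cycle edges other than $y_1b_1$; its complement in $\mathcal{G}$ is exactly the $2g$-edge bundle between $y_1$ and $b_1$, a single connected co-tree component with an even number of edges, so Xuong's criterion gives $\gamma_M(\mathcal{G})=g$ and hence a one-face cellular embedding on $S_g$. With that clarification the argument goes through.
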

\begin{proof}
    The point is that we are finding a graph rather than a tree, so there are more freedom on drawing edges and more possibility for the existence of a weight function. The idea is to connect several trees by extra arcs. See Figure \ref{fig:graph_6_2.g=2}. 

    Let $\mathcal{T}$ be the unique planar bi-colored tree with $k$ black vertices and 1 white vertex. As before, assume the black vertices $x_1,\cdots, x_k$ lies on a line in order.
    We still use the regular polygon representation of genus $g$ surface in Lemma \ref{fig:canonical_poly}. But the vertices are not the ones in our desired graph. 

    Fix a regular $(4g+2)$-gon, whose vertices are counterclockwise labeled as $A_1, A_2, \cdots, A_{2g+1}, B_1, B_2, \cdots, B_{2g+1}, A_1$. Here $A_i A_{i+1}$ is parallel to $B_{i+1} B_i$ for $i=1,\cdots, 2g$, and $A_{2g+1} B_1$ is parallel to $A_1 B_{2g+1}$. 
    By rotation, we assume $A_{2g+1} B_1, A_1 B_{2g+1}$ are vertical, with $A_1 B_{2g+1}$ on the left.  
    Let $C_i,D_i$ be the middle point of $A_i A_{i+1}, B_i B_{i+1}$ for $i=1,\cdots, 2g$, and $C_0, D_0$ be the middle point of $A_1 B_{2g+1}, A_{2g+1} B_1$ respectively. $C_i, D_i$ glues to a same point on surface. 
    
    \medskip
    The required weighted bi-colored graph will be constructed in 4 steps. See Figure \ref{fig:graph_6_2.g=2} as an example and outline of this construction. 
    
    First, line up $q$ copies $\mathcal{T}^{(1)}, \cdots, \mathcal{T}^{(q)}$ of the tree $\mathcal{T}$ in the rectangle $\diamondsuit := A_1 A_{2g+1} B_1 B_{2g+1}$ inside the regula polygon. 
    For any $j=1,\cdots, q$, the black vertices of $\mathcal{T}^{(j)}$ are $x_i^{(j)}\ (i=1,\cdots,k)$, and the white vertex is $y^{(j)}$. Let $e_i^{(j)} := (x_i^{(j)}, y^{(j)})$ in $\mathcal{T}^{(j)}$. 
    Then we have $q$ white vertices, $kq=p$ black vertices and $kq=p$ edges. 

    Secondly, add $q$ arcs connecting these $q$ trees. 
    For $j=1,\cdots q-1$, define $e_{k+1}^{(j)}$ to be an edge connecting $y^{(j)}$ and $x_1^{(j+1)}$, lying completely inside $\diamondsuit$ and disjoint from all edges. 
    Besides, define $e_{k+1}^{(q)}$ to be an edge starting from $y^{(q)}$, leaving the polygon through the middle point $D_0$, entering it through another middle point $C_0$, and ending at $x_1^{(1)}$. 

    Thirdly, add $2g-1$ arcs connecting $x_k^{(1)}$ and $y^{(1)}$ to cut the surface into one simply-connected region.  
    For $i=1,\cdots 2g-1$, define $\hat{e}_i$ to be an edge starting $x_k^{(1)}$, leaving the polygon through $C_i$, entering through $D_i$ and ending at $y^{(1)}$. 
    It can be checked directly that the complementary region is simply-connected, by going around the two vertices of the regular polygon.  
    The total number of edge is indeed 
    $ p + q + 2g-1 = \alpha $.  

    Finally, we provide a solution of weight function to balance equations. 
    Let $\mathcal{W}(e_1^{(j)}) = \mathcal{W}(e_{k+1}^{(j)}) := q/2$ for all $j=1,\cdots,q$. 
    And let $\mathcal{W}(e_i^{(j)}) := q$ for all $ 1\leq j \leq q, 2\leq i \leq k$, except $e_k^{(1)}$.
    Define $\mathcal{W}(e_k^{(1)}) := q-1$ (recall that we always assume $q>1$). 
    For the remained edges, let $\mathcal{W}(\hat{e}_i) := 1/(2g-1) >0$, $i=1,\cdots 2g-1$. 
    One can check that the balance equation is satisfied. In particular, the weights on the last $2g-1$ edges recover the missing number 1 on $e_k^{(1)}$.  
\end{proof}

\begin{figure}[h]
    \makebox[\textwidth][c]
    {\includegraphics[width=\textwidth]{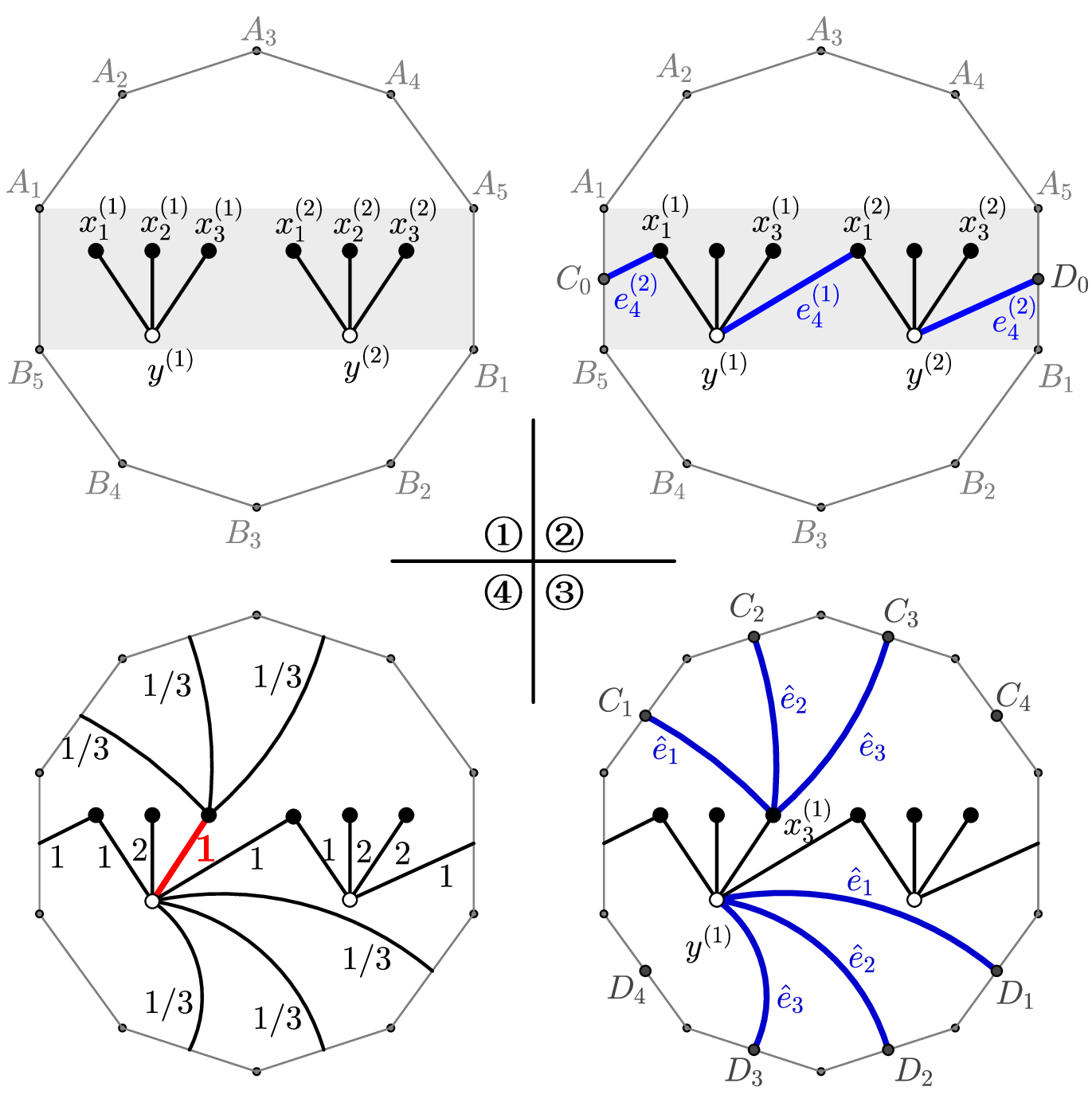}}
    \caption{\footnotesize An example for $(p,q)=(6,2)$ with $g=2$.}
    \label{fig:graph_6_2.g=2}
\end{figure}

\bigskip
\section{Dimension count for moduli space}\label{sec:dim_hcmu}
Now we are ready to finish the dimension count. 
Similar to the angle constraint, Theorem \ref{thm:dim_hcmu} can be obtained by the following theorem on refined moduli space. 

\begin{theorem}
\label{thm:dim_hcmu_T}
    Let $\alpha$ be an angle vector and $Z$ a fixed non-empty subset of $\{1,\cdots,k\}$ with $\abs{Z}=j_0>0$, following the convention and conditions of Theorem \ref{thm:angle_cnstr_hcmu_new}.     
    Then for any possible type partition $\vec{T}=(Z,P^+,P^-)$ with non-empty refined moduli space, the real dimension $\textbf{dim}$ of $\MhcmunT$ is given by the follows.
    \begin{enumerate}[(A). ] 
        \item If there is no cusp, then $\textbf{dim}=2g+2j_0$. 
        \item If there are $q>0$ cusp, then $\textbf{dim}=2g+2j_0+q-1$.
    \end{enumerate}
\end{theorem}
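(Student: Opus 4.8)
The plan is to count the number of independent continuous parameters appearing in the data set representation of Theorem \ref{thm:hcmu_data}, restricted to a fixed combinatorial type. Fix a type partition $\vec{T}=(Z,P^+,P^-)$ with $\MhcmunT\neq\varnothing$, and fix also the discrete data $(\AA, \PP^+\sqcup\PP^-)$, i.e.\ a mixed-angulation together with its bi-colored graph $\mathcal{G}=(\PP^+\sqcup\PP^-,\AA)$ realizing the prescribed saddle angles and at least one admissible assignment of the expected angles. Over such a fixed combinatorial skeleton the remaining data are the constants $K_0>0$ and $R\in[0,1)$, the weight function $\mathcal{W}\colon\AA\to\RR_{>0}$, and the face function $\mathcal{L}\colon\FF\to(0,l)$; isometry classes in $\MhcmunT$ are in bijection with tuples of these satisfying the balance equations \eqref{eq:balance_black}, \eqref{eq:balance_white} at every puncture with the prescribed cone angles, together with the compatibility $K_1=\tfrac{2R-1}{2-R}K_0$ and $R=(A^-+m^-)/(A^+ + m^+)$. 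The dimension of $\MhcmunT$ is then the maximum, over all admissible combinatorial skeleta, of the dimension of this solution set; one must check the count is the same for each skeleton contributing a top-dimensional piece (and that non-generic strata, as well as the unused ``extra'' skeleta, are lower-dimensional).

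The four tasks, carried out in order, are as follows. \textbf{(i) The face function.} Each of the $j_0$ complementary polygons contributes one free parameter $\mathcal{L}(F_i)\in(0,l)$, entirely unconstrained; this gives $j_0$ dimensions. \textbf{(ii) The curvature constants.} $K_0$ ranges freely over $\RR_{>0}$, contributing $1$ dimension; $R$ is \emph{not} free, since it is forced by the balance equations (equivalently by \eqref{eq:ratio_given_type}), so it contributes $0$. This gives $1$ dimension from $K_0$. \textbf{(iii) The weight function modulo balance equations.} Here lies the main computation: $\mathcal{W}$ lives in $\RR_{>0}^{|\AA|}$ with $|\AA|=b=\sum_{i\in Z}\alpha_i$ by Lemma \ref{lem:number_b}, subject to the $a=\sum_{i\in Z}(\alpha_i-1)-(2g-2)$ balance equations indexed by the extremal points (Lemma \ref{lem:extrm_set}), where the black equations involve the prescribed angles and the white equations involve the common unknown $R$. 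One packages these as a linear system governed by the (signed or bipartite) incidence matrix of $\mathcal{G}$ — this is the ``connection matrix'' alluded to in Section \ref{ssec:cnnt_matrix_hcmu}. The rank of this incidence matrix for a connected bipartite graph embedded on a genus-$g$ surface with a single complementary cell structure is $a-1$ if we treat $R$ as known, but since $R$ is itself an unknown tied to the white equations, the effective count of independent constraints is $a-1$; together with positivity (an open condition, non-restrictive near an existing solution) the solution space of $(\mathcal{W},R)$ has dimension $b-(a-1)=b-a+1$. Using $b-a=\sum_{i\in Z}\alpha_i-\sum_{i\in Z}(\alpha_i-1)+(2g-2)=j_0+2g-2$, this is $2g+j_0-1$. \textbf{(iv) Assemble.} Adding the contributions: $j_0 + 1 + (2g+j_0-1) = 2g+2j_0$, proving part (A). For part (B), when there are $q>0$ cusps one has $R\equiv0$ forced and $K_1=-K_0/2$, so $l=+\infty$; the white balance equations \eqref{eq:balance_white} become vacuous ($0=0$), removing $q$ constraints but also, since $R$ is no longer a live unknown, the bookkeeping shifts: the weight system now has $b$ unknowns and only the $p=a-q$ black equations, of which $p-?$ are independent; carefully redoing the rank count (the bipartite incidence matrix restricted to black rows on a connected graph with the given cell structure) yields solution dimension $b-p+1+(\text{corrections from topology})$, and combined with the $j_0$ face parameters and $1$ for $K_0$ produces $2g+2j_0+q-1$. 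I will present the rank computation uniformly via the incidence matrix and Euler-characteristic bookkeeping, so that (A) and (B) differ only by the $q$ vacuous equations and the forced value of $R$.

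The main obstacle is task (iii): pinning down the exact rank of the balance-equation system, i.e.\ showing the $a$ equations (with $R$ coupled into the white ones) impose exactly $a-1$ independent constraints on $(\mathcal{W},R)$, uniformly over all admissible skeleta. The subtlety is twofold — first, the incidence matrix of a bipartite graph has a well-known corank governed by bipartiteness and connectivity, but here one row-block is scaled by the unknown $R$, so one must argue either by treating $R$ as a parameter and checking the rank is stable, or by a direct rank computation on the enlarged matrix $[\,\text{incidence}\mid(0,\dots,0,-\beta_j,\dots)\,]$; second, one must verify that the generic skeleton attains this dimension (existence of a solution with all $\mathcal{W}(e)>0$ and $0<\mathcal{L}(F_i)<l$ is guaranteed by the constructions in Sections \ref{sec:exist_thm}, \ref{sec:S_alpha}, so the solution set is genuinely a manifold of the claimed dimension near such a point) and that non-generic HCMU surfaces and degenerate skeleta form a strictly lower-dimensional subset, which follows from the twisting/deformation arguments of Section \ref{ssec:twist}. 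I expect the bipartite-incidence rank lemma, stated as $\operatorname{rank} = |V| - (\text{number of bipartite connected components})$, to be the linear-algebra core, and the coupling of $R$ to be handled by observing that summing all white equations expresses $R$ linearly in $\mathcal{W}$, thereby decoupling it and leaving exactly the $a-1$ homogeneous relations among the $\mathcal{W}(e)$.
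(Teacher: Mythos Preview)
Your plan matches the paper's proof almost exactly: reduce to generic surfaces via the twist deformation (Section~\ref{ssec:twist}), fix the discrete skeleton $(\AA,\PP^+\sqcup\PP^-)$, and then count the continuous parameters $K_0$, $R$, $\mathcal{L}$, $\mathcal{W}$ separately, using the bipartite incidence (``connection'') matrix to determine the dimension of the weight-function solution space. The final assembly $1+j_0+(2g+j_0-1)=2g+2j_0$ is identical to the paper's.

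Two points where your write-up should be sharpened to match the paper's argument. First, your handling of $R$ in step~(iii) is muddled: you say $R$ is forced in step~(ii), then call it an ``unknown'' in step~(iii), and end up counting the solution space of $(\mathcal{W},R)$ as $b-(a-1)$, which would be off by one if $R$ were truly a free unknown. The paper's resolution (Proposition~\ref{prop:ratio_value}) is cleaner: once the skeleton is fixed, the integers $m^\pm$ are determined, hence $R=(A^-+m^-)/(A^+ + m^+)$ is a single fixed value, and the balance system is genuinely linear in $\mathcal{W}$ alone with $a$ equations of rank $a-1$. You should simply treat $R$ as a known constant in step~(iii).

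Second, and more substantively, your case~(B) rank count is wrong. You write ``$b-p+1+(\text{corrections from topology})$'', anticipating a dependency among the black equations analogous to the one in case~(A). There is none: the $p\times b$ submatrix of $\mtxM$ formed by the black rows has exactly one nonzero entry per column (each edge touches a unique black vertex), and since every black vertex has positive degree, these $p$ rows are linearly independent. So $\rank(\mtxR(0)\cdot\mtxM)=p$ exactly, and the weight solution space has dimension $b-p$. With $p=(n-j_0-q)+m$ and $b=n+m+2g-2$ this gives $b-p=2g-2+j_0+q$, and the total is $1+j_0+(2g-2+j_0+q)=2g+2j_0+q-1$ as claimed. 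The single relation in case~(A) arises only because one can cancel black rows against white rows scaled by $R^{-1}$; when the white rows are killed by $R=0$, no dependency remains.
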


As mentioned in the introduction, the dimension indicates the independent geometric parameters that determine the isometric classes of HCMU surfaces. 
In the data set representation $\left(\AA, \PP^{+} \sqcup \PP^-; K_0, R; \mathcal{W}, \mathcal{L} \right)$, the first two data are discrete topological ones, hence do not contribute to the dimension. We need to count the degree of freedom of the last 4 continuous valued data, when the cone angle and type partition is prescribed. 

All over this section, we shall fix a non-empty refined moduli space $\MhcmunT$ with type partition $\vec{T}=(Z,P^+,P^-)$ and non-empty $Z\subset\{1,\cdots,k\}$.

\bigskip
\subsection{Generic HCMU surfaces are represented by database}\label{ssec:twist} 
In this subsection we first show that generic HCMU surfaces indeed form a maximal dimension subset of the moduli space. 
Hence the data set representation is enough for us to count the dimension. 
We achieve this by the twist deformation introduced below.

\medskip
Transverse to the strip decomposition, an HCMU surface can also be canonically decomposed into topological annuli. 
Let $S_{\alpha}(K_0, K_1)$ be an HCMU bigon, parameterized as Definition \ref{defn:hcmu_bigon}. 

\begin{definition}
    An \DEF{HCMU annulus} \index{HCMU!- annulus} is a subset of $S_{\alpha}(K_0, K_1)$ with $(v,\phi)$-coordinate
    \[ [a,b] \times [0, 2\pi\alpha] \subsetneq [0,l] \times [0, 2\pi\alpha] \ . \] 
    Here $[a,b]\subsetneq [0,l]$ is always assumed to be proper and other than a single point. It is allowed to be a disk when $a=0, b<l$ or $a>0, b=l$. 
\end{definition}

The level sets of the curvature function $K$ form a foliation transverse to all the meridians. As Proposition \ref{prop:vector_fields}, if an integral curve of the Killing vector field $\vec{V}$ does not meet any critical points of $K$, then it must be a connected component of a level set of $K$. By compactness, it must be a circle. 
Let \SYM{$\mathrm{Crti}(\vec{V})$} be the set of all integral curves of $\vec{V}$ passing critical points of $K$, including the extremal points themselves. 
Then $\mathrm{Crti}(\vec{V})$ cuts the HCMU surface into finite number of annuli. This partition is called the \DEF{annulus decomposition} of an HCMU surface.
It can also be obtained by starting with the strip decomposition, further cutting them along $\mathrm{Crti}(\vec{V})$, and then gluing the pieces along meridian segments. 

As the case for strip decompostion in Section \ref{ssec:hcmu_strip_decomp}, a saddle point of angle $2\pi n\ (n\in \ZZ_{>1})$ is glued from $2n$ marked points on the boundary of the annulus.

\medskip
Now assume that $(M,g)$ is an HCMU surface other than a football, and $A$ is an HCMU annulus other than a disk in its annulus decomposition. Geometrically, we regard $A$ as a proper subset parametrized as $[a,b] \times [0, 2\pi\alpha]$ of some football $S_\alpha(K_0,K_1)$. 
Now for an arbitrarily chosen $c\in(a,b)$, let $\gamma=\gamma_A : [0,\alpha] \to A$ be the closed curve defined by $\gamma(t)=\{c\} \times \{ 2\pi t\} \in (a,b)\times[0, 2\pi\alpha]$. Here $\gamma(0)=\gamma(\alpha)$ on the surface. 
Cut $M$ along $\gamma$ and regard the result as a closed surface $M_\gamma$ (possibly disconnected) with two boundaries $\gamma^\pm$. 
Parameterize them as $\gamma^\pm(t)=\gamma(t)$ in the original coordinate of $A$. 
Here $\gamma^+$ is chosen such that the surface lies on the right when $t$ grows from $0$ to $\alpha$. 
Then $ M = M_\gamma \big{/} \gamma^+(t) \sim \gamma^-(t)$. 

\begin{definition}\label{def:twist}
    Let $\psi\in[0,\alpha)$. A \DEF{$\psi$-(right) twist deformation} of $(M,g)$ along annulus $A$ is re-gluing the bordered surface $M_\gamma$ along $\gamma^\pm$ by the identification
    \[ \gamma^+([t]_\alpha) \sim \gamma^-([t - \psi]_\alpha) \]
    for all $t\in [0,\alpha]$. Here for any $t\in\RR$, $[t]_\alpha$ is the real number in $[0,\alpha)$ such that $[t]_\alpha \equiv t (\mathrm{mod}\ \alpha)$. 
    See Figure \ref{fig:hcmu_twist} for illustration. 
\end{definition}


\begin{figure}
      \makebox[\textwidth][c]{\includegraphics[width=1.1\textwidth]{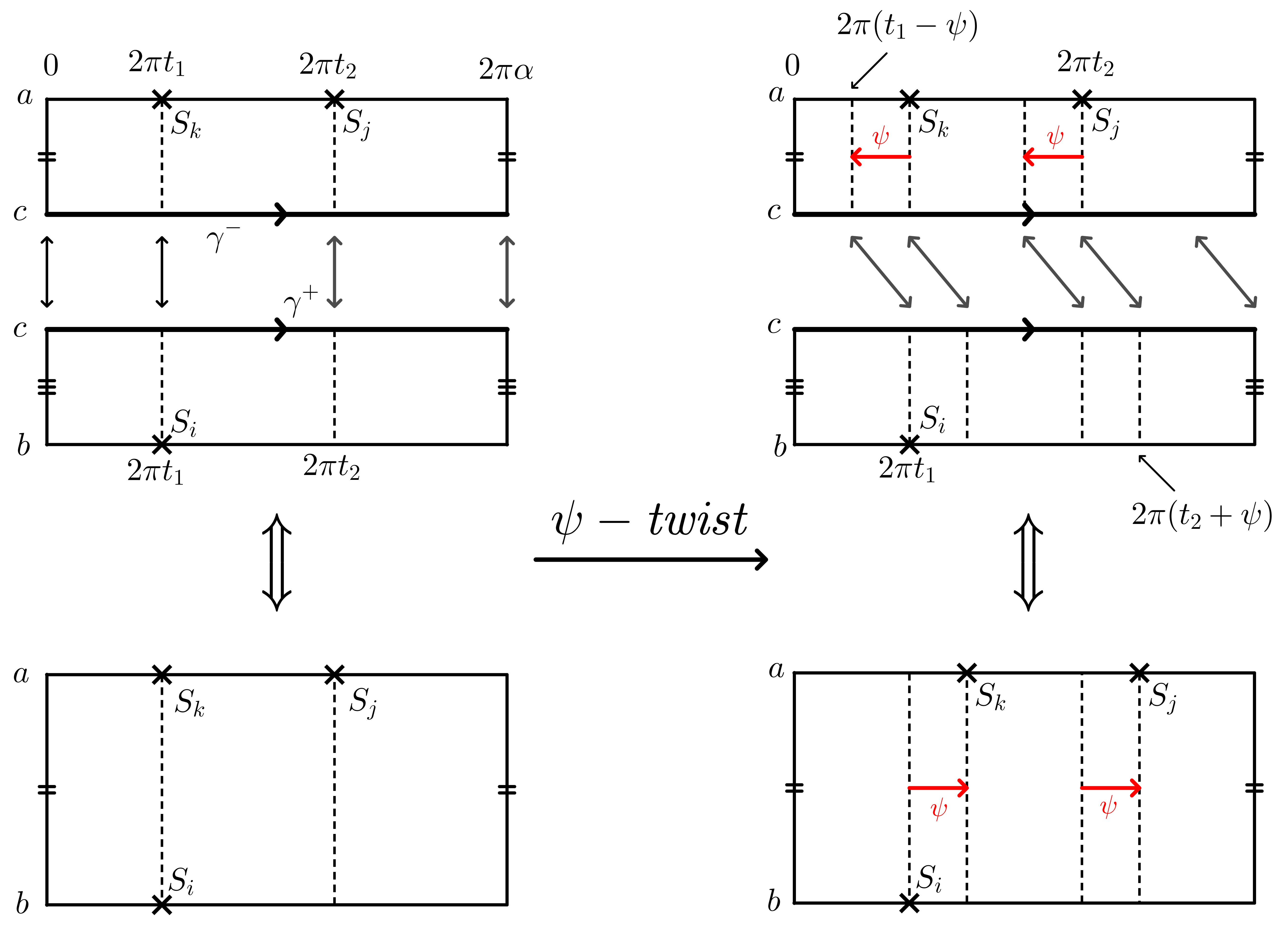}}
      \caption{\footnotesize The effect of twist deformation. The lower row is the HCMU annulus we are twisting along, in its $(v,\phi)$-coordinate as Figure \ref{fig:hcmu_football}. The upper row shows the gluing pattern along $\gamma^\pm$. $S_i, S_j, S_k$ are saddle points on the boundary of this annulus. 
      Also note that the meridian segment connecting $S_i, S_k$ is broken after the twisting. }
      \label{fig:hcmu_twist}
\end{figure}

The gluing operation is available due to the rotational symmetry of HCMU football and annulus. This is equivalent to the invariance of the metric with respect to the translation of $\phi$-parameter. 
The result is independent of the choice of $\gamma\subset A$: the geometry of the open annulus is unchanged due to the rotational symmetry, while the relative positions between the marked points on two different boundaries of the closed annulus are essentially changed.  

This definition resembles the twist in Fenchel-Nielsen coordinates for hyperbolic surfaces. 
Intuitively, the twist deformation alters the alignment of the meridians when they traverse the annulus $A$. Observed from one side of $A \setminus \gamma$, everything on the opposite side appears to shift to the right. 
The underlying Riemann surface is usually changed, while the topology of $\vec{V}$ remains unchanged throughout this process. 
In fact, the twist parameter $\psi$ can take any real value, but the result remains identical if two parameters are congruent modulo $\alpha$, since we are considering isometry classes. 

With absolutely identical argument in \cite[\S4.6]{LuXu24}, we have the follows. 

\begin{proposition}\label{prop:generic_hcmu} ~
\begin{enumerate}[(1). ]
    \item Each non-generic HCMU surface can be continuously deformed to a generic one by a sequence of twist deformation, with fixed cone angles and type partition.
    \item In every non-empty $\MhcmunT$, the non-generic HCMU surfaces form a lower dimensional subset. 
    ~\hfill $\square$
\end{enumerate}
\end{proposition}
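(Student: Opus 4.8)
The plan is to carry over the twist-deformation argument of \cite[\S4.6]{LuXu24} to the HCMU setting; I only have to check that nothing new happens here. For part (1), call a meridian segment joining two saddle points a \emph{saddle connection}; by Definition \ref{defn:generic_hcmu}, $(M,g)$ is non-generic exactly when it carries at least one saddle connection $\sigma$. Since $K$ is strictly monotone along any meridian (Proposition \ref{prop:vector_fields}) and both endpoints of $\sigma$ are saddle points, which are non-extremal critical points of $K$ (Proposition \ref{prop:curvature}), the range of $K$ along $\sigma$ lies strictly between $K_1$ and $K_0$; hence $\sigma$ never reaches an extremal point, and the HCMU annulus $A$ of the annulus decomposition that contains an endpoint of $\sigma$ and is traversed by $\sigma$ is a genuine annulus, not a disk. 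First I would perform a $\psi$-twist deformation along $A$ (Definition \ref{def:twist}), taking the defining curve $\gamma$ to be a level circle of $K$ in the interior of $A$. By monotonicity of $K$, any saddle connection meeting $A$ crosses $\gamma$ exactly once, so it is broken for every nonzero $\psi$; on the other hand, since there are only finitely many saddle points and finitely many meridian arcs issuing from each of them, the requirement that a twisted meridian again terminate at a saddle point is a single equation in $\psi\in[0,\alpha)$, so only finitely many values of $\psi$ create a new saddle connection through $A$. Saddle connections disjoint from $A$ are untouched, and because $\gamma$ avoids every singular point, a twist preserves the underlying topological surface and the singular set and indices of $\vec{V}$ (Proposition \ref{prop:curvature}), hence it changes neither the cone angles nor the type partition. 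Thus a generic choice of $\psi$ gives a surface in the same $\MhcmunT$ with strictly fewer saddle connections; iterating finitely often and concatenating the corresponding continuous paths (each with $\psi=0$ recovering the previous surface) deforms $(M,g)$ continuously to a generic surface, with fixed cone angles and type partition.

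For part (2), I would combine (1) with a local parameter count near a non-generic surface $X\in\MhcmunT$. As in \cite[\S4.6]{LuXu24}, a neighborhood of $X$ in $\MhcmunT$ is parameterized by sliding the marked boundary points along their meridians in a (refined) strip decomposition of $X$, together with the moduli $K_0$ and $R$; in these coordinates the non-generic locus is the finite union of the loci on which two marked points coincide along a common meridian, and each of these loci is cut out by at least one nontrivial equation, hence has positive codimension. Therefore the non-generic surfaces form a lower-dimensional subset of $\MhcmunT$. Together with Proposition \ref{defn:AA_q} and Theorem \ref{thm:hcmu_data}, this shows that the data-set representation already describes a dense, full-dimensional part of the moduli space, which is exactly what is needed for the subsequent dimension count.

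The hard part will be the bookkeeping in step (1): one must make sure that the twist can always be chosen so as not to introduce new saddle connections (the ``finitely many bad $\psi$'' claim) and that the total number of saddle connections genuinely decreases at each step, so that the procedure terminates. This is where the rotational symmetry of HCMU footballs and annuli — which makes the re-gluing in Definition \ref{def:twist} well defined and the result again an HCMU surface — and the transversality of the level sets of $K$ to the meridians are used. Since the character $1$-form of an HCMU surface has only simple poles, these surfaces correspond to the simplest strata of meromorphic differentials, so I expect the argument of \cite{LuXu24} to apply essentially verbatim and the remaining difficulty to be purely expository.
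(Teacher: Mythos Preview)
Your proposal is correct and follows essentially the same approach as the paper: the paper's own proof is simply the sentence ``With absolutely identical argument in \cite[\S4.6]{LuXu24}'' followed by a one-paragraph sketch of exactly the twist-to-break-a-saddle-connection induction you describe (choose an annulus crossed by a saddle connection, twist by a generic parameter to break it without creating new ones, iterate), together with the remark that the surviving one-parameter freedom in $\psi$ accounts for the extra dimension in part (2). Your write-up is in fact more detailed than what appears in the paper, but the strategy, the key lemma, and the reference are identical.
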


The basic idea is to use twist to break any meridian segment connecting two saddle points, and an induction on the number of such meridian segments. Since the twist deformation alters the alignment of the meridians, and there are only finitely saddle points, it is possible to properly choose a twist parameter that breaks that meridian and produces no new segments connecting two saddle points. 
Figure \ref{fig:hcmu_twist} also hints this process, where $S_i, S_k$ are two saddle points connected by a meridian segment in the original surface. The twist deformation break this segment. 
Such parameter also admits a 1-dimensional degree of freedom. Hence we obtain surfaces with more free parameters and less meridian segments connecting points.

\bigskip
\subsection{Choices for ratio are finite}\label{ssec:dscrt_ratio}
Now we investigate the degree of freedom coming from the curvature on this part.

Recall that $m=m(Z)$ in \eqref{eq:extra_pole_hcmu} is the total number of smooth extremal points, and $A^+, A^-$ is the sum of cone angles at maximum and minimum points. 
Also review the notations in Table \ref{tab:extrm_set}. 

\begin{proposition}[Values of ratio]\label{prop:ratio_value}
    When all $\alpha_i>0$, the possible values of the ratio of surfaces in $\MhcmunT$ are discrete and finite. 
    
    More precisely, let $m=m(Z), A^+, A^-$ defined as before, then all the possible choices of $R$ are given by
    \begin{equation}\label{eq:Ratio_value}
        \frac{A^- + m - m^+}{A^+ + m^+} \ , 
    \end{equation}
    where $m^+\in\ZZ_{\geq0}$ satisfying
    \begin{equation}\label{eq:integer_value}
        A^- + m > m^+ > \frac12(A^- + m - A^+) \ .
    \end{equation}
\end{proposition}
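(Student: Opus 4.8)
The plan is to isolate the one quantity that the ratio depends on — $m^+$, the number of smooth maximum points — determine its admissible range, and check that each admissible value is attained. First I would observe that $R$ is a function of $m^+$ alone: by the lemma giving \eqref{eq:ratio_given_type}, any $(M,g)\in\MhcmunT$ has $R=(A^-+m^-)/(A^++m^+)$, and by Lemma~\ref{lem:extrm_set} the numbers of smooth extremal points satisfy $m^++m^-=m(Z)=m$; substituting $m^-=m-m^+$ yields \eqref{eq:Ratio_value}. Since the numerator is strictly decreasing and the denominator strictly increasing in $m^+$, the map $m^+\mapsto R$ is injective, so it suffices to describe the admissible integers $m^+$.

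For necessity of \eqref{eq:integer_value}: as counts of points, $m^+\ge 0$ and $m^-=m-m^+\ge 0$. In the no-cusp case every HCMU surface carries a minimum point of positive angle, so the numerator $A^-+m-m^+$ of $R$ is positive — that is $m^+<A^-+m$ — while $m^+\le m\le A^-+m$ is in any case automatic. An HCMU metric is non-CSC, hence $R<1$; written out this is $A^-+m-m^+<A^++m^+$, i.e. $m^+>\tfrac12(A^-+m-A^+)$. Thus $m^+$ ranges over a bounded set of integers, so the set \eqref{eq:Ratio_value} of possible ratios is finite — which is precisely what \S\ref{ssec:dscrt_ratio} needs for the dimension count.

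It remains to realize each admissible $m^+$ (equivalently, each split $m=m^++m^-$ with $R<1$), so that \eqref{eq:Ratio_value} is exhaustive. For this I would re-run the construction of Theorem~\ref{thm:angle_cnstr_hcmu_new}(A), but instead of forcing a single white vertex, distribute the $m$ smooth extremal points so that $m^+$ of them are black and $m^-$ are white, keeping the skeleton supplied by Lemma~\ref{lem:canonical_polygon} and Lemma~\ref{lem:poly_poly} (this fixes the genus, the $j_0$ complementary $(2\alpha_i)$-gons, and the edge count $b=\sum_{i\in Z}\alpha_i$). The expected angle vector is $\big((\alpha_k)_{k\notin Z},1,\dots,1\big)$, with the $1$'s split between the two colours as $m^+$ and $m^-$; one then needs a positive weight function satisfying \eqref{eq:balance_black} and \eqref{eq:balance_white} with $R$ as in \eqref{eq:Ratio_value}. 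The only linear obstruction to such a flow is the conservation identity $A^++m^+=R^{-1}(A^-+m^-)$, which holds by construction, so a positive solution exists on a suitably connected bi-colored graph: concretely, place the black and white vertices on two parallel lines, run the greedy (``north-west corner'') construction of Proposition~\ref{prop:coprime_case}, and then splice in the canonical genus-$g$ piece together with the diagonal and self-folded arcs of Lemma~\ref{lem:poly_poly} to install the prescribed polygon structure.

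The main obstacle is this realizability step: one must choose the combinatorial graph so that it simultaneously carries the prescribed puncture colouring (with exactly $m^+$ extra black vertices), the prescribed complementary-polygon sizes and genus, and a strictly positive weight function for an arbitrary distribution of demand over the white vertices. Positivity itself is routine from the network-flow picture once the graph is connected and the side totals agree, but coordinating connectivity with the polygon and genus bookkeeping uniformly in $m^+$ is delicate; as in \S\ref{ssec:sufficiency} and \S\ref{ssec:clsfy_g_pstv}, I expect it to reduce to a short list of explicit graph constructions.
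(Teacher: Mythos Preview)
Your first two paragraphs are correct and coincide with the paper's proof: derive $R=(A^-+m-m^+)/(A^++m^+)$ from \eqref{eq:ratio_given_type} together with $m^++m^-=m$, then read off the range \eqref{eq:integer_value} from $0<R<1$ and the positivity of the minimum-side total. (Your remark that $m^+\mapsto R$ is injective is a small bonus the paper does not state.)

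Your third and fourth paragraphs, however, go beyond what the paper actually proves. The paper treats ``all the possible choices of $R$ are given by'' as an \emph{inclusion}: the ratio of any surface in $\MhcmunT$ lies among the finitely many values \eqref{eq:Ratio_value} with $m^+$ in the range \eqref{eq:integer_value}. It does not claim, and its proof does not show, that every such $m^+$ is realized. Finiteness is the only conclusion needed downstream (Corollary~\ref{cor:choice_K0K1} and the dimension count in \S\ref{ssec:dim_count_proof}), and finiteness follows from necessity alone. So the realizability step you sketch --- and correctly flag as delicate --- is simply not part of this proposition; you may drop it.
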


\begin{remark}\label{rmk:ratio_value} 
    These expressions are always meaningful for non-empty $\MhcmunT$. 
    \begin{enumerate}[(i). ]
        \item Every HCMU surface contains at least one minimum point. Hence $A^- + m = (A^- + m^-) + m^+$ is always positive in \eqref{eq:integer_value}.
        
        \item $R = \frac{A^- + m^-}{A^+ + m^+} < 1$. Then 
        $A^+ + m > A^- + 2 m^- \geq A^-$ and $\frac12(A^- + m - A^+) < m$. 
        When $A^- > 0$, $m$ is always a possible choice for $m^+$ in \eqref{eq:integer_value}. 
        When $A^- = 0$, $P^- = \varnothing$ and $m^- \geq1$, hence $m\geq1$. Then $A^+ + m > A^- + 2 m^- \geq A^- +2$, and $\frac12(A^- + m - A^+) < \frac12(A^+ + m -2 +m -A^+) = m-1$. 
        Hence $m-1$ is always a possible choice for $m^+$ in \eqref{eq:integer_value}.

        \item If $A^+ = 0$, then $m^+\geq 1$ and $m\geq 1$. Then $m^+ > \frac12(A^- + m -A^+)\geq \frac12$. Hence $A^+ + m^+$ is always positive in \eqref{eq:Ratio_value}.
    \end{enumerate}
\end{remark}

\begin{proof}
    Regard $R$ as a parameter to be determined. Let $m^+$ and $m^-$ be the number of smooth maximum and minimum points, which are unknowns to be solved. 
    By \eqref{eq:extra_pole_hcmu} and \eqref{eq:ratio_given_type}, 
    \begin{equation*}
        \renewcommand\arraystretch{1.3}
        \left\{ \begin{array}{ll}
             A^- + m^- &= R (A^+ + m^+) \\
             m^+ + m^- &= m \\
        \end{array} \right. .
    \end{equation*}
    We have
    \begin{equation}\label{eq:extra_pole_hcmu^pm}
        \renewcommand\arraystretch{1.4}
        \left\{ \begin{array}{ll}
            m^+ & = \frac{1}{1+R}\left( A^- -A^+R + m \right) \\
            m^- & = \frac{1}{1+R}\left( A^+R - A^- + mR \right) \\
        \end{array}
        \right. .
    \end{equation}
    Then $A^- + m - m^+ = R (A^+ + m^+)$, where $m^+$ is regarded as a variable. This gives \eqref{eq:Ratio_value}. 
    Since $0<R<1$, $ 0 < A^- + m - m^+ = R (A^+ + m^+) < A^+ + m^+$. This gives \eqref{eq:integer_value}. 
\end{proof}

The ratio $R$ must be zero when there are cusps, so we can include this case in the above proposition as well. 
Meanwhile, $K_0$ is always free to choose. 
Hence, we say the choice of $(K_0, R)$ is 1-dimensional in the following sense. 
\begin{corollary}\label{cor:choice_K0K1}
    For any non-empty $\MhcmunT$, possibly with cusp singularities, the set (as a subset of $\RR_{\geq0}^2$) of all possible pairs of the maximum curvature and ratio $(K_0, R)$ is a finite union of rays. 
    ~\hfill $\square$
\end{corollary}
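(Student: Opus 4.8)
The plan is to deduce the statement directly from Proposition \ref{prop:ratio_value} together with the freedom in choosing $K_0$. First I would observe that, once the angle vector $\vec{\alpha}$ and the type partition $\vec{T}=(Z,P^+,P^-)$ are fixed, the quantities $A^+ := \sum_{i\in P^+}\alpha_i$, $A^- := \sum_{i\in P^-}\alpha_i$ and $m = m(Z)$ are all determined constants. By Proposition \ref{prop:ratio_value}, when no $\alpha_i$ vanishes the admissible ratios are exactly the numbers $\frac{A^- + m - m^+}{A^+ + m^+}$ as $m^+$ ranges over the integers in the bounded interval $\bigl(\tfrac12(A^- + m - A^+),\, A^- + m\bigr)$; there are only finitely many such integers, so the set of admissible $R$ is a finite set $\{R_1,\dots,R_N\}\subset [0,1)$. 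When there are cusps, $R$ must be $0$, so again the admissible set of ratios is finite (indeed a single point).

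Next I would record that, for each admissible ratio $R_j$, the maximum curvature $K_0$ is unconstrained. Given the data set representation of Theorem \ref{thm:hcmu_data}, once a valid mixed-angulation, coloring, weight function and face function producing a surface in $\MhcmunT$ with ratio $R_j$ have been fixed, rescaling $K_0$ by any positive factor (and correspondingly $K_1 = \frac{2R_j-1}{2-R_j}K_0$, the character line element length $l(K_0,K_1)$, and the face function into the rescaled target interval $(0,l)$) yields another surface in the same refined moduli space. Hence for each $j$ the entire ray $\RR_{>0}\times\{R_j\}$ is realized, and no other pairs are. Consequently the set of admissible pairs $(K_0,R)$ equals $\bigcup_{j=1}^N \bigl(\RR_{>0}\times\{R_j\}\bigr)$, a finite union of rays in $\RR_{\geq 0}^2$.

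The only point requiring a little care — and the main, though minor, obstacle — is to check that rescaling $K_0$ genuinely keeps the surface inside $\MhcmunT$: one must verify that the resulting HCMU surface has the same genus, the same number and type of conical or cusp singularities, and the same cone angles. This holds because the combinatorial data $(\AA, \PP^+\sqcup\PP^-)$, the weight function $\mathcal{W}$, and (after the evident reparametrization of its target interval) the face function $\mathcal{L}$ are all unchanged, while the cone angles at the punctures are read off from $\mathcal{W}$ and $R_j$ through the balance equations \eqref{eq:balance_black} and \eqref{eq:balance_white}, which do not involve $K_0$; only the overall scale of the metric varies. With that verified, the description of the admissible set as a finite union of rays is immediate.
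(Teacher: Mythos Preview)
Your proposal is correct and follows essentially the same approach as the paper: the paper's ``proof'' is just the sentence immediately preceding the corollary (``The ratio $R$ must be zero when there are cusps, so we can include this case in the above proposition as well. Meanwhile, $K_0$ is always free to choose'') together with the $\square$, so the argument is exactly Proposition~\ref{prop:ratio_value} plus the freedom of $K_0$. You have simply spelled out in more detail why $K_0$ is unconstrained via the data set representation, which the paper leaves as an assertion.
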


\bigskip
\subsection{Connection matrix and balance equations}\label{ssec:cnnt_matrix_hcmu} ~
Now we consider the degree of freedom contributed by weight function $\mathcal{W}$. Let $(M,\rho)\in \MhcmunT$ be a generic surface with $j_0>0$ saddle points and $\left(\AA, \PP^{+} \sqcup \PP^-; K_0, R; \mathcal{W}_0, \mathcal{L} \right)$ be its data set representation. 
Assume $\AA = \left\{e_1, \cdots, e_b \right\}$ is the set of arcs, 
and $\mathbb{P}^{+} = \left\{x_1, \cdots, x_p\right\}, \mathbb{P}^{-} = \left\{x_{p+1}, \cdots, x_{p+q}\right\}$ are the sets of maximum and minimum points, with the cone angle at $x_i$ being $\beta_i\in \RR_{>0}, 1 \leq i \leq p+q$. 
Then the weight function $\mathcal{W}_0$ of $(M, \rho)$ satisfies the balance equations 
\begin{align}\begin{split}\label{eq:BE_beta}
\sum_{e \in E(x_i)} \mathcal{W}_0(e) &= \beta_i, \quad \forall\ 1 \leq i \leq p \ , \\
R\cdot \sum_{e \in E(x_j)} \mathcal{W}_0(e) &= \beta_j, \quad \forall\ p+1 \leq j \leq p+q \ .
\end{split}\end{align}
This subsection will be actually the proof the following: 

\begin{proposition}\label{prop:solv_weight}
    Assume $R\neq0$. When all data in the above is fixed except for $\mathcal{W}$, the solution space of positive weight functions constrained by \eqref{eq:BE_beta} has dimension $(2g+j_0-1)$. 
\end{proposition}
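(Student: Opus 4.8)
The plan is to reformulate the balance equations \eqref{eq:BE_beta} as a linear system governed by the incidence structure of the bi-colored graph $\mathcal{G}=(\PP^+\sqcup\PP^-,\AA)$, and to compute the dimension of its solution space by a rank computation. First I would introduce the \emph{connection matrix} (incidence matrix) $N$ of size $(p+q)\times b$, whose $(i,\ell)$-entry records whether the vertex $x_i$ is an endpoint of the arc $e_\ell$ (with multiplicity $2$ when $e_\ell$ is self-folded at $x_i$, so that $\sum_{e\in E(x_i)}\mathcal{W}(e)$ is correctly expressed as a row of $N$ applied to the weight vector $\mathbf{w}=(\mathcal{W}(e_1),\dots,\mathcal{W}(e_b))^{\mathsf T}$). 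After rescaling the white rows by $1/R$ (legitimate since $R\neq 0$), the system \eqref{eq:BE_beta} becomes $N\mathbf{w}=\mathbf{v}$ for an explicit right-hand side $\mathbf{v}$ depending on the $\beta_i$. Since we are told $\mathcal{W}_0$ already solves this system, the solution set is a nonempty affine subspace of $\RR^b$, a translate of $\ker N$; so the dimension we want is $\dim\ker N = b - \operatorname{rank} N$, \emph{provided} the positivity constraint $\mathcal{W}>0$ cuts out an open subset of that affine subspace — which it does, since $\mathcal{W}_0>0$ lies in it, so a neighborhood of $\mathcal{W}_0$ in the affine solution space consists of positive functions, giving the same local dimension.

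Next I would compute $\operatorname{rank} N$. The key input is that $\mathcal{G}$ is connected (Proposition~\ref{defn:weight_bicolor}) and bi-colored. For a connected bipartite graph the incidence matrix over $\RR$ has rank exactly $\lvert V\rvert - 1$: the only linear relation among the rows is the signed sum (black rows minus white rows) vanishing, reflecting that each edge contributes once to a black vertex and once to a white vertex. (Here one must check that self-folded arcs and the multiplicity-$2$ convention do not change this: a self-folded arc at a black vertex $x$ still contributes its weight with total coefficient $2$ to the black side and $0$ to the white side only if its other endpoint is a degree-one white vertex — I need to be careful and instead argue directly that the cycle space of $\mathcal{G}$, which has dimension $b-(p+q)+1$ since $\mathcal{G}$ is connected, injects into $\ker N$ and in fact equals it.) Thus $\operatorname{rank} N = (p+q)-1$ and
\[
\dim\ker N \;=\; b-(p+q)+1.
\]
Finally I substitute the combinatorial identities established earlier: $b=b(Z)=n+m+2g-2$ by Lemma~\ref{lem:number_b}, and $p+q=a(Z)=n-j_0+m$ by Lemma~\ref{lem:extrm_set} (equation \eqref{eq:pole_hcmu}, with $p+q=a$). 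Hence
\[
\dim\ker N \;=\; (n+m+2g-2)-(n-j_0+m)+1 \;=\; 2g+j_0-1,
\]
which is the asserted dimension.

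The main obstacle I anticipate is the bookkeeping around \textbf{self-folded arcs and the multiplicity convention}. The clean statement ``$\operatorname{rank}$ of the incidence matrix of a connected bipartite graph is $\lvert V\rvert-1$'' is standard for simple graphs, but a mixed-angulation is only a cell decomposition, not a simplicial complex, so the graph $\mathcal{G}$ can have self-folded arcs (an edge both of whose sides touch the same face, attached to a degree-one vertex). The correct way to handle this is not to treat $N$ entrywise but to identify $\ker N$ with the real cycle space $H_1(\mathcal{G};\RR)$: a weight assignment lies in $\ker N$ iff it is a $1$-cycle, and for a connected graph $\dim H_1(\mathcal{G};\RR)=\lvert E\rvert-\lvert V\rvert+1$ regardless of self-folded edges — a self-folded edge is a leaf edge and contributes neither to $H_1$ nor to the rank deficit, consistently. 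I would also need to note explicitly why the ``$+2$'' multiplicity in the balance equation at a degree-one black vertex supporting a self-folded arc is consistent with this homological picture (it forces the weight there, i.e. that coordinate is determined, matching that leaf edges are not in any cycle). Once that identification is made precise, the rest is the substitution above and the observation that positivity is an open condition around the given solution $\mathcal{W}_0$.
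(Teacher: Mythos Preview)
Your proposal is correct and follows essentially the same route as the paper: encode the balance equations as a linear system via the connection (incidence) matrix, show its rank is $(p+q)-1=a-1$ using connectedness of $\mathcal{G}$ together with the single bipartite relation (black rows minus white rows sum to zero), and conclude that the affine solution space has dimension $b-a+1=2g+j_0-1$, with positivity handled as an open condition around the known solution $\mathcal{W}_0$.

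One clarification that dissolves the obstacle you flagged: your worry about self-folded arcs and a ``multiplicity $2$'' convention is unnecessary here. In this paper's Definition~\ref{defn:self_fold}, a self-folded arc connects two \emph{distinct} punctures (one in $\PP^+$, one in $\PP^-$); the name refers to two consecutive sides of a complementary polygon folding onto the same arc, not to a loop at a single vertex. Since $\mathcal{G}$ is bi-colored, every edge has exactly one black and one white endpoint, so each column of the connection matrix contains precisely two entries equal to $1$ in distinct rows, with no multiplicity adjustment. Multi-edges (several arcs sharing the same pair of endpoints, as in the canonical angulation $\AA_g^o$) can occur, but they cause no difficulty for the rank computation. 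Hence the standard argument for connected bipartite multigraphs applies directly, and the homological detour through $H_1(\mathcal{G};\RR)$, while valid, is not needed.
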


The idea is the same as \cite[\S 5.2]{LuXu24}. We introduce the connection matrix to detect the linear independence of the equations. 
Recall that in Lemma \ref{lem:extrm_set}, $m=m(Z)$ is the total number of smooth extremal points, and $a=a(Z)$ is the total number of extremal points. Note that $p+q = a = n-j_0+m$ in Table \ref{tab:extrm_set}. 
And by \eqref{eq:b(Z)}, $ b=b(Z) = n + m + (2g-2) = \sum_{i \in Z} \alpha_i $ is the total number of edges in $\AA$, or bigons in the strip decomposition. 

\newcommand{\mtxM}{\mathbf{M}}
\newcommand{\mtxR}{\mathbf{\Lambda}}
\begin{definition}\label{defn:cnnct_mtr_hcmu}
    The \DEF{connection matrix} \SYM{$\mtxM$} of a generic HCMU surface $(M,\rho)$ is an $a \times b$ 0-1 matrix, 
    such that $\mtxM_{ij}=1$ if and only if the edge $e_j$ connects to the vertex $v_i$. Equivalently, $\mtxM_{i j}=1$ if and only if the bigon $B_j$ is glued to the extremal point $v_i$ on the surface. 
    The \DEF{ratio matrix} \SYM{$\mtxR$} is the $a \times a$ diagonal matrix
    \begin{equation*}
        \mtxR = \mtxR(R) := \mathrm{diag}\left( \underbrace{1,\ \cdots\ , 1}_{p}\ ,\ \underbrace{R,\ \cdots\ , R}_{q} \right) \ .
    \end{equation*}
\end{definition}

Since each edge connects to two different vertices, each column of $\mtxM$ contains exactly 2 non-zero entries. And the $i$-th row contains $\deg(v_i)$ non-zero entries. 

Let $\vec{\beta}=\left(\beta_1, \cdots, \beta_p, \cdots, \beta_{p+q}\right)^{T}$ be the column vector recording the cone angles at extremal points (we used to call it expected angle vector in Section \ref{ssec:sufficiency}). 
A weight function $\mathcal{W}: \mathbb{A} \rightarrow \mathbb{R}_0$ is also temporarily regarded as a column vector $\vec{\mathcal{W}}=\big(\mathcal{W}(e_1), \cdots, \mathcal{W}(e_b)\big)^{T} \in R_{>0}^b$. All entries of $\vec{\beta}$ and $\vec{\mathcal{W}}$ should be positive. 
Then the balance equations \eqref{eq:BE_beta} can be encoded as a matrix equation 
\begin{equation}\label{eq:width_constraint_hcmu}
    \mtxR \cdot \mtxM \cdot \vec{\mathcal{W}} = \vec{C} \ .
\end{equation}
We shall call it the \DEF{balance matrix equation}. Note that this linear equation still holds when $R=0$. In such case, $\beta_{p+1}=\cdots=\beta_{p+q}=0$ and the only non-trivial equations are the first $p$ rows for the maximum points. 

\newcommand{\rank}{\mathrm{rank}}
\newcommand{\rankM}{\mathrm{rank}(\mtxM)} 
\begin{proposition}\label{prop:rank_hcmu}
    For any generic HCMU surface $(M,\rho)\in\MhcmunT$ with $\abs{Z}\geq1$, $\rankM = a-1$. 
\end{proposition}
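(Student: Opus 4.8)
The plan is to identify $\mtxM$ with the (unsigned) vertex-edge incidence matrix of the bi-colored graph $\mathcal{G}(\rho) = (\PP^+ \sqcup \PP^-, \AA)$ and to compute its left kernel by hand. By Definition \ref{defn:cnnct_mtr_hcmu} the $a$ rows of $\mtxM$ are indexed by the extremal points and the $b$ columns by the bigons of the strip decomposition, with $\mtxM_{ij} = 1$ exactly when $v_i$ is an endpoint of the edge $e_j$. Since $\abs{Z} = j_0 \geq 1$ the surface is not a football, so its strip decomposition and the induced graph $\mathcal{G}(\rho)$ are defined; by Proposition \ref{defn:weight_bicolor} this graph is connected, and by Definition \ref{defn:bicolor} every edge joins a point of $\PP^+$ to a point of $\PP^-$, so $\mathcal{G}(\rho)$ has no loop and each column of $\mtxM$ has exactly two nonzero entries.

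Next I would show $\dim \ker(\mtxM^{T}) = 1$, which yields $\rankM = a - \dim\ker(\mtxM^{T}) = a - 1$. A row vector $\xi \in \RR^a$ lies in the left kernel of $\mtxM$ if and only if $\xi_i + \xi_{i'} = 0$ for every edge $e_j$ with endpoints $v_i, v_{i'}$. Fixing $v_1 \in \PP^+$ and putting $\xi_1 = t$, the connectedness of $\mathcal{G}(\rho)$ lets one propagate the relation $\xi_v = -\xi_{v'}$ along the successive edges $\{v,v'\}$ of a path from $v_1$ to an arbitrary vertex $v_i$; because $\mathcal{G}(\rho)$ is bipartite with sides $\PP^+, \PP^-$, the parity of such a path is determined by which side contains $v_i$, so the propagation is consistent and forces $\xi \equiv t$ on $\PP^+$ and $\xi \equiv -t$ on $\PP^-$. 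Conversely every such $\xi$ annihilates each column of $\mtxM$. Hence the left kernel is the line spanned by the vector equal to $+1$ on $\PP^+$ and $-1$ on $\PP^-$, so it is one-dimensional.

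I do not expect a genuine obstacle here; only some bookkeeping needs care. A mixed-angulation may produce multi-edges (distinct arcs with the same pair of endpoints) and self-folded arcs, but this is harmless: multi-edges merely repeat an equation, and a self-folded arc still joins two distinct punctures by Definition \ref{defn:self_fold}, so no column of $\mtxM$ has a single nonzero entry. The properties actually used are that $\mathcal{G}(\rho)$ is \emph{connected} (Proposition \ref{defn:weight_bicolor}, which is where the hypothesis $j_0 \geq 1$ is needed, through the existence of the strip decomposition) and \emph{bipartite} (Definition \ref{defn:bicolor}); bipartiteness is exactly what prevents an odd cycle from forcing $\xi \equiv 0$, i.e.\ it is what makes the rank drop by precisely one rather than zero. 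As a sanity check, $\rankM = a - 1 \leq b$ is automatic, since a connected graph on $a$ vertices has at least $a-1$ edges.
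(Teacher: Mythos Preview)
Your proof is correct and uses essentially the same ingredients as the paper's: connectedness and bipartiteness of $\mathcal{G}(\rho)$, together with the fact that each column of $\mtxM$ has exactly two nonzero entries. The only cosmetic difference is that the paper first argues $\rankM \geq a-1$ from a ``path-connectedness'' property of the nonzero entries and then exhibits the single linear relation among the rows (multiply the last $q$ rows by $-1$ and sum), whereas you compute the left kernel directly in one pass.
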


\begin{remark}
    By \eqref{eq:b(Z)} and definition \eqref{eq:a(Z)}, when $j_0\geq1$, $b-a=(n+m+2g-2)-(n-j_0+m)=2g-2+j_0\geq-1$. Hence $b\geq a-1$. 
\end{remark}

\begin{proof}
    The argument is the same as \cite[\S 5.2]{LuXu24}. We only sketch the proof here. First, $\mtxM$ is path-connected in the following sense: for any two non-zero elements $\mtxM_{ij}, \mtxM_{kl}$, there exists a finite sequence $(i_0,j_0), (i_1,j_1), \cdots, (i_N, j_N)$ such that
    \begin{itemize}
        \item $(i_0,j_0)=(i,j), (i_N, j_N)=(k,l)$;
        \item $(i_{t-1},j_{t-1}), (i_{t},j_{t})$ have exactly one same component for all $t=1,\cdots,N$;
        \item all entries $\mtxM_{i_{t},\ j_{t}}$ are non-zero.
    \end{itemize}
    This is obtained by the connectedness of surface, similar to the second part of Proposition \ref{defn:weight_bicolor}. 
    Since each column of $\mtxM$ contains exactly 2 non-zero entries, we can deduce that $\rankM \geq a-1$. 
    
    However, $\mtxM$ must not be full rank. Multiply last $q$ rows of $\mtxM$ by $-1$, which correspond to balance equations at white vertices. 
    Since every edge connects a unique black and white vertex, corresponding to the unique +1 and -1 in each column, the sum of the rows in this modified matrix is zero. Then we have $\rankM = a-1$. 
\end{proof}

\begin{proof}[Proof of Proposition \ref{prop:solv_weight}]
    When $R\neq0$, $\mtxR$ is invertible. Proposition \ref{prop:rank_hcmu} shows the solution space of matrix equation \eqref{eq:width_constraint_hcmu} has dimension 
    $$ b - \rankM = b - a+1 = 2g + j_0 -1 \ .$$
    However, this linear subspace already contains a positive solution $\vec{\mathcal{W}}_0$, induced by the predeclared surface $(M,\rho)$. Hence, the solution space of all positive weight functions must have the same dimension. 
\end{proof}

\bigskip
\subsection{Relations between different type partitions}\label{ssec:split_hcmu} 
By definition of type partition, if $\alpha_i\in \ZZ_{>1}$, then the $i$-th conical singularity may also be realized as an extremal point rather than a saddle point. 
Split deformation defined below will replace an extremal cone point of angle $2\pi\alpha_i\ (\alpha_i\in\ZZ_{>1})$ by a saddle point of the same cone angle and $\alpha_i$ new smooth extremal points. This gives us a relation between moduli spaces with different type partition. 

\medskip
Let $p_i$ be a maximum point of cone angle $2\pi\alpha_i\  (\alpha_i \in \ZZ_{>1})$ on $(M,\rho)$. 
Then in the annulus decomposition of $(M,\rho)$, $p_i$ serves as a punctured point of a unique disk $A_i$. $A_i$ must be a part of some football $S_{\alpha_i}(K_0, K_1)$. We assume $A_i$ is parameterized as $(0, d_i) \times [0, 2\pi\alpha_i]$. 

Splitting is a geometric deformation obtained by a cut-and-paste operation.
First pick $\alpha_i \in \ZZ_{>1}$ isometric meridian segments $p_i S_1, \cdots, p_i S_{\alpha_i}$ from $p_i$, equally spaced by $2\pi$ angle. The length $\delta_i$ of these closed segments lies in $(0, d_i)$ so that they contain no singularity other than $p_i$. 
Cutting the surface along $p_i S_1, \cdots, p_i S_{\alpha_i}$, one obtains a bordered surface with one boundary component. The cone point $p_i$ splits into $\alpha_i$ boundary vertices $q_1, \cdots q_{\alpha_i}$. 
Suppose the vertices are labeled as $S_1, q_1, S_2, q_2, \cdots S_{\alpha_i}, q_{\alpha_i}, S_1$ in counterclockwise order. 
Then glue $q_j S_j$ to $q_j S_{j+1}$ for all $j=1,\cdots, \alpha_i$ ($S_{\alpha_i+1} := S_1$). 
Since the cut locus are meridian segments with the same range of $v$-parameter, we obtain another HCMU surface. 
All $S_j$'s are glued to a common point $S$, which is a new saddle point of angle $2\pi\alpha_i$. And each $q_j$ becomes a smooth maximum point. See Figure \ref{fig:split_hcmu}.

For a minimum point of cone angle $2\pi\alpha_i\ (\alpha_i \in \ZZ_{>1})$, a similar procedure can be applied, with $(0, d_i)$ replaced by $(d_i, l)$. $l$ is the length of character line element in Definition \ref{defn:line_element}.

\begin{definition}\label{defn:split_hcmu}
    For an extremal cone point of angle $2\pi\alpha_i\ (\alpha_i \in \ZZ_{>1})$, the process described above is called a \DEF{split deformation}. 
    It turns an extremal cone point into a saddle point and adds $\alpha_i$ smooth extremal points. 
\end{definition}

\begin{figure}
      \makebox[\textwidth][c]{\includegraphics[width=1.1\textwidth]{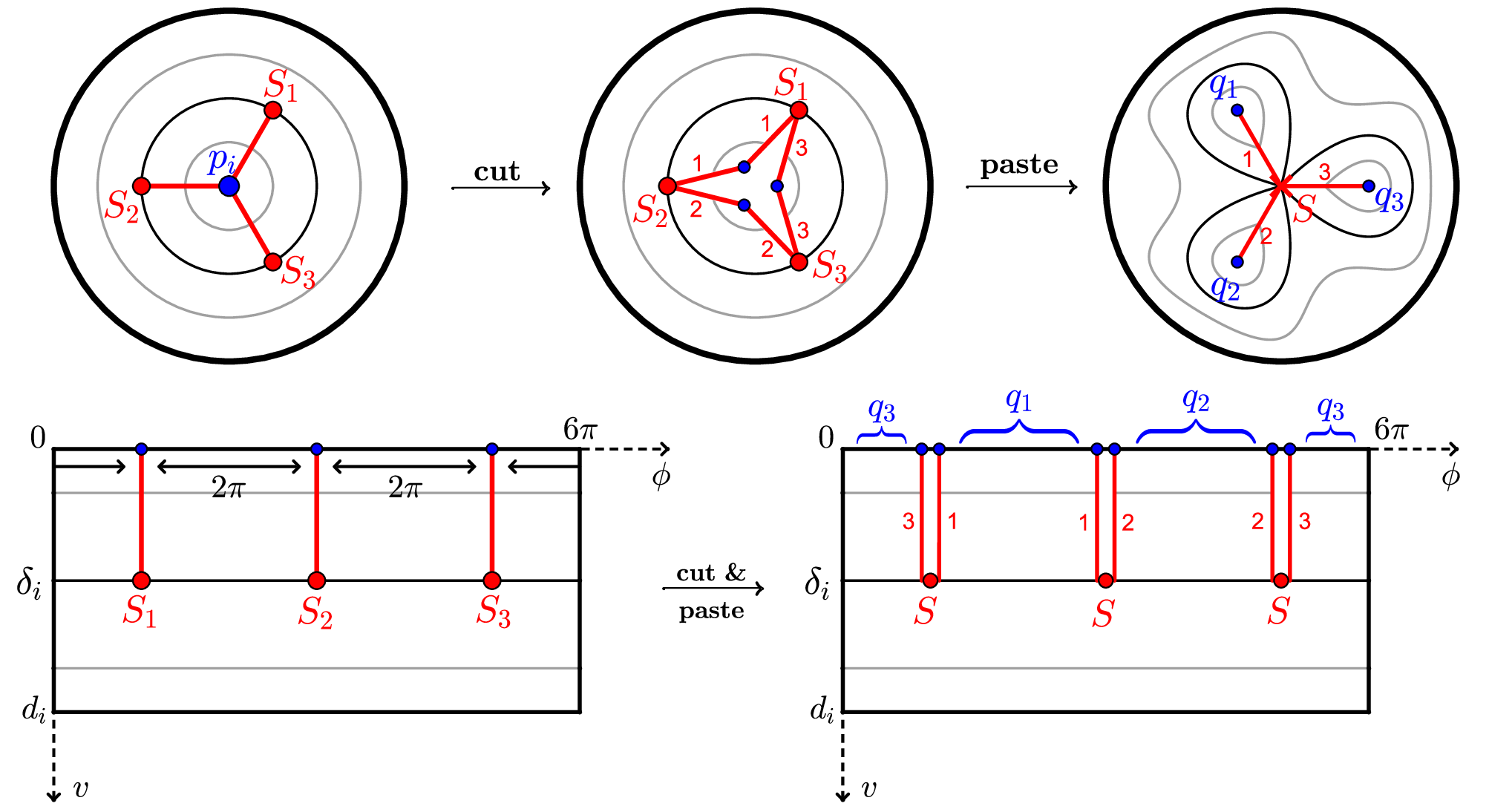}}
      \caption{\footnotesize Splitting an extremal cone point of angle $6\pi$. The upper row shows the local deformation on the surface. Closed black lines are the integral curve of $\vec{V}$. The lower row shows the deformation with respect to the $(v,\phi)$-coordinate. }
      \label{fig:split_hcmu}
\end{figure}

There is a 2-dimensional choice when splitting such an extremal cone point: the $\phi$-parameters of the $\alpha_i$ equally spaced segments, and the length $\delta_i$ of the segments. 
Hence we have the following relation between refined moduli spaces of different type partitions.

\begin{proposition}\label{prop:low_dim_bd_hcmu}
    Let $Z_0:=\{1,\cdots,k\}$ and $Z \subsetneq Z_0$. 
    If $Z\subsetneq Z_0$ in the type partition $\vec{T}$ and $\MhcmunT$ is non-empty, define $\vec{T}_0 := ( Z_0, {P}^+ \setminus Z_0, {P}^- \setminus Z_0)$ by moving all the first $k$ indices from $P^\pm$ into $Z$. 
    Then $\Mhcmu{g,n}{\vec{\alpha}; \vec{T}_0}$ is also non-empty, and $\MhcmunT$ can be regarded as some lower dimensional boundary of $\Mhcmu{g,n}{\vec{\alpha}; \vec{T}_0}$.
\end{proposition}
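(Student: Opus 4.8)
The plan is to pass from $\MhcmunT$ to $\Mhcmu{g,n}{\vec\alpha;\vec T_0}$ using the split deformation of Definition~\ref{defn:split_hcmu}. Fix an HCMU surface $(M,\rho)\in\MhcmunT$. Since $Z\subsetneq Z_0=\{1,\dots,k\}$, the set $Z_0\setminus Z$ is non-empty, and for every $i\in Z_0\setminus Z$ one has $i\le k$, hence $\alpha_i\in\ZZ_{>1}$; moreover $i\notin Z$ means the $i$-th singularity $p_i$ of $(M,\rho)$ is realized as an extremal cone point (a maximum if $i\in P^+$, a minimum if $i\in P^-$), so the split deformation applies at each such $p_i$.

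First I would split $(M,\rho)$ simultaneously at all the points $p_i$ with $i\in Z_0\setminus Z$. The operation at $p_i$ is supported in a disk $A_i$ of the annulus decomposition, and for segment lengths $\delta_i$ small enough these disks are pairwise disjoint, so the splits do not interfere and the resulting surface $(M',\rho')$ is well defined. A split is an isometric cut-and-paste inside a disk, hence it preserves the genus and the finiteness of area and Calabi energy; the conical and cusp singularities remain $p_1,\dots,p_n$ carrying the same angle vector $\vec\alpha$, except that each $p_i$ with $i\in Z_0\setminus Z$ has become a saddle point of angle $2\pi\alpha_i$, while $\sum_{i\in Z_0\setminus Z}\alpha_i$ new smooth extremal points appear (consistent with $m(Z_0)-m(Z)=\sum_{i\in Z_0\setminus Z}\alpha_i$). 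The zero-angle indices lie in $\{n-q+1,\dots,n\}$ and are therefore disjoint from $Z_0$, so they are untouched and stay in $P^-$; thus $\vec T_0=(Z_0,P^+\setminus Z_0,P^-\setminus Z_0)$ is a legitimate type partition and $(M',\rho')\in\Mhcmu{g,n}{\vec\alpha;\vec T_0}$, proving non-emptiness.

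Next I would keep track of the freedom in this construction. Splitting a single $p_i$ carries a $2$-dimensional choice: the common rotational offset of its $\alpha_i$ equally spaced meridian segments and their length $\delta_i\in(0,d_i)$. Running $(M,\rho)$ over $\MhcmunT$ and varying these $2(k-j_0)$ parameters (writing $j_0=\abs{Z}$) yields a family of surfaces inside $\Mhcmu{g,n}{\vec\alpha;\vec T_0}$; since un-splitting --- collapsing the $\alpha_i$ short meridian segments of length $\delta_i$ back to a single cone point --- inverts a split, letting all $\delta_i\to0$ degenerates this family, in the Gromov--Hausdorff topology on $\Mhcmun$, to the original $(M,\rho)$. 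This exhibits $\MhcmunT\subset\overline{\Mhcmu{g,n}{\vec\alpha;\vec T_0}}$. To conclude, I would compare dimensions by Theorem~\ref{thm:dim_hcmu_T}: the two type partitions have the same number $q$ of cusps (see the previous paragraph), so
\[
  \textbf{dim}\,\Mhcmu{g,n}{\vec\alpha;\vec T_0}-\textbf{dim}\,\MhcmunT=2(k-j_0)\geq 2 ,
\]
which is exactly the count of split parameters; hence $\MhcmunT$ sits in $\overline{\Mhcmu{g,n}{\vec\alpha;\vec T_0}}$ as a boundary stratum of real codimension $2(k-j_0)$, the ``lower dimensional boundary'' claimed.

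The step I expect to be the main obstacle is making the limiting statement precise, not the arithmetic: one must verify carefully that the split families remain in the \emph{same} moduli space (same $g$, $n$, $\vec\alpha$, with the cusps untouched) for all small parameter values, and that the degeneration $\delta_i\to0$ genuinely converges to $(M,\rho)$ in whatever notion of convergence one adopts on the moduli space --- here only the informal Gromov--Hausdorff one is available. Once the cut-and-paste bookkeeping for the split deformation from Section~\ref{ssec:split_hcmu} is in place, the remainder is routine.
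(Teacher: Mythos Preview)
Your proof is correct and follows essentially the same route as the paper's: both apply the split deformation of Definition~\ref{defn:split_hcmu} at each extremal cone point $p_i$ with $i\in Z_0\setminus Z$, then interpret the reverse degeneration $\delta_i\to 0$ as exhibiting $\MhcmunT$ on the boundary. The only differences are cosmetic. The paper splits one index at a time and concludes by induction on $|Z_0\setminus Z|$, whereas you split all points simultaneously in disjoint disks of the annulus decomposition; and the paper reads off the dimension jump directly from the $2$-parameter freedom of each split, whereas you invoke Theorem~\ref{thm:dim_hcmu_T} to obtain the exact codimension $2(k-j_0)$. That forward reference is not circular --- the proof of Theorem~\ref{thm:dim_hcmu_T} does not use this proposition --- but the paper's version stays self-contained at this point. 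Both arguments are, as the paper itself remarks immediately afterward, deliberately informal about the limiting interpretation.
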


\begin{proof}
    If there exists $i\in Z_0$ with $i\in P^+ \sqcup P^-$, that is, if $\alpha_i \in \ZZ_{>1}$ but the $i$-th cone point is realized as an extremal point, then define 
    $Z' := Z \sqcup \{i\},\ {P'}^\pm := P^\pm \setminus \{i\}$ and $\vec{T}:=(Z', {P'}^+, {P'}^-)$. 
    Every surface in $\MhcmunT$ can be turned into a surface in $\Mhcmu{g,n}{\vec{\alpha}; \vec{T}'}$ by splitting deformation.     
    As pointed out above, the choice of splitting has a 2-dimensional parameter space. Hence $\Mhcmu{g,n}{\vec{\alpha}; \vec{T}'}$ is also non-empty and has higher dimension. 

    Viewing this deformation in reverse order, the original surface can be regarded as a limiting surface of $\Mhcmu{g,n}{\vec{\alpha}; \vec{T}'}$, with some marked points on the boundary of strip decomposition tending to the vertex of the bigons. 

    The proposition is then obtained by an inductive step. 
\end{proof}

\begin{remark}
    We are not going to discuss the limit of surfaces or the boundary of moduli space more seriously here. This may be a topic for future works.
\end{remark}

\bigskip
\subsection{Dimension count}\label{ssec:dim_count_proof} 
Now we are ready to finish the dimension count. Recall that the dimension equals the maximal number of independent continuous parameters for determining an isometric class of HCMU surfaces. 

\medskip
\begin{proof}[Proof of Theorem \ref{thm:dim_hcmu_T}] ~
    
\textbf{Case (A).} Assume there is no cusp.

By Proposition \ref{prop:generic_hcmu}, we only need to consider generic surfaces. Since the choice of $(\AA, \PP^{+} \sqcup \PP^{-})$ are discrete, they do not contribute to the dimension. In other words, we may fix a choice of $(\AA, \PP^{+} \sqcup \PP^{-})$ which is realized by a generic surface $(M,\rho)\in\MhcmunT$. 

By Corollary \ref{cor:choice_K0K1}, the maximum curvature $K_0$ provides a 1-dimensional continuous parameter, and the choice of $R$ is discrete and finite. 

Recall that $\FF$ is the set of complementary polygons of $\AA$. The distance function $\mathcal{L}$ determines the position of marked boundary  points in each bigon. 
Whenever $(K_0,R)$ is determined, the length $l$ of the common character line element is also determined. Then $\mathcal{L}$, regarded as a vector of dimension $\abs{\FF}=j_0$, can be any point inside $(0,l)^{j_0} \subset \RR_{>0}^{j_0}$. 

Finally, by Proposition \ref{prop:solv_weight}, whenever the bi-colored graph $(\AA, \PP^+\sqcup\PP^-)$ admits a weight function solving the balance matrix equation \eqref{eq:width_constraint_hcmu}, the solution space of all weight functions has dimension $2g+j_0-1$. 
That is, $\mathcal{W}$ has $(2g+j_0-1)$ independent free variables. 

Sunning up all of then, we have
$$ \textbf{dim} = \quad 1 \quad + \quad (j_0) \quad + \quad (2g-1+j_0) \quad = 2g+2j_0 \ . $$

\bigskip
\textbf{Case (B).} Assume there is $q>0$ cusp.

Most part in previous case still works. We only consider generic surfaces with a fixed bi-colored graph $(\AA, \PP^+ \sqcup \PP^-)$, which already admits a weight function $\vec{\mathcal{W}}_{0} \in \RR_{>0}^{b}$. Now $R=0$ is constant while the choice of $K_0$ is still free. And $ \mathcal{L} \in (0,\infty)^{j_0} $ is arbitrary again. 

However, there are less constraints on $\mathcal{W}$. 
$\mtxR(0)$ kills the last $q$ rows and keeps the first $p$ rows of $\mtxM$. They are definitely linearly independent, since each column contains exactly one entry $1$. In other words, $\rank(\mtxR(0)\cdot \mtxM)=p$. 

As before, the solution space of all positive weight function has dimension $b-p$. Note that when there are cusps, $m^-=0$ and $m=m^+,\ \abs{P^-}=q$. Then $p=\abs{P^+} + m^+ = (n-j_0-q)+m$ and 
$$ b-p = (n+m+2g-2) - (n-j_{0}-q+m) = 2g-2 +j_0 +q \ . $$
Summing up all of them, we have 
$$ \textbf{dim} = \quad 1 \quad + \quad (j_{0}) \quad + \quad  (2g-2+j_{0}+q)\quad =2g+2j_{0}+q-1 \ .$$
\end{proof}

\medskip
\begin{proof}[Proof of Theorem \ref{thm:dim_hcmu}]
Case (C) corresponding to footballs. When the angles at extremal points are given, the ratio $R$ is known. The only free parameter is $K_0$.

Case (B) is similar to case (A). We only prove the case without cusp. 

Let $Z_{0}=\{1,\cdots,k\}$. 
By Theorem \ref{thm:dim_hcmu_T}, for any type partition $\vec{T}=(Z_{0}, P^{+}, P^{-})$ with non-empty refined moduli space, the real dimension of $\MhcmunT$ is always $(2g+2k)$. 
Such type partition always exists by Theorem \ref{thm:angle_cnstr_hcmu_new}. 

\medskip
On the other hand, it is still possible for some $Z_1 \subsetneq Z_0$ and type partition $\vec{T}_1=(Z_1, P_1^{+}, P_1^{-})$ inducing non-empty $\Mhcmu{g,n}{\vec{\alpha};\vec{T}_1}$. 
In such cases, let $\vec{T}_0=(Z_0, P_1^{+}\setminus Z_0, P_1^{-}\setminus Z_0)$. By Proposition \ref{prop:low_dim_bd_hcmu}, $\Mhcmu{g,n}{\vec{\alpha};\vec{T}_1}$ is regarded as lower dimensional boundary of $\Mhcmu{g,n}{\vec{\alpha};\vec{T}_0}$, whose dimension is already $(2g+2k)$. 
Hence the original moduli space $\Mhcmun$, possibly as a topological orbifold with boundary, may have several connected components, but all of them have the same dimension 
$(2g+2k)$. 
\end{proof}

\bigskip\noindent
{\bf Acknowledgement. }
The authors would like to express their special thanks to Zhe Han and Yu Qiu for introducing the stories of stability conditions and quadratic differentials, which led them to the most precise language for formulation after much exploration. 
The first author also wish to thank his friend Yisheng Jiang, for conversations on graph theory. Both authors are sincerely grateful to Qing Chen, Zhiqiang Wei and Yingyi Wu for their valuable discussions on HCMU metrics during \emph{The 4th Symposium in Geometry and Differential Equations} held at USTC, July 2024.

\medskip\noindent
{\bf Funding. } 
S. Lu is supported by the China Postdoctoral Science Foundation under Grant Number 2024M753071. 
B. Xu is supported in part by the Project of Stable Support for Youth Team in Basic Research Field, Chinese Academy of Sciences (Grant No. YSBR-001) and National Natural Science Foundation of China (Grant Nos. 12271495, 11971450 and 12071449).

\bigskip
\bibliographystyle{plain}
\bibliography{main}

\begin{thebibliography}{10}

\bibitem{NetworkFlows}
Ravindra~K. Ahuja, Thomas~L. Magnanti, and James~B. Orlin.
\newblock {\em Network flows. {Theory}, algorithms, and applications.}
\newblock Prentice Hall, Englewood Cliffs, NJ, 1993.

\bibitem{BMQS24}
Anna Barbieri, Martin M\"oller, Yu~Qiu, and Jeonghoon So.
\newblock Quadratic differentials as stability conditions: collapsing
  subsurfaces.
\newblock {\em J. Reine Angew. Math.}, 810:49--95, 2024.

\bibitem{BrSm15}
Tom Bridgeland and Ivan Smith.
\newblock Quadratic differentials as stability conditions.
\newblock {\em Publ. Math. Inst. Hautes \'Etudes Sci.}, 121:155--278, 2015.

\bibitem{Calabi82extrm}
Eugenio Calabi.
\newblock {\em Extremal K\"{a}hler Metrics}, volume 102 of {\em Annals of
  Mathematics Studies}, pages 259--290.
\newblock Princeton University Press, Princeton, 1982.

\bibitem{Calabi85extrm}
Eugenio Calabi.
\newblock Extremal {K}\"ahler metrics. {II}.
\newblock In {\em Differential geometry and complex analysis}, pages 95--114.
  Springer, Berlin, 1985.

\bibitem{CCW05}
Qing Chen, Xiuxiong Chen, and Yingyi Wu.
\newblock The structure of {HCMU} metric in a {$K$}-surface.
\newblock {\em Int. Math. Res. Not.}, (16):941--958, 2005.

\bibitem{CWWX15}
Qing Chen, Wei Wang, Yingyi Wu, and Bin Xu.
\newblock Conformal metrics with constant curvature one and finitely many
  conical singularities on compact {R}iemann surfaces.
\newblock {\em Pacific J. Math.}, 273(1):75--100, 2015.

\bibitem{CqWyy11}
Qing Chen and Yingyi Wu.
\newblock Character 1-form and the existence of an {HCMU} metric.
\newblock {\em Math. Ann.}, 351(2):327--345, 2011.

\bibitem{CWX15}
Qing Chen, Yingyi Wu, and Bin Xu.
\newblock On one-dimensional and singular {C}alabi's extremal metrics whose
  {G}auss curvatures have nonzero umbilical {H}essians.
\newblock {\em Israel J. Math.}, 208(1):385--412, 2015.

\bibitem{Cxx98}
Xiuxiong Chen.
\newblock Weak limits of {R}iemannian metrics in surfaces with integral
  curvature bound.
\newblock {\em Calc. Var. Partial Differential Equations}, 6(3):189--226, 1998.

\bibitem{Cxx99}
Xiuxiong Chen.
\newblock Extremal {H}ermitian metrics on {R}iemann surfaces.
\newblock {\em Calc. Var. Partial Differential Equations}, 8(3):191--232, 1999.

\bibitem{Cxx00}
Xiuxiong Chen.
\newblock Obstruction to the existence of metric whose curvature has umbilical
  {H}essian in a {$K$}-surface.
\newblock {\em Comm. Anal. Geom.}, 8(2):267--299, 2000.

\bibitem{Erm20}
Alexandre Eremenko.
\newblock Co-axial monodromy.
\newblock {\em Ann. Sc. Norm. Super. Pisa Cl. Sci. (5)}, 20(2):619--634, 2020.

\bibitem{FST08}
Sergey Fomin, Michael Shapiro, and Dylan Thurston.
\newblock Cluster algebras and triangulated surfaces. {I}. {C}luster complexes.
\newblock {\em Acta Math.}, 201(1):83--146, 2008.

\bibitem{GMN13adv}
Davide Gaiotto, Gregory~W. Moore, and Andrew Neitzke.
\newblock Wall-crossing, {H}itchin systems, and the {WKB} approximation.
\newblock {\em Adv. Math.}, 234:239--403, 2013.

\bibitem{GT23}
Quentin {Gendron} and Guillaume {Tahar}.
\newblock {Dihedral monodromy of cone spherical metrics}.
\newblock {\em Illinois J. Math.}, 67(3):457--483, 2023.

\bibitem{Hopf83}
Heinz Hopf.
\newblock {\em Differential geometry in the large}, volume 1000 of {\em Lecture
  Notes in Mathematics}.
\newblock Springer-Verlag, Berlin, 1983.
\newblock Notes taken by Peter Lax and John Gray, With a preface by S. S.
  Chern.

\bibitem{Kyy13}
Yu.~Yu. Kochetkov.
\newblock Enumeration of a class of plane weighted trees.
\newblock {\em Fundam. Prikl. Mat.}, 18(6):171--184, 2013.

\bibitem{Lzy07}
Zhiyuan Li.
\newblock Existence problems about hcmu metric on k-surface.
\newblock Master's thesis, University of Science and Technology of China, 2007.

\bibitem{LinZhu02}
Changshou Lin and Xiaohua Zhu.
\newblock Explicit construction of extremal hermitian metrics with finite
  conical singularities on $s^2$.
\newblock {\em Communications in Analysis and Geometry}, 10:177--216, 2002.

\bibitem{LuXu24}
Sicheng Lu and Bin Xu.
\newblock Moduli space of dihedral spherical surfaces and measured foliations.
\newblock {\em arXiv e-prints}, arxiv:2312.01807, 2024.

\bibitem{MyjWzq24}
Yingjie {Meng} and Zhiqiang {Wei}.
\newblock {Classification of non-CSC extremal K{\"a}hler metrics on K-surfaces
  $S^2_{\{\alpha\}}$ and $S^2_{\{\alpha,\beta\}}$}.
\newblock {\em Math. Ann.}, pages 1--28, August 2024.

\bibitem{ONeill83}
Barrett O'Neill.
\newblock {\em Semi-{R}iemannian geometry}, volume 103 of {\em Pure and Applied
  Mathematics}.
\newblock Academic Press, Inc. [Harcourt Brace Jovanovich, Publishers], New
  York, 1983.
\newblock With applications to relativity.

\bibitem{GTM171}
Peter Petersen.
\newblock {\em Riemannian geometry}, volume 171 of {\em Graduate Texts in
  Mathematics}.
\newblock Springer, Cham, third edition, 2016.

\bibitem{Strebel}
Kurt Strebel.
\newblock {\em Quadratic differentials}, volume~5 of {\em Ergebnisse der
  Mathematik und ihrer Grenzgebiete (3) [Results in Mathematics and Related
  Areas (3)]}.
\newblock Springer-Verlag, Berlin, 1984.

\bibitem{WangZhu00}
Guofang Wang and Xiaohua Zhu.
\newblock Extremal {H}ermitian metrics on {R}iemann surfaces with
  singularities.
\newblock {\em Duke Math. J.}, 104(2):181--210, 2000.

\bibitem{WzqWyy19}
Zhiqiang Wei and Yingyi Wu.
\newblock Non-{CSC} extremal {K}\"ahler metrics on {$S^2_{\{2,2,2\}}$}.
\newblock {\em Results Math.}, 74(1):Paper No. 58, 13, 2019.

\bibitem{Zak13}
A.~K. Zvonkin.
\newblock Enumeration of weighted plane trees.
\newblock {\em Fundam. Prikl. Mat.}, 18(6):135--144, 2013.

\end{thebibliography}

\end{document}